\newcommand{\LABEL}[1]{\label{#1}
\edef\@CURRENT{\@currentlabel}\def\@NAME{#1}}
\newcommand{\myindex}[1]{
\@ifundefined{@CURRENT}{\index{#1}}%
{\index{#1 (\@CURRENT)}}}
\theoremstyle{definition}
\newtheorem{mydef}{Definition}[section]
\newtheorem{lem}[mydef]{Lemma}
\newtheorem{thm}[mydef]{Theorem}
\newtheorem{conjecture}[mydef]{Conjecture}
\newtheorem{cor}[mydef]{Corollary}
\newtheorem{hypothesis}[mydef]{Hypothesis}
\newtheorem{defin}[mydef]{Definition}
\newtheorem{example}[mydef]{Example}
\newtheorem{remark}[mydef]{Remark}
\newtheorem{notation}[mydef]{Notation}
\newtheorem{fact}[mydef]{Fact}
\newtheorem{discussion}[mydef]{Discussion}
\newcommand{\fct}[2]{{}^{#1}#2}
\newcommand{\bM}{\bar{M}}
\newcommand{\ba}{\bar{a}}
\newcommand{\bb}{\bar{b}}
\newcommand{\bx}{\bar{x}}
\newcommand{\by}{\bar{y}}
\newcommand{\Ksatpp}[2]{{#1}^{#2\text{-sat}}}
\newcommand{\Ksatp}[1]{\Ksatpp{\K}{#1}}
\newcommand{\Ksat}{\K^{\text{sat}}}
\newcommand{\Kbrim}{\K^{\text{brim}}}
\newcommand{\Kprop}{\K^{\text{proper}}}
\newcommand{\Kpropa}{\K^{\text{proper}, \ast}}
\newcommand{\blueq}[1]{{\color{blue} #1}} 
\newcommand{\redq}[1]{{\color{red} #1}}
\newcommand{\inc}{\redq{incomp}}
\newcommand{\sea}{\mathfrak{C}}
\newcommand{\dom}{\operatorname{dom}}
\newcommand{\cf}[1]{\text{cf} (#1)}
\newcommand{\seq}[1]{\langle #1 \rangle}
\newcommand{\rest}{\upharpoonright}
\newcommand{\s}{\mathfrak{s}}
\newcommand{\ts}{\mathfrak{t}}
\newcommand{\is}{\mathfrak{i}}
\newcommand{\isbr}{\is^{\text{brim}}}
\newcommand{\isbrim}{\isbr}
\newcommand{\js}{\mathfrak{j}}
\newcommand{\id}{\text{id}}
\newcommand{\leap}[1]{\le_{#1}}
\newcommand{\ltap}[1]{<_{#1}}
\newcommand{\geap}[1]{\ge_{#1}}
\newcommand{\lta}{\ltap{\K}}
\newcommand{\lea}{\leap{\K}}
\newcommand{\gea}{\geap{\K}}
\def\lee{\preceq}
\newcommand{\K}{\mathfrak{K}}
\newbox\noforkbox \newdimen\forklinewidth
\noforkbox\hbox{\lower 2pt\box1\lower
2pt\box0\relax}
\def\unionstick{\mathop{\copy\noforkbox}\limits}
\newbox\doesforkbox
\doesforkbox\hbox{\lower 0pt\box1 \lower
2pt\box2\lower2pt\box0\relax}
\newcommand{\nf}{\unionstick}
\newcommand{\nfs}[4]{#2 \nf_{#1}^{#4} #3}
\def\1nf{\unionstick^{(1)}}
\def\2nf{\unionstick^{(2)}}
\def\3nf{\unionstick^{(3)}}
\def\nfm{\overline{\nf}}
\newcommand{\nfcl}[4]{#2 \overset{#4}{\underset{#1}{\overline{\nf}}} #3}
\newcommand{\tp}{\otp}
\newcommand{\gtp}{\otp}
\newcommand{\gS}{\oS}
\newcommand{\oSp}[1]{\mathscr{S}_{#1}}
\newcommand{\oS}{\oSp{}}
\newcommand{\Ii}{\mathbb{I}}
\newcommand{\hanf}[1]{h (#1)}
\newcommand{\hanfe}[1]{\beth_{\left(2^{#1}\right)^+}}
\newcommand{\ehanf}[1]{\hanfe{#1}}
\newcommand{\Ll}{\mathbb{L}}
\newcommand{\otp}{\mathbf{tp}}
\newcommand{\goodp}{\text{good}^+}
\newcommand{\PC}{\operatorname{PC}}
\newcommand{\NF}{\operatorname{NF}}
\newcommand{\LS}{\text{LS}}
\newcommand{\Iis}{\mathcal{I}}
\newcommand{\Jis}{\mathcal{J}}
\newcommand{\m}{\mathbf{m}}
\newcommand{\Ps}{\mathcal{P}}
\newcommand{\Psm}{\Ps^-}
\newcommand{\WGCH}{\operatorname{WGCH}}
\newcommand{\muunif}{\mu_{\text{unif}}}
\newcommand{\barsign}{|}
\title{Categoricity and multidimensional diagrams}
\date{\today\\
  AMS 2010 Subject Classification: Primary 03C48. Secondary: 03C45, 03C52, 03C55, 03C75, 03E05, 03E55.}
\keywords{abstract elementary classes, good frames, categoricity, forking, multidimensional diagrams, excellence  }
\author {Saharon Shelah}
\email{shelah@math.huji.ac.il}
\urladdr{http://shelah.logic.at}
\address{Einstein Institute of Mathematics\\
Edmond J. Safra Campus, Givat Ram\\
The Hebrew University of Jerusalem\\
Jerusalem, 91904, Israel, and Department of Mathematics\\
 Hill Center - Busch Campus \\ 
 Rutgers, The State University of New Jersey \\
 110 Frelinghuysen Road \\
 Piscataway, NJ 08854-8019, USA}
\thanks{This is paper 842 in the first author archive.  The first author would like to thank the Israel Science Foundation for partial support of this research (Grant No. 242/03) and the European Research Council grant 338821, and the National Science Foundation grant no: 136974  and NSF-BSF 2021: grant with M. Malliaris, NSF 2051825, BSF 3013005232 (2021-10-2026-09). The first  author is grateful to an individual who prefers to remain anonymous for providing typing services and  thanks the typist for the careful and beautiful typing that were used during late  work on the paper.}
\author{Sebastien Vasey}
\email{sebv@math.harvard.edu}
\urladdr{http://math.harvard.edu/\textasciitilde sebv/}
\address{Department of Mathematics \\ Harvard University \\ Cambridge, Massachusetts, USA}
\renewcommand{\s}{\mathfrak{s}}
\renewcommand{\K}{\mathfrak{K}}
\renewcommand{\m}{\mathbf{m}}
\begin{document}
\makeatletter\def\shfiuwefootnote{\gdef\@thefnmark{}\@footnotetext}\makeatother\shfiuwefootnote{Version 2023-02-22. See \url{https://shelah.logic.at/papers/842/} for possible updates.}

\dedicatory{Dedicated to the memory of Oren Kolman.}


\begin{abstract}
  We study multidimensional diagrams in independent amalgamation in the framework of abstract elementary classes (AECs). We use them to prove the eventual categoricity conjecture for AECs, assuming a large cardinal axiom. More precisely, we show assuming the existence of a proper class of strongly compact cardinals that an AEC which has a single model of \emph{some} high-enough cardinality will have a single model in \emph{any} high-enough cardinal. Assuming a weak version of the generalized continuum hypothesis, we also establish the eventual categoricity conjecture for AECs with amalgamation. 
\end{abstract}

\maketitle

\newpage
\tableofcontents

\newpage

\section{Introduction}

\subsection{General background and motivation}

One of the most important result of modern model theory is Morley's categoricity theorem \cite{morley-cip}: a countable first-order theory categorical in \emph{some} uncountable cardinal is categorical in \emph{all} uncountable cardinals. Here, we call a theory (or more generally a class of structures) \emph{categorical} in a cardinal $\lambda$ if it has (up to isomorphism) exactly one model of cardinality $\lambda$. Morley's theorem generalizes the fact that algebraically closed fields of characteristic zero are categorical in all uncountable cardinals.

The machinery developed in the proof of Morley's theorem has proven to be important, both for pure model theory and for applications to other fields of mathematics (on the former, see e.g.\ the first author's book \cite{shelahfobook}; on the latter, see e.g.\ \cite{mordell-lang}). Specifically, the proof of Morley's theorem shows that there is a notion of independence generalizing algebraic independence (as well as linear independence) for the models of the theory in question. This was later precisely generalized and studied by the first author, who called this notion \emph{forking}. Forking has now become a central concept of model theory.

It is natural to look for extensions of Morley's theorem, the real goal being to look for generalizations of \emph{forking} to other setups. The first author has established \cite{sh31} that for \emph{any} (not necessarily countable) first-order theory $T$, if $T$ is categorical in \emph{some} $\mu > |T|$, then $T$ is categorical in \emph{all} $\mu' > |T|$. One possible next step would be to prove versions of Morley's theorem for infinitary logics, like $\Ll_{\omega_1, \omega}$ (countably-many conjunctions are allowed within a single formula). Indeed, already in the late seventies the first-author conjectured (see \cite[Conjecture 2]{sh87a}):

\begin{conjecture}[Categoricity conjecture for $\Ll_{\omega_1, \omega}$, strong version]\LABEL{strong-categ}
  Let $\psi$ be an $\Ll_{\omega_1, \omega}$-sentence. If $\psi$ is categorical in \emph{some} $\mu \ge \beth_{\omega_1}$, then $\psi$ is categorical in \emph{all} $\mu' \ge \beth_{\omega_1}$.
\end{conjecture}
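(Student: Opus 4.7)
The plan is to reduce Conjecture \ref{strong-categ} to the eventual categoricity conjecture for abstract elementary classes (AECs) and then invoke the main results of this paper. To any $\Ll_{\omega_1,\omega}$-sentence $\psi$ one associates an AEC $\K_\psi$ canonically: fix a countable fragment $\F$ of $\Ll_{\omega_1,\omega}$ containing $\psi$, let the objects of $\K_\psi$ be the models of $\psi$, and take $\F$-elementary substructure as the strong submodel relation. Then $\LS(\K_\psi) = \aleph_0$, the Hanf number of $\K_\psi$ is $\beth_{\omega_1}$, and categoricity of $\psi$ in a cardinal $\mu$ coincides with categoricity of $\K_\psi$ in $\mu$.

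Assuming $\K_\psi$ is categorical in some $\mu \ge \beth_{\omega_1}$, I would first derive basic structural consequences above the Hanf number using classical arguments of the first author: joint embedding, no maximal models, and the existence of models in every cardinality at least $\aleph_0$. The central step is then to establish amalgamation in a tail of cardinals above $\beth_{\omega_1}$. Given amalgamation, the paper's eventual categoricity theorem for AECs with amalgamation (under weak GCH) produces a cardinal $\mu^\ast$ above which $\K_\psi$ is categorical; a separate downward-transfer argument is then needed to push the threshold down to $\beth_{\omega_1}$.

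The main obstacle is obtaining amalgamation for $\K_\psi$ above the Hanf number, which is a deep open problem in its own right. One route is to appeal to the large cardinal version of the paper's main theorem (a proper class of strongly compacts provides amalgamation and tameness, though at the cost of replacing ZFC by a strong metatheory); a ZFC route would presumably proceed via a delicate Ehrenfeucht--Mostowski analysis of the categoricity model to explicitly construct amalgams from categoricity. A secondary obstacle is that the abstract threshold produced by the paper's machinery depends on cardinal invariants of $\K_\psi$; pinning it down to the tight value $\beth_{\omega_1}$ likely requires revisiting the excellence program originally developed by the first author for $\Ll_{\omega_1,\omega}$-sentences, where categoricity in all $\aleph_n$ yields structural properties (good $\aleph_n$-frames, $(n)$-dimensional amalgamation) that feed back into the multidimensional diagram technology developed in this paper.
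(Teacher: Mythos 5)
There is a genuine gap here, and it is not one that can be patched: the statement you are trying to prove is stated in the paper as Conjecture \ref{strong-categ}, and the paper itself explicitly notes that it is still open. Nothing in the paper proves it, and your proposal does not either. Your own outline concedes the decisive point: obtaining amalgamation for $\K_\psi$ above the Hanf number from categoricity alone, in ZFC, is an open problem, so the ``central step'' of your reduction is exactly the missing mathematics, not a lemma you can quote. The routes you offer to bypass it change the theorem being proved: a proper class of strongly compact cardinals, or $\WGCH$, are hypotheses beyond ZFC, whereas Conjecture \ref{strong-categ} is a ZFC statement with a specific threshold.

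Even granting those extra hypotheses, the paper's machinery only yields the \emph{eventual} versions with much larger thresholds: Corollary \ref{compact-main-cor-2} gives categoricity above $\ehanf{\kappa}$ for $\kappa$ strongly compact, and Corollary \ref{ap-main-cor} (which also needs amalgamation as a hypothesis) gives it above $\ehanf{\aleph_{\LS(\K)^+}}$, i.e.\ $\ehanf{\aleph_{\omega_1}}$ when $\LS(\K) = \aleph_0$ --- both far above $\beth_{\omega_1}$. No downward transfer to the exact bound $\beth_{\omega_1}$ exists in the paper (nor in the cited follow-up work, which only reaches $\ehanf{\LS(\K)}$), and the paper stresses that $\beth_{\omega_1}$ is best possible, so your hope of ``pinning down'' the threshold by revisiting the excellence program is again an appeal to open problems (the ZFC excellence results of \cite{sh87a, sh87b} themselves use instances of weak GCH and few-models hypotheses in the $\aleph_n$'s, not bare categoricity in one large cardinal). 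In short, your proposal is a reasonable survey of how one might attack the conjecture, but it is a research program, not a proof, and it should not be presented as establishing the statement.
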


Forty years down the road, and despite a lot of partial approximations (see the history given in the introduction of \cite{ap-universal-apal}), Conjecture \ref{strong-categ} is still open. The main difficulty is that the compactness theorem fails, hence one has to work much more locally. The interest is that one can derive weak version of compactness from the categoricity assumption itself, often combined with large cardinals or combinatorial set-theoretic assumptions. Thus any proof is likely to exhibit a rich interplay between set theory and model theory.

The threshold $\beth_{\omega_1}$ appears in Conjecture \ref{strong-categ} because it is provably best possible (for lower thresholds, there is a standard counterexample due to Morley, and written down explicitly with full details in \cite[4.1]{mv-universal-v4}). Nevertheless, the spirit of the conjecture is captured by the following eventual version:

\begin{conjecture}[Categoricity conjecture for $\Ll_{\omega_1, \omega}$, eventual version]\LABEL{eventual-categ}
  There is a threshold cardinal $\theta$ such that if $\psi$ is an $\Ll_{\omega_1, \omega}$-sentence categorical in \emph{some} $\mu \ge \theta$, then $\psi$ is categorical in \emph{all} $\mu' \ge \theta$.
\end{conjecture}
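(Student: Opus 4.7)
The plan is to reduce Conjecture \ref{eventual-categ} to this paper's eventual categoricity theorem for AECs with amalgamation under a weak GCH. First, associate to any $\mathcal{L}_{\omega_1,\omega}$-sentence $\psi$ an AEC $\K$ whose models are the models of $\psi$, ordered by $\mathcal{L}_{\omega_1,\omega}$-elementary substructure along a suitable countable fragment. The resulting $\K$ has L\"owenheim--Skolem--Tarski number $\aleph_0$, which makes Shelah's presentation theorem and the standard Ehrenfeucht--Mostowski blueprint machinery available in $\K$.

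Next, I would deduce from the categoricity hypothesis that $\K$ has the amalgamation property above some threshold $\chi$. The routine consequences of categoricity above the Hanf number --- no maximal models, joint embedding, and a weak form of stability --- follow from standard EM-model arguments combined with the single-model hypothesis. Amalgamation is the hard part. My attempt would follow Shelah's few-models/many-models strategy: from a failure of amalgamation at some $\lambda \ge \chi$, use the multidimensional diagram technology of the present paper to build, via independent amalgamation over a non-amalgamable triple, $2^{\lambda^+}$ pairwise non-isomorphic models of some common EM-reachable cardinality; under WGCH this contradicts categoricity and hence forces amalgamation above $\chi$.

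Once amalgamation is in hand above $\chi$, I would invoke this paper's main theorem for AECs with amalgamation (under WGCH) applied to the restriction of $\K$ to models of size at least $\chi$, yielding a threshold $\theta$ as required by Conjecture \ref{eventual-categ}.

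The main obstacle is the amalgamation step. Extracting amalgamation from categoricity alone --- without large cardinals and without a priori tameness --- has been the classical bottleneck of Shelah's program for $\mathcal{L}_{\omega_1,\omega}$, since without amalgamation one cannot cleanly talk about Galois types and the many-models argument demands simultaneous control of independent systems of models across many cardinals. That is exactly the regime the multidimensional diagrams of this paper are designed to handle, but deploying them \emph{below} the categoricity cardinal to extract amalgamation (rather than above it, to transfer categoricity) appears to require a downward argument substantially finer than the one feeding the amalgamation-based theorem quoted above. A secondary difficulty is that the threshold $\theta$ produced this way will likely be far above $\beth_{\omega_1}$, so even if the plan succeeds it settles only the eventual Conjecture \ref{eventual-categ} and leaves the strong Conjecture \ref{strong-categ} open.
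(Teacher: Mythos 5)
There is a genuine gap, and in fact the statement you are trying to prove is not proved in the paper at all: it is stated as a \emph{conjecture}, and the introduction says explicitly that it is still open. What the paper establishes are conditional approximations: Corollary \ref{compact-main-cor-2} gives the transfer for $\Ll_{\kappa,\omega}$ (hence for $\Ll_{\omega_1,\omega}$) \emph{assuming a strongly compact cardinal}, and Corollary \ref{ap-main-cor} / Theorem \ref{ap-main} give it for AECs \emph{with amalgamation} assuming $\WGCH$. Your plan, even if every step worked, would only yield a consistency result modulo $\WGCH$, not the ZFC statement of Conjecture \ref{eventual-categ}, so it could not close the conjecture as stated.

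The concrete missing step is the one you yourself flag: extracting amalgamation above a threshold from a single categoricity hypothesis. The paper's only tool in this direction without large cardinals is the weak-diamond argument (Theorem \ref{ap-diamond-thm}, generalizing \cite[I.3.8]{shelahaecbook}), and that argument needs much more than categoricity in one cardinal $\mu$: it needs $\phi$-categoricity and a $\phi$-universal model at the top cardinal $\lambda_2$ together with $\phi$-uniqueness at every $\lambda \in [\lambda_1,\lambda_2)$ --- in the classical instance, categoricity in two adjacent cardinals. A categoricity hypothesis at $\mu$ gives you no such control in the cardinals below $\mu$ where you want to build amalgamation. Moreover, the multidimensional-diagram machinery of Sections \ref{multidim-sec}--\ref{multi-cont-sec} is only deployed \emph{after} one has a good frame (hence amalgamation, at least locally), which the paper obtains either from large cardinals (Fact \ref{compact-ap}, Fact \ref{compact-very-good}) or by assuming amalgamation outright (Fact \ref{tame-frame-constr}); it is not a device for manufacturing $2^{\lambda^+}$ non-isomorphic models from a failure of amalgamation at an arbitrary $\lambda$ below the categoricity cardinal. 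Whether amalgamation (and hence eventual categoricity) can be derived from categoricity alone, in ZFC or even under $\WGCH$, is exactly the open problem the paper points to in its discussion of ``other approaches,'' so your reduction does not go through.
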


\subsection{Eventual categoricity from large cardinals}

Conjecture \ref{eventual-categ} is also still open. In the present paper, we prove it assuming that a large cardinal axiom holds. More precisely and generally:

\textbf{Corollary \ref{compact-main-cor-2}.} Let $\kappa$ be a strongly compact cardinal and let $\psi$ be an $\Ll_{\kappa, \omega}$-sentence. If $\psi$ is categorical in \emph{some} $\mu \ge \ehanf{\kappa}$, then $\psi$ is categorical in \emph{all} $\mu' \ge \ehanf{\kappa}$.

This solves Question 6.14(1) in the first author's list of open problems \cite{sh702} and improves on a result of Makkai and the first author \cite{makkaishelah} who proved the statement of Corollary \ref{compact-main-cor-2} under the additional assumption that the initial categoricity cardinal $\mu$ is a \emph{successor} cardinal. We see this as quite a strong assumption, since it allows one to work directly with models of a single cardinality $\mu_0$ (where $\mu = \mu_0^+$). The gap between \cite{makkaishelah} and Corollary \ref{compact-main-cor-2} is further reflected by the several additional levels of sophistication that we use to prove the latter. For example, we work in the framework of abstract elementary classes (AECs). This was introduced by the first author \cite{sh88} as a general semantic framework encompassing in particular classes axiomatized by ``reasonable'' logics. The following version of Conjecture \ref{eventual-categ} has been stated there \cite[N.4.2]{shelahaecbook}:

\begin{conjecture}[Eventual categoricity conjecture for AECs]\LABEL{categ-aec}
  An AEC categorical in \emph{some} high-enough cardinal is categorical in \emph{all} high-enough cardinals.
\end{conjecture}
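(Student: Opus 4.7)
The plan is to prove Conjecture \ref{categ-aec} assuming a proper class of strongly compact cardinals, exactly the hypothesis used in Corollary \ref{compact-main-cor-2}. Fix an AEC $\K$ and suppose $\K$ is categorical in some cardinal $\lambda$. Pick a strongly compact cardinal $\kappa > \LS(\K)$ with $\lambda \geq \ehanf{\kappa}$. First I would invoke Boney's ultraproduct theorem: since $\kappa$ is strongly compact and $\LS(\K) < \kappa$, the class $\K$ is closed under $\kappa$-complete ultraproducts, so $\K_{\geq \kappa}$ has amalgamation, joint embedding, no maximal models, and is fully $(<\kappa)$-tame and $(<\kappa)$-short. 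This reduces the problem to establishing categoricity on a tail for an AEC with these strong structural features.

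Next, I would construct a good $\lambda'$-frame on $\K$ at some $\lambda' \in [\ehanf{\kappa}, \lambda)$. Categoricity in $\lambda$, combined with tameness and amalgamation, yields superstability and an orthogonality calculus, and gives a non-forking relation on basic types satisfying extension, uniqueness, local character, and symmetry; the first author's earlier work on good frames packages this into a good $(\geq \lambda')$-frame on $\K_{\geq \lambda'}$. The main technical content of the paper is then to lift this good frame to an \emph{excellent} one by developing the theory of multidimensional diagrams: one must show that independent $n$-dimensional systems of $\lambda'$-sized models can be freely amalgamated and that the resulting primary amalgam is unique up to isomorphism, for every $n < \omega$, and uniformly in all dimensions. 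This is an intricate inductive construction generalizing Shelah's excellence for $\Ll_{\omega_1, \omega}$-sentences from \cite{shelahfobook}, now carried out in the AEC setting and built on top of the good frame machinery instead of countable first-order atomic types.

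The hardest part will be the inductive step for uniqueness of multidimensional diagrams: the $(n+1)$-dimensional uniqueness must be extracted from the $n$-dimensional uniqueness together with compatibility statements between non-forking amalgamation at different cardinal scales, and any slack in the local character or transitivity of non-forking compounds rapidly as $n$ grows. Managing this—in particular, keeping the frame's properties invariant as one passes from $n$-systems to $(n+1)$-systems and back down via the tameness and shortness granted by $\kappa$—is where the bulk of the work will live. Once excellence is in place, the categoricity transfer is comparatively standard: each model of cardinality $\geq \lambda'$ is determined up to isomorphism by its skeleton of $\lambda'$-sized independent elementary submodels, and excellence together with categoricity in $\lambda$ pins down this skeleton uniquely. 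Consequently $\K$ is categorical in every cardinal above a threshold depending only on $\kappa$, which establishes the conjecture.
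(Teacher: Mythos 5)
Your outline does follow the paper's own route in broad strokes (a strongly compact $\kappa$ gives amalgamation, shortness and tameness via ultraproducts; a categorical very good frame on a class of saturated models; multidimensional independence and excellence; then transfer), but the decisive step --- obtaining $(n+1)$-dimensional uniqueness from the $n$-dimensional properties --- is exactly where your sketch has a genuine gap, and the mechanism you gesture at (``the tameness and shortness granted by $\kappa$'') is not what makes it work. In the paper the induction is driven by a combinatorial, not a compactness, argument: Theorem \ref{ap-diamond-thm}, a parametrized generalization of ``amalgamation in $\lambda$ from categoricity and weak diamond'' (via Fact \ref{theta-fact}), is applied to the abstract classes $\Kpropa_{\is, \isbrim, \Ps(n)}$ of proper $n$-dimensional systems with brimmed interior, at pairs $\lambda_1 < \lambda_2$ with $2^{\lambda_1} = 2^{<\lambda_2} < 2^{\lambda_2}$ (such pairs exist in ZFC, no cardinal arithmetic hypothesis needed), yielding density of $\phi_n$-amalgamation bases and hence $(\mu, n+1)$-uniqueness for \emph{some} $\mu$ in the interval (Lemma \ref{uq-lambda-step}). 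The strongly compact cardinal then enters the induction a second time, and not through tameness: categoricity plus sufficiently complete ultrafilters give the ``nice'' property (Theorem \ref{multidim-compact}) --- two independent $\Ps(n)$-systems agreeing on $\Psm(n)$ admit brimmed extensions still agreeing on $\Psm(n)$ --- and this is what spreads uniqueness from that single $\mu$ to \emph{every} cardinal of the interval (clause (3) of Lemma \ref{uq-lambda-step}), giving the brimmed excellence theorem (Theorem \ref{excellence-lem}) and then Theorem \ref{compact-excellent-thm}. Without a substitute for this two-pronged argument, the inductive step as you describe it does not get off the ground.

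The endgame also differs from what you propose. The paper does not argue that each large model is determined by a skeleton of small independent submodels (it explicitly leaves decomposition-style analysis to future work); instead it extracts from excellence amalgamation, $\LS(\K)$-locality, and \emph{primes} over models in $\Ksatp{\LS(\K)^+}$ (Theorem \ref{excellent-struct}), and then quotes the known prime-based categoricity transfer (Fact \ref{categ-old-facts}) together with Morley's omitting type theorem (Fact \ref{morley-omitting}) to reach Theorem \ref{compact-main-thm} and Corollary \ref{compact-main-cor-2}. Your decomposition-style conclusion would require an additional uniqueness-of-decomposition argument that the paper never needs; the prime route is what makes the transfer ``comparatively standard''.
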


In this paper, we more generally prove Conjecture \ref{categ-aec} assuming a large cardinal axiom (a proper class of strongly compact cardinals). Again, Conjecture \ref{categ-aec} was known in the special case where the starting categoricity cardinal is a successor and there is a proper class of strongly compact cardinals (a result of Boney \cite{tamelc-jsl}). The second author had also proven it for AECs closed under intersections \cite[1.7]{ap-universal-apal}.

\subsection{Multidimensional diagrams}

The proof of Corollary \ref{compact-main-cor-2} only tangentially uses large cardinals. We have already mentioned that one key step in the proof is the move to AECs: this is done because of generality, but also because the framework has a lot of closure, allowing us for example to take subclasses of saturated models and still remain inside an AEC. The other key step is the use of multidimensional diagrams in independent amalgamation. Essentially, the idea is to generalize \cite{sh87a, sh87b} (which discussed $\Ll_{\omega_1, \omega}$ only) to AECs. This was started in the first author's two volume book \cite{shelahaecbook, shelahaecbook2}, where the central concept of a \emph{good $\lambda$-frame} was introduced. Roughly, an AEC $\K$ has a good $\lambda$-frame if it has several basic structural properties in $\lambda$ (amalgamation, no maximal models, and stability), and also has a superstable-like notion of forking defined for models of cardinality $\lambda$. The idea here is that everything is assumed only for models of cardinality $\lambda$. The program, roughly, is to develop the theory of good frames so that one can start with a good $\lambda$-frame, extend it to a good $\lambda^+$-frame, and keep going until the structure of the whole class is understood.

In order to carry out this program in practice, one has to prove that the good $\lambda$-frame has more structural properties. Specifically, one first proves (under some conditions) that a notion of \emph{nonforking amalgamation} can be defined (initially, the definition of a good frame only requires a notion of nonforking for types of singles elements, but here we get one for types of models). Thus one has at least a \emph{two-dimensional} notion of nonforking (it applies to squares of models). From this, one seeks to define a three-dimensional notion (for cubes of models), and then inductively an $n$-dimensional notion. It turns out that once one has a good $n$-dimensional notion for all $n < \omega$ (still for models of cardinality), then one can lift this to the whole class above $\lambda.$ In \cite{sh87a, sh87b}, an $\mathbb{L}_{\omega_{1}, \omega}$ sentence with all the good $n$-dimensional properties was called \emph{excellent}, so we naturally call an AEC with those properties an \emph{excellent} AEC. It turns out that this is truly a generalization of  \cite{sh87a, sh87b}, and in fact another consequence of the methods of this paper is Theorem \ref{few-model-thm-2},  which generalizes \cite{sh87a, sh87b} to any $\PC_{\aleph_{0}}$-AEC. A particular case is that of $\mathbb{L}_{\omega_{1}, \omega}(Q)$ classes, see on this \cite[II.3.5]{shelahaecbook}. In particular see \cite[3.5]{Sh:600} and see there reading plan 3 in the end of \S0 how this give results. In particular \cite{sh87a, sh87b} does indeed generalize to $\mathbb{L}_{\omega_{1}, \omega}(Q),$ in particular if $\psi \in \mathbb{L}_{\omega_{1}, \omega}(Q)$ is categorical in $\aleph_{n +1}$ and $2^{\aleph_{n}} < 2^{\aleph_{n+1}}$ for $n < \omega$ \underline{then} $\psi$ is categorical in every $\lambda > \aleph_{0}.$ It had been a longstanding open question whether such a generalization was possible. 

The use of multidimensional diagrams has had several more applications. For example, the \emph{main gap theorem} of the first author \cite{main-gap-ams} is a theorem about countable first-order theory but was proven using multidimensional diagrams. Thus we believe that the systematic study of multidimensional diagrams undertaken in this paper is really its key contribution (in fact, a main gap theorem for certain AECs now also seems within reach - the axioms of \cite{abstract-decomp} hold inside $\Ksatp{\LS (\K)^+}$ if $\K$ is an excellent class, see Section \ref{excellent-sec} - but we leave further explorations to future work). Let us note that multidimensional diagrams for AECs were already studied in \cite[III.12]{shelahaecbook}, but we do not rely on this here. 

\subsection{Eventual categoricity in AECs with amalgamation}

We mention another application of multidimensional diagram, which is really why this paper was started. In \cite[Theorem IV.7.12]{shelahaecbook}, it was asserted that the eventual categoricity conjecture holds for AECs with amalgamation, assuming a weak version of the generalized continuum hypothesis (GCH). However, the proof relied on a claim of \cite[III.12]{shelahaecbook} that was not proven there, but promised to appear in \cite{sh842}. See also \cite[Section 11]{Vas17} for an exposition of the proof, modulo the claim.

We prove this missing claim\footnote{A very careful expert may note that the statement of Corollary \ref{categ-extendible-cor} is not exactly the same as that of \cite[Claim~11.2]{Vas17}. Terminology aside, the two statements are very close and the result of the present paper is (arguably) stronger and more explicit. None of this rely matters since we do not rely on any way on how the missing claim was previously formulate.} in the present paper (Corollary \ref{categ-extendible-cor}). As a consequence, we obtain the consistency of the eventual categoricity conjecture in AECs with amalgamation:

\textbf{Corollary \ref{ap-main-cor}.} Assume $2^\theta < 2^{\theta^+}$ for all cardinals $\theta$. Let $\K$ be an AEC with amalgamation. If $\K$ is categorical in \emph{some} $\mu \ge \ehanf{\aleph_{\LS(\K)^{+}}}$, then $\K$ is categorical in \emph{all} $\mu' \ge \ehanf{\aleph_{\LS(\K)^{+}}}$.

Note that a similar-looking result starting with categoricity in a \emph{successor} cardinal was established (in ZFC) in \cite{sh394}. There however, only a downward transfer was obtained (i.e.\ nothing was said about categoricity \emph{above} the starting categoricity cardinal $\mu$). Moreover the threshold cardinal was bigger. After the present paper was written, the second author \cite{categ-amalg-v1} established that the threshold cardinal in Corollary \ref{ap-main-cor} can be lowered further\footnote{The reasons for the appearance of the strange cardinal $\ehanf{\aleph_{\LS (\K)^+}}$ are perhaps best explained by the proof itself. The short version is that $\aleph_{\LS (\K)^+}$ is the minimal limit cardinal with cofinality greater than $\LS (\K)$.}, all the way to $\ehanf{\LS (\K)}$.

Note also that it is known (see for example \cite[1.13]{makkaishelah} or \cite[7.3]{tamelc-jsl}) that AECs categorical above a strongly compact cardinal have (eventual) amalgamation; an early work is \cite{Sh:932}. Thus Corollary \ref{ap-main-cor} can be seen as a generalization of Corollary \ref{compact-main-cor-2}. However, the two results are formally incompatible since there are no cardinal arithmetic assumptions in the hypotheses of Corollary \ref{compact-main-cor-2}. By a result of Kolman and the first author \cite{kosh362}, some amount of amalgamation already follows from categoricity above a \emph{measurable} cardinal. In fact, this amount of amalgamation is enough to carry out the proof of Corollary \ref{ap-main-cor}, see Theorem \ref{ap-main}. Thus the present paper also establishes that Conjecture \ref{categ-aec} is consistent with a proper class of measurable cardinals.\footnote{In fact, the machinery of this paper was designed to use a measurable rather than a strongly compact in Corollary \ref{compact-main-cor-2}. However, some technical points remain on how exactly to do this, so for simplicity we focus on the strongly compact case and leave the measurable case to future works.} More precisely: 

\textbf{Corollary \ref{a78}.} Assume $2^\theta < 2^{\theta^+}$ for all cardinals $\theta$. Let $\K$ be an AEC and let $\kappa > \LS(\K)$ be a measurable cardinal.  If $\K$ is categorical in \emph{some} $\mu \ge \ehanf{\aleph_{\kappa^{+}}}$, then $\K$ is categorical in \emph{all} $\mu' \ge \ehanf{\aleph_{\kappa^+}}$.

See more in \cite{Sh:E102} and \cite{Sh:F1980} and see Remark \ref{14.38}. 

\subsection{Other approaches to eventual categoricity} 

The eventual categoricity conjecture is known to hold under several additional assumptions (in addition to those already mentioned). Grossberg and VanDieren \cite{tamenesstwo, tamenessthree} established an \emph{upward} categoricity transfer from categoricity in a successor in AECs (with amalgamation) that they called \emph{tame}.  Roughly, orbital types in such AECs are determined by their restrictions to small sets. This is a very desirable property which is known to follow from large cardinals \cite{tamelc-jsl}.

Recently, the second author \cite{ap-universal-apal, categ-universal-2-selecta} established the eventual categoricity conjecture for \emph{universal classes}: classes of models axiomatized by a universal $\Ll_{\infty, \omega}$ sentence (or alternatively, classes of models closed under isomorphism, substructure, and unions of chains). The proof is in ZFC and the lower bound on the starting categoricity cardinal is low (not a large cardinal). Further, the starting categoricity cardinal itself does not need to be a successor. To prove this result, the second author established eventual categoricity more generally in AECs that have amalgamation are tame, and in addition have \emph{primes} over models of the form $Ma$ \cite{categ-primes-mlq}.

Now, it is folklore that any excellent AEC (for the purpose of this discussion, an AEC satisfying strong multidimensional amalgamation properties) should have amalgamation, be tame, and have primes. Thus the main contribution of the present paper is to establish excellence for the setups mentioned above (large cardinals or amalgamation with weak GCH). Then one can essentially just quote the categoricity transfer of the second author.

Note however that even with additional locality assumptions, the known categoricity transfers for AECs are proven using local approaches (mainly good frames or a close variant\footnote{See \cite{Vas17} on how the Grossberg-VanDieren result can be proven using good frames.}), and transferring the local structure upward using the tameness assumptions. The present paper takes the local approach further by using it to \emph{prove} tameness conditions, through excellence. Of course, in the present paper we pay a price: cardinal arithmetic or large cardinal assumptions. It remains open whether paying such a price is necessary, or whether excellence (and therefore eventual categoricity) can be proven in ZFC.

\subsection{How to prove excellence}

While the present paper is admittedly complex, relying on several technical frameworks to bootstrap itself, we can readily describe the main idea of the proof of excellence. A starting point is a result of the first author \cite[I.3.8]{shelahaecbook}: assuming $2^{\lambda} < 2^{\lambda^+}$ (the weak diamond, see \cite{dvsh65}), an AEC categorical in $\lambda$ and $\lambda^+$ must have amalgamation in $\lambda$. This can be rephrased as follows: the $1$-dimensional amalgamation property in $\lambda$ holds provided that a strong version of the $0$-dimensional amalgamation property holds in $\lambda^+$ (we think of the $0$-dimensional amalgamation property as joint embedding).

A key insight (a variant of which already appears in \cite[III.12.30]{shelahaecbook}) is that this argument can be repeated by looking at \emph{diagrams} (or \emph{systems}) of models rather than single models (we do not quite get an AEC, but enough of the properties hold). We obtain, roughly, that $(n + 1)$-dimensional amalgamation in $\lambda$ follows from a strong version of $n$-dimensional amalgamation in $\lambda^+$. Since we are talking about \emph{nonforking} amalgamation, we call these properties ``uniqueness'' properties rather than ``amalgamation'' properties, but the idea remains. Parametrizing (and forgetting about the ``strong'' part), we have that $(\lambda^+, n)$-uniqueness implies $(\lambda, n + 1)$-uniqueness. The precise statement is in \ref{uq-lambda-step}. Note that the converse (the $(\lambda, n+1)$)-properties implying the $(\lambda^{+},n)$-properties is well known and standard). 

Now, it was already known how to obtain $(\lambda, 1)$, and even $(\lambda, 2)$-uniqueness. In fact, it is known (under reasonable assumptions) how to obtain a $\lambda$ such that $(\lambda^{+n}, 2)$-uniqueness holds for all $n < \omega$ (such a setup is called an \emph{$\omega$-successful frame}). Assuming $2^{\lambda^{+m}} < 2^{\lambda^{+(m + 1)}}$ for all $m < \omega$, we obtain from the earlier discussion used with each $\lambda^{+m}$ that $(\lambda^{+m}, 3)$-uniqueness holds for all $m < \omega$. Continuing like this, we get $(\lambda^{+m}, n)$-uniqueness for all $n, m < \omega$.

The implementation of this sketch is complicated by the fact that there are other properties to consider, including extension properties. Further, we have to ensure a \emph{strong} uniqueness property (akin to categoricity) in the successor cardinal, so the induction is much more complicated.

Forgetting this for the moment, let us explain how, assuming large cardinals, one can remove the cardinal arithmetic assumption. For this we assume more generally that we have an increasing sequence $\seq{\lambda_m : n < \omega}$ of cardinals such that for all $m < \omega$, $2^{<\lambda_{m + 1}} = 2^{\lambda_m} < \lambda_{m + 1}$ (in other words, $\lambda_{m + 1}$ is least such that $2^{\lambda_m} < 2^{\lambda_{m + 1}}$). Let $\lambda_\omega \coloneqq  \sup_{m < \omega} \lambda_m$. Assume that $(\lambda, 2)$-uniqueness holds for all $\lambda \in [\lambda_0, \lambda_\omega)$. In the previous setup, we had that $\lambda_{m + 1} = \lambda_m^+$. Now, as in \cite[1.2.4]{shvi635}, we can generalize the first author's aforementioned result on getting amalgamation from two successive categoricity to: whenever $\lambda < \lambda'$ and $2^\lambda = 2^{<\lambda'} < 2^{\lambda'}$ and categoricity in $\lambda'$ implies there exists an amalgamation base in $[\lambda, \lambda')$. Essentially, in some $\mu \in [\lambda, \lambda')$, we will have amalgamation. The $n$-dimensional version of this will essentially be that if $(\lambda', n)$-uniqueness holds, then $(\mu, n + 1)$-uniqueness holds for \emph{some} $\mu \in [\lambda, \lambda')$.

So far, we have not used large cardinals. However, we use them to ``fill in the gaps'' and make sure that we have amalgamation in every cardinals below $\mu$. So let $\mu_0 < \mu$. Fix a triple $(M_0, M_1, M_2)$ of size $\mu_0$ to amalgamate. Since we have large cardinals, we can take ultraproducts by a sufficiently complete ultrafilters to obtain an extension $(M_0', M_1', M_2')$ of this triple that has size $\mu$. Now using amalgamation in $\mu$ we can amalgamate this new triple, and hence get an amalgam of the original triple as well. The suitable $n$-dimensional version of this argument can be carried out, and we similarly obtain $(\lambda, n)$-uniqueness for all $\lambda \in [\lambda_0, \lambda_\omega)$ and $n < \omega$.

\subsection{Structure of the paper}

We assume that the reader has a solid knowledge of AECs, including \cite{baldwinbook09} (and preferably at least parts of the first three chapters of \cite{shelahaecbook}) but also the more recent literature. We have tried to repeat the relevant background but still, to keep the paper to a manageable size we had to heavily quote and use existing arguments.

Schematically, for each $n$ section $n$ discusses a certain framework, call it framework $n$. Framework $n$ is more powerful than framework $n - 1$. Section $n$ discusses how to get into framework $n$, starting with some instances of framework $n - 1$. Then enough properties of framework $n$ are also proven so that one can develop the properties of framework $n + 1$ in the next section. We have tried to minimize the cross-section dependencies: the reader can often forget about previous framework and focus on the current one. At the end of the paper, an index will also help the reader getting back to previous definitions.

The abstraction seems unfortunately needed to make the intricate arguments work in the end (i.e. we need to prove very general statements so that we can later move from our AEC to e.g. an AEC of independent systems of saturated models, maybe with a fancy ordering). It can be easy to get lost in the maze of abstract definitions and lose track of the big picture. The usual advices apply in this case, go back to the definition. Usually the abstract parameters we have are introduced with a specialization in mind.  Always keep these in mind. 

For the reader convenience, we add a list of frameworks that we use, each one with an example: 

\begin{enumerate}
    \item[(a)] Abstract class (see Definition \ref{ac-def}, page  \pageref{ac-def}). \underline{Example}: an elementary class restricted to the rigid models in it. 
    
    \item[(b)] Skeleton (see Definition \ref{skel-def}, page \pageref{skel-def}). The main example of a skeleton is that of limit/saturated models. $M \leq_{\K_{\inc}} N$ iff $N$ is limit over $M.$ 
    
    \item[(c)] $\phi \in \mathbb{L}_{\omega_{1}, \omega}(Q)$ (see Example \ref{resolv-def}, page \pageref{resolv-def}).
    
    \item[(d)] Fragmented AEC, (see Definition \ref{r5}(4), page \pageref{r5}). \underline{Example}: the class of saturated models of a (first order) superstable theory. 
    
    \item[(e)] Semi AEC (see Definition \ref{r5}(7), page \pageref{r5}). \underline{Note}: increasing chain with union of bigger cardinality may be omitted.
    
    \item[(f)] Resolvable (see Definition \ref{resolv-def}, page \pageref{resolv-def}). \underline{Note}: it says we can represented $M \in \mathfrak{K}_{\lambda}$ by a chain of smaller cardinality model. 
    
    \item[(g)] $\kappa$-compact $\mathfrak{K}$ (see Definition \ref{compact-def}, page \pageref{compact-def}). \underline{Note}: if $\kappa$ is a compact cardinal, $\mathfrak{K}$ is closed under ultra powers and more, the situation is more similar to f.u. 
    
    \item[(h)] (Semi)-good frame (see Definition \ref{good-frame-def}, page \pageref{good-frame-def}). \underline{Note}: this is an AEC parallel to superstable, not just for one cardinality. 
    
    \item[(i)] A two-dimensional independence relation $\nf$ (see Definition \ref{relation-def}, page \pageref{relation-def}) may be good/very good (see Definition \ref{good-def}/Definition \ref{very-good-def}). \underline{Example}: the class of models of a stable first order theory. 
    
    \item[(j)] A two-dimensional independence notion on $\mathfrak{K}$ is good (see Definition \ref{good-def}, page \pageref{good-def}). \underline{Note} here we have non-forking of models, $\nfs{M_{0}}{M_{1}}{M_{2}}{M_{3}}.$ 
    
    \item[(k)]  A multidimensional independence relation (see Definition \ref{multi-def}), page \pageref{multi-def}. \underline{Note} here we have the parallel of $\mathcal{P}$-stable diagram for $\mathcal{P} \subseteq \mathcal{P}(n).$ It can be very good (see Definition \ref{very-good-twodim-def}, page \pageref{very-good-def}) and even excellent (see Definition \ref{excellent-def}, page \pageref{excellent-def}).  
\end{enumerate}


    
    
    
    
    

A key is that to study independent systems, it is easier to study the limit ones (i.e. the ones that are ``very saturated''). This is because they are canonical. Honestly, and especially with claims having to do  with a multidimensional systems, sometimes it is better if the reader tries to prove the claim before reading the proof. We feel that the main ideas should be straightforward-enough, even if the technical realization is involved. A dream of the second author is that introducing category-theoretic language can simplify the proofs.   

The paper is organized as follows. After some notational preliminaries, AECs and more generally abstract classes are discussed (Section \ref{ac-sec}). Then a special framework that will be used to deal with large cardinals, compact AECs, is introduced (Section \ref{compact-sec}). We do some combinatorics, in particular the weak diamond argument alluded to earlier, in Section \ref{combinatorics-sec}. The next framework, good frames, is discussed in Section \ref{good-frames-sec}. We then move on to two-dimensional independence notions (Section \ref{twodim-sec}), and multidimensional independence notions (Section \ref{multidim-sec}). Some properties of multidimensional independence notions are then proven that are purely combinatorial, in the sense of only requiring finitely many steps (no arguments involving unions of chains and resolutions), see Sections \ref{multi-finite-sec}, \ref{multi-ap-sec}. One then moves on to looking at multidimensional independence notions with also requirements on chains of systems (Section \ref{multi-cont-sec}). After a section on primes (Section \ref{primes-sec}), we discuss excellent AECs (Section \ref{excellent-sec}) and conclude with the main theorems of this paper (Section \ref{main-sec}).

Each section begins with a short overview of its organization and main results.

\subsection{Acknowledgments}

We thank John Baldwin, Will Boney, Adi Jarden, and the referees for comments that helped improve the paper. The first author lecture on a very preliminary version in the conference in Chicago in October 2007.

\newpage

\section{Preliminaries and semilattices}

Our notation is standard, and essentially follows \cite{shelahaecbook} and \cite{aec-stable-aleph0-v5-toappear}. While we may repeat some of these conventions elsewhere in the paper but it does not hurt to emphasize them here: for a structure $M,$ we write $\vert M \vert$ for its universe and $\Vert M \Vert$ for its cardinality.  We imitate this convention for $\mathfrak{K}$ (see Definition \ref{ac-def} and Notation \ref{ac-notation}): $\K$\myindex{$\K$} will denote a pair $(K, \lea)$\myindex{$\lea$}, where $K$ is a class of structures and $\lea$ is a partial order on $K$. We will sometimes write $|\K|$\myindex{$\barsign \K \barsign$} for $K$, but may identify $\K$ and $K$ when there is no danger of confusion. The following set-theoretic function is sometimes useful:

\begin{defin}\LABEL{hanf-def}
    $h(\lambda) \coloneqq  \beth_{(2^{\lambda})^{+}}.$
\end{defin}


We write $[A]^\mu$ (respectively $[A]^{<\mu}$) for the set of all subsets of $A$ of cardinality $\mu$ (respectively strictly less than $\mu$).\myindex{$[A]^{<\mu}$}\myindex{$[A]^\mu$}

Recall that a (meet) \emph{semilattice}\myindex{semilattice}\myindex{lattice} is a partial order $(I, \le)$ so where every two elements have a meet (i.e.\ a greatest lower bound). We will often identify $I$ with $(I, \le)$. We write $s \land t$\myindex{$s \land t$} for the meet of two elements $s, t \in I$. In this paper, $I$ will almost always be finite, and in this case we write $\bot$\myindex{$\bot$} for $\land I$ (when $I$ is not empty), the least element of $I$. We will also use interval notation: for $u \le v$ both in $I$, we write $[u, v]$ for the partial order with universe $\{w \in I \mid u \le w \le v\}$, similarly for $(u, v)$, $(u, \infty)$, and other interval notations.

When $I$ and $J$ are semilattices, we write $I \subseteq J$ to mean that $I$ is a \emph{subsemilattice}\myindex{subsemilattice} of $J$. In particular, the meet operation in $I$ and $J$ agree  on $I$. Notice that any initial segment of a semilattice is a subsemilattice.

Given $I$ and $J$ partial orders, we write $I \times J$\myindex{$I \times J$} for their ordered product: $(i_1, j_1) \le (i_2, j_2)$ if and only if $i_1 \le i_2$ and $j_1 \le j_2$. When $I$ and $J$ are semilattices, $I \times J$ is a semilattice: the meet is given by the meet of each coordinate.

For $u$ a set, we write $\Ps (u)$\myindex{$\Ps (u)$} for the semilattice $(\Ps (u), \subseteq)$, where the meet operation is given by $s \land t = s \cap t$ and $\bot = \emptyset$. We write $\Psm (u)$\myindex{$\Psm (u)$} for the semilattice on $\Ps (u) \backslash \{u\}$ induced by $\Ps (u)$. Very often, we will have $u = n = \{0, 1, \ldots, n - 1\}$\myindex{$\Ps (n)$}\myindex{$\Psm (n)$}. More generally, the semilattices we will work with will usually be finite initial segment of $\Ps (\omega)$.

Note that for any set $u$, $\Ps (u) \cong \Ps (|u|)$, and $\Psm (u) \cong \Psm (|u|)$. Moreover, $\Ps (n) \times \Ps (m) \cong \Ps (n + m)$ for $n, m < \omega$. 

$\delta$ denote a limit ordinal.

\newpage

\section{Abstract classes and skeletons}\LABEL{ac-sec}

The definition of an abstract class gives the minimal properties we would like any class of structures discussed in this paper to satisfy. We also discuss the notion of a \emph{skeleton} of an abstract class: a subclass that captures many of the essential properties of the original class but may be more manageable.

We study special kind of abstract classes. \emph{Fragmented AECs} are the main new concept: they are AECs, except that the chain axiom is weakened so that union of chains that jump cardinalities may not be in the class. Class of saturated models in a superstable first-order theory are an example, and we will use them extensively to deal with such setups.

We finish this section with a review of some standard definitions and results on AECs.

Let us start with the definition of an abstract class (due to Rami Grossberg).

\begin{defin}\LABEL{ac-def}\myindex{abstract class}
  An \emph{abstract class} is a pair $\K = (K, \lea)$ such that $K$ is a class of structures in a fixed (here always finitary) vocabulary $\tau = \tau (\K)$, $\lea$ is a partial order on $K$ extending $\tau$-substructure, and $\K, \, \leq_{\K}$ are closed under isomorphisms. 
\end{defin}

We often do not distinguish between $\K$ and its underlying class, writing for example ``$M \in \K$''. When we need to be precise, we will use the following notation:

\begin{notation}\LABEL{ac-notation}
  Let $\K = (K, \lea)$ be an abstract class.

  \begin{enumerate}
  \item[(A)] We write $|\K|$ for the underlying class $K$ of $\K$.\myindex{$\barsign \K \barsign$}
  
  \item[(B)] Let $K_0 \subseteq K$ be a class of structures which is closed under isomorphisms. We let $\K \, {\rest} \, K_0 \coloneqq  (K_0, \lea \, {\rest} \, K_0)$, where $\lea \, {\rest} \, K_0$ denotes the restriction of $\lea$ to $K_0$.\myindex{$\K_0 \subseteq \K$}
  \end{enumerate}
  
\end{notation}

We will also use the following standard notation:

\begin{notation}\LABEL{r0}\myindex{$\K_\lambda$}\myindex{$\K_{\ge \lambda}$}\myindex{$\K_{\le \lambda}$}\myindex{$\K_{\Theta}$}
  Let $\K$ be an abstract class and let $\Theta$ be a class of cardinals. We write $\K_{\Theta}$ for the abstract class $\K \, {\rest} \, \{M \in \K \mid \|M\| \in \Theta\}$. That is, it is the abstract class whose underlying class is the class of models of $\K$ of size in $\Theta$, ordered by the restriction of the ordering of $\K$. We write $\K_{\lambda}$ instead of $\K_{\{\lambda\}}$, $\K_{\le \lambda}$ instead of $\K_{[0, \lambda]}$, etc. 
\end{notation}

We define amalgamation, orbital types and $\mathscr{S}_{\K}(M)$ the set of orbital types (usually omit $\K$) stability, etc.\ for abstract classes as in the preliminaries of \cite{sv-infinitary-stability-afml} but we mostly use the notation and terminology of \cite{shelahaecbook}\index{amalgamation}\index{orbital type}\index{stability}. In particular we write $\gtp_{\K} (\bb / A; N) \in \mathscr{S}(A, N)$ \myindex{$\tp_{\K} (\bb / A; N)$} for the orbital type of $\bb$ over $A$ in $N$ as computed inside $\K;$ this is really useful when we have enough amalgamation. Usually, $\K$ will be clear from context, so we will omit it.

The following locality notion (also called tameness) will play an important role. It is implicit already in \cite{sh394}, but is it considered as a property of AECs per se by Grossberg and VanDieren in \cite{tamenessone}.

\begin{defin}\LABEL{def-local}\myindex{local}\myindex{locality}
  \     
  
  (1) Let $\K$ be an abstract class and let $\kappa$ be an infinite cardinal.

  (2) $p \in \mathscr{S}(M)$ is called algebraic \myindex{algebraic} when $p = \gtp(a /  M; M)$ for some $a \in M.$ 
  
  (3) We say that $\K$ is \emph{$(<\kappa)$-tame} if for any $M \in \K$ and any $p, q \in \gS (M)$, if $p \, {\rest} \, A = q \, {\rest} \, A$ for any $A \in [M]^{<\kappa}$, then $p = q$. We say that $\K$ is \emph{$\kappa$-tame} if it is $(<\kappa^+)$-tame.
\end{defin}

We will also use the following standard notions of saturation:

\begin{defin}\LABEL{sat-defs}
  Let $\K$ be an abstract class and let $M \lea N$ both be in $\K$.

  \begin{enumerate}
  \item We say that $N$ is \emph{universal over $M$} if whenever $N_0$ is such that $M \lea N_0$ and $\|N_0\| = \|M\|$, we have that there exists $f: N_0 \xrightarrow[M]{} N$.\myindex{universal over}
  
  \item We say that $N$ is \emph{$(\lambda, \delta)$-limit over $M$} if $\Vert M \Vert = \Vert N \Vert$ and there exists an increasing continuous chain $\seq{M_i  \colon i \le \delta}$ in $\K_\lambda$ such that $M_0 = M$, $M_\delta = N$, and $M_{i + 1}$ is universal over $M_i$ for all $i < \delta$.\myindex{$(\lambda, \delta)$-limit over (a model)}
  
  \item We say that $N$ is \emph{limit over $M$} if it is $(\lambda, \delta)$-limit over $M$ for some $\lambda$ and $\delta$.\myindex{limit over (a model)} 
  
  \item\label{brimmed-set-def} We say that $N$ is \emph{limit over $A$} (for $A \subseteq |N|$ a set\footnote{Pedantically, if $A = \vert M \vert$, $M \leq_{\K} N,$ then \ref{sat-defs}(3), \ref{sat-defs}(4) are not equivalent, but our notion and context tell us what we mean.}) if there exists $M' \in \K$ such that $M' \lea N$, $A \subseteq |M'|$, and $N$ is limit over $M'$.\myindex{limit over (a set)} Note that it follows from the definition that $\Vert M' \Vert = \Vert N \Vert$ and that the $\delta$ witnessing the length of the chain is always (is a limit ordinal and) strictly less $\lambda^{+}.$ On the other hand $A$ could be very small relative to $N$ (nevertheless, in this paper it will most often be the union of a system of models of the same cardinality as $N$).
  
  \item We say that $N$ is \emph{limit} if it is limit over some $M'$.\myindex{limit}
  
  \item We say that $M$ is \emph{$\lambda$-model-homogeneous} if whenever $M_0 \lea N_0$ are both in $\K$ with $M_0 \lea M$ and $\|N_0\| < \lambda$, there exists $f: N_0 \xrightarrow[M_0]{} M$. We say that $M$ is \emph{model-homogeneous} if it is $\|M\|$-model-homogeneous.\myindex{model-homogeneous}
  
  \item We say that $M$ is \emph{$\lambda$-saturated} if for any $M_0 \lea M$ with $M_0 \in \K_{<\lambda}$, any $p \in \gS (M_0)$ is realized inside $M$. We say that $M$ is \emph{saturated} if it is $\|M\|$-saturated.\myindex{saturated}
  \end{enumerate}
\end{defin}

We give the definition of a few more useful general concepts:

\begin{defin}\LABEL{dom-def}\myindex{domain of an abstract class}\myindex{$\dom{\K}$}
  The \emph{domain} $\dom (\K)$ of an abstract class $\K$ is the class of cardinals $\lambda$ such that $\K_{\lambda} \neq \emptyset$.
\end{defin}

\begin{defin}\LABEL{r3}\myindex{sub-abstract class}
  A \emph{sub-abstract class} of an abstract class $\K = (K, \leap{\K})$ is an abstract class $\K^\ast = (K^\ast, \leap{\K^\ast})$ such that $K^\ast \subseteq K$ and for $M, N \in K^\ast$, $M \leap{\K^\ast} N$ implies $M \lea N$.
\end{defin}

The definition of a skeleton is due to the second author \cite[5.3]{indep-aec-apal}. We have further weakened the requirement on chains appearing there (although the real ``spirit'' of the definition remains!). This is useful in order to do study skeletons in a purely finitary way (i.e. without worrying about existence of colimits/unions of chains).

\begin{defin}\LABEL{skel-def}\myindex{skeleton of an abstract class}
  A \emph{skeleton} of an abstract class $\K$ is a sub-abstract class $\K^\ast$ of $\K$ such that:

  \begin{enumerate}
  \item[(A)]\LABEL{skel-1} For any $M \in \K$, there exists $N \in \K^\ast$ with $M \lea N$.
  \item[(B)]\LABEL{skel-2} For any $M, N \in \K^\ast$ with $M \lea N$, there exists $N' \in \K^\ast$ such that $M \leap{\K^\ast} N'$ and $N \leap{\K^\ast} N'$.
  \end{enumerate}
\end{defin}

Examples of skeletons will be given later (see Fact \ref{tameness-ap}). A simple example the reader can keep in mind is the class of $\aleph_0$-saturated models of a first-order theory, ordered either by elementary substructure or by being ``universal over''.

The following apparent strengthening of the definition of a skeleton will be useful.

\begin{lem}\LABEL{skel-order-lem}
  Let $\K^\ast$ be a skeleton of $\K$. Let $M \in \K$, $n < \omega$, and let $\seq{M_i : i < n}$ be a (not necessarily increasing) sequence of elements of $\K^\ast$ such that $M_i \lea M$ for all $i < n$. Then there exists $N \in \K^\ast$ such that $M \lea N$ and $M_i \leap{\K^\ast} N$ for all $i < n$.
\end{lem}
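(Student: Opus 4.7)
The plan is to induct on $n$. For the base case $n = 0$, there are no models $M_i$ to accommodate, so we simply apply clause (\ref{skel-1}) of Definition \ref{skel-def} to $M$ to produce some $N \in \K^\ast$ with $M \lea N$.

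For the inductive step, suppose we have constructed $N \in \K^\ast$ with $M \lea N$ and $M_j \leap{\K^\ast} N$ for all $j < n$, and we wish to incorporate $M_n$. Since $M_n \lea M \lea N$ and both $M_n$ and $N$ lie in $\K^\ast$, we have $M_n \lea N$, so clause (\ref{skel-2}) applies to the pair $(M_n, N)$ and yields some $N' \in \K^\ast$ with $M_n \leap{\K^\ast} N'$ and $N \leap{\K^\ast} N'$. Because $\K^\ast$ is a sub-abstract class of $\K$, the relation $N \leap{\K^\ast} N'$ entails $N \lea N'$, whence $M \lea N'$ by transitivity of $\lea$. Moreover, each of the previously arranged relations $M_j \leap{\K^\ast} N$ together with $N \leap{\K^\ast} N'$ gives $M_j \leap{\K^\ast} N'$ by transitivity of the partial order $\leap{\K^\ast}$. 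Thus $N'$ witnesses the desired conclusion at stage $n + 1$.

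There is no real obstacle: the only point requiring a moment's thought is that extending $N$ to $N'$ via clause (\ref{skel-2}) preserves the $\leap{\K^\ast}$-relationships with the earlier $M_j$'s, which is immediate from transitivity. The statement is essentially a finite iteration of the skeleton axiom, with the initial call to (\ref{skel-1}) used just to move from the arbitrary $M \in \K$ into the subclass $\K^\ast$ where (\ref{skel-2}) becomes applicable.
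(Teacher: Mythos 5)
Your proof is correct and follows essentially the same route as the paper: induction on $n$, with the base case handled by clause (\ref{skel-1}) and the inductive step by applying clause (\ref{skel-2}) to the pair $(M_n, N)$ and concluding by transitivity of $\leap{\K^\ast}$ (and of $\lea$ for $M \lea N'$). No issues.
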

\begin{proof}
  We work by induction on $n$. If $n = 0$, use (\ref{skel-1}) in the definition of a skeleton to find $N \in \K^\ast$ with $M \lea N$. Assume now that $n = m + 1$. By the induction hypothesis, let $N_0 \in \K^\ast$ be such that $M \lea N_0$ and $M_i \leap{\K^\ast} N_0$ for all $i < m$. Now apply \ref{skel-def}(B) in the definition of a skeleton, with $M, N$ there standing for $M_m, N_0$ here. We get $N \in \K^\ast$ such that $N_m \leap{\K^\ast} N$ and $N_0 \leap{\K^\ast} N$. By transitivity of $\leap{\K^\ast}$, we have that $M_i \leap{\K^\ast} N$ for all $i < m$, so $N$ is as desired.
\end{proof}

We now give the definition of several properties that an abstract class may have. In particular, it could be a fragmented AEC, or even just an AEC.

\begin{defin}\LABEL{r5}
  Let $\K$ be an abstract class.
  
  \begin{enumerate}
  \item Let $\LS (\K)$ be the least cardinal $\lambda \ge |\tau (\K)| + \aleph_0$ such that for any $M \in \K$ and any $A \subseteq |M|$, there is $M_0 \in \K$ with $M_0 \lea M$, $A \subseteq |M_0|$, and $\|M_0\| \le |A| + \lambda$. When such a $\lambda$ does not exist, we write $\LS (\K) = \infty$.\myindex{$\LS (\K)$}\myindex{L{\"o}wenheim-Skolem-Tarski number}
  
  \item Let $[\lambda_1, \lambda_2)$ be an interval of cardinals and let $\delta$ be a limit ordinal. We say that $\K$ is \emph{$([\lambda_1, \lambda_2), \delta)$-continuous} if whenever $\seq{M_i \colon i < \delta}$ is an increasing chain in $\K_{[\lambda_1, \lambda_2)}$, then $M_\delta \coloneqq  \bigcup_{i < \delta} M_i$ is such that $M_\delta \in \K$ and $M_0 \lea M_\delta$. We say that $\K$ is \emph{$[\lambda_1, \lambda_2)$-continuous} if it is $([\lambda_1, \lambda_2), \delta)$-continuous for all limit ordinals $\delta,$ (note that possibly $M_{\delta} \in \K_{\lambda_{2}}$). We say that $\K$ is \emph{continuous} if it is $\dom (\K)$-continuous defined similarly\footnote{Pedantically, $\dom(\K)$ may not be an interval.}.\myindex{continuous abstract class} Note we do \emph{not} require that the union is a \emph{least} upper bound: this is called smoothness later. 
            
  \item We say that $\K$ is \emph{coherent} if whenever $M_0, M_1, M_2 \in \K$, if $|M_0| \subseteq |M_1| \subseteq |M_2|$, $M_0 \lea M_2$, and $M_1 \lea M_2$, then $M_0 \lea M_1$\myindex{coherent abstract class} (this is Ax IV of AEC).
  
  \item $\K$ is a \emph{fragmented abstract elementary class} (AEC) if it satisfies:\myindex{fragmented abstract elementary class}:

    \begin{enumerate}
    \item (Coherence) $\K$ is coherent.
    \item (L{\"o}wenheim-Skolem-Tarski axiom) $\LS (\K) < \infty$.\myindex{L{\"o}wenheim-Skolem-Tarski axiom}
    \item (Restricted chain axioms) Let $\seq{M_i : i < \delta}$ be an increasing chain in $\K$. Let $M_\delta \coloneqq  \bigcup_{i < \delta} M_i$.\myindex{restricted chain axioms}\myindex{chain axioms}
    \begin{enumerate}
    \item \emph{If} $\|M_i\| = \|M_\delta\|$ for some $i < \delta$, then:
      \begin{enumerate}
        \item $M_\delta \in \K$.
        \item $M_0 \lea M_\delta$.
      \end{enumerate}
    \item (Smoothness) If $N \in \K$ is such that $M_i \lea N$ for all $i < \delta$, then $M_\delta \in \K$ and $M_{\delta} \lea N$.\myindex{smoothness axiom}
    \end{enumerate}
    \end{enumerate}
  \item 
  \begin{enumerate}
      \item[(a)] $\K$ is an \emph{abstract elementary class (AEC)} if it is a continuous fragmented AEC.\myindex{abstract elementary class}\index{AEC|see {abstract elementary class}}
      
      \item[(b)] Let AAEC (almost AEC)\myindex{AAEC} be defined similarly omitting the demand on $\LS(\K)$ exists. 
      
      \item[(c)] We say the abstract $\K$ has $\mu$-\emph{amalgamation}\myindex{$\mu$-amalgamation} \underline{when} if $M_{0} \leq_{\K} M_{\ell}$ for $\ell = 1, 2$ are from $\K_{\mu}$ then there are $N, f_{1}, f_{2}$ such that $M_{0} \leq_{\K} N \in \K_{\mu}$ and $f_{\ell}$ is a $\leq_{\K}$-embedding of $M_{\ell}$ into $N$ over $M_{0}$ for $\ell = 1, 2.$
      
      \item[(d)] We say $\K$ has \emph{local amalgamation}\myindex{local amalgamation} \underline{when} for every $\mu$ it has $\mu$-amalgamation. 
      
      \item[(e)]  We say $\K$ has \emph{amalgamation}\myindex{amalgamation} \underline{when} if $M_{0} \leq_{\K} M_{\ell}$ for $\ell = 1, 2$ are from $\K_{\mu},$ then there are $N, f_{1}, f_{2}$ such that $M_{0} \leq_{\K} N \in \K_{\ell}$ and $f_{\ell}$ is a $\leq_{\K}$-embedding of $M_{\ell}$ into $N$ over $M_{0}$ for $\ell = 1, 2.$ 
  \end{enumerate}

  \item We define also the notion of a \emph{[very] weak [fragmented] AEC}, where ``weak'' means that in addition the smoothness axiom may not hold and ``very weak'' means that the coherence axiom may also not hold.\myindex{weak AEC}
  
  \item We say $\s$ is a \emph{semi fragmented} AEC \myindex{semi fragmented AEC} \underline{when} in part (4) we replace clause (c)(ii) by: 
  
  \begin{enumerate}
      \item[(ii)'] (Weak smoothness) If $N \in \K$ is such that $M_{i} \leq_{\K} N$ for all $i < \delta$ and $M_{\delta} \in \K,$ then $M_{\delta} \leq_{\K} N$ (used in \ref{r7}).      
  \end{enumerate}
  \end{enumerate}
\end{defin}

Compared to AECs, fragmented AECs are not required to be closed under unions of ``long'' chains, i.e. those whose union is of bigger cardinality than the pieces. In some sense, they have to be studied ``cardinal by cardinal'', at least more so than AECs, and this is why they called ``fragmented''. 

The following are examples of fragmented AECs. The generalization of the first one to AECs will play an important role in the present paper. The second will not be studied here, but we give it as an additional motivation.

\begin{example}\LABEL{r7}
  Let $T$ be a superstable first-order theory and let $\K$ be its class of saturated models, ordered by elementary substructure. Then $\K$ is a semi fragmented AEC with $\LS (\K) \le 2^{|T|}$. It is \emph{not} an AEC because (for $T$ countable say) the union of an $\aleph_{2}$-chain of saturated models of cardinality $\aleph_{1}$ does not have to be saturated (it will only be $\aleph_{1}$-saturated). 
\end{example}

\begin{example}\LABEL{r9}
  Let $\phi$ be a complete $\Ll_{\omega_1, \omega} (Q)$-sentence ($Q$ is the quantifier ``there exists uncountably many'') and let $\Phi$ be a countable fragment of $\Ll_{\omega_1, \omega} (Q)$ containing $\phi$. Assume that for every  $\psi (\bx) \in \Phi$, there exists a predicate $R_\psi (\bx)$ such that $\phi \models \forall \overline{x}( \psi(\overline{x}) \leftrightarrow R_{\psi}(\overline{x})).$ Let $K$ be the class of models $M$ of $\Ll_{\omega, \omega}$ consequences of $\phi$ which also is a model $\psi$ if $M$ uncountable and if $\psi(\bar{x}) = \bigvee_{n < \omega} \psi_{n}(\bar{x})$ belongs to $\Phi,$ then $M$ omit the type $\{ \neg R_{\psi}(\bar{x}) \cap R_{\psi_{n}}: n < \omega \}$. Order it by $M \lea N$ if and only if $M \preceq N$ and for any $\psi (x, \by) \in \Phi$, if $M \models R_{\neg Q x \psi (x, \by)}[\ba]$, then $\psi (M, \ba) = \psi (N, \ba)$. \underline{Then} $\K$ is a fragmented AEC with $\LS (\K) = \aleph_0$. It is \emph{not} an AEC because an $\aleph_{1}$-union $M$ of countable models in $\K$ may fail to be in $\K,$ because maybe $\psi(\bar{y}) \equiv Qx\varphi(x, \bar{y}) \in \Phi$ and $M \models \psi[\bar{b}]$ but $\varphi(M, \bar{b})$ is countable. 
\end{example}

In any AEC, models of big cardinality can be resolved as an increasing union of smaller models. We make this into a definition:

\begin{defin}\LABEL{resolv-def}\myindex{resolvable}
  Let $\K$ be an abstract class and let $M_0, M \in \K$ with $M_0 \lta M$. 
  
  (1) We say that $M$ is \emph{$\delta$-resolvable over $M_0$} if there exists a strictly increasing continuous chain $\seq{N_i : i \le \delta}$ such that:

  \begin{enumerate}
      \item[(A)] $N_0 = M_0$, $N_\delta = M$.
      
      \item[(B)] $\|N_{i}\| = \|M_0\| + |i| < \Vert M \Vert$ for all $i < \delta$ (so necessarily $\delta = \Vert M \Vert,$ hence $\cf \delta = \rm{cf}( \Vert M \Vert)$).
  \end{enumerate}

  1A) We say that $M$ is \emph{resolvable over $M_0$} if it is $\delta$-resolvable over $M_0$ for some limit ordinal $\delta$.
  
  (2) We say that $M$ is \emph{resolvable} if it is resolvable over $M_0$ for every $M_0 \in \K$ with $M_0 \lea M$ and $\|M_0\| < \|M\|$.
  
  (3) We say that $\K$ is \emph{resolvable} if any $M \in \K$ is resolvable.
  
  (4) $\K$ is $< \lambda$-resolvable when every $M \in \K_{\mu}$ is $\mu$-resolvable whenever $\LS(\K) < \mu < \lambda.$ 
\end{defin}

\begin{remark}\LABEL{r10}\ 
    
    (1) Why we have ``semi fragmented AEC'' in \ref{r5}(7) not using only ``fragmented AEC''? 
    
    As then Example \ref{r7} fail as $\K$ is not a fragmented AEC as clause (c)(ii) of Definition \ref{r5}(4) in general fail. 
    
    (2) We can change Definition \ref{resolv-def}(1) replacing clause (B) by: 
    
    \begin{enumerate}
        \item[(B)$'$] $\Vert N_{i} \Vert = \Vert M_{0} \Vert + \vert i \vert$ for all $i \leq \delta.$ \myindex{reasonable$^{*}$}.
    \end{enumerate}
    
    Is it interesting? The difference is that we may allow $\Vert M_{0} \Vert = \Vert M \Vert,$ anyhow then we can use any limit $\delta < \Vert M \Vert^{+}.$ 
    
    (3) Many times we can use ``semi fragmented AEC'' if we add ``$\K_{\s}$ is resolvable'', and sometimes just if $p \in \mathscr{S}_{\s}(M)$ then $p$ does not fork over $N$ for some $N \leq_{\K} M$ of cardinality $\LS(\K).$   
    
    (4) Clearly for a fragmented AEC $\K,$ it is $\mu$-resolvable for every $\mu > \LS(\K).$ So we may wonder, for a semi fragmented $\K,$ is every $M \in \K$ resolvable? Let us try to prove ``every $M \in \K$ is reasonable'' assuming $\mu < \LS(\K) \Rightarrow \K_{\mu} = \emptyset.$ So assume $M_{0} \leq_{\K} M, \Vert M_{0} \Vert < \Vert M \Vert.$ Fix $\mu \geq \LS(\K).$ We try to prove by induction on $\lambda > \mu$ that $M$ is resolvable \underline{when} $\Vert M \Vert = \lambda, \, M_{0} \leq_{\K} M $ and $ \Vert M_{0} \Vert \geq \mu.$ We have partial success: we are stuck when $\lambda$ is (probably weakly) inaccessible. But then we \blueq{give} a counterexample.  
    
    \underline{Case 1}: $\lambda = \chi^{+}.$
    
    Then by induction hypothesis, without loss of generality $\Vert M_{0} \Vert = \chi.$
    
    Let $\{ a_{\alpha}: \alpha < \lambda \}$ list $\vert M \vert$ with $\vert M_{0} \vert = \{ a_{\alpha}: \alpha < \chi \}.$ By induction on $\alpha \in [\chi, \lambda)$ choose $N_{\alpha} \in K_{\chi}$ such that $\langle N_{\beta}: \beta \leq \alpha \rangle$ is $\leq_{\K}$-increasing continuous, $N_{0} = M_{0}$ and $\{ a_{\beta}: \beta < \alpha \} \subseteq N_{\alpha}.$ For $\alpha = \chi,$ let $N_{\alpha} = M_{0},$ for $\alpha = \beta +1$ use $A = \vert N_{\beta} \vert \cup \{ a_{\beta} \}$ recalling $\vert \alpha \vert = \chi + \vert \alpha \vert \geq \LS(\K)$ and $\alpha < \Vert M \Vert.$ So by ``$\LS(\K) \leq \chi$'' there is $N \leq_{\K} M$ of cardinality $\vert \chi \vert$ which included $N_{\beta} \vert \cup \{ a_{\beta} \}.$ As $\K$ is coherent (see Definition \ref{r5}(3)) and $N_{\beta} \leq_{\K} M, \vert N_{\beta} \vert \leq \vert N_{\alpha} \vert$ we have $N_{\beta} \leq_{\K} N_{\alpha}$ and similarly (or by transitivity) $i < \alpha \Rightarrow N_{i} \leq_{\K} N_{\alpha}.$ Lastly for limit $\alpha,$ let $N_{\alpha} = \bigcup \{ N_{\beta}: \beta < \alpha \}$ so as $\K$ satisfies clause (c)(ii) of the Definition \ref{r5}, we have $N_{0} \leq_{\K} N_{\alpha} \in \K_{\chi}$ and similarly $i < \alpha \Rightarrow N_{i} \leq_{\K} N_{\alpha}.$ Finally $N_{\alpha} \leq_{\K} M$ by Definition \ref{r5}(7)(ii'). 
    
    Clearly we are done for this case. 
    
    \underline{Case 2}: $\lambda$ is singular.
    
    Let $\kappa = \cf \lambda < \lambda.$ Let $\overline{\mu} = \langle \mu_{i}: i < \kappa \rangle$ be an increasing continuous sequence of cardinality with limit $\lambda$ such that $\mu_{0} = \mu.$ Let $\langle A_{i}: i < \kappa \rangle$ be $\subseteq$-increasing continuous sequence of subsets of $M$ with union $\vert M \vert$ such that $\vert A_{i} \vert = \mu_{i}.$ 
    
    We choose $\langle N_{n, i} f_{n, i}: i < \kappa \rangle$ by induction on $n$ such that: 
    
    \begin{enumerate}
        \item[$(*)_{1}$ (a)] $N_{n, i} \leq_{\K} M$ has cardinality $\mu_{i},$
        
        \item[(b)] $f_{n, i}$ is a $1$-to-$1$ function from $\mu_{i}$ onto $N_{n, i},$
        
        \item[(c)] $N_{n, i}$ include $A_{n, i} = \bigcup \{f_{m, j}''(M_{i} \cap \mu_{j}): m < n, j < \kappa \} \cup A_{i}.$
    \end{enumerate}
    
    Why can we carry the induction? Assuming we arrive to $i, A_{n, i}$ is a subset of $M$ of cardinality $\mu_{i}$ hence (as $\LS(\K) \leq \mu_{0} \leq \mu_{i}$) there is $N_{m, i}$ as required in $(*)_{1}(a), (c).$ Then choose $f_{n, i}$ as in $(*)_{1}(b).$ 
    
    Next, 
    
    $(*)$ let $N_{i} = \bigcup \{ N_{n, i}: n < \omega \}.$
    
    Now, $\langle N_{n, i}: n < \omega \rangle$ is $\subseteq$-increasing and $N_{n, i} \leq_{\K} M$ by $(\ast)_{1}$(a) and $\K$ being coherent hence this sequence is $\leq_{\K}$-increasing. As $\Vert N_{n, i} \Vert = \mu_{i},$ we have $N_{n, i} \leq_{\K} N_{i} \in \K$ for $n < \omega;$ hence by Definition \ref{r5}(7)(c)(ii') also $N_{i} \leq_{\K} M.$ 
    
    Also  if $i < j,$ then $N_{n, i} \subseteq N_{n, j}$ hence $N_{i} = \bigcup_{n \in \omega} N_{n, i} \subseteq \bigcup_{n \in \omega} N_{n, j} \subseteq N_{j},$ so as $N_{i} \leq_{\K} M, \, N_{j} \leq_{\K} N.$ By coherence we have $N_{i} \leq_{\K} N_{j}.$ As $A_{i} \subseteq N_{i};$ clearly $\bigcup \{ \vert N_{i} \vert: i < \kappa \} = \vert M \vert,$ so together $M = \bigcup_{i < \kappa} N_{i}.$ Lastly let $j < \kappa$ be a limit ordinal we should prove that $\bigcup_{i < j} N_{i} = N_{j},$ clearly $\bigcup_{i < j} N_{i} \subseteq N_{j}.$ For the other inclusion, $N_{j} = \bigcup \{ N_{n, j}: n < \omega \} = \bigcup \{ f_{n, j}(\mu_{j}): n < \omega \} = \bigcup \{ f_{n, i}(\mu_{i}): n < \omega, i < j \} \subseteq \bigcup \{ N_{n +1, i}: n < \omega, i < j \} \subseteq \bigcup \{ N_{i}: i < j \}.$
    
    \underline{Case 3}: $\lambda > \mu$ is a limit regular cardinal. 
    
    Here \underline{not clear} what to do, so we shall to try to build a counterexample in part ($4$).
    
    (4) Let $S$ be a set or class of cardinals. We define $\K = \K_{S}$ as follow:
    
    \begin{enumerate}
        \item[$(*)_{1}$ (a)] $K$ is the class of linear orders not isomorphic to  $(\mu, <)$ for any $\mu \in S,$
        
        \item[(b)] $M \leq_{\K} N$ \underline{iff} $N \subseteq N.$ 
    \end{enumerate}
    
    It is easy to verify (noting that if $M \in \K, \Vert M \Vert = \mu, M \ncong (\mu, <)$ then no linear order $N \supseteq M$ is isomorphic to $(\mu, <)$ because either $M$ is not well ordered or it is isomorphic to $(\alpha, <)$ for some $\alpha \in [\mu +1, \mu^{+})$). Hence easily. 
    
    \begin{enumerate}
        \item[$(*)_{2}$] $\K$ is a fragmented AEC with $\LS(\K) = \aleph_{0}.$  
    \end{enumerate}
    
    However, 
    
    \begin{enumerate}
        \item[$(*)_{3}$] if $\lambda$ is a uncountable regular limit cardinal and $\lambda \notin S, \lambda \cap S$ is \underline{stationary}, then $M = (\lambda, <) \in \K$ is not resolvable.
    \end{enumerate}    
    
    (5) Another example: 
    
    For a cardinal $\lambda$ we define $\K = \K^{[\lambda]}$ be: 
    
    \begin{enumerate}
        \item[$(*)$] 
        
        \begin{enumerate}
            \item[(a)] $M \in K$  \underline{iff} $M$ is a linear order, well ordered of order type in $[\lambda, \lambda^{+}],$
            
            \item[(b)] $M_{1} \leq_{\K} M_{2}$ \underline{iff} $M_{1}$ is an initial segment of $M_{2}.$ 
        \end{enumerate}
    \end{enumerate}
\end{remark}

It is known that any class of structures is contained in a smallest AEC. Indeed, by \cite[I.2.19]{shelahaecbook}:

\begin{fact}\LABEL{aec-intersec}
  Let $\{\K_i : i \in I\}$ be a non-empty collection of AECs, all in the same vocabulary $\tau$. Then $\bigcap_{i \in I} \K_i$ (defined as $(\bigcap_{i \in I} |\K_i|, \bigcap_{i \in I} \leap{\K_i})$) is an AEC with $\LS(\K) \leq \sup \{ \LS(\K_{i}): i \in I \} + \vert I \vert.$
\end{fact}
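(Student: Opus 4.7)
The plan is to verify each of the AEC axioms for $\K := \bigcap_{i \in I} \K_i$ (equipped with $\lea := \bigcap_{i \in I} \leap{\K_i}$) one at a time. Most of them transfer for free from the factors: closure under isomorphism, the partial-order conditions, the requirement that $\lea$ extend $\tau$-substructure, coherence, and both chain axioms are each universally quantified over $i \in I$. For example, if $\seq{M_j : j < \delta}$ is $\lea$-increasing then it is $\leap{\K_i}$-increasing for every $i$, whence $M_\delta := \bigcup_{j < \delta} M_j$ belongs to each $\K_i$, sits $\leap{\K_i}$-above each $M_j$ for every $i$, and satisfies the smoothness clause inside each $\K_i$ --- all of which immediately give the corresponding statements for $\K$.

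The only substantive step is the Löwenheim-Skolem-Tarski axiom, where the difficulty is that distinct choices of a small $\leap{\K_i}$-substructure for different $i$ need not coalesce into a single small substructure belonging to the intersection. I would set $\lambda := |I| + \sup_{i \in I} \LS (\K_i)$ (the collection $I$ being a set) and, given $M \in \K$ and $A \subseteq |M|$ with $|A| = \mu$, build by recursion on $n < \omega$ an increasing sequence $\seq{A_n : n < \omega}$ of subsets of $|M|$ together with models $M_n^i$ for each $i \in I$ satisfying $A_0 = A$, $A_n \subseteq |M_n^i|$, $M_n^i \leap{\K_i} M$, $\|M_n^i\| \le \mu + \lambda$, and $A_{n+1} := \bigcup_{i \in I} |M_n^i|$. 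The size bound $|A_{n+1}| \le |I| \cdot (\mu + \sup_i \LS (\K_i)) \le \mu + \lambda$ is precisely why one adds the factor $|I|$ into the definition of $\lambda$.

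Let $A_\omega := \bigcup_{n < \omega} A_n$. For each fixed $i$, the inclusions $|M_n^i| \subseteq A_{n+1} \subseteq |M_{n+1}^i|$ combined with $M_n^i, M_{n+1}^i \leap{\K_i} M$ and coherence in $\K_i$ yield that $\seq{M_n^i : n < \omega}$ is $\leap{\K_i}$-increasing, so its union $N^i$ satisfies $N^i \leap{\K_i} M$ by the chain axiom in $\K_i$. Now the universe of $N^i$ equals $A_\omega$ regardless of $i$, so all the $N^i$ are literally the same $\tau$-substructure $N$ of $M$. This single $N$ then lies in every $\K_i$ and is $\leap{\K_i} M$ for every $i$, hence $N \in \K$, $A \subseteq |N|$, $N \lea M$, and $\|N\| \le \mu + \lambda$, establishing $\LS (\K) \le \lambda < \infty$.

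The main obstacle is exactly this LST construction: diagonalizing an $\omega$-length closure through every factor $\K_i$ simultaneously is what forces the $|I|$ term to appear in the Löwenheim-Skolem-Tarski bound, and it is the only step that is not pure ``intersection is preserved under universal statements''. Everything else is bookkeeping.
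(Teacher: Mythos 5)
Your proof is correct, and it is essentially the standard argument: the paper does not prove this fact but quotes it from the cited reference (Shelah's book, I.2.19), where the proof is the same as yours — every axiom except Löwenheim--Skolem--Tarski passes through the intersection verbatim, and LST is recovered by exactly the $\omega$-length diagonal closure you describe, giving $\LS\bigl(\bigcap_{i \in I} \K_i\bigr) \le |I| + \sup_{i \in I} \LS (\K_i)$, with coherence plus smoothness in each $\K_i$ ensuring the $\omega$-chains cohere and have the same union. One remark: your parenthetical assumption that $I$ is a set is not cosmetic — for a proper-class family the statement can fail (intersecting, over all infinite $\lambda$, the AECs of all sets ordered by ``equality, or inclusion with the smaller set of cardinality at least $\lambda$'' gives the class of all sets ordered by equality, which has no LST number) — but it matches the intended reading of the indexed family $\{\K_i : i \in I\}$.
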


Also, 

\begin{fact}\LABEL{aec-generated}\ 
  
  (1) For any abstract class $\K$, there exists a unique smallest AAEC $\K^\ast$ such that $\K$ is a sub-abstract class of $\K^\ast$.
  
  (2) If $\lambda = \LS(\K)$ \underline{then} $\K^{\ast}$ is an AEC and $\LS(\K^{\ast}) \leq \lambda.$ 
  
  (3) If $\K$ is a set then $\K^{\ast}$ is an AEC when $\LS(\K^{\ast}) \leq \sum(\dom(\K)).$ 
\end{fact}

\begin{proof}\ 
  
  (1) Take the intersection of the collection of all AECs that include $\K$ as a sub-abstract class. Note that this collection is non-empty: it contains the AEC of all $\tau (\K)$-structures, ordered with substructure. Pedantically this is not O.K. as we quantify on classes. As earlier we were not accurate and to help part (2), we give full details.
  
  First we fix  $\chi,$ then we define $(K_{\chi, \alpha}^{\ast},  \leq_{\chi, \alpha}^{\ast})$ by induction on $\alpha \leq (2^{ < \chi})^{+}$ such that: \index{$K_{\chi, \alpha}^{\ast}$}
 
  \begin{enumerate}
      \item[$(*)$ \  (a)] $(K_{\chi, \alpha}^{\ast}, \leq_{\chi, \alpha}^{\ast})$ is an abstract class, except the transitivity of the order is omitted,  
      
      \item[(b)] $K_{\chi, \alpha}$ is a set of $\tau(\K)$-models from $\mathscr{H}(\chi),$
      
      \item[(c)] $\leq_{\chi, \alpha}$ is a two-place relation on $\K_{\chi, \alpha},$
      
      \item[(d)] if $\beta < \alpha,$ then $K_{\chi, \beta} \subseteq K_{\chi, \alpha}$ and $\leq_{\chi, \beta} \, \subseteq \, \leq_{\chi, \alpha},$
      
      \item[(e)] if $M \leq_{\chi, \alpha} N,$ then $M \subseteq N.$
  \end{enumerate}
  
  The induction is: 
  
  \underline{Case 1}: $\alpha = 0.$
  
  $(K_{\chi, \alpha}, \leq_{\chi, \alpha}) = \K \, {\rest} \, \mathscr{H}(\chi).$
  
  \underline{Case 2}: $\alpha$ a limit ordinal.
  
  $K_{\chi, \alpha} = \bigcup \{ K_{\chi, \beta}: \beta < \alpha \}$ and $\leq_{\chi, \alpha} = \bigcup \{ \leq_{\chi, \beta}: \beta < \alpha \}.$
  
  \underline{Case 3}: $\alpha = 3\beta + 1.$
  
  $K_{\chi, \alpha} = \{ M \in \mathscr{H}(\chi):$ for some directed partial order and sequence $\langle M_{s}: s \in I \rangle$ of members of $K_{\chi, 3 \beta}$ such that $s \leq_{I} t \Rightarrow M_{s} \leq_{\chi, 3 \beta} M_{t},$ we have $M = \bigcup_{s \in I} M_{s} \}.$ Also, $\leq_{\chi, \alpha} = \{ (M_{1}, M_{2}) \in \mathscr{H}(\chi):$ for some directed partials orders $I_{1} \subseteq I_{2}$ and sequence $\overline{M}^{\ell} = \langle M_{s}^{\ell}: s \in I_{2} \rangle$ of members of $K_{\chi, 3\beta}$ such that $s \leq_{I_{1}} t \Rightarrow M_{s}^{\ell} \leq_{\chi, 3 \beta} M_{t}^{\ell},$ we have $M_{\ell} = \bigcup_{s \in I_{\ell}} M_{s}$ for $\ell = 1, 2 \}.$
  
  \underline{Case 4}: $\alpha = 3 \beta +2.$ 
  
  $K_{\chi, 3\beta + 2} = K_{\chi, 3\beta +1 }$ and $\leq_{\chi, 3 \beta + 2} \, = \, \leq_{3 \beta + 1} \cup \, \{ (M_{1}, M_{2}) \in \mathscr{H}(\chi): M_{1} \subseteq M_{2}$ are from $K_{\chi, 3 \beta + 1}$ and for some $N \in K_{\chi, 3\beta +2},$ we have $M_{1} \leq_{\chi, 3 \beta +1} N, M_{2} \leq_{\chi, 3 \beta + 1} N \}.$
  
  \underline{Case 5}: $\alpha = 3 \beta + 3.$
  
  $K_{\chi, 3\beta + 3} = K_{\chi, 3 \beta + 2}$ and $\leq_{\chi, 3 \beta + 2} \, = \, \leq_{3 \beta + 1} \cup \, \{ (M_{1}, M_{2}): M_{1} \subseteq M_{2}$ are from $K_{\chi, 3 \beta + 1}$ and for some $N \in K_{\chi, 3\beta + 2},$ we have $M_{1} \leq_{\chi, 3\beta + 2} N \leq_{\chi, 3\beta +2} M_{2} \}.$
  
  Having carried the induction on $\chi,$ 
  
  $(*)$ letting $\K_{\chi} = (K_{\chi, \alpha}, \leq_{\chi, \alpha})$ for $\alpha = (2^{< \chi})^{+}$ clearly $\K_{\chi}$ almost is an AAEC, the only missing point is union which are not in $\K_{\chi}$. 
  
  However, 
  
  $(*)$ if $\chi < \theta, \underline{then}$ $\K_{\chi} \subseteq \K_{\theta}.$ 
  
  So easily $\K^{\ast} = \bigcup \{ \K_{\chi}: \chi$ is a cardinal$\}$ is as required. 
  
  (2) For each $\chi > \LS(\K)$ we can prove by induction on the ordinal $\alpha$ that $\K_{\chi, \alpha}$ satisfies $\LS(\K_{\chi, \alpha}).$ The only non-trivial case is $\alpha = 3 \beta +1,$ so assume $M \in \K_{3 \beta +1} \setminus \K_{3 \beta}$ and let $\langle M_{s}: s \in I \rangle$ be as in case 3, let $\LS(\K) \leq \mu \leq \Vert M \Vert$ and $A \subseteq M$ be of cardinality $\leq \mu;$ without loss of generality $\Vert A \Vert = \mu.$ We can find $J_{0} \subseteq I$ of cardinality $\leq \mu$ such that $A \subseteq \bigcup \{ M_{s}: s \in J_{0} \},$ also we can find $J_{1} \subseteq I$ of cardinality $\vert J_{0} \vert + \aleph_{0} \leq \mu$ such that $J_{1} \subseteq J$ and $J_{1}$ is directed. Let $J \subseteq J_{1}$ be cofinal in $J_{1}$ and be well founded. Next let $\langle s_{\varepsilon}: \varepsilon < \varepsilon_{\ast} \rangle$ list $J$ such that $s_{\varepsilon} <_{I} s_{\zeta} \Rightarrow \varepsilon < \zeta.$ Now choose $\bar{N_{s_{\varepsilon}}}$  by induction on $\alpha < \alpha_{\ast}$ such that: 
  
  \begin{enumerate}
      \item[$(\ast)$ (a)] $N_{s_{\varepsilon}} \leq_{\K} M_{s_{\varepsilon}},$
      
      \item[(b)] $N_{s_{\varepsilon}}$ has cardinality $\leq \mu,$
      
      \item[(c)] $N_{s_{\varepsilon}}$ includes $\bigcup \{N_{s_{\varepsilon}} \cup (A \cap M_{s_{\varepsilon}}): \zeta < \varepsilon$ and $s_{\zeta} <_{I} s_{\varepsilon}\}.$
  \end{enumerate}
  
  Easy to carry the induction, $\langle N_{s_{\varepsilon}} \colon \varepsilon < \varepsilon_{\ast} \rangle$ i.e., $\langle N_{s} \colon s \in J \rangle$ is $\leq_{\K}$-increasing in $(\K_{\chi, 3 \beta})_{\mu}$ so by its definition $N = \bigcup \{ N_{s_{\varepsilon}}: \varepsilon < \varepsilon_{\ast} \}$ belongs to $\K_{\chi, \alpha}.$ But why $N \leq_{\K_{\chi, \alpha}} M?$ use the definition of $\leq_{\K_{\chi, \alpha}}$ with $J, I$ here standing for $I_{1}, I_{2}$ there. 
  
  (3) Follows by (2).    
\end{proof}

\begin{defin}\LABEL{r15}\myindex{AEC generated by}\index{generated by|see {AEC generated by}}
  \ 
  
  (1) Let $\K$ be an abstract class. We call the AAEC $\K^\ast$ given by Fact \ref{aec-generated}, the AAEC \emph{generated by $\K$}.
  
  (2) If $\K^{\ast}$ is an AEC then we may say $\K^{\ast}$ is the AEC generated by $\K.$\myindex{AEC generated by $\K$} 
\end{defin}

AECs are uniquely determined by their restrictions of size $\lambda$:

\begin{fact}[{\cite[II.1.23]{shelahaecbook}}]\LABEL{canon-aec}
  Let $\K^1$ and $\K^2$ be AECs with $\lambda \coloneqq  \LS (\K^1) = \LS (\K^2)$. If $\K_{\le \lambda}^1 = \K_{\le \lambda}^2$, then $\K^1 = \K^2$.
\end{fact}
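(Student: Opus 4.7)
By symmetry it suffices to show that $\K^1$ is a sub-abstract class of $\K^2$: every model of $\K^1$ belongs to $|\K^2|$, and $M \leap{\K^1} N$ implies $M \leap{\K^2} N$ whenever $M, N \in \K^1$. My plan is to prove both inclusions together by transfinite induction on a cardinal $\mu \ge \lambda$, where the induction hypothesis at stage $\mu$ asserts that (a) every $M \in \K^1_{\le \mu}$ lies in $\K^2$, and (b) $M \leap{\K^1} N$ implies $M \leap{\K^2} N$ whenever $\|M\|, \|N\| \le \mu$. The base case $\mu = \lambda$ is exactly the assumption $\K^1_{\le \lambda} = \K^2_{\le \lambda}$, so the real work is in the inductive step at $\mu > \lambda$, which I would split into the membership step and two sub-cases for the ordering.

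For membership, given $M \in \K^1_\mu$, I would use $\LS (\K^1) = \lambda$ to build an increasing continuous resolution $\seq{M_\alpha : \alpha \le \mu}$ in $\K^1$ with $M_\mu = M$, $\|M_\alpha\| = \lambda + |\alpha| < \mu$, and $M_\alpha \leap{\K^1} M$ for every $\alpha < \mu$. The induction hypothesis applied at cardinals strictly below $\mu$ puts each $M_\alpha$ in $\K^2$ and makes the chain $\seq{M_\alpha : \alpha < \mu}$ $\leap{\K^2}$-increasing continuous. The chain axioms of the AEC $\K^2$ then yield $M = \bigcup_{\alpha < \mu} M_\alpha \in \K^2$ together with $M_\alpha \leap{\K^2} M$ for all $\alpha < \mu$.

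For the ordering step, assume $M \leap{\K^1} N$ with $\|N\| = \mu$. If $\|M\| < \mu$, I would resolve $N$ in $\K^1$ as $\seq{N_\alpha : \alpha \le \mu}$ with $|M| \subseteq |N_0|$, $\|N_\alpha\| < \mu$, and $N_\alpha \leap{\K^1} N$; coherence in $\K^1$ applied to $M \leap{\K^1} N$ and $N_0 \leap{\K^1} N$ gives $M \leap{\K^1} N_0$. The induction hypothesis below $\mu$ transfers $M \leap{\K^1} N_0$ and the $\K^1$-chain into $\K^2$, so the chain axiom of $\K^2$ yields $N_0 \leap{\K^2} N$ and transitivity finishes this sub-case. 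If $\|M\| = \mu$, I resolve $M$ as in the membership step; each $M_\alpha \leap{\K^1} M \leap{\K^1} N$, so the strict-inequality sub-case just handled gives $M_\alpha \leap{\K^2} N$. Smoothness of $\K^2$ applied to the $\leap{\K^2}$-increasing chain $\seq{M_\alpha : \alpha \le \mu}$ with bound $N$ then yields $M \leap{\K^2} N$.

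I expect the main obstacle to be the equal-cardinality sub-case $\|M\| = \|N\| = \mu$, where no proper resolution of one model over the other is available. The plan above sidesteps a direct simultaneous back-and-forth by first completing the membership step and the strict-inequality sub-case at $\mu$, so that smoothness of $\K^2$ bridges the gap from a one-sided resolution of $M$. Beyond this ordering-of-sub-steps, the argument is routine bookkeeping with the Löwenheim-Skolem-Tarski axiom, coherence, and the chain axioms of $\K^1$ and $\K^2$.
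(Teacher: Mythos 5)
The paper does not prove this statement at all: it is quoted as a known fact from \cite[II.1.23]{shelahaecbook}, so there is no in-paper argument to compare against. Your proof is correct and is essentially the standard argument for this result — induction on the cardinality $\mu$, using Löwenheim-Skolem-Tarski resolutions together with coherence, the union (chain) axiom, and smoothness, with the equal-cardinality case $\|M\|=\|N\|=\mu$ handled exactly as usual by resolving $M$ and invoking smoothness of $\K^2$ after the strict-inequality sub-case.
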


We now look into how much of this uniqueness is carried over in fragmented AECs. We will specifically look at totally categorical fragmented AECs (this is not such a strong assumption, since classes of saturated models are examples).

First, only part of the fragmented AEC suffices in order to specify the AEC it generates. 

\begin{lem}\LABEL{generated-fragmented}
  Let $\K$ be a fragmented AEC with $\K_{<\LS (\K)} = \emptyset$. Then the AEC generated by $\K$ is the same as the AEC generated by $\K_{\LS (\K)}$.
\end{lem}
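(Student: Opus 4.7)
The plan is to let $\K^\ast$ denote the AEC generated by $\K$ and $\K^{\ast\ast}$ denote the AEC generated by $\K_{\LS(\K)}$. The inclusion of $\K^{\ast\ast}$ as a sub-abstract class of $\K^{\ast}$ is immediate, since $\K^{\ast}$ is an AEC containing $\K_{\LS (\K)}$ (via $\K_{\LS(\K)} \subseteq \K \subseteq \K^{\ast}$) and $\K^{\ast\ast}$ is the smallest such. For the reverse inclusion, minimality of $\K^{\ast}$ reduces the problem to verifying that $\K$ itself is a sub-abstract class of $\K^{\ast\ast}$; that is, every $M \in \K$ lies in $\K^{\ast\ast}$ and $M \lea N$ in $\K$ implies $M \leap{\K^{\ast\ast}} N$.

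I would prove this by induction on $\mu \geq \LS(\K)$, establishing the statement for all $M, N \in \K_{\leq \mu}$. The base case $\mu = \LS (\K)$ is immediate since $\K_{\LS(\K)}$ is by construction a sub-abstract class of $\K^{\ast\ast}$. The easier inductive case is $\|M\| < \|N\| = \mu$: the resolvability of $\K_{\geq \LS (\K)}$ (the remark following Definition \ref{resolv-def}) provides a strictly increasing continuous chain $\seq{N_i : i \leq \delta}$ in $\K$ with $N_0 = M$, $N_\delta = N$, and $\|N_i\| < \mu$ for $i < \delta$. The inductive hypothesis places every $N_i$ in $\K^{\ast\ast}$ with the chain increasing in $\K^{\ast\ast}$ as well, so the unrestricted chain axiom of the AEC $\K^{\ast\ast}$ gives $M = N_0 \leap{\K^{\ast\ast}} \bigcup_{i < \delta} N_i = N$.

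The main obstacle is the case $\|M\| = \|N\| = \mu > \LS (\K)$, in which we cannot directly resolve $N$ over $M$. My plan is to build a bi-resolution: simultaneous strictly increasing continuous chains $\seq{M_i : i \leq \mu}$ and $\seq{N_i : i \leq \mu}$ in $\K$ satisfying $M_i \lea M$, $N_i \lea N$, $M_i \lea N_i$, $\|M_i\| = \|N_i\| = \LS(\K) + |i|$ for $i < \mu$, $M_\mu = M$, and $N_\mu = N$. At successor stages, enumerations of $|M|$ and $|N|$ together with the L\"owenheim--Skolem--Tarski axiom of $\K$ (applied first inside $M$ and then inside $N$, seeding the latter with $|M_{i+1}|$) produce candidate models, and coherence of $\K$ then supplies the required $\lea$ relations. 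The delicate point is the limit stages $i < \mu$: the restricted chain axiom of the fragmented AEC $\K$ may fail since $|i|$ can strictly exceed all earlier cardinalities $\|M_j\|$, but the smoothness axiom of $\K$ still applies with $M$, $N$, and $N_i$ acting as bounding models, placing $M_i$ and $N_i$ in $\K$ and providing $M_i \lea M$, $N_i \lea N$, and $M_i \lea N_i$.

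With the bi-resolution in hand, each $M_i, N_i$ for $i < \mu$ has size strictly below $\mu$, so the inductive hypothesis places them all in $\K^{\ast\ast}$ with $M_i \leap{\K^{\ast\ast}} N_i$ and the two sequences increasing in $\K^{\ast\ast}$ as well. The unrestricted chain axiom of $\K^{\ast\ast}$ applied to $\seq{N_i : i < \mu}$ gives $N \in \K^{\ast\ast}$ and $N_i \leap{\K^{\ast\ast}} N$ for all $i < \mu$. A final application of smoothness of $\K^{\ast\ast}$ to the chain $\seq{M_i : i < \mu}$ with bounding model $N$ (which dominates every $M_i$ since $M_i \leap{\K^{\ast\ast}} N_i \leap{\K^{\ast\ast}} N$) then yields $M = \bigcup_{i < \mu} M_i \leap{\K^{\ast\ast}} N$, closing the induction and the proof.
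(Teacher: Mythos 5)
Your proof is correct, but it follows a genuinely different route from the paper. The paper's proof is a two-line reduction to canonicity of AECs: it checks that both generated AECs have Löwenheim--Skolem--Tarski number $\lambda := \LS(\K)$ and that their restrictions to models of size $\le \lambda$ coincide, and then invokes Fact \ref{canon-aec} (an AEC is determined by its restriction to models of size at most its LST number). You instead argue by double inclusion via minimality of the generated AEC, the substantive direction being that $\K$ itself is a sub-abstract class of the AEC $\K^{\ast\ast}$ generated by $\K_{\LS(\K)}$; you prove this by induction on cardinality, using resolutions (and, in the equal-cardinality case, a simultaneous bi-resolution built from the LST axiom, coherence, and smoothness of the fragmented AEC) together with the unrestricted chain and smoothness axioms of $\K^{\ast\ast}$. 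In effect you inline the directed-system argument that underlies Fact \ref{canon-aec}, adapted to the fragmented setting. What each approach buys: the paper's proof is much shorter but leans on the external canonicity fact and on the unverified (though routine) claims that $\LS$ of the generated class is $\lambda$ and that the two generated classes agree at $\lambda$; yours is longer but self-contained, never needing to compute the LST number or the small-model part of the generated AECs. One small point worth tightening: in the case $\|M\| < \|N\| = \mu$ you quote the resolvability remark as giving intermediate models of size $< \mu$, whereas Definition \ref{resolv-def} only guarantees a resolution along \emph{some} limit $\delta$ with $\|N_i\| = \|M\| + |i|$, which for $\delta > \mu$ would allow a tail of models of size $\mu$; this is harmless, since either one takes $\delta = \mu$ (which is what the standard construction behind the remark produces) or one builds the resolution directly inside $N$ by the same LST--coherence--smoothness argument you already use for the bi-resolution.
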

\begin{proof}
  Let $\lambda \coloneqq  \LS (\K)$. Let $\K^1$ be the AEC generated by $\K$ and let $\K^2$ be the AEC generated by $\K_{\le \lambda}$. Since $\LS (\K) = \lambda$, by Fact \ref{aec-generated} clearly $\LS (\K^1) = \lambda$. Of course, we also have that $\LS (\K^2) = \lambda$. Moreover, $\K_{\lambda}^1 = \K_{\lambda}^2$. By Fact \ref{canon-aec} and Lemma \ref{generated-fragmented}, $\K^1 = \K^2$. Alternatively see the proof of \ref{aec-generated}.
\end{proof}

Toward studying categoricity in fragmented AECs, we look at homogeneous models. As in \cite[I.2.5]{shelahaecbook}, we have:

\begin{fact}\LABEL{mh-uq}
  Let $\K$ be a fragmented AEC and let $M_0 \lea M_\ell$, $\ell = 1,2$. If $\|M_1\| = \|M_2\| > \LS (\K)$ and both $M_1$ and $M_2$ are model-homogeneous, then $M_1 \cong_{M_0} M_2$.
\end{fact}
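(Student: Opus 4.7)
The plan is a standard back-and-forth, taking care that the only chain axiom available in a fragmented AEC is either the ``cardinality does not jump'' version or smoothness.

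Let $\lambda := \|M_1\| = \|M_2\|$. If $\|M_0\| = \lambda$, then $M_0 = M_1 = M_2$ (by coherence applied to $M_0 \lea M_\ell \lea M_\ell$ and $|M_0| = |M_\ell|$), so we may assume $\|M_0\| < \lambda$. Enumerate $|M_1| = \{a_i : i < \lambda\}$ and $|M_2| = \{b_i : i < \lambda\}$. I will construct increasing sequences $\seq{N_i^\ell : i < \lambda}$ with $N_i^\ell \lea M_\ell$ and a coherent system of isomorphisms $f_i : N_i^1 \cong N_i^2$ fixing $M_0$ such that $\|N_i^\ell\| \le \|M_0\| + \LS(\K) + |i| < \lambda$, and $a_i \in |N_{i+1}^1|$, $b_i \in |N_{i+1}^2|$. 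Setting $f := \bigcup_{i < \lambda} f_i$ will give the desired isomorphism.

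At $i = 0$ take $N_0^\ell := M_0$ and $f_0 := \id_{M_0}$. At a successor step $i+1$, given $f_i : N_i^1 \cong N_i^2$, first use the Löwenheim-Skolem-Tarski axiom inside $M_1$ to find $N^\prime \in \K$ with $|N_i^1| \cup \{a_i\} \subseteq |N^\prime|$, $N^\prime \lea M_1$, and $\|N^\prime\| \le \|N_i^1\| + \LS(\K) < \lambda$. Pull $N^\prime$ across $f_i$: pick any structure $N^{\prime\prime}$ and isomorphism $g : N^\prime \cong N^{\prime\prime}$ extending $f_i$, so that $N_i^2 \lea N^{\prime\prime}$. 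Since $\|N^{\prime\prime}\| < \lambda$ and $M_2$ is $\lambda$-model-homogeneous, embed $N^{\prime\prime}$ into $M_2$ over $N_i^2$; composing yields $h : N^\prime \hookrightarrow M_2$ extending $f_i$. Set $N_{i+1}^1 := N^\prime$, $N_{i+1}^2 := h[N^\prime]$, and $f_{i+1} := h$. Alternate with the symmetric ``back'' step to absorb $b_i$.

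At a limit stage $\delta < \lambda$, set $N_\delta^\ell := \bigcup_{i < \delta} N_i^\ell$ and $f_\delta := \bigcup_{i < \delta} f_i$. This is the only place where being merely a \emph{fragmented} AEC matters: the cardinality may jump at $\delta$ (e.g.\ when $\delta$ is a cardinal), so the first restricted chain axiom need not apply. However, for each $i < \delta$ we have $N_i^\ell \lea M_\ell$, so $M_\ell$ is an upper bound of the chain, and smoothness gives $N_\delta^\ell \in \K$ together with $N_\delta^\ell \lea M_\ell$. It is then routine to verify that $f_\delta$ is an isomorphism $N_\delta^1 \cong N_\delta^2$ fixing $M_0$. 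The same argument at stage $\lambda$ (using $M_\ell$ as upper bound, smoothness, and coherence to identify $\bigcup_i N_i^\ell$ with $M_\ell$ since they share their universe) delivers $f : M_1 \cong_{M_0} M_2$. The main---indeed only---subtlety is thus the use of smoothness rather than the ``equal cardinality'' chain axiom at limit stages, and this is what makes the argument go through for fragmented AECs.
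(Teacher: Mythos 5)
Your back-and-forth is the same argument the paper has in mind: Fact \ref{mh-uq} is simply quoted from Shelah's book, and the proof of Theorem \ref{fragment-canon} refers to it as ``the usual back and forth argument''. Your key observation is also the right one: at a limit stage the cardinality of the chain may jump, so the restricted chain axiom is unavailable, but $M_\ell$ bounds the chain, so smoothness puts the union in $\K$ and below $M_\ell$; coherence (which you should also invoke at intermediate limits, to get $N_i^\ell \lea N_\delta^\ell$ from $N_\delta^\ell \lea M_\ell$, and at successor steps, to get $N_i^1 \lea N'$) is what keeps the chain genuinely increasing. For $\|M_0\| < \lambda$ the construction is correct.

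The opening reduction, however, is wrong. From $\|M_0\| = \lambda$ you infer $|M_0| = |M_\ell|$ and then $M_0 = M_1 = M_2$ ``by coherence''; but equality of \emph{cardinalities} says nothing about the universes, and coherence applied to $M_0 \lea M_\ell \lea M_\ell$ only returns $M_0 \lea M_\ell$, which you already know. Moreover, no argument can close this case, because the statement fails when $\|M_0\| = \|M_1\|$ is allowed: in the AEC of all infinite sets ordered by $\subseteq$ (the paper's own example, with $\LS(\K) = \aleph_0$), every set of size $\aleph_1$ is model-homogeneous; take $M_0 = M_1$ of size $\aleph_1$ and $M_2 := M_1 \cup \{a\}$ for one new element $a$. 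Then any isomorphism $M_2 \cong M_1$ over $M_0$ would restrict to the identity on $M_1$ and so could not be injective on $M_2$. So, as in Shelah's I.2.5 and as in the paper's only use of this fact (inside the proof of Theorem \ref{fragment-canon}, where the common submodel has cardinality strictly below $\lambda$), the hypothesis $\|M_0\| < \|M_1\|$ must be read into the statement; your proof then covers exactly the intended content, and the $\|M_0\| = \lambda$ ``case'' should be deleted rather than handled.
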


We obtain:

\begin{thm}\LABEL{fragment-mh-categ}\ 

  (1) Let $\K$ be a fragmented AEC with amalgamation and $\K_{<\LS (\K)} = \emptyset$. Assume that for every $\lambda \in \dom (\K)$, $\K$ is categorical in $\lambda$ and stable in $\lambda$. Then any $M \in \K_{>\LS (\K)}$ is model-homogeneous.

  (2) This holds also for semi-fragmented $\K.$
\end{thm}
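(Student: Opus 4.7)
The strategy is to reduce $\mu$-model-homogeneity of $M$ (where $\mu := \|M\|$) to the existence of \emph{some} $\mu$-model-homogeneous model in $\K_\mu$: once such a model $M^*$ is produced, categoricity in $\mu$ yields $M \cong M^*$, and model-homogeneity transfers across isomorphism. I proceed by induction on $\mu \in \dom(\K) \cap (\LS(\K), \infty)$.

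For the inductive step, fix $M_0 \lea M$ of size $\lambda < \mu$ and an extension $M_0 \lea N_0$ with $\|N_0\| = \lambda$, seeking an embedding of $N_0$ into $M$ fixing $M_0$. Provided $\dom(\K) \cap (\lambda, \mu)$ is nonempty, pick $\lambda^*$ in this interval and use resolvability of $M$ (the Remark after Definition \ref{resolv-def}) together with coherence to produce $P$ with $M_0 \lea P \lea M$ and $\|P\| = \lambda^*$. Since $\lambda^* > \lambda \ge \LS(\K)$ (the latter using $\K_{<\LS(\K)} = \emptyset$), the inductive hypothesis gives that $P$ is $\lambda^*$-model-homogeneous. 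As $\|N_0\| = \lambda < \lambda^* = \|P\|$, there is an embedding $N_0 \to P$ over $M_0$; composing with the inclusion $P \lea M$ finishes this case.

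The main obstacle is the base case: when $\mu$ is an immediate successor in $\dom(\K)$ of some $\lambda$ and $\|M_0\| = \lambda$, the inductive reduction breaks down. Here a direct construction inside $M$ is required. I would use stability, amalgamation, and categoricity in $\lambda$ to build an $\omega$-chain $M_0 = Q_0 \lea Q_1 \lea \cdots \lea M$ of $\lea$-submodels of $M$ of size $\lambda$ with each $Q_{k+1}$ universal over $Q_k$, and then obtain $N_0 \hookrightarrow Q_\omega \lea M$ over $M_0$ by brimmedness. The delicate part is arranging $Q_{k+1}$ as a universal extension of $Q_k$ \emph{inside} $M$: one constructs an abstract universal extension $R$ of $Q_k$ using stability (to bound the type spectrum over $Q_k$) and amalgamation (to realize each type), amalgamates $R$ with $M$ over $Q_k$ to obtain a size-$\mu$ amalgam, and invokes categoricity in $\mu$ to map this amalgam isomorphically onto $M$; this yields a copy of $R$ sitting inside $M$ over an iso-copy of $Q_k$, and a careful back-and-forth using categoricity in $\lambda$ across successive chain steps allows these iso-copies to be straightened into $\lea$-extensions of $Q_k$ itself.

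A secondary subtlety throughout is the fragmented-AEC feature that chains jumping cardinalities may not close in $\K$. This is handled by always constructing chains inside the ambient $M$, so that smoothness with $M$ as the upper bound guarantees limit unions remain in $\K$ and $\lea$-embed into $M$.
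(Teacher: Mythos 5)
Your inductive step (dropping to an intermediate $P$ with $M_0 \lea P \lea M$ of size $\lambda^*$ and invoking the induction hypothesis) is fine, and is close in spirit to how the paper treats singular $\|M\|$. But all the weight falls on your base case $\mu = \lambda^+$, and there the ``straightening'' step is a genuine gap. After amalgamating an abstract universal extension $R$ of $Q_k$ with $M$ over $Q_k$ and applying categoricity in $\mu$, what you obtain is a universal extension, inside $M$, of some isomorphic copy $h(Q_k)$ of $Q_k$ -- where $h$ comes from an isomorphism of the big amalgam with $M$ and fixes neither $Q_k$ nor $M_0$. To convert this into a universal extension of $Q_k$ itself (or even just into an embedding of $N_0$ over the literal $M_0$), you would have to extend an isomorphism between size-$\lambda$ submodels of $M$ to a map defined on enough of $M$; that is exactly the model-homogeneity/uniqueness being proved, and categoricity in $\lambda$ only gives abstract isomorphism of the small pieces, with no compatibility with their embeddings into $M$. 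There is no back-and-forth ``across successive chain steps'' available, because you have no uniqueness statement (say, for brimmed or universal extensions over a fixed base inside $M$) with which to extend the partial maps. Note also that the purely abstract variant -- build an abstract $\lambda^+$-chain of size-$\lambda$ models with universal successor steps and quote categoricity in $\mu$ once at the end -- is blocked by fragmentedness: that chain jumps cardinality and has no ambient upper bound, so its union need not be in $\K$.

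The paper's proof sidesteps all of this by exploiting categoricity and stability at $\mu = \|M\|$ itself: the unique model of cardinality $\mu$ is $(\mu,\mu)$-brimmed (the witnessing chain consists of models of size $\mu$, so only the same-cardinality chain axiom is used), hence $M$ is. If $\mu$ is regular, the given $M_0$, having size $< \mu$, is contained in some member $M_i$ of the brimmed chain (regularity plus coherence); one then amalgamates $N_0$ with $M_i$ over $M_0$ and embeds the amalgam into $M_{i+1} \lea M$ over $M_i$ by universality, so no universal extension of size $\lambda$ over $Q_k$ inside $M$ is ever needed. If $\mu$ is singular, one first passes to $M^0 \lea M$ of size $\|N_0\|^+$ containing $M_0$ and runs the regular case there. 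Two smaller points: your induction should track $\lambda := \|N_0\|$ rather than assume $\|M_0\| = \|N_0\|$ (the unequal case reduces to the equal one by a single amalgamation over $M_0$), and in your base case a single universal extension of $Q_0$ would already suffice, since $\|N_0\| = \|Q_0\|$ -- the $\omega$-chain adds nothing.
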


\begin{proof}
  Let $\lambda \in \dom (\K) \cap (\LS (\K), \infty)$ and let $M \in \K_\lambda$. Pick $M_0 \lea N_0$ in $\K_{<\lambda}$ with $M_0 \lea M$.
  
  (1) By categoricity and stability in $\lambda$, $M$ is $(\lambda, \lambda)$-limit. If $\lambda$ is regular, this means that we can find $M^0 \in \K_\lambda$ such that $M_0 \lea M^0$ and $M$ is universal over $M^0$. Now by amalgamation, there exists $N^0 \in \K_\lambda$ and $f \colon N_0 \xrightarrow[M_0]{} N^0$ such that $M^0 \lea N^0$. Now by the universality of $M$ over $M_{0},$ let $g \colon N^0 \xrightarrow[M^0]{} M$. Then $g f$ embeds $N_0$ into $M$ over $M_0$.

  If $\lambda$ is singular, let $\mu \coloneqq  \|N_0\|$. Pick $M^0 \lea M$ with $M^0 \in \K_{\mu^+}$ and $M^0$ containing $M_0$. By the previous case, $N_0$ embeds into $M^0$ (and hence into $M$) over $M_0$, as desired.
  
  (2) Also for semi fragmented AEC. Let $\mu = \Vert N_{0} \Vert + \LS(\K).$ So clearly $\Vert M_{0} \Vert + \LS(\K) \leq \mu < \lambda,$ hence there is $M_{1} \in K_{\mu}$ such that $M_{0} \leq_{\K} M_{1} \leq_{\K} M.$ As clearly $\Vert M_{1} \Vert = \mu < \lambda = \Vert M \Vert,$ there is $M_{2} \in \K_{\mu^{+}}$ such that $M_{1} \leq_{\K} M_{2} \leq_{\K} M$  and as $\K$ is categorical in $\mu^{+}, \mu$ and stable in $\lambda,$ necessarily $M_{2}$ is universal over $M_{1}.$ So there is an $\leq_{\K}$-embedding of $N_{0}$ into $M_{2}$ over $M_{0},$ so we are done.   
\end{proof}

\begin{thm}[Canonicity of categorical fragmented AECs]\LABEL{fragment-canon}
  Let $\K^1$ and $\K^2$ be fragmented AECs with $\Theta \coloneqq  \dom (\K^1) = \dom (\K^2)$ and $\LS (\K^1) = \LS (\K^2)$. Suppose that for $\ell = 1,2$, $\K^\ell$ has amalgamation and is stable and categorical in every $\lambda \in \Theta$. If $\K_{\le \LS (\K^1)}^1 = \K_{\le \LS (\K^2)}^2$, then $\K^1 = \K^2$.
\end{thm}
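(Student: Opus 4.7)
I argue by induction on $\mu \in \Theta$ that $\K^1_{\le \mu}$ and $\K^2_{\le \mu}$ coincide as abstract classes, where $\lambda := \LS(\K^1) = \LS(\K^2)$; the base case $\mu = \lambda$ is the hypothesis. Note that Löwenheim-Skolem applied inside any $M \in \K^\ell_\mu$ forces $[\lambda, \mu] \subseteq \Theta$, so every intermediate cardinality lies in the common domain.

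At the inductive step with $\mu > \lambda$, write $\K^3_{<\mu} := \K^1_{<\mu} = \K^2_{<\mu}$. Categoricity yields a unique (up to $\tau$-isomorphism) $M^\ell \in \K^\ell_\mu$, and Theorem \ref{fragment-mh-categ} makes $M^\ell$ model-homogeneous in $\K^\ell$. The plan is to build a $\tau$-isomorphism $f \colon M^1 \cong M^2$ by a length-$\mu$ back-and-forth producing simultaneously increasing chains $\seq{A^\ell_i : i \le \mu}$ with $A^\ell_i \leap{\K^\ell} M^\ell$ and $\|A^\ell_i\| < \mu$ for $i < \mu$, together with compatible $\tau$-isomorphisms $f_i \colon A^1_i \cong A^2_i$.

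At a successor stage I alternately cover a new element of $M^1$ or of $M^2$. Suppose the element lies in $M^1$: Löwenheim-Skolem in $\K^1$ extends $A^1_i$ to $A^1_{i+1} \leap{\K^1} M^1$ of size $\|A^1_i\| + \lambda < \mu$ containing it; the induction hypothesis transfers $A^1_i \leap{\K^1} A^1_{i+1}$ to $A^1_i \leap{\K^2} A^1_{i+1}$, and pushing this through $f_i$ produces a $\K^2$-extension of $A^2_i$ of size $< \mu$, which the model-homogeneity of $M^2$ in $\K^2$ embeds into $M^2$ over $A^2_i$ to give $A^2_{i+1}$ and the extension $f_{i+1}$. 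At a limit stage $\delta \le \mu$ the smoothness axiom of the fragmented AEC $\K^\ell$, applied to the chain $\seq{A^\ell_i : i < \delta}$ (bounded by $M^\ell$), gives $A^\ell_\delta := \bigcup_{i<\delta} A^\ell_i \in \K^\ell$ with $A^\ell_\delta \leap{\K^\ell} M^\ell$ regardless of cofinality, and $\bigcup_{i<\delta} f_i$ is a $\tau$-isomorphism. Coverage forces $A^\ell_\mu = M^\ell$, so $f \colon M^1 \cong M^2$ is a $\tau$-isomorphism; closure under isomorphism then places $M^\ell \in \K^{3-\ell}$ and, by categoricity in $\mu$, $\K^1_\mu = \K^2_\mu$ as classes.

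For the ordering, since $M^1, M^2 \in \K^\ell$ for each $\ell$, the $\tau$-isomorphism $f$ is automatically a $\K^\ell$-isomorphism. The key case is $M \leap{\K^1} N$ with $\|M\| < \|N\| = \mu$: identifying $N$ with $M^1$ up to iso in $\K^1$, I pick $A^1_0 := M$ and $A^2_0 \leap{\K^2} M^2$ an isomorphic $\K^2$-copy of $M$ (produced via Löwenheim-Skolem in $\K^2$ and categoricity), and pull $A^2_0 \leap{\K^2} M^2$ back through $f^{-1}$ (as a $\K^2$-isomorphism) to obtain $M \leap{\K^2} N$. The case $\|M\| = \|N\| = \mu$ then reduces to this by resolving $M$ in $\K^1$ as a chain of size-$<\mu$ submodels and applying smoothness in $\K^2$, and the converse direction is symmetric. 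The technical worry I anticipated was the limit-cardinal stages of the back-and-forth, where the restricted chain axiom can fail in a fragmented AEC; this is resolved cleanly by the smoothness axiom (which has no cardinality hypothesis), so the substantive remaining labor is in the bookkeeping and in exploiting isomorphism closure to transfer the ordering.
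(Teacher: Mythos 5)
Your argument is correct, and its first half is exactly the paper's: induct on the cardinal, use Theorem \ref{fragment-mh-categ} to make the two $\mu$-sized models model-homogeneous, and run the back-and-forth of Fact \ref{mh-uq} (smoothness, which carries no cardinality restriction, is indeed what makes the limit stages work in a fragmented AEC) to identify the model classes. Where you genuinely diverge is the ordering. The paper disposes of it in one line by quoting canonicity of the generated AEC (Fact \ref{canon-aec} together with Lemma \ref{generated-fragmented}): both $\K^1$ and $\K^2$ generate the same AEC, ``hence the orderings coincide'' --- a short argument, but note that being a sub-abstract class only gives one inclusion of orderings into the generated AEC's order, so the paper's final step is terser than yours. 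You instead prove both inclusions directly: for $M \leap{\K^1} N$ with $\|M\| < \|N\| = \mu$ you re-run the back-and-forth \emph{over} $M$, starting from an isomorphism of $M$ onto a Löwenheim--Skolem $\K^2$-submodel of $M^2$ (legitimate by the inductive class equality and categoricity of $\K^2$ in $\|M\|$), and pull $\leap{\K^2}$ back through the resulting isomorphism; the equal-cardinality case then follows by resolving $M$ in $\K^1$ (possible in a fragmented AEC, as the Remark after Definition \ref{resolv-def} notes), transferring each small link by the induction hypothesis, and applying smoothness in $\K^2$. This is longer but self-contained and arguably more transparent than the paper's appeal to the generated AEC. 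Two small points you should still record: first, as in the paper, reduce at the outset to $\K^\ell_{<\LS(\K^\ell)} = \emptyset$ (Theorem \ref{fragment-mh-categ} is stated under that hypothesis); second, in your key case the Löwenheim--Skolem choice of $A^2_0$ only works when $\|M\| \ge \LS(\K^1)$ --- if $\|M\|$ could be smaller, first enlarge $M$ inside $N$ to a $\K^1$-submodel $M'$ of size $\LS(\K^1)$, use the hypothesis $\K^1_{\le \LS(\K^1)} = \K^2_{\le \LS(\K^2)}$ to get $M \leap{\K^2} M'$, run your argument for $M'$, and finish by transitivity.
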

\begin{proof}
  Without loss of generality, $\K_{<\LS (\K^\ell)}^\ell = \emptyset$ for $\ell = 1,2$. Let $\lambda > \LS (\K)$ be such that $\lambda \in \Theta$ and assume inductively that we know $\K_{<\lambda}^1 = \K_{<\lambda}^2$. We have that the model $M^\ell$ of cardinality $\lambda$ in $\K^\ell$ is model-homogeneous (Theorem \ref{fragment-mh-categ}) for $\ell = 1,2$. By categoricity and the inductive assumption, there exists $M_\ell \in \K^\ell$ of cardinality strictly less than $\lambda$ with $M_\ell \lea M^\ell$ and $f: M_1 \cong M_2$. Now use the proof of Fact \ref{mh-uq} (the usual back and forth argument) to conclude that $M^1 \cong M^2$. This implies that $|\K_\lambda^1| = |\K_\lambda^2|$. To see that $\K_\lambda^1 = \K_\lambda^2$ (that is, the orderings also coincide), observe that the AECs generated by $\K^1$ and $\K^2$ must be the same (Fact \ref{canon-aec} and Lemma \ref{generated-fragmented}), hence the orderings must coincide on all models there.
\end{proof}

\begin{defin}\LABEL{sat-class-def}
  Let $\K$ be an abstract class.

  \begin{enumerate}
  \item We write $\Ksatp{\lambda}$ for the abstract class $\K \, {\rest} \, \{M \in \K \mid M \text{ is } \lambda\text{-saturated}\}$ of $\lambda$-saturated models in $\K$ ordered by the restriction of the ordering of $\K$.\myindex{$\Ksatp{\lambda}$}
  \item We write $\Ksat$ for the abstract class $\K \, {\rest} \, \{M \in \K \mid M \text{ is saturated}\}$.\myindex{$\Ksat$}
  
  \item We write $\Kbrim$ for the abstract class with $|\K^{\rm{lim}}| = \{M \in \K \mid M \text{ is limit}\}$ ordered by $M \leap{\K^{\rm{lim}}} N$ if and only if $M \lea N$ and one of the following conditions holds:

    \begin{enumerate}
    \item $M = N$.
    
    \item $\|M\| < \|N\|$.
    
    \item $N$ is limit over $M$ (so $\|M\| = \|N\|$).
    \end{enumerate}\myindex{$\Kbrim$}
  \end{enumerate}
\end{defin}

It is known \cite[5.7]{categ-saturated-afml} that an AEC with amalgamation categorical in a high-enough cardinal will satisfy a weak version of tameness, and unions of chains of $\lambda$-saturated models will be $\lambda$-saturated there. This gives several examples of fragmented AECs and skeletons. Note that these results have a long history (detailed for example in \cite{Vas17}), with contributions of, among others, both the first and second author.

\begin{fact}[Structure of categorical AECs with amalgamation]\LABEL{tameness-ap}
  Let $\K$ be an AEC with arbitrarily large models. Let $\mu > \LS (\K)$. Assume that $\K$ is categorical in $\mu$ and $\K_{<\mu}$ has amalgamation and no maximal models.

  \begin{enumerate}
  \item\label{tameness-ap-1} If $\mu \ge \hanf{\LS (\K)}$, then there exists $\chi \in (\LS (\K), \mu)$ such that $\Ksat_{(\chi, \mu)}$ is $\chi$-tame.
  
  \item\label{tameness-ap-2} Let $\lambda \le \mu$. If $\cf{\lambda} \ge \LS (\K)^+$, then there exists $\chi \in (\LS (\K), \lambda)$ such that $\Ksat_{(\chi, \lambda)}$ is $\chi$-tame.
  
  \item\label{tameness-ap-3} For any $\lambda \in (\LS (\K), \mu]$, $\Ksatp{\lambda}$ is an AEC with L{\"o}wenheim-Skolem-Tarski number $\lambda$. In particular, $\Ksat_{(\LS (\K), \mu]}$ is a fragmented AEC with L{\"o}wenheim-Skolem-Tarski number $\LS (\K)^+$. Moreover, $\Ksat_{(\LS (\K), \, \mu]}$ is categorical in every $\lambda \in (\LS (\K), \mu)$ and it is a skeleton of $\K_{(\LS (\K), \mu]}$.
  \item\label{tameness-ap-4} Let $\Theta \coloneqq  [\LS (\K), \mu)$. Then:

      \begin{itemize}
      \item $\Kbrim_{\Theta}$ is a fragmented very weak AEC which is $[\LS (\K), \lambda)$-continuous and categorical in $\lambda$ for every $\lambda \in \Theta$.
      \item $\Kbrim_{\Theta}$ is a skeleton of $\K_{\Theta}$.
      \item $|\Kbrim_{(\LS (\K), \mu)}| = |\Ksat_{(\LS (\K), \mu)}|$ and $\Kbrim_{(\LS (\K), \mu)}$ is a sub-abstract class of $\Ksat_{(\LS (\K), \mu)}$.
      \end{itemize}
  \end{enumerate}
\end{fact}

Regarding categoricity transfers we will use the following two known results. The first can be traced back to the presentation theorem of the first author. See \cite[9.2]{Vas17} for a full proof.

\begin{fact}[Morley's omitting type theorem for AECs]\LABEL{morley-omitting}
  Let $\K$ be an AEC with amalgamation and let $\mu > \LS (\K)$. If every model in $\K_\mu$ is $\LS (\K)^+$-saturated, then there exists $\chi < \hanf{\LS (\K)}$ such that every model in $\K_{\ge \chi}$ is $\LS (\K)^+$-saturated.
\end{fact}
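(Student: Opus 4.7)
I would argue by contraposition: assume that for every $\chi < \hanf{\LS(\K)}$ there exists $M \in \K_{\ge \chi}$ that is \emph{not} $\LS(\K)^+$-saturated, and produce a non-$\LS(\K)^+$-saturated model in $\K_\mu$ to contradict the hypothesis. Non-saturation of such an $M$ means there is $M_0 \lea M$ with $\|M_0\| \le \LS(\K)$ and $p \in \gS(M_0)$ not realized in $M$.

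\textbf{Pigeonhole step.} Up to isomorphism there are at most $2^{\LS(\K)}$ pairs $(M_0, p)$ with $\|M_0\| \le \LS(\K)$ and $p \in \gS(M_0)$. Since $\hanf{\LS(\K)} = \beth_{(2^{\LS(\K)})^+}$ has cofinality above $2^{\LS(\K)}$, there is a single pair $(M_0^\ast, p^\ast)$ and arbitrarily large $\chi < \hanf{\LS(\K)}$ such that some $M \in \K_{\ge \chi}$ contains an isomorphic copy of $M_0^\ast$ (as a $\lea$-substructure) in which $p^\ast$ is not realized. After naming $M_0^\ast$ by $\LS(\K)$-many constants, the task becomes: produce arbitrarily large such structures, in particular one of size $\mu$.

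\textbf{Hanf number / EM-model step.} Apply Shelah's presentation theorem to view $\K$ as the class of $\tau(\K)$-reducts of models of a first-order theory $T^\ast$ in an expansion $\tau^\ast$ with $|\tau^\ast| = \LS(\K)$ that omit a prescribed set $\Gamma$ of $\tau^\ast$-types. Add to $\Gamma$ the partial type expressing ``$x$ realizes $p^\ast$ over the named copy of $M_0^\ast$'' (one uses amalgamation and the naming of $M_0^\ast$ to phrase realization of the orbital type $p^\ast$ as a set of $\tau^\ast$-types to omit on a single variable, possibly after a further harmless expansion). By the classical Morley/Ehrenfeucht--Mostowski omitting-types theorem applied to $T^\ast$ and this enlarged $\Gamma$, the existence of models of $T^\ast$ of unbounded size below $\hanf{\LS(\K)}$ omitting $\Gamma$ yields models of $T^\ast$ of every size $\ge \LS(\K)$ omitting $\Gamma$; taking the $\tau(\K)$-reduct produces $N \in \K_\mu$ containing $M_0^\ast$ with $p^\ast$ not realized.

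\textbf{Conclusion.} Then $N \in \K_\mu$ is not $\LS(\K)^+$-saturated, contradicting the hypothesis. Therefore some threshold $\chi < \hanf{\LS(\K)}$ works, as required.

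\textbf{Main obstacle.} The delicate point is the second step: orbital types in AECs are defined via amalgamation rather than as first-order syntactic objects, so encoding ``$p^\ast$ is not realized in $M$'' as omission of a $\tau^\ast$-type in the presentation is nontrivial. The standard fix is to expand the vocabulary with Skolem-like function symbols witnessing amalgams of $M_0^\ast$ with one-element extensions (as in the proof of the presentation theorem itself), which translates the orbital condition into finitary omission. Once this coding is in place, the remainder is a direct application of the first-order Hanf-number machinery.
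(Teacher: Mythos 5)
Your overall skeleton (contraposition, pigeonhole down to a single pair $(M_0^\ast,p^\ast)$, then the presentation theorem plus the classical Morley/Hanf-number machinery to manufacture a non-$\LS(\K)^+$-saturated model of size $\mu$) is the standard route, and the place you flag as the ``main obstacle'' is indeed the crux. But your resolution of that obstacle does not work, and this is a genuine gap, not a technicality. ``$x$ realizes $p^\ast$ over the named copy of $M_0^\ast$'' is \emph{not} a set of $\tau^\ast$-types to be added to $\Gamma$: realization of an orbital type is witnessed by an embedding of a representative $(M_0^\ast, b^\ast, N^\ast)$ into some \emph{extension} of the ambient model, so it is an existential condition over extensions, not a syntactic property of $x$ inside any fixed expansion of the model. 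Adding ``Skolem-like function symbols witnessing amalgams'' goes in the wrong direction: such witnesses can only certify that embeddings/amalgams \emph{exist} (this is how one shows the class of pairs $(M,a)$ with $a$ realizing $p^\ast$ is pseudo-elementary), whereas you need to force their \emph{non}-existence in the EM models. Concretely, the EM models built from your blueprint only control the $\tau^\ast$-types of their elements; an element of $\EM_{\tau(\K)}(I,\Phi)$ can perfectly well realize $p^\ast$ in the orbital sense while its syntactic $\tau^\ast$-type over the constants is one that is realized in the original model $N$ by a non-realization of $p^\ast$. Ruling this out would amount to saying that orbital types are determined by syntactic types in the expansion, i.e.\ a tameness-type hypothesis that is exactly what one does not have.

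The correct repair is close in spirit but different: do not try to encode omission inside the presentation of $\K$ at all. Fix $(M_0^\ast,p^\ast)$ by your pigeonhole (note you also need $|\gS(M_0^\ast)|\le 2^{\LS(\K)}$, which holds since every orbital type over $M_0^\ast$ has a representative of size $\LS(\K)$), and consider the class $\K'$ of all $N\in\K$ with $M_0^\ast\lea N$ (named by constants) omitting $p^\ast$. Because the base is \emph{fixed}, omission of $p^\ast$ is preserved under unions of $\lea$-increasing chains and under passing to $\lea$-substructures containing $M_0^\ast$ (monotonicity of orbital types), so $\K'$ is itself an AEC with Löwenheim-Skolem-Tarski number $\LS(\K)$. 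Now apply the presentation theorem to $\K'$ — the omission condition is absorbed semantically into membership in $\K'$, and its presentation does the ``coding'' for free — and apply classical Morley's omitting types theorem to that $\PC_{\LS(\K),2^{\LS(\K)}}$ class, using that $\K'$ has models of size $\ge\beth_\alpha$ for unboundedly many $\alpha<(2^{\LS(\K)})^+$ (passing to $\lea$-submodels of the witnesses if they are too large). This yields a member of $\K'$ of size exactly $\mu$, i.e.\ a non-$\LS(\K)^+$-saturated model in $\K_\mu$, the desired contradiction; this is the argument the paper's citation traces back to the presentation theorem. Without this (or an equivalent) idea, the step you describe cannot be completed as written.
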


The second is due to the second author. It is a consequence of an omitting type theorem of the first author (see \cite{Vas17} for a history). 

\begin{fact}[{\cite[9.8]{Vas17}}]\LABEL{categ-unbounded-fact}
  Let $\K$ be an $\LS (\K)$-tame AEC with amalgamation, and arbitrarily large models. If $\K$ is categorical in \emph{some} $\mu > \LS (\K)$, then $\K$ is categorical in a proper class of cardinals.
\end{fact}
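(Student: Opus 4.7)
The plan is to combine the Ehrenfeucht--Mostowski construction coming from Shelah's presentation theorem, Morley's omitting type theorem for AECs (Fact \ref{morley-omitting}), and the $\LS(\K)$-locality hypothesis, to push structural information from the model in $\mu$ to a tail of cardinals.

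First, I would show that the unique model $M_\mu \in \K_\mu$ is $\LS(\K)^+$-saturated. Given any $M_0 \in \K_{\LS(\K)}$ and any $p \in \gS(M_0)$, apply the presentation theorem to get an EM-functor into $\K$ built from a blueprint that codes a realization of $p$. Applying this functor to an index set of size $\mu$ produces a model in $\K_\mu$ that realizes $p$; by categoricity this model is isomorphic to $M_\mu$, so $M_\mu$ realizes every type over every $\LS(\K)$-sized $M_0 \lea M_\mu$. Since amalgamation lets us compare types over different $M_0$'s, this gives $\LS(\K)^+$-saturation.

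Second, I would apply Fact \ref{morley-omitting} directly: since every model in $\K_\mu$ is $\LS(\K)^+$-saturated, there exists $\chi < \hanf{\LS(\K)}$ such that every model in $\K_{\ge \chi}$ is $\LS(\K)^+$-saturated.

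Third, I would upgrade this to full categoricity in a proper class of cardinals. Fix any $\lambda \ge \chi$ with $\cf(\lambda) > \LS(\K)$, and take $N_1, N_2 \in \K_\lambda$. I would build an isomorphism $N_1 \cong N_2$ by a back-and-forth of length $\lambda$ through $\LS(\K)^+$-sized approximations: at each stage, use the Löwenheim--Skolem axiom to enclose the current partial data in some $M^\ell \lea N_\ell$ of size $\LS(\K)^+$; use $\LS(\K)^+$-saturation of the larger $N_{3-\ell}$ to extend the isomorphism to cover $M^\ell$; use $\LS(\K)$-locality (applied to Galois types over the built-up union, of size $\le \LS(\K)^+$ at successor stages and $<\lambda$ overall) to check that the extensions chosen on the two sides are compatible as we iterate. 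The condition $\cf(\lambda) > \LS(\K)$ ensures that the union of the chain of partial isomorphisms closes off at stage $\lambda$. This yields $N_1 \cong N_2$, hence categoricity in $\lambda$; the class of such $\lambda$'s is a proper class.

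The main obstacle is the third step. The hypothesis $\LS(\K)$-locality controls types over \emph{models} (not arbitrary sets), so the back-and-forth has to be organized through increasing chains of submodels of size $\LS(\K)$ rather than through sets; and $\LS(\K)^+$-saturation, not full saturation, is all that is available, so type realizations always have to be reduced to types over $\LS(\K)^+$-sized submodels before extension steps. I would carefully arrange the induction so every extension step realizes a type over a submodel of size $\le \LS(\K)$, which is precisely the setting where the locality hypothesis is usable.
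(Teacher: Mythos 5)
The central gap is your third step. $\LS(\K)$-locality is a \emph{uniqueness} statement about Galois types (a type over a large model is determined by its restrictions to $\LS(\K)$-sized submodels); it gives no \emph{existence} of realizations. In your back-and-forth, as soon as the partial isomorphism has domain $M \lea N_1$ of size greater than $\LS(\K)$, extending it past a new element requires realizing in $N_2$ a Galois type over a submodel whose size may be anywhere up to $\lambda$; $\LS(\K)^+$-saturation of $N_2$ only realizes the $\LS(\K)$-sized restrictions of that type, and locality only tells you that \emph{if} a single element realizes all of them then it realizes the full type --- it does not produce such an element. If your argument were valid, then from amalgamation, locality and ``all models above $\chi$ are $\LS(\K)^+$-saturated'' alone you would obtain categoricity in every $\lambda \ge \chi$ with $\cf{\lambda} > \LS(\K)$, i.e.\ an eventual-tail categoricity transfer for local AECs with amalgamation in ZFC --- exactly what this paper only achieves with substantially more (primes or a successor, as in Fact~\ref{categ-old-facts}, or the excellence machinery under $\WGCH$ or large cardinals). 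The paper does not reprove the fact; the cited proof instead derives superstability below $\mu$ from categoricity, transfers it upward by locality, uses that unions of chains of saturated models are saturated to get existence and uniqueness of saturated models, and then locates a proper class of cardinals in which \emph{every} model is saturated via omitting-types/Hanf-number arguments (Fact~\ref{morley-omitting} and its relatives) --- not via a direct back-and-forth from $\LS(\K)^+$-saturation.

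Your first step also has a gap as written. Producing, for each pair $(M_0, p)$, \emph{some} model of size $\mu$ extending $M_0$ and realizing $p$ and then invoking categoricity does not show that the given copy $M_\mu \gea M_0$ realizes $p$: the isomorphism supplied by categoricity need not fix $M_0$, and ``amalgamation lets us compare types over different $M_0$'s'' does not repair this. The honest route is to note that $\LS(\K)^+$-saturation is isomorphism-invariant, so it suffices to build one $\LS(\K)^+$-saturated model of size $\mu$; but that requires realizing all types over all small submodels simultaneously, hence stability in $\LS(\K)$ (indeed in $[\LS(\K),\mu)$), which must first be extracted from categoricity by a different EM-model argument (EM models realize few types), and when $\cf{\mu} \le \LS(\K)$ the chain construction does not close off without knowing that unions of chains of $\LS(\K)^+$-saturated models remain $\LS(\K)^+$-saturated --- a superstability-type result in the spirit of Fact~\ref{tameness-ap}(\ref{tameness-ap-3}). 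So step one is repairable, but only by the machinery your sketch omits, and step three needs a genuinely different strategy.
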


\newpage

\section{Compact abstract elementary classes}\LABEL{compact-sec}

Compact AECs are an axiomatization of the AECs ``essentially below $\kappa$'' introduced by Boney \cite[2.10]{tamelc-jsl}. They encompass classes of models of an $\Ll_{\kappa, \omega}$ sentence, $\kappa$ a strongly compact, and more generally AECs closed under $\kappa$-complete ultraproducts, $\kappa$ a strongly compact cardinal below (yes, below!) the L{\"o}wenheim-Skolem-Tarski number. The reader who does not want to process yet another abstract definition can with little loss think of:

$(*)$ $\K_{\ge \kappa}$, where $\K$ is an AEC and $\kappa > \LS (\K)$ is a strongly compact cardinal\footnote{With a little bit of care, one could adapt the definitions to work also with \emph{almost} strongly compact cardinals, see \cite{btr-almost-compact-tams}. Since there are no obvious benefits, we have not bothered.}.

Why is ``$\kappa$-compact'' chosen? Restricting ourselves as in $(*)$ is natural as we consider only models of cardinality $\geq \kappa$ and in many cases $\LS(\K) << \kappa.$ This disappears for $\kappa$-compact. An additional point is that elementary classes become a special case when we allow $\kappa = \aleph_{0}.$

\begin{defin}\LABEL{compact-def}\myindex{compact AEC}
  Let $\K$ be an AEC and let $\kappa$ be a strongly compact cardinal (we allow $\kappa = \aleph_0$). We call $\K$ \emph{$\kappa$-compact} if:

  \begin{enumerate}
      \item[(A)] $\kappa \le \LS (\K)$ and $\K_{<\LS (\K)} = \emptyset$.
      
      \item[(B)] $\K$ is closed under $\kappa$-complete ultraproducts. More precisely, if $U$ is a $\kappa$-complete ultrafilter on some index set $I$ and $\seq{M_i : i \in I}$ is a sequence in $\K$, then

      \begin{enumerate}
          \item $\prod_{i \in I} M_i / U$ is in $\K$,

          \item if $\seq{N_i : i \in I}$ is another sequence in $\K$ and $M_i \lea N_i$ for all $i \in I$, then $\prod_{i \in I} M_i / U \lea \prod_{i \in I} N_i / U$,

          \item if $i \in I \Rightarrow M_{i} = M,$ then the canonical embedding of $M$ into $M^{I} / U = \prod_{i \in I} M_{i} / U$ is a $\leq_{\K}$-embedding.
      \end{enumerate}
  \end{enumerate}

  We call $\K$ \emph{compact} if it is $\kappa$-compact for some strongly compact cardinal $\kappa$.
\end{defin}

Immediately from the definition, we have:

\begin{remark}\LABEL{compact-rmk}
  Let $\K$ be a $\kappa$-compact AEC.

\begin{enumerate}
      \item $\K$ has no maximal models (just take an ultrapower).
      
      \item For any cardinal $\lambda$, $\K_{\ge \lambda}$ is a $\kappa$-compact AEC.
      
      \item The conclusion of \cite[4.3]{tamelc-jsl} says: $\K$ is closed under ultraproducts in a strong sense (for example, not only is the ultraproduct of members of $\K$ in $\K,$ but the canonical embeddings are themselves $\K$-embedding).
      
      \item[(4)] But (concerning \ref{compact-rmk}(3)).
    
      \underline{Example}:  Assume that $\kappa$ is a compact cardinal. We define $\K = (K, \leq_{\K})$ by: 
      
      \begin{enumerate}
          \item[(a)] $\tau_{\K} = \{ < \},$
          
          \item[(b)] $K$ is the class of linear orders with initial segments isomorphic to $(\kappa, <),$
          
          \item[(c)] $\leq_{\K} = \{(M, N) \colon M \subseteq N$ are linear orders from $\K$ such that $M \neq N \Rightarrow M \ncong (\kappa, <)\}.$
      \end{enumerate}
  \end{enumerate}
  
    Now check that $\K$ satisfies the requirements of being a $\kappa$-compact AEC, provided that we omit clause (B)(c) in \ref{compact-def}; and part (3) fail. 
\end{remark}

The two main examples are:

\begin{fact}\LABEL{compact-fact} \
  \begin{enumerate}
  \item Let $\kappa$ be a strongly compact cardinal (we allow $\kappa =\aleph_0$) and let $T$ be a theory in $\Ll_{\kappa, \omega}$. Let $\Phi$ be a fragment containing $T$. Let $\K$ be the class of models of $T$, ordered by $M \lea N$ if and only if $M \lee_{\Phi} N$. Then $\K_{\ge \kappa}$ is a $\kappa$-compact AEC with $\LS (\K_{\ge \kappa}) \le |\Phi| + |\tau (\Phi)| + \kappa$.
  \item Let $\K$ be an AEC and let $\kappa > \LS (\K)$ be a strongly compact cardinal. Then $\K_{\ge \kappa}$ is $\kappa$-compact.
  \end{enumerate}
\end{fact}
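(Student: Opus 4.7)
The plan is to verify the two clauses of the definition of a $\kappa$-compact AEC separately for each part. The requirement $\kappa \le \LS(\K_{\ge \kappa})$ and $\K_{<\LS(\K_{\ge \kappa})} = \emptyset$ are built into the restriction to $\K_{\ge \kappa}$, so the work is to establish (a) the AEC axioms and the Löwenheim–Skolem–Tarski bound, and (b) closure under $\kappa$-complete ultraproducts in both the models and the ordering.

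For part (1), I would first check that the class of models of $T$ ordered by $\lee_\Phi$ is an AEC: coherence, closure under isomorphism, and the chain axioms all follow from the standard Tarski–Vaught-style argument for $\Phi$-elementary substructure in the infinitary setting. The Löwenheim–Skolem–Tarski bound $|\Phi| + |\tau(\Phi)| + \kappa$ comes from the usual downward construction — close a given set $A$ under Skolem witnesses for existential formulas of $\Phi$ in $\kappa$ many stages; since $\Ll_{\kappa,\omega}$-formulas have $<\kappa$ free variables and $|\Phi|$ such formulas must be considered, each stage contributes at most $|A| + |\Phi| + |\tau(\Phi)| + \kappa$ new elements. Closure under $\kappa$-complete ultraproducts is then Łoś's theorem for $\Ll_{\kappa,\omega}$: the induction on formula complexity goes through because conjunctions and disjunctions in $\Ll_{\kappa,\omega}$ have length $<\kappa$ and $U$ is $\kappa$-complete; the same argument applied to all formulas of $\Phi$ preserves the ordering $\lee_\Phi$.

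For part (2), I would invoke Shelah's presentation theorem to write $\K = \PC(T^\ast, \Gamma, \tau(\K))$, where $T^\ast$ is a first-order theory in an expanded vocabulary $\tau^\ast \supseteq \tau(\K)$ with $|T^\ast| + |\tau^\ast| \le \LS(\K) < \kappa$, and $\Gamma$ is a set of $\tau^\ast$-types each of size $\le \LS(\K) < \kappa$. For a sequence $\seq{M_i : i \in I}$ in $\K_{\ge \kappa}$ and a $\kappa$-complete ultrafilter $U$ on $I$, pick expansions $M_i^\ast \models T^\ast$ omitting $\Gamma$; by Łoś's theorem $\prod_i M_i^\ast / U \models T^\ast$, and by $\kappa$-completeness together with each type in $\Gamma$ having size $<\kappa$, the omission of $\Gamma$ is preserved — this is the standard large-cardinal calculation. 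Reducting back to $\tau(\K)$ shows $\prod_i M_i / U \in \K$. The ordering statement is handled in the same way by running the presentation theorem argument simultaneously on the pair $(M_i, N_i)$; this is precisely Boney's argument in \cite[4.3]{tamelc-jsl}, which I would cite rather than re-prove.

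The main obstacle is the preservation of the AEC-ordering $\lea$ under ultraproducts in part (2), since $\lea$ is not directly a first-order notion. The conceptual content is that the coherence and chain axioms can be encoded into the $(T^\ast, \Gamma)$ presentation in a way compatible with $\kappa$-complete ultraproducts, which is exactly where strong compactness of $\kappa$ is used. Once this is granted (via Boney's theorem), both parts reduce to checking bookkeeping, and the fact that $\K_{\ge \kappa}$ has no models below its Löwenheim–Skolem–Tarski number is immediate from the definition of the restriction.
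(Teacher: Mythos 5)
Your sketch is essentially correct and follows the same route the paper relies on: this statement is quoted as a known Fact (from Boney's \cite{tamelc-jsl}) rather than proved, and the standard arguments are exactly the ones you give --- the elementary chain/Tarski--Vaught and downward L\"owenheim--Skolem arguments for a fragment $\Phi$ together with \L o\'s's theorem for $\Ll_{\kappa,\omega}$ under a $\kappa$-complete ultrafilter for part (1), and the presentation theorem plus Boney's ultraproduct theorem \cite[4.3]{tamelc-jsl} for part (2). One small remark: closure under $\kappa$-complete ultraproducts in part (2) uses only $\kappa$-completeness (measurability suffices), not the full strength of strong compactness, which enters elsewhere (e.g.\ shortness via fine ultrafilters); this does not affect your argument since strong compactness is assumed anyway.
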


The following is an important property of compact AECs. Using the terminology of \cite[3.3]{tamelc-jsl}, it should be called ``fully $(<\kappa)$-tame and short over the empty set''. We prefer the shorter ``$(<\kappa)$-short'' here.

\begin{defin}\LABEL{r19}\myindex{short}
We say that an abstract class $\K$ is \emph{$(<\kappa)$-short} if for any $M_1, M_2 \in \K$, any ordinal $\alpha$, and any $\ba_\ell \in \fct{\alpha}{M_\ell}$, $\ell = 1,2$, if $\gtp (\ba_1 \, {\rest} \, I / \emptyset; M_1) = \gtp (\ba_2 \, {\rest} \, I / \emptyset; M_2)$ for any $I \in [\alpha]^{<\kappa}$, then $\gtp (\ba_1 / \emptyset; M_1) = \gtp (\ba_2 / \emptyset; M_2)$. We say that $\K$ is \emph{$\kappa$-short} if it is $(<\kappa^+)$-short. We say that $\K$ is \emph{short} if it is $(<\kappa)$-short for some $\kappa$.
\end{defin}
\begin{remark}[{\cite[3.4]{tamelc-jsl}}]\LABEL{short-local}
  If an abstract class $\K$ is $(<\kappa)$-short, then it is $(<\kappa)$-tame (put an enumeration of $M$ as part of both $\bb_1$ and $\bb_2$).
\end{remark}

\begin{fact}[{\cite[4.5]{tamelc-jsl}}]\LABEL{compact-short}
  If $\K$ is a $\kappa$-compact AEC, then $\K$ is $(<\kappa)$-short.
\end{fact}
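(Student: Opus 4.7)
The plan is to use the strong compactness of $\kappa$ to build a suitable $\kappa$-complete ultrafilter that will let us ``glue together'' the witnesses of the small-subset type equalities into a single amalgam witnessing full type equality.

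Fix $M_1, M_2 \in \K$, an ordinal $\alpha$, and tuples $\ba_\ell \in \fct{\alpha}{M_\ell}$ for $\ell = 1,2$, and assume $\gtp(\ba_1 \rest I / \emptyset; M_1) = \gtp(\ba_2 \rest I / \emptyset; M_2)$ for every $I \in [\alpha]^{<\kappa}$. For each such $I$, unfolding the definition of orbital type gives $N_I \in \K$ together with $\K$-embeddings $f_I \colon M_1 \to N_I$ and $g_I \colon M_2 \to N_I$ such that $f_I(\ba_1(i)) = g_I(\ba_2(i))$ for all $i \in I$. Let $\Ii := [\alpha]^{<\kappa}$ and, for each $i < \alpha$, set $X_i := \{I \in \Ii \mid i \in I\}$. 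Since any fewer-than-$\kappa$ intersection $\bigcap_{\beta < \lambda} X_{i_\beta}$ contains the element $\{i_\beta : \beta < \lambda\} \in \Ii$, the family $\{X_i : i < \alpha\}$ generates a proper $\kappa$-complete filter on $\Ii$. By strong compactness of $\kappa$, extend this to a $\kappa$-complete ultrafilter $U$ on $\Ii$, so that $X_i \in U$ for every $i < \alpha$.

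Now take ultraproducts by $U$. By the assumption that $\K$ is $\kappa$-compact (i.e.\ closed under $\kappa$-complete ultraproducts in a way that preserves $\lea$; see Remark~\ref{compact-rmk}(3)), the structures $M_1^\ast := \prod_{I \in \Ii} M_1 / U$, $M_2^\ast := \prod_{I \in \Ii} M_2 / U$, and $N^\ast := \prod_{I \in \Ii} N_I / U$ all belong to $\K$, the canonical diagonal maps $d_\ell \colon M_\ell \to M_\ell^\ast$ are $\K$-embeddings, and the pointwise maps $\bigf \colon M_1^\ast \to N^\ast$ (induced by the $f_I$) and $\bigg \colon M_2^\ast \to N^\ast$ (induced by the $g_I$) are $\K$-embeddings. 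Composing, we obtain $h_\ell := \bigf_\ell \circ d_\ell$ as $\K$-embeddings from $M_\ell$ into $N^\ast$ (with $\bigf_1 = \bigf$ and $\bigf_2 = \bigg$).

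The final step is to check that $h_1(\ba_1) = h_2(\ba_2)$ coordinatewise. Fix $i < \alpha$. By the construction of $f_I, g_I$, the set $\{I \in \Ii : f_I(\ba_1(i)) = g_I(\ba_2(i))\}$ contains $X_i$ and therefore lies in $U$. Hence
\[
h_1(\ba_1(i)) = \bigl[\, (f_I(\ba_1(i)))_{I \in \Ii}\,\bigr]_U = \bigl[\, (g_I(\ba_2(i)))_{I \in \Ii}\,\bigr]_U = h_2(\ba_2(i)).
\]
This exhibits $\gtp(\ba_1/\emptyset; M_1) = \gtp(\ba_2/\emptyset; M_2)$, completing the argument.

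The main obstacle is a bookkeeping one rather than a conceptual one: making sure that the ``Łoś-style'' features we need of the ultraproduct for AECs — namely that the diagonal maps are genuine $\K$-embeddings, and that pointwise $\K$-embeddings induce a $\K$-embedding between ultraproducts — really follow from the $\kappa$-compactness hypothesis as stated. These facts are exactly what Remark~\ref{compact-rmk}(3) cites into the framework, so once invoked the proof is essentially the standard compactness-via-ultraproducts argument transplanted into the AEC setting.
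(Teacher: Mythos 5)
Your argument is, in substance, the ultraproduct proof behind the cited result of Boney (which is all the paper itself offers for this Fact): build a fine $\kappa$-complete ultrafilter on $[\alpha]^{<\kappa}$ using strong compactness, average the witnesses, and use the \L o\'s-type preservation imported by Remark~\ref{compact-rmk}(3) to see that the diagonal maps and the maps induced by pointwise $\K$-embeddings are $\K$-embeddings. The ultrafilter construction, the observation that the agreement sets contain the $X_i$ and hence lie in $U$, and the final coordinatewise identification $h_1(\ba_1)=h_2(\ba_2)$ (which exhibits the two full types as equal) are all fine.

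The one step that is not justified at the stated level of generality is the opening ``unfolding of the definition.'' A $\kappa$-compact AEC is \emph{not} assumed to have amalgamation (amalgamation is only derived later, under categoricity, in Fact~\ref{compact-ap}), and with the paper's conventions orbital types are the transitive closure of the atomic relation $E_{\text{at}}$ (``there exist embeddings into a common model identifying the tuples''). So the hypothesis $\gtp(\ba_1\rest I/\emptyset;M_1)=\gtp(\ba_2\rest I/\emptyset;M_2)$ only yields, for each $I$, a \emph{finite chain} of models, tuples, and embeddings linking $(\ba_1\rest I,M_1)$ to $(\ba_2\rest I,M_2)$, not a single $N_I$ with embeddings $f_I,g_I$ of $M_1$ and $M_2$ agreeing on the tuples. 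This is repairable: for $\kappa$ uncountable a $\kappa$-complete ultrafilter is $\omega_1$-complete, so the chain length is constant on a $U$-large set, and you can take ultraproducts link by link (defining the intermediate $\alpha$-tuples coordinatewise $U$-almost everywhere); since $\K$-embeddings, in particular the diagonal maps, preserve orbital types, equality of the types of $d_1(\ba_1)$ and $d_2(\ba_2)$ over $\emptyset$ in the ultrapowers pulls back to the desired equality. (The case $\kappa=\aleph_0$, which the paper's definition of compactness allows, needs a separate remark; and in the paper's actual applications amalgamation is available, so there the single-amalgam extraction is harmless.) As written, though, that extraction is a genuine gap in the stated generality.
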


Finally, we note that categorical compact AECs have amalgamation:

\begin{fact}\LABEL{compact-ap}
  Let $\K$ be a compact AEC. If $\K$ is categorical in some $\mu > \LS (\K)$, then $\K$ has amalgamation.
\end{fact}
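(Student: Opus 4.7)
The result is due to Boney \cite[7.3]{tamelc-jsl} and goes by the standard ``lift-to-categoricity'' trick. Let $\kappa$ witness that $\K$ is $\kappa$-compact, so in particular $\K$ is closed under $\kappa$-complete ultraproducts and is $(<\kappa)$-short by Fact \ref{compact-short}.

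The plan is first to reduce the amalgamation problem to the categoricity cardinal. Given $M_0 \lea M_1, M_2$ with $\|M_\ell\| < \mu$, I would pick a $\kappa$-complete uniform fine ultrafilter $U$ on an index set $I$ of size $\mu$, arranged so that each ultrapower $M_\ell^\ast := M_\ell^I / U$ has cardinality exactly $\mu$. By the ultraproduct closure axiom of a compact AEC, $M_\ell^\ast \in \K$ and $M_0^\ast \lea M_\ell^\ast$ for $\ell = 1,2$; the diagonal maps $j_\ell : M_\ell \to M_\ell^\ast$ commute with the inclusions $M_0 \hookrightarrow M_\ell$. Consequently, any amalgam of $M_1^\ast, M_2^\ast$ over $M_0^\ast$ yields an amalgam of $M_1, M_2$ over $M_0$ by composition with the $j_\ell$.

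The heart of the argument is then to show that the unique $\mu$-sized model $N$ (to which each $M_\ell^\ast$ is isomorphic by categoricity) admits enough homogeneity to amalgamate $M_1^\ast$ and $M_2^\ast$ over $M_0^\ast$. I would proceed by a back-and-forth: given a partial $\K$-embedding of an initial segment of $M_1^\ast$ into $M_2^\ast$ fixing $M_0^\ast$, extend by one more element by taking a further $\kappa$-complete ultrapower of $M_2^\ast$ that swallows the image of the new element, then use categoricity in $\mu$ to identify the enlarged model back with $N$. The key is that the canonical diagonal embedding provided by a $\kappa$-complete ultrapower lives inside $\K$, and $(<\kappa)$-shortness guarantees that orbital types are pinned down by their small restrictions, so the ultrapower really does match the required orbital type.

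The main obstacle is avoiding circularity in the back-and-forth step: one would ordinarily extend partial isomorphisms by appealing to amalgamation, which is exactly what we are trying to establish. The resolution exploits the strongly compact cardinal $\kappa$ directly: $\kappa$-complete ultrapowers provide \emph{internal} extensions inside $\K$ that play the role of amalgams without presupposing amalgamation, while $(<\kappa)$-shortness ensures these extensions realize the right orbital types. Once the back-and-forth yields an isomorphism $M_1^\ast \cong M_2^\ast$ fixing $M_0^\ast$, composing with $j_1$ and $j_2$ produces the desired amalgam of $M_1$ and $M_2$ over $M_0$.
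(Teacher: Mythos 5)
Your proposal has two genuine gaps. First, the heart of your argument --- amalgamating $M_1^\ast$ and $M_2^\ast$ over $M_0^\ast$ at the categoricity cardinal by a back-and-forth --- is exactly the hard part, and the sketch does not avoid the circularity you mention. To extend a partial embedding by one element $a \in M_1^\ast$ you must realize (the image of) the orbital type of $a$ over a subset of $M_2^\ast$ of size up to $\mu$ in some extension of $M_2^\ast$ lying in $\K$. A $\kappa$-complete ultrapower of $M_2^\ast$ is indeed an internal extension, but nothing in your sketch shows it realizes the prescribed type: $(<\kappa)$-shortness only says that types of long tuples are determined by their small restrictions; it does not produce realizations, and without amalgamation orbital types over large bases do not even have the extension/transitivity calculus you would need to compare the type of $a$ computed in $M_1^\ast$ with types computed over $M_2^\ast$. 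Likewise, ``identify the enlarged model back with $N$ by categoricity'' gives an isomorphism that need not fix $M_0^\ast$ or the partial map already built, so the bookkeeping of the back-and-forth collapses. In effect you are trying to reprove amalgamation in (and below) the categoricity cardinal from scratch; this is the content of the Kolman--Shelah argument (\cite{kosh362}, or Makkai--Shelah \S 1), which requires a genuine independence/limit-model machinery, not just ultrapowers plus categoricity.

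Second, even granting your amalgamation at $\mu$, your reduction only treats triples with $\|M_\ell\| < \mu$ (or $\le \mu$ after cutting ultrapowers down), so at best you get amalgamation for $\K_{<\mu}$, whereas the statement asserts amalgamation for all of $\K$, including models of size $\ge \mu$, which cannot be ``lifted to $\mu$'' by taking ultrapowers. The paper's proof runs in the opposite direction on both counts: it quotes Kolman--Shelah to get amalgamation in $\K_{<\mu}$ from categoricity above the strongly compact, and then uses sufficiently complete \emph{fine} ultrafilters, as in Baldwin--Boney, to decompose a large amalgamation problem into small ones and push amalgamation \emph{up} to the whole class. (There is also a minor technical wrinkle in your setup: the $U$-ultrapower of a model of size $<\mu$ need not have size exactly $\mu$, and arranging this requires Löwenheim--Skolem plus coherence; this is fixable, unlike the two issues above.)
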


\begin{proof}
  See \cite{Sh:E102}, a revised version of\footnote{The referee pointed out that \cite{kosh362} is about $\mathbb{L}_{\kappa, \omega}$ while we work in the more general context of AECs. This is true, but the fact is that all the relevant results of this paper have immediate analogs to AECs: all that the paper uses is that the class of models of and $\mathbb{L}_{\kappa, \omega}$-sentence has the structure of an AEC. One day, somebody may write an exposition of this beautiful paper and generalize it explicitly to AECs but for now we have to save trees and move on!.} \cite{kosh362}, $\K_{<\mu}$ has amalgamation. Now as in the proof of \cite[2.3]{baldwin-boney}, we can use sufficiently complete fine ultrafilters to push amalgamation up and get that $\K$ has amalgamation.
\end{proof}

Thus we obtain that categorical compact AECs already have quite a lot of structure:

\begin{cor}\LABEL{compact-unbounded}
  Let $\K$ be a compact AEC. If $\K$ is categorical in \emph{some} $\mu > \LS (\K)$, then $\K$ has amalgamation, no maximal models, is $(<\LS (\K))$-short, and is categorical in a proper class of cardinals.
\end{cor}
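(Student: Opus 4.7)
The plan is to show that all four conclusions follow by assembling the facts already collected in this section, together with one external result on categoricity transfer. Fix a strongly compact $\kappa$ witnessing that $\K$ is $\kappa$-compact; by the definition of compactness, $\kappa \le \LS(\K)$.

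First I would dispose of the easy conclusions. \emph{No maximal models} is exactly Remark \ref{compact-rmk}(1), since one can always pass to a proper ultrapower of a given model. \emph{Amalgamation} is Fact \ref{compact-ap}, applied directly to the categoricity hypothesis. Combining no maximal models with the chain axioms of the ambient AEC yields \emph{arbitrarily large models}. For shortness, Fact \ref{compact-short} gives that $\K$ is $(<\kappa)$-short; since $\kappa \le \LS(\K)$ we have $[\alpha]^{<\LS(\K)} \supseteq [\alpha]^{<\kappa}$, so agreement of two types on all sub-sequences of size $<\LS(\K)$ forces agreement on all sub-sequences of size $<\kappa$, hence equality of the types. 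Therefore $\K$ is $(<\LS(\K))$-short.

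For the final conclusion, the plan is to invoke Fact \ref{categ-unbounded-fact}, which takes an $\LS(\K)$-local AEC with amalgamation and arbitrarily large models, categorical in some $\mu > \LS(\K)$, and outputs categoricity in a proper class of cardinals. I already have amalgamation, arbitrarily large models, and the categoricity hypothesis, so what remains is locality. Shortness implies locality at the same parameter by Remark \ref{short-local}, giving $(<\LS(\K))$-locality; and $(<\LS(\K))$-locality is stronger than $\LS(\K)$-locality $= (<\LS(\K)^+)$-locality (if two types agree on all subsets of cardinality $<\LS(\K)^+$, they a fortiori agree on all subsets of cardinality $<\LS(\K)$, and then shortness finishes). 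Applying Fact \ref{categ-unbounded-fact} completes the argument.

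There is no real obstacle here: every ingredient has just been recorded in the section, and the only thing to watch is the comparison of the various thresholds ($\kappa$ versus $\LS(\K)$ versus $\LS(\K)^+$) in the shortness/locality conversions, which is routine once one tracks the direction of the implications $(<\kappa)$-short $\Rightarrow$ $(<\kappa')$-short and analogously for locality when $\kappa \le \kappa'$.
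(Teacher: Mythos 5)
Your argument is correct and is essentially the paper's own proof: amalgamation from Fact \ref{compact-ap}, no maximal models from Remark \ref{compact-rmk}, $(<\kappa)$-shortness from Fact \ref{compact-short} upgraded by threshold-monotonicity (using $\kappa \le \LS(\K)$) to $(<\LS(\K))$-shortness and hence, via Remark \ref{short-local}, to $\LS(\K)$-locality, followed by Fact \ref{categ-unbounded-fact}; your extra remarks on arbitrarily large models and the direction of the monotonicity implications are fine. (Only a wording slip: in the parenthetical about locality it is locality, not shortness, that finishes the argument.)
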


\begin{proof}
  Say $\K$ is $\kappa$-compact. By Fact \ref{compact-ap}, $\K$ has amalgamation. By Remark \ref{compact-rmk}, $\K$ has no maximal models. By Fact \ref{compact-short}, $\K$ is $(<\kappa)$-short, hence by Remark \ref{short-local} it is $\LS (\K)$-tame. Now apply Fact \ref{categ-unbounded-fact}.
\end{proof}

\newpage

\section{Combinatorics of abstract classes}\LABEL{combinatorics-sec}

In addition to a few technical lemmas, the main result of this section is a generalization of the ``amalgamation from categoricity and weak diamond'' theorem of the first author \cite[I.3.8]{shelahaecbook}. The proof is essentially the same, but the axioms of AECs that were not used are dropped (as in the study of nice categories in \cite[\S4]{grsh174}), and an additional parametrization in the spirit of classification theory over a predicate \cite{class-predicate-i,class-predicate-ii} is introduced: we consider the $\phi$-amalgamation property to mean essentially that we can amalgamate while fixing the set defined by the formula $\phi$.

First, we recall a generally useful folklore fact about increasing continuous chains reflecting on a club:

\begin{lem}\LABEL{club-set}
  Let $\lambda$ be a regular uncountable cardinal. Let $\seq{A_i : i \le \lambda}$, $\seq{B_i : i \le \lambda}$ be increasing continuous sequences of sets such that:

  \begin{enumerate}
  \item $A_\lambda = B_\lambda$.
  \item For all $i < \lambda$, $|A_i| + |B_i| < \lambda$.
  \end{enumerate}

  Then $C \coloneqq  \{i < \lambda \mid A_i = B_i\}$ is club.
\end{lem}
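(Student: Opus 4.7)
The plan is a standard back-and-forth argument, using the continuity at $\lambda$ (so $A_\lambda = \bigcup_{i < \lambda} A_i$ and similarly for $B$) together with the hypothesis $A_\lambda = B_\lambda$ and regularity of $\lambda$ to catch up the two sequences.

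For closedness of $C$, I would simply observe that if $\seq{i_j : j < \delta}$ is an increasing sequence in $C$ with $\delta < \lambda$ limit and $i := \sup_j i_j$, then continuity of both sequences at $i$ gives $A_i = \bigcup_{j < \delta} A_{i_j} = \bigcup_{j < \delta} B_{i_j} = B_i$, so $i \in C$. This step is routine.

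For unboundedness, I would fix $i_\ast < \lambda$ and construct an increasing sequence $\seq{i_n : n < \omega}$ of ordinals below $\lambda$, starting with $i_0 := i_\ast$, such that $A_{i_n} \subseteq B_{i_{n+1}}$ and $B_{i_n} \subseteq A_{i_{n+1}}$ for every $n < \omega$. The key point is that each $a \in A_{i_n}$ lies in $A_\lambda = B_\lambda = \bigcup_{j < \lambda} B_j$, so some $j(a) < \lambda$ witnesses $a \in B_{j(a)}$; since $|A_{i_n}| < \lambda$ and $\lambda$ is regular, the supremum of these $j(a)$ is strictly below $\lambda$, and I can pick $i_{n+1} < \lambda$ above this supremum (and symmetrically catching the elements of $B_{i_n}$). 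Letting $i_\omega := \sup_{n < \omega} i_n$, we have $i_\omega < \lambda$ because $\lambda$ is uncountable and regular. Continuity gives $A_{i_\omega} = \bigcup_n A_{i_n}$ and $B_{i_\omega} = \bigcup_n B_{i_n}$, and the back-and-forth inclusions yield $A_{i_\omega} = B_{i_\omega}$, hence $i_\omega \in C$ with $i_\omega \ge i_\ast$.

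The only real subtlety is making sure the back-and-forth terminates inside $\lambda$, which uses both the regularity and uncountability of $\lambda$ together with the cardinality bound $|A_i| + |B_i| < \lambda$ at each stage; no step should present a genuine obstacle.
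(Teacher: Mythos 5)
Your proof is correct and follows essentially the same route as the paper: closedness from continuity, and unboundedness by building an $\omega$-chain below $\lambda$ with the alternating inclusions $A_{i_n} \subseteq B_{i_{n+1}}$ and $B_{i_n} \subseteq A_{i_{n+1}}$, using $A_\lambda = B_\lambda$, the cardinality bound, and regularity, then taking the supremum. No issues.
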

\begin{proof}
  $C$ is clearly closed by continuity of the chains. To see unboundedness, let $i < \lambda$. We build $\seq{j_k : k < \omega}$ an increasing sequence of ordinals below $\lambda$ such that $j_0 = i$, and for all $k < \omega$, $B_{j_k} \subseteq A_{j_{k + 1}} $ and $A_{j_k} \subseteq B_{j_{k + 1}}$. This is straightforward using regularity of $\lambda$, $A_\lambda = B_\lambda$, $|A_j| + |B_j| < \lambda$ for all $j$, and the fact that the chains are increasing. Now by continuity of the chains, $j \coloneqq  \sup_{k < \omega} j_k$ is in $C$, as desired.
\end{proof}

We use this to prove a result about chain of structures that are not even necessarily ordered by substructure:

\begin{defin}\LABEL{f3}\myindex{weak substructure}\index{$M \subseteq^\ast N$|see {weak substructure}}
  For $M$ and $N$ $\tau$-structures, $M \subseteq^\ast N$ ($M$ is a \emph{weak substructure of $N$}) if $|M| \subseteq |N|,$ for all relations symbols $R$ in $\tau$, $R^M \subseteq R^N,$ and for all function symbols $f$ (including those of arity $0$), $f^{M} = f^{N} \, {\rest} \, M.$
\end{defin}

\begin{lem}\LABEL{club-struct}
  Let $\lambda$ be a regular uncountable cardinal. Let $\tau$ be a vocabulary of cardinality $< \lambda$ and let $\seq{M_i : i \le \lambda}$, $\seq{N_i : i \le \lambda}$ be $\subseteq^\ast$-increasing continuous sequences of $\tau$-structures. Assume:

  \begin{enumerate}
  \item $M_\lambda = N_\lambda$.
  \item $|\tau| < \lambda$.
  \item $\|M_i\| + \|N_i\| < \lambda$ for all $i < \lambda$.
  \end{enumerate}

  Then the set $C \coloneqq  \{i < \lambda \mid M_i = N_i\}$ is club.
\end{lem}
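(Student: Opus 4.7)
The plan is to reduce everything to Lemma \ref{club-set} by applying it once to the universes and once to each symbol interpretation, and then to intersect the resulting clubs using that $\lambda$ is regular uncountable and $|\tau| < \lambda$.

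First I would apply Lemma \ref{club-set} with $A_i := |M_i|$ and $B_i := |N_i|$. These sequences are $\subseteq$-increasing (by the universe clause in the definition of $\subseteq^\ast$), continuous (since continuity of the structure chains gives continuity of their universes), satisfy $A_\lambda = B_\lambda$ (from $M_\lambda = N_\lambda$), and satisfy the cardinality bound $|A_i| + |B_i| < \lambda$ by hypothesis (3). This yields a club $C_0 := \{i < \lambda \mid |M_i| = |N_i|\}$.

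Next, for each relation symbol $R \in \tau$ of (finite) arity $n_R$, apply Lemma \ref{club-set} with $A_i := R^{M_i}$ and $B_i := R^{N_i}$. These are $\subseteq$-increasing by the second clause in the definition of $\subseteq^\ast$, continuous because the interpretations under a $\subseteq^\ast$-continuous chain are the set-theoretic unions of the interpretations, and satisfy $A_\lambda = B_\lambda$ since $M_\lambda = N_\lambda$. For the cardinality bound, $|R^{M_i}| + |R^{N_i}| \le (\|M_i\| + \|N_i\|)^{n_R} < \lambda$, using hypothesis (3) and that $\lambda$ is infinite (so a finite power of a cardinal below $\lambda$ stays below $\lambda$). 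This yields a club $C_R := \{i < \lambda \mid R^{M_i} = R^{N_i}\}$. Constants are handled trivially, and if $\tau$ contains function symbols one treats them by passing to their graphs (the only structure the definition of $\subseteq^\ast$ tracks is relational).

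Finally, set $C := C_0 \cap \bigcap_{R \in \tau} C_R$. Since $|\tau| < \lambda$ and $\lambda$ is a regular uncountable cardinal, this is an intersection of fewer than $\lambda$ many clubs, hence a club. For any $i \in C$ the universes agree and all symbol interpretations agree, so $M_i = N_i$ as $\tau$-structures, which shows $C \subseteq \{i < \lambda \mid M_i = N_i\}$ (the reverse inclusion is immediate), completing the proof. There is no real obstacle here beyond bookkeeping: the only mild subtlety is the bound $\mu^{n_R} < \lambda$ for $\mu < \lambda$, which holds automatically since the vocabulary is finitary and $\lambda$ is infinite.
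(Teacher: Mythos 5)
Your proof is correct and follows essentially the same route as the paper: apply Lemma \ref{club-set} symbol-by-symbol and intersect the fewer than $\lambda$ resulting clubs (the paper merely handles the universes by expanding with a unary predicate $P$ interpreting the whole universe, instead of your separate application of Lemma \ref{club-set} to the universes, which is an immaterial difference).
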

\begin{proof}
  Expanding $M_i$ and $N_i$ if necessary, we may assume without loss of generality that there is a unary predicate $P$ in $\tau$ such that $P^{M_i} = M_i$ and $P^{N_i} = N_i$ for all $i < \lambda$. For each $R \in \tau$, the set $C_R \coloneqq  \{i < \lambda \mid R^{M_i} = R^{N_i}\}$ is club by Lemma \ref{club-set}. Now observe that $C = \bigcap_{R \in \tau} C_R$.
\end{proof}

The following technical criteria for being resolvable (Definition \ref{resolv-def}) is used in the proof of Lemma \ref{isbrim-lem}.

\begin{lem}\LABEL{resolv-technical}
  Let $\K$ be an abstract class. Assume:

  \begin{enumerate}
      \item $\LS (\K) < \infty$, $\K_{<\LS (\K)} = \emptyset$.
      
      \item For any $\lambda \in \dom (\K)$, $\K$ is $[\LS (\K), \lambda)$-continuous.
      
      \item (Coherence for models of different sizes) For $M_0, M_1, M_2 \in \K$ with $M_0 \lea M_2$, $M_1 \lea M_2$, $|M_0| \subseteq |M_1|$, \emph{and} $\|M_0\| < \|M_1\| < \|M_2\|$, we have that $M_0 \lea M_1$.
      
      \item (Smoothness for big extensions) If $\seq{M_i : i \le \delta}$ is increasing and $N \in \K$ is such that $M_i \lea N$ for all $i < \delta$ and $\|M_\delta\| < \|N\|$, then $M_\delta \lea N$.
      
      \item (Resolvability for successors) If $\lambda \ge \LS (\K)$, $M \lea N$ are such that $M \in \K_\lambda$ and $N \in \K_{\lambda^+}$, then there exists $\seq{M_i : i < \lambda^+}$ increasing continuous in $\K_\lambda$ such that $M_0 = M$ and $\bigcup_{i < \lambda^+} M_i = N$.
  \end{enumerate}

  Then $\K$ is resolvable.
\end{lem}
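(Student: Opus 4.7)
The plan is to proceed by transfinite induction on $\lambda := \|M\|$, with the goal that for every $M \in \K_\lambda$ and every $M_0 \lea M$ with $\mu := \|M_0\| < \lambda$, the model $M$ is resolvable over $M_0$. For the base case $\lambda = \LS(\K)^+$, we necessarily have $\mu = \LS(\K)$, and hypothesis (5) directly yields a chain $\seq{N_i : i < \LS(\K)^+}$ in $\K_{\LS(\K)}$ resolving $M$ over $M_0$; setting $N_{\LS(\K)^+} := M$ completes the resolution.

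For the inductive step, fix an enumeration $\seq{a_\gamma : \gamma < \lambda}$ of $|M|$ and first construct an intermediate skeleton consisting of one model $M^{(\kappa)}$ for each cardinal $\kappa \in [\mu, \lambda)$, satisfying $M^{(\mu)} = M_0$, $\|M^{(\kappa)}\| = \kappa$, $M^{(\kappa)} \lea M$, $M^{(\kappa)} \lea M^{(\kappa')}$ whenever $\kappa \le \kappa'$, and $\{a_\gamma : \gamma < \kappa\} \subseteq |M^{(\kappa)}|$. At a successor cardinal $\kappa^+ < \lambda$, use the Löwenheim-Skolem-Tarski property (available from hypothesis (1)) to find $M^{(\kappa^+)} \lea M$ of cardinality $\kappa^+$ containing $|M^{(\kappa)}| \cup \{a_\gamma : \gamma < \kappa^+\}$, then invoke coherence for models of different sizes (hypothesis (3)) to conclude $M^{(\kappa)} \lea M^{(\kappa^+)}$. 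At a limit cardinal $\kappa < \lambda$, set $M^{(\kappa)} := \bigcup_{\kappa' < \kappa} M^{(\kappa')}$: continuity (hypothesis (2)) gives $M^{(\kappa)} \in \K$ with $M^{(\kappa')} \lea M^{(\kappa)}$ for all $\kappa' < \kappa$, and smoothness for big extensions (hypothesis (4)) then yields $M^{(\kappa)} \lea M$. The inclusion $\{a_\gamma : \gamma < \kappa\} \subseteq |M^{(\kappa)}|$ guarantees that $\bigcup_\kappa |M^{(\kappa)}| = |M|$.

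Now refine the skeleton using hypothesis (5): for each cardinal $\kappa \in [\mu, \lambda)$, apply (5) to $M^{(\kappa)} \lea M^{(\kappa^+)}$ (writing $M^{(\kappa^+)} := M$ when $\kappa^+ = \lambda$) to obtain a strictly $\lea$-increasing continuous chain $\seq{R^{(\kappa)}_\xi : \xi < \kappa^+}$ in $\K_\kappa$ with $R^{(\kappa)}_0 = M^{(\kappa)}$ and $\bigcup_\xi R^{(\kappa)}_\xi = M^{(\kappa^+)}$. Assemble the resolution $\seq{N_\alpha : \alpha \le \lambda}$ so that within each cardinality block $\alpha \in [\kappa, \kappa^+)$ (with $[0, \mu^+)$ as the initial block) the restriction of $\seq{N_\alpha}$ is, up to a shift of index, the chain $R^{(\kappa)}$, with $N_{\kappa^+} = M^{(\kappa^+)}$. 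Consecutive blocks agree at their shared boundary $\alpha = \kappa^+$ since both give $M^{(\kappa^+)}$; continuity within each block is built into $R^{(\kappa)}$, and continuity at cardinal-boundary limit ordinals $\kappa < \lambda$ (including limit cardinals $\kappa$) follows from hypothesis (2). Setting $N_\lambda := M$, the inclusion of the enumeration $\seq{a_\gamma}$ in the skeleton ensures $\bigcup_{\alpha < \lambda} |N_\alpha| = |M|$, so $N_\lambda = M$ as a structure.

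The principal obstacle is the absence of a same-cardinality coherence axiom: hypothesis (3) supplies coherence only when three models have \emph{strictly} increasing sizes. This rules out a naive element-by-element transfinite construction within a fixed $\K_\kappa$, and is precisely why we must rely on hypothesis (5) as the sole tool for producing within-cardinality $\lea$-chains, forcing the block-based architecture above. A secondary concern is ensuring that the terminal union $\bigcup_{\alpha < \lambda} N_\alpha$ equals $M$ as a structure (not merely as a $\lea$-predecessor of $M$), which is why the enumeration $\seq{a_\gamma : \gamma < \lambda}$ must be threaded through the construction of the skeleton.
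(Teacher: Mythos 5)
Your proof is correct and uses the same ingredients as the paper's, but organizes them differently. The paper proceeds by induction on $\|M\|$: in the successor case it interpolates a single intermediate model via Löwenheim-Skolem-Tarski plus coherence and concatenates a recursively obtained resolution with one given by hypothesis (5); in the limit case it builds a chain of length $\cf{\|M\|}$ of models of strictly increasing sizes cofinal in $M$ and concatenates recursively obtained resolutions between consecutive members. You instead run a single global construction: a skeleton with exactly one model per cardinal in $[\|M_0\|,\|M\|)$, threaded through an enumeration of $|M|$, and then fill each consecutive gap directly with hypothesis (5). This makes your ``induction on $\lambda$'' vestigial (the inductive step never uses the induction hypothesis), and it buys a slightly cleaner picture in which hypothesis (5) is only ever applied between consecutive cardinals, at the cost of having to verify coherence of the assembled chain across block boundaries by hand; the paper's recursion pushes that bookkeeping into the concatenation of resolutions.

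Two points deserve tightening, though neither is fatal and the paper is equally terse about them. First, at a limit cardinal $\kappa$ you assert that hypothesis (2) gives $M^{(\kappa')}\lea M^{(\kappa)}$ for \emph{all} $\kappa'<\kappa$, but as defined, $([\LS(\K),\lambda),\delta)$-continuity only yields $M_0\lea M_\delta$; you get the full statement by applying hypothesis (2) to the tail chains $\seq{M^{(\kappa'')} : \kappa'\le\kappa''<\kappa}$, and only then is the chain $\seq{M^{(\kappa')} : \kappa'\le\kappa}$ increasing so that hypothesis (4) applies to give $M^{(\kappa)}\lea M$. The same tail-of-chain argument is what guarantees $R^{(\kappa)}_\xi\lea M^{(\kappa^+)}$, which you need for the assembled chain to be $\lea$-increasing across blocks (hypothesis (5) does not state it). Second, hypothesis (5) gives an increasing, not strictly increasing, chain, while Definition \ref{resolv-def} demands strict increase; since the union of each block has cardinality $\kappa^+$ while its members have cardinality $\kappa$, one extracts a strictly increasing continuous subchain of length $\kappa^+$ along a club of indices, which restores the exact index/cardinality pattern $\|N_\alpha\|=\|M_0\|+|\alpha|$. (Also, the requirement $\{a_\gamma:\gamma<\mu\}\subseteq|M^{(\mu)}|$ should not be imposed at $\kappa=\mu$, since $M^{(\mu)}=M_0$ is given; imposing it only for $\kappa>\mu$ suffices for the union to exhaust $|M|$.)
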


\begin{proof}
  Let $M_0 \lea M$ be given with $\|M_0\| < \|M\|$. Write $\lambda \coloneqq  \|M_0\|$, $\mu \coloneqq  \|M\|$. We show by induction on $\mu$ that $M$ is resolvable over $M$. If $\mu = \lambda^+$, this is clause $(5)$ of the assumption. Assume now that $\lambda^+ < \mu$. For any such $\lambda, M_{0}, M$, if $\mu$ is a successor, say $\mu_0^+$, use that $\LS (\K) \le \lambda$ to pick $M_1$ such that $M_1 \lea M$, $|M_0| \subseteq |M_1|$, and $\lambda < \|M_1\| = \mu_0$. By coherence for models of different sizes, $M_0 \lea M_1$. By the induction hypothesis, there is a resolution $\seq{M_{0, i} : i < \delta_0}$ of $M_1$ over $M_0$. By assumption (5), there is also a resolution $\seq{M_{1, i}: i < \delta_1}$ of $M$ over $M_1$. Now concatenate these two resolutions.

  Thus we can assume that $\mu$ is limit. Let $\delta \coloneqq  \cf{\mu}$ and let $\seq{A_i : i < \delta}$ be an increasing sequence of sets such that $|A_i| < \mu$ for all $i < \delta$ and $\bigcup_{i < \mu} A_i = M$. We build $\seq{M_i : i < \delta}$ increasing continuous such that for all $i < \delta$:

  \begin{enumerate}
  \item $M_0 = M$.
  \item $A_i \subseteq |M_{i + 1}|$ for all $i < \delta$.
  \item $\|M_i\| < \|M_{i + 1}\| < \|M\|$.
  \end{enumerate}

  This is possible using the coherence and smoothness assumptions. This is enough. At the end, $\bigcup_{i < \delta} M_i = M$. We can then take a resolution of $M_{i + 1}$ over $M_i$ for each $i < \delta$ (using the induction hypothesis), and concatenate all these resolutions to obtain the final desired resolution.
\end{proof}

Instead of giving the definition of the weak diamond principle, we will use the following form of the weak diamond principle (the proof is the same as in \cite[6.1]{dvsh65}; or see the appendix of \cite{proper-and-imp}).

\begin{fact}\LABEL{theta-fact}
  Let $\lambda$ be an infinite cardinal and suppose there is an infinite cardinal $\lambda_0 < \lambda$ such that $2^{\lambda_0} = 2^{<\lambda} < 2^{\lambda}$. Then ($\lambda$ is regular uncountable and) for every sequence $\seq{f_\eta \in \fct{\lambda}{\lambda} : \eta \in \fct{\lambda}{2}}$, there exists $\eta \in \fct{\lambda}{2}$ such that the set $$ S_\eta \coloneqq  \{\delta < \lambda \mid \exists \nu \in \fct{\lambda}{2} : f_\eta \, {\rest} \, \delta = f_\nu \, {\rest} \, \delta, \eta \, {\rest} \, \delta = \nu \, {\rest} \, \delta, \eta \, {\rest} \, (\delta + 1) \neq \nu \, {\rest} \, (\delta + 1)\}
  $$

  is stationary.
\end{fact}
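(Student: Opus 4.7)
The plan is to prove the statement by contradiction, reducing to the generalized Devlin--Shelah weak diamond for colorings $\fct{<\lambda}{\lambda} \to 2$. Before the main argument I would verify the parenthetical: $\lambda$ must be regular and uncountable. Uncountability is immediate since $\lambda_0 < \lambda$ is infinite. For regularity, if $\lambda$ were singular then the continuum function is constant with value $2^{\lambda_0}$ on $[\lambda_0, \lambda)$, so Bukovsky's formula gives $2^\lambda = (2^{\lambda_0})^{\cf{\lambda}} = 2^{\max(\lambda_0, \cf{\lambda})}$; this equals $2^{\lambda_0}$ if $\cf{\lambda} \leq \lambda_0$, and is at most $2^{<\lambda} = 2^{\lambda_0}$ if $\lambda_0 < \cf{\lambda} < \lambda$, either of which contradicts $2^{<\lambda} < 2^\lambda$.

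Now suppose for contradiction that $S_\eta$ is non-stationary for every $\eta \in \fct{\lambda}{2}$, and for each such $\eta$ fix a club $C_\eta \subseteq \lambda$ disjoint from $S_\eta$. Unwinding the definition, $\delta \in S_\eta$ is equivalent to the existence of some $\nu \in \fct{\lambda}{2}$ with $\nu \rest \delta = \eta \rest \delta$, $f_\nu \rest \delta = f_\eta \rest \delta$, and $\nu(\delta) \neq \eta(\delta)$. Hence for $\delta \in C_\eta$, the value $\eta(\delta)$ is the unique bit $i < 2$ for which some $\nu \in \fct{\lambda}{2}$ extends $(\eta \rest \delta, f_\eta \rest \delta)$ with $\nu(\delta) = i$. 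Fix a bijection $2 \times \lambda \to \lambda$, encode each pair $(\sigma, g) \in \fct{\delta}{2} \times \fct{\delta}{\lambda}$ as an element of $\fct{\delta}{\lambda}$, and encode each $\eta$ together with $f_\eta$ as $h_\eta \in \fct{\lambda}{\lambda}$. Define $F: \fct{<\lambda}{\lambda} \to 2$ so that on an encoded pair it returns $1 - i$ when a unique extendable bit $i$ exists (and an arbitrary default value otherwise); then by construction $F(h_\eta \rest \delta) \neq \eta(\delta)$ for every $\delta \in C_\eta$ and every $\eta$.

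To derive the contradiction I would invoke the generalized weak diamond: provided $\lambda^{<\lambda} < 2^\lambda$ and $\lambda$ is regular uncountable, for every coloring $F: \fct{<\lambda}{\lambda} \to 2$ there exists $g^\ast \in \fct{\lambda}{2}$ such that $\{\delta < \lambda : F(h \rest \delta) = g^\ast(\delta)\}$ is stationary for every $h \in \fct{\lambda}{\lambda}$ (the proof adapts the argument in \cite{dvsh65} verbatim, replacing the range $2$ by $\lambda$). The hypothesis $\lambda^{<\lambda} < 2^\lambda$ follows from ours: since $\lambda$ is regular, every function $\delta \to \lambda$ with $\delta < \lambda$ is bounded, yielding $\lambda^{<\lambda} \leq \lambda \cdot 2^{<\lambda} = \max(\lambda, 2^{\lambda_0})$, and both $\lambda < 2^\lambda$ (by Cantor) and $2^{\lambda_0} < 2^\lambda$ (by hypothesis) hold. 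Applying the weak diamond with $\eta := g^\ast$ and $h := h_\eta$, the set $\{\delta : F(h_\eta \rest \delta) = \eta(\delta)\}$ must be stationary, contradicting its disjointness from the club $C_\eta$ established above.

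The combinatorial heart of the argument is identical to Devlin--Shelah, so the main obstacle is really the packaging: one must verify that the encoding of $(\eta, f_\eta)$ as a single $h_\eta \in \fct{\lambda}{\lambda}$ is information-preserving enough that the deterministic coloring $F$ indeed violates the generalized weak diamond, and one must confirm that this generalization of the weak diamond to $\fct{<\lambda}{\lambda}$-colorings is available under our exact cardinal arithmetic hypothesis.
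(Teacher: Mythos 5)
Your preliminary argument that $\lambda$ is regular and uncountable is correct, and so is your reduction: the coordinatewise coding of $(\eta, f_\eta)$ into $h_\eta \in \fct{\lambda}{\lambda}$, the coloring $F$ returning the complement of the unique extendable bit, and the observation that if every $S_\eta$ were non-stationary then $\{\delta < \lambda : F(h_\eta \rest \delta) = \eta(\delta)\}$ would be contained in the complement of the club $C_\eta$, are all fine. This is the standard translation between the uniformization form of the weak diamond (which is how the Fact, essentially \cite[6.1]{dvsh65}, is stated) and the guessing form; in that sense your architecture agrees with the intended proof, which the paper simply cites.

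The genuine gap is the guessing principle itself, which carries all the content of the Fact and which you do not prove. You state it under the hypothesis ``$\lambda$ regular uncountable and $\lambda^{<\lambda} < 2^\lambda$'' and assert that the Devlin--Shelah proof adapts verbatim. For regular $\lambda$ one has $\lambda^{<\lambda} = \lambda \cdot 2^{<\lambda}$, so your hypothesis amounts to $2^{<\lambda} < 2^\lambda$; at successor $\lambda$ this is equivalent to the Fact's hypothesis, but at weakly inaccessible $\lambda$ it is strictly weaker, and it is precisely the clause you discarded, $2^{\lambda_0} = 2^{<\lambda}$ (eventual constancy of the continuum function below $\lambda$), that the Devlin--Shelah argument and the treatment in the appendix of \cite{proper-and-imp} actually use: it is what lets all level-$\delta$ data ($\eta \rest \delta$, $f_\eta \rest \delta$, $C_\eta \cap \delta$, for arbitrary $\delta < \lambda$) be coded by subsets of a single fixed $\lambda_0 < \lambda$ so that the counting and pigeonhole steps go through. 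The condition $\lambda^{<\lambda} < 2^\lambda$ only bounds the number of possible inputs of the coloring, which is not enough for that argument; indeed your lemma as stated would give the weak diamond at every strongly inaccessible cardinal (where $\lambda^{<\lambda} = \lambda < 2^\lambda$), which the Devlin--Shelah proof does not yield and which is not a known theorem of ZFC. Since your application only needs the principle under the Fact's full hypothesis, the repair is simply to invoke it as proved in \cite[6.1]{dvsh65} (the enlarged domain $\fct{<\lambda}{\lambda}$ is harmless, since $\lambda^{<\lambda} \le 2^{\lambda_0}$ under that hypothesis); but then what remains of your proposal is the routine change of form together with the citation the paper already makes, and the combinatorial core stays unproven --- a point your own closing caveat concedes.
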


The following notions relativize the usual amalgamation, categoricity, and universality properties to a formula $\phi.$ The goal it to relativize and generalize the following result of the first author \cite[I.3.8]{shelahaecbook}: if an AEC $\K$ with $\lambda \geq \LS(\K)$ is categorical in $\lambda,$ has a universal model in $\lambda^{+},$ and $2^{\lambda} < 2^{\lambda^{+}},$ then $\K$ has amalgamation in $\lambda.$ This will be obtained from Theorem \ref{ap-diamond-thm} by setting $\K = \K^{\ast}, \lambda_{1} = \LS(\K) = \lambda, \lambda_{2} = \LS(\K)^{+}, \phi(x) \coloneqq  x \neq x$ and $\iota = 1.$  

\begin{defin}\LABEL{phi-props-def}
  Let $\K$ be an abstract class in a vocabulary $\tau = \tau (\K)$ and let $\phi (x)$ be a quantifier-free $\Ll_{\omega, \omega}(\tau)$-formula and for part (1)-(5), (9), let 
  $\iota \in \{ 1, 2\}.$ 
  
\begin{enumerate}
    \item For $M, N \in \K$, we say $M$ and $N$ are \emph{$\phi$-equal} if $\psi[M] = \phi[N]$ which means $\phi(M) = \phi(N)$ and $R^M \, {\rest} \, \phi (M) = R^N \, {\rest} \, \phi (N)$ for every relation and function symbols $R$ of $\tau$. In other words, the (partial) $\tau$-structures induced by $\phi$ on $M$ and $N$ are equal (if a function of $M$ takes an element out of $\phi(M),$ we require that its $M$ version also takes the element out, but not necessarily that the functions agree).\myindex{$\phi$-equal}\index{equal|see {$\phi$-equal}}
    
    \item For $\iota \in \{ 1, 2 \}$ a \emph{$\phi$-span$_{\iota}$} is a triple $(M_0, M_1, M_2)$ such that $M_0 \lea M_\ell$, $\ell = 1,2$, $\phi(M_{1}) = \phi(M_{2})$ and $\iota = 1 \Rightarrow \phi(M_{\ell}) = \phi(M_{0})$ and $\iota = 2 \Rightarrow \phi[M_{\ell}] \neq \phi[M_{0}]$.\myindex{$\phi$-span}\index{span|see {$\phi$-span}}
    
    \item A \emph{$\phi$-amalgam}$_{\iota}$ of a $\phi$-span$_{\iota}$ $(M_0, M_1, M_2)$ is a  triple $(N, f_1, f_2)$ such that $N \in \K$ and $f_\ell : M_\ell \xrightarrow[M_0]{} N$ are such that $f_1 \, {\rest} \, \phi (M_1) = f_2  \, {\rest} \, \phi (M_2)$ and $f_{1} \, {\rest} \, M_{0} = f_{2} \, {\rest} \, M_{0}.$\myindex{$\phi$-amalgam}\index{amalgam|see {$\phi$-amalgam}}
    
    \item We say that $M$ is a \emph{$\phi$-amalgamation$_{\iota}$ base (in $\K$)} if $M \in \K$ and every $\phi$-span$_{\iota}$ $(M, M_1, M_2)$ has a $\phi$-amalgam$_{\iota}$.\index{$\phi$-amalgamation base}\myindex{amalgamation base|see {$\phi$-amalgamation base}}
    
    \item We say that $\K$ has the \emph{$\phi$-amalgamation$_{\iota}$ property} if every $M \in \K$ is a $\phi$-amalgamation$_{\iota}$ base.\myindex{$\phi$-amalgamation property}\index{amalgamation property|see {$\phi$-amalgamation property}}
    
      \item We say that $N \in \K_{\lambda}$ is \emph{$\phi$-universal}$_{\iota}$ if whenever $M \in \K_{\lambda}$ and $f: \phi[M] \cong \phi[N]$, there exists $g: M \rightarrow N$ which extends $f$.\myindex{$\phi$-universal}\index{universal|see {$\phi$-universal}}
      
      \item We say that $\K$ has \emph{$\phi$-uniqueness}$_{\iota}$ if whenever $M_0, M_1, M_2 \in \K$ are such that $M_0 \lta M_\ell$, $\ell = 1,2$, there is $f: \phi[M_1] \cong_{\phi(M_0)} \phi[M_2]$ and $f \, {\rest} \, M_{0} = \id$.\myindex{$\phi$-uniqueness}
      
      \item We say that $\K_{\lambda}$ is \emph{$\phi$-categorical}$_{\iota}$ if whenever $M, N \in \K_{\lambda}$, we have that $\phi [M] \cong \phi [N]$.\myindex{$\phi$-categorical}\index{categorical|see {$\phi$-categorical}}
      
      \item $\K$ is $([\lambda_{1}, \lambda_{2}), \phi, \delta)$-continuous \underline{when} if $\langle M_{\alpha}: \alpha < \delta \rangle$ is $\leq_{\K_{[\lambda_{1}, \lambda_{2})}}$-increasing continuous \underline{then} $M_{\delta} = \bigcup_{\alpha} M_{\alpha} \in \K.$ Omitting $\delta$ means ``for every limit ordinal $\delta$''. 
  \end{enumerate}
\end{defin}

\begin{remark}\LABEL{f6}\
  \begin{enumerate}
      \item Letting $\iota = 1$ and setting $\phi (x)$ to be $x \neq x$, we get back the usual definitions of the amalgamation property and of a universal model.
      
      \item Setting $\phi (x)$ to be $x = x$ in Definition \ref{phi-props-def}(8) we get back the usual definition of categoricity.
      \end{enumerate}
\end{remark}

\begin{thm}\LABEL{ap-diamond-thm}
  Let $\K$ and $\K^\ast$ be abstract classes such that $|\K| = |\K^\ast|$ and $M \lea N$ implies $M \leap{\K^\ast} N$. Let $\phi$ be a quantifier-free $\Ll_{\omega, \omega} (\tau (\K))$-formula, $\iota \in \{1, 2\}$ and let $\Theta = [\lambda_1, \lambda_2]$ be an interval of cardinals. Assume $2^{\lambda_1} = 2^{<\lambda_2} < 2^{\lambda_2}$.
  
  1) If $\iota = 1$ and

  \begin{enumerate}
      \item[(A)] $\K$ is $[\lambda_1, \lambda_2)$-continuous.
      
      \item[(B)] There is a $\phi$-universal$_{\iota}$ model in $\K_{\lambda_2}^\ast$.
      
      \item[(C)] $\K_{\lambda_2}$ is $\phi$-categorical$_{\iota}$.
      
      \item[(D)] For any $\lambda \in [\lambda_1, \lambda_2)$:
      
        \begin{enumerate}
            \item $\K_\lambda$ has $\phi$-uniqueness$_{\iota}$.
            
            \item\label{phi-ap-assumption} For any $\phi$-span$_{\iota}$ $\bM = (M_0, M_1, M_2)$ in $\K_\lambda$, if $\bM$ has a $\phi$-amalgam$_{\iota}$ in $\K^\ast$, then $\bM$ has a $\phi$-amalgam$_{\iota}$ in $\K_\lambda$.
        \end{enumerate}
  \end{enumerate}
  
  \underline{Then} for any $M \in \K_{[\lambda_1, \lambda_2)}$, there exists $N \in \K_{[\lambda_1, \lambda_2)}$ such that $M \lea N$ and $N$ is a $\phi$-amalgamation$_{\iota}$ base in $\K_{\|N\|}$.
  
  2) If $\iota = 2,$ we get the conclusion of part (1) if (A), (B), (C) there holds and
  
  \begin{enumerate}
      \item[(D)$_{2}$] if $M_{\ast} \in \K_{[\lambda_{1}, \lambda_{2})}$ there is $\bar{N}, N$ such that: 
      
      \begin{enumerate}
          \item[(a)] $N \in \{ \phi(M) \colon M \in K_{\lambda_{2}} \},$
          
          \item[(b)] $\bar{N} = \langle N_{i} \colon i < \lambda_{2} \rangle$ is a $\subsetneq$-increasing sequence of $\tau(\phi(M))$-models, 
          
          \item[(c)] $\bigcup_{i} N_{i} = N $ and $i < \lambda_{2} \Rightarrow \Vert N_{i} \Vert < \lambda_{2},$
          
          \item[(d)] $N_{0} = \phi(M_{\ast}),$
          
          \item[(e)] if $i < \lambda_{2}, M_{0} \in \K_{[\lambda_{1}, \lambda_{2})}, \phi(M_{0}) = N_{i}$ and $M_{0} \cap N = N_{i},$ then there are $M_{1}, M_{2}$ such that $(M_{0}, M_{1}, M_{2})$ is $\phi$-span$_{\iota}$ and $\phi(M_{\ell}) = N_{i +1}$ for $\ell = 1, 2.$ 
      \end{enumerate}
  \end{enumerate}
\end{thm}

\begin{proof}\
  
  1) Suppose $M$ is a counter-example. We shall build an increasing continuous tree $\seq{M_\eta : \eta \in \fct{\le \lambda_2}{2}}$ such that for all $\eta \in \fct{\leq \lambda_2}{2}$:

  \begin{enumerate}
      \item[$\bullet_{1}$] $M_{\seq{}} = M$.
      
      \item[$\bullet_{2}$] $M_\eta \in \K_{\lambda + \rm{lg}(\eta)}$, where $\lambda = \Vert M \Vert.$
      
      \item[$\bullet_{3}$]\LABEL{ord-req} $|M_\eta| \subseteq \lambda_2$.
      
      \item[$\bullet_{4}$]\LABEL{phi-req} if $\iota = 1,$ then $\phi (M_\eta) = \phi (M_\nu)$ for all $\nu \in \fct{\leq \ell (\eta)}{2},$ if $\iota = 2$ then $\phi(M_{\eta^{\smallfrown} \langle 0 \rangle}) = \phi(M_{\eta^{\smallfrown} \langle 1 \rangle}).$ 
      
      \item[$\bullet_{5}$]\LABEL{ap-fail-req} if $\iota = 2,$ then in $\K_{\lambda_1 + \vert \lg (\eta) \vert}$, $(M_\eta, M_{\eta \smallfrown 0}, M_{\eta \smallfrown 1})$ is a $\phi$-span$_{\iota}$ that has no $\phi$-amalgam$_{\iota}$.
      
      \item[$\bullet_{6}$] $\langle M_{\eta \, {\rest} \, \varepsilon}: \varepsilon \leq \rm{lg}(\eta) \rangle$ is $\leq_{\K}$-increasing continuous.
  \end{enumerate}

    We choose $M_{\eta}$ for $\eta \in {}^{\alpha} 2$  by induction on $\alpha \leq \lambda_{2}:$
    
    \underline{Case 1}: $\alpha = 0.$ Let $M_{\langle \rangle} = M. $
    
    \underline{Case 2}: $\alpha = \beta + 1$ and $\eta \in {}^{\beta}2, M_{\eta}$ cannot be a $\phi$-amalgamation$_{\iota}$ base (by the choice of $M = M_{\langle \rangle}$) so by Definition \ref{phi-props-def}(4) there is a $\phi$-span$_{\iota}$ $(M_{\eta}, M_{\eta ^{\smallfrown} 0}, M_{\eta^{\smallfrown} 1})$ which has no $\phi$-amalgamation  (for $\K$) and $\Vert M_{\eta ^{\smallfrown} \ell} \Vert = \Vert M_{\eta} \Vert = \lambda + \vert \rm{lg}(\eta) \vert.$
    
    It follows that $(M_{\eta}, M_{\eta^{\smallfrown} 0}, M_{\eta^{\smallfrown} 1})$ has no $\phi$-amalgamation$_{\iota}$ in $\K^{*},$ also $\phi(M_{\eta^{\smallfrown} \ell}) = \phi(M_{\eta}) = \phi(M_{\langle \rangle})$ by Definition \ref{phi-props-def}(3). Renaming without loss of generality $\vert M_{\eta^{\smallfrown} 1} \vert$ is an ordinal so  $< \lambda_{2}. $
    
    \underline{Case 3}: $\alpha$ limit ordinal.
    
    For $\eta \in {}^{\alpha}2$ let $M_{\eta} = \bigcup \{ M_{\eta \, {\rest} \, \beta}: \beta < \alpha \},$ note $M_{\eta} \in K_{\lambda + \vert \rm{lg}(\eta) \vert}$ by  clause (1) of the assumption, and also $\phi(M_{\eta}) = \bigcup \{ M_{\eta \, {\rest} \, \beta}): \beta < \alpha \} = \phi(M_{\langle \rangle})$ by the induction hypothesis. So we have carried the induction.  
    

  Let $A$ be the $\tau (\K)$-structure induced by $\phi (M_\eta)$ for some (or any, see requirement $\bullet_{4})$ $\eta \in \fct{\lambda_2}{2}$. Let $\sea \in \K_{\lambda_2}^\ast$ be $\phi$-universal in $\K_{\lambda_2}^\ast$. By $\phi$-categoricity (that is, clause (c) of the claim), pick $h: A \cong \phi (N)$. For each $\eta \in \fct{\lambda_2}{2}$, pick $g_\eta : M_\eta \rightarrow \sea$ a $\K^\ast$-embedding extending $h$. We have that $|M_\eta| \subseteq \lambda_2$, so extend each $g_\eta$ arbitrarily to $f_\eta \in \fct{\lambda_2}{\lambda_2}$.

  By Fact \ref{theta-fact}, there exists $\eta \in \fct{\lambda_2}{2}$ such that the set $S_\eta$ described there is stationary. Let $C_\eta \coloneqq  \{\delta < \lambda_2 \mid |M_{\eta \, {\rest} \, \delta}| \subseteq \delta\}$. This is a club (by Lemma \ref{club-set} applied to $\seq{M_{\eta \, {\rest} \, \delta} : \delta < \lambda_2}$, $\seq{M_{\eta \, {\rest} \, \delta} \cap \delta : \delta < \lambda_2}$), so let $\delta \in S_\eta \cap C_\eta$. We then must have that there is $\nu \in \fct{\lambda_2}{2}$ such that $\eta_0 \coloneqq  \eta \, {\rest} \, \delta = \nu \, {\rest} \, \delta$, $\eta \, {\rest} \, (\delta + 1) \neq \nu \, {\rest} \, (\delta + 1)$, and (in particular), $g_\eta$ and $g_\nu$ agree on $M_{\eta_0}$. Moreover, $g_\eta$ and $g_\nu$ also extend $h$, so must in particular agree on $\phi (M_{\eta \, {\rest} \, (\delta + 1)})$. This means that $(N, g_\eta \, {\rest} \, M_{\eta \, {\rest} \, (\delta + 1)}, g_\nu \, {\rest} \, M_{\nu \, {\rest} \, (\delta + 1)})$ is a $\phi$-amalgam of $(M_\eta, M_{\eta \, {\rest} \, (\delta + 1)}, M_{\nu \, {\rest} \, (\delta + 1)})$ in $\K^\ast$. By assumption (D)(b), this means there is such a $\phi$-amalgam in $\K_{\lambda_1 +  \vert \delta \vert}$, a contradiction to requirement $\bullet_{5}$. 
  
  2) Similarly. 
\end{proof}


\newpage 

\section{Good frames}\LABEL{good-frames-sec}

Good frames are a local notion of ``bare bone'' superstability, introduced by the first author \cite[Chapter II]{shelahaecbook}. Essentially, an AEC $\K$ has a \emph{good $\lambda$-frame} if it looks superstable in $\lambda$ in the sense that $\K_\lambda$ has some reasonable structural properties (like amalgamation), and there is a forking-like notion for types of singletons over models. 

In this section, we give the definition we will use and state some fundamental results (most of them known or folklore) about good frames.

The definition we give here allows good frames in several cardinals (as in \cite[2.21]{ss-tame-jsl}) but using fragmented AECs (so also allowing ``shrinking frames'', \cite[Appendix A]{Vas17}). The frames in this paper will always be type-full (i.e.\ all nonalgebraic types will be basic), so we will drop the adjective and ignore basic types in the definition.

\begin{defin}\LABEL{good-frame-def}\index{good frame}\myindex{$\s$|see {good frame}} 
  A \emph{(type-full) good frame} (or a semi good frame) is a pair $\s = (\K, F) = (\K_{\s}, F_{\s})$, where:

  \begin{enumerate}
  \item $\K$ is a fragmented AEC (or a semi fragmented AEC) such that:
    \begin{enumerate}
        \item $\K \neq \emptyset$.
        
        \item $\K_{< \LS (\K)} = \emptyset$.
        
        \item\label{frame-aec-3} $\K$ has amalgamation, joint embedding, and no maximal models.
        
        \item\label{frame-aec-4} For every $M \in \K$, there is $N \in \K$ such that $N$ is universal over $M$ and $\|N\| = \|M\|$.
    \end{enumerate}

  \item\myindex{does not fork over}\myindex{forking} $F$ is a binary relation taking as input pairs $(p, M)$, where $p$ is an orbital type and $M$ is a model in $\K$. We write \emph{$p$ does not fork over $M$} (or \emph{$p$ does not $\s$-fork over $M$}) instead of $F (p, M)$ and require that $F$ satisfies the following:

    \begin{enumerate}
    \item If $p$ does not fork over $M$, then $p \in \gS (N)$ for some $N \gea M$.
    \item Invariance: if $f: N \cong N'$ and $p \in \gS (N)$ does not fork over $M$, then $f (p)$ does not fork over $f[M]$.
    \item Monotonicity: if $p \in \gS (N)$ does not fork over $M$ and $M \lea M' \lea N$, then $p \, {\rest} \, M'$ does not fork over $M$ and $p$ does not fork over $M'$.
    \item Disjointness: if $p \in \gS (N)$ does not fork over $M$, then $p \, {\rest} \, M$ is algebraic if and only if $p$ is algebraic.
    \item\label{frame-5} Extension: for any $M \lea N$ and any $p \in \gS (M)$ there exists $q \in \gS (N)$ such that $q$ extends $p$ and $q$ does not fork over $M$.
    \item\label{frame-6} Uniqueness: for any $M \lea N$ and any $p, q \in \gS (N)$, if both $p$ and $q$ do not fork over $M$ and $p \, {\rest} \, M = q \, {\rest} \, M$, then $p = q$.
    
    \item Local character: If $\seq{M_i : i \le \delta}$ is increasing continuous in $\K$ and $p \in \gS (M_\delta)$, then there exists $i < \delta$ such that $p$ does not fork over $M_i$.
    \item\label{frame-8} Symmetry: If $p = \gtp (a / N; N')$ does not fork over $M$ and $b \in |N|$, then there exists $M', N'' \in \K$ such that $N' \lea N''$, $M \lea M'$, $a \in M'$, and $\gtp (b / M'; N'')$ does not fork over $M$.
    \end{enumerate}
  \end{enumerate}
  \myindex{$\K_{\s}$}\myindex{categorical good frame}\myindex{domain of a good frame}
  We write $\K_{\s}$ for the class of the frame $\s$. We say that $\s$ is \emph{on $\K^\ast$} if $\K_{\s} = \K^\ast$. The \emph{domain} of a good frame $\s$ is the domain of $\K_{\s}$ (see Definition \ref{dom-def}). For $\lambda \in \dom (\s)$, we say that $\s$ is \emph{categorical in $\lambda$} if $\K_{\s}$ is categorical in $\lambda$. We say that $\s$ is \emph{categorical} if it is categorical in \emph{all} $\lambda \in \dom (\s)$. We also define restrictions to smaller classes of models such as $\s_{\lambda}$ in the natural way. We say that $\s$ is a \emph{good $\lambda$-frame} if $\s = \s_\lambda$. 
\end{defin}

We will use the following construction of a good frame:

\begin{fact}\LABEL{tame-frame-constr}
  Let $\K$ be an AEC with arbitrarily large models. Let $\mu > \LS (\K)$ and let $\theta \ge \mu$. Assume that $\K_{<\theta}$ has amalgamation and no maximal models. Assume further that $\K$ is categorical in $\mu$ and $\Ksat_{(\LS (\K), \theta)}$ is $\LS (\K)$-tame. Then there is a (categorical) good frame on $\Ksat_{(\LS (\K), \theta)}$.
\end{fact}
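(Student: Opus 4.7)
The plan is to construct the good frame on $\K^\ast := \Ksat_{(\LS(\K), \theta)}$ using $\LS(\K)$-non-splitting as the forking relation, lifted to all of $\K^\ast$ via the $\LS(\K)$-locality hypothesis. The underlying class is already in reasonable shape by Fact \ref{tameness-ap}(\ref{tameness-ap-3}): $\K^\ast$ is a fragmented AEC with Löwenheim-Skolem-Tarski number $\LS(\K)^+$, it inherits amalgamation from $\K_{<\theta}$, and (from categoricity in $\mu$ together with amalgamation, as in Corollary \ref{compact-unbounded}-style arguments) it has joint embedding and no maximal models. This takes care of clauses \ref{frame-aec-3} of Definition \ref{good-frame-def}.

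First I would derive stability in every $\lambda \in [\LS(\K), \theta)$. Categoricity in $\mu > \LS(\K)$ with amalgamation, no maximal models, and $\LS(\K)$-locality is known to yield $\lambda$-stability throughout $[\LS(\K), \theta)$ by the standard Galois-type counting argument, comparing types via their restrictions to bounded-size submodels. Stability in each $\lambda \in \dom(\K^\ast)$ yields a universal extension of the same cardinality for every $M \in \K^\ast$, giving clause \ref{frame-aec-4}.

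Next I would define nonforking. For $M \lea N$ in $\K^\ast$ and $p \in \gS(N)$, declare that $p$ does not $\s$-fork over $M$ if and only if there exists $M_0 \lea M$ with $\|M_0\| = \LS(\K)$ such that $p$ does not $\LS(\K)$-split over $M_0$, i.e.\ for every pair $N_1, N_2 \lea N$ of cardinality $\LS(\K)$ extending $M_0$ and every isomorphism $f: N_1 \cong_{M_0} N_2$, one has $f(p \rest N_1) = p \rest N_2$. Invariance, monotonicity, and disjointness are immediate from the definition. Local character follows from stability via the standard argument that stable AECs admit no long $\LS(\K)$-splitting chains. Uniqueness is where $\LS(\K)$-locality enters crucially: two non-$\LS(\K)$-splitting extensions agreeing on a common small witness must agree on every small submodel and hence agree globally by locality. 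Extension then follows by combining universal extensions in each cardinality with uniqueness: extend to a universal model, and pull back via an automorphism fixing the small witness.

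The main obstacle will be \emph{symmetry}. My plan is to prove it through the uniqueness of limit models in $\K^\ast$, which is the key consequence of categoricity in $\mu$ (together with stability, amalgamation and locality): two $(\lambda, \LS(\K)^+)$-brimmed extensions of a saturated model in any $\lambda \in \dom(\K^\ast)$ must be isomorphic over the base. Given that $\gtp(a/N;N')$ does not fork over $M$ and $b \in |N|$, I would use this uniqueness to align two brimmed extensions witnessing the non-splitting of the types of $a$ and $b$ respectively, and then use amalgamation and locality to produce the required $M', N''$. This is the step where mere stability is insufficient and the full categoricity hypothesis is consumed, and is therefore the heart of the proof.
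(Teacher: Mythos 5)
Your plan is essentially the route the paper takes: the paper's proof just quotes that categoricity in $\mu$ (with amalgamation and no maximal models below $\theta$) yields $\LS(\K)$-superstability and $\LS(\K)$-symmetry, and then follows the known construction of a good frame on $\Ksat_{(\LS(\K),\theta)}$, which is exactly your non-splitting relation lifted through $\LS(\K)$-locality, with symmetry obtained via the limit-model machinery. So the architecture of the proposal matches the intended proof, and most of the individual steps (stability transfer, uniqueness of non-splitting extensions via locality, extension, symmetry through uniqueness of limit models) are the standard ones; note that your omission of the usual requirement that the small witness $M_0$ sit below a universal extension inside the base is harmless only because every $M \in \Ksat_{(\LS(\K),\theta)}$ is $\LS(\K)^+$-model-homogeneous, hence universal over its $\LS(\K)$-sized submodels, which is what the uniqueness lemma for non-splitting actually needs.

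There is, however, one genuine flaw: your justification of local character. You assert that ``stable AECs admit no long $\LS(\K)$-splitting chains,'' and this is false -- stability alone does not rule out long splitting chains (strictly stable first-order theories are stable in all relevant cardinals yet fail this property). The no-long-splitting-chains statement is precisely $\LS(\K)$-superstability, and in this context it is \emph{not} a consequence of stability but of the categoricity hypothesis (via Ehrenfeucht--Mostowski/saturation arguments); it is the content of the very result the paper cites. Consequently your closing remark that categoricity is consumed only in the symmetry step is inaccurate: without invoking categoricity (or some other superstability input) at the local character step, that clause of Definition \ref{good-frame-def} would simply fail for your relation, and one must also then transfer the $\LS(\K)$-level local character to chains of saturated models of arbitrary sizes using locality, as in the construction the paper follows. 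Once this misattribution is repaired, the rest of the proposal goes through as the standard argument.
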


\begin{proof}
  By \cite[5.7(1)]{categ-saturated-afml}, $\K$ is $\LS (\K)$-superstable and has $\LS (\K)$-symmetry. The result now follows from the proof of \cite[A.3]{Vas17}.
\end{proof}

\begin{remark}\LABEL{good-frame-local-rmk}
  Combining local character, transitivity, and uniqueness, we have that whenever $\s$ is a good frame, then $\K_{\s}$ is $\min (\dom (\s))$-tame.
\end{remark}

Regarding disjointness, it follows from the other properties if the frame is categorical: 

\begin{lem}\LABEL{disj-lem}
  If $\s$ satisfies all the properties of good frames, except perhaps disjointness, and $\s$ is categorical, then $\s$ satisfies disjointness as well.
\end{lem}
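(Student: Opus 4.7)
My plan is to prove the two directions of the disjointness biconditional separately; only one direction uses categoricity.

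For the direction $p \rest M$ algebraic implies $p$ algebraic, I will use only the definition of orbital types. Let $a \in M$ realize $p \rest M$ and write $p = \gtp(a'/N;N')$. Since $a \in M \lea N \lea N'$, both $\gtp(a'/M;N')$ and $\gtp(a/M;N')$ equal $p \rest M$. Unfolding this equality of orbital types gives an amalgam $N^*$ together with $\K_{\s}$-embeddings $f_1, f_2 \colon N' \to N^*$ agreeing on $M$ and satisfying $f_1(a') = f_2(a)$. Since $a \in M$, $f_2(a) = f_1(a)$, and hence $f_1(a') = f_1(a)$; injectivity of $f_1$ forces $a' = a$. Therefore $a \in N$ realizes $p$, so $p$ is algebraic.

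For the reverse direction, write $p = \gtp(a/N;N)$ with $a \in N$ and apply the symmetry axiom to $p$ with $b := a \in |N|$. This yields $M' \gea M$ and $N'' \gea N$ with $M' \lea N''$, $a \in M'$, and $q := \gtp(a/M';M')$ not $\s$-forking over $M$. Comparing types inside $N''$ one sees $q \rest M = p \rest M$, so $p \rest M$ is realized by $a \in M'$. By downward Löwenheim-Skolem-Tarski, I may further require $\|M'\| = \|M\| = \lambda$, which lies in $\dom (\s)$ by assumption.

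The main obstacle is the final step: promoting the realization of $p \rest M$ from $M' \gea M$ to a realization in $M$ itself. This is where categoricity becomes essential. Using categoricity of $\s$ in $\lambda$, combined with amalgamation and the universal-extension axiom of the frame, the unique isomorphism type in $\K_{\s,\lambda}$ is $\lambda$-model-homogeneous: iterating the universal-extension axiom produces a chain of universal extensions of $M$ that are all isomorphic to $M$ by categoricity, yielding via back-and-forth the strong property that every $\lambda$-sized $\K_{\s}$-extension of $M$ embeds into $M$ over $M$. Applied to $M \lea M'$ this gives $h \colon M' \to M$ with $h \rest M = \id_M$, so $h(a) \in M$ realizes $p \rest M$, as required. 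Without categoricity the realization delivered by symmetry remains trapped in $M'$ and no combination of the remaining axioms closes this gap, which is precisely why categoricity appears in the hypothesis.
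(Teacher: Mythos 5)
Your first direction ($p \rest M$ algebraic implies $p$ algebraic) is correct, and as you observe it uses neither forking nor categoricity: equality of orbital types over $M$, when one of the two realizations lies in $M$, forces the realizations to coincide, so the realization of $p$ already lies in $M \subseteq N$. The problem is the other direction, which is precisely where categoricity must do real work, and there your final step is not merely unjustified but impossible. The property you invoke --- ``every $\lambda$-sized $\K_{\s}$-extension of $M$ embeds into $M$ over $M$'', i.e.\ $M$ is universal over itself --- can never hold for a proper extension: if $h \colon M' \rightarrow M$ is a $\K_{\s}$-embedding with $h \rest M = \id_M$, then for $x \in M' \setminus M$ we have $h(x) \in |M| = h[M]$, so injectivity forces $x \in M$; hence $M' = M$. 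Categoricity gives an isomorphism $M' \cong M$, but not one fixing $M$ pointwise, and no iteration of the universal-extension axiom or back-and-forth produces one (think of saturated models of algebraically closed fields of characteristic zero: no proper extension of $M$ maps into $M$ over $M$). Moreover, the symmetry step buys you nothing: it produces an algebraic nonforking extension $q \in \gS(M')$ of $p \rest M$ with the realization $a$ in $M'$, but $p$ itself is already such a configuration (take $M' = N$, $q = p$). The actual gap --- moving the realization from an extension of $M$ down into $M$ --- is untouched, and it cannot be closed by the homogeneity statement you assert, since that statement is false.

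The paper's proof takes a different and essentially unavoidable route: it quotes the conjugation property \cite[III.1.21]{shelahaecbook} (whose proof does not use disjointness). In a categorical frame, if $p \in \gS(N)$ does not fork over $M$, then there is an isomorphism $f \colon N \cong M$ with $f(p) = p \rest M$. Crucially, $f$ does \emph{not} fix $M$; it only matches the pair $(N, p)$ with the pair $(M, p \rest M)$ --- and this weaker statement is exactly what categoricity can deliver. Both directions of disjointness then transfer across $f$ at once, since isomorphisms preserve algebraicity. If you want a self-contained argument, you would need to reprove some form of conjugation (using categoricity together with uniqueness of nonforking extensions and the universal-extension axiom), not the ``universal over itself'' claim.
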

\begin{proof}
  By the conjugation property \cite[III.1.21]{shelahaecbook} (whose proof never uses disjointness).
\end{proof}

We now state and prove canonicity of the framework. First, frames with the same restriction in their low cardinals are the same:

\begin{lem}\LABEL{canon-lem}
  Let $\s$ and $\ts$ be good frames with $\K_{\s} = \K_{\ts}$. Let $\lambda \coloneqq  \min (\dom (\s))$. If $\s_\lambda = \ts_\lambda$, then $\s = \ts$.
\end{lem}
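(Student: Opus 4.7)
The plan is to show that the relations ``does not $\s$-fork over'' and ``does not $\ts$-fork over'' coincide on $\K := \K_\s = \K_\ts$. By the symmetry of the hypotheses in $\s$ and $\ts$, it is enough to establish one direction: given $M \lea N$ in $\K$ and $p \in \gS(N)$ not $\s$-forking over $M$, I would show that $p$ does not $\ts$-fork over $M$ either.

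The first step is to reduce to a base of size $\lambda$. Using local character of $\s$ together with iterated applications of resolvability in $\K_\s$ (which is a fragmented AEC with $\LS(\K_\s) \le \lambda$), I would produce $M_0 \in \K_\lambda$ with $M_0 \lea M$ such that $p \rest M$ does not $\s$-fork over $M_0$. Then transitivity of $\s$-nonforking --- which follows from extension, monotonicity, and uniqueness by a short diagram chase (extend $p \rest M_0$ non-$\s$-forkingly to some $q \in \gS(N)$ and apply uniqueness twice to conclude $q = p$) --- upgrades this to: $p$ does not $\s$-fork over $M_0$.

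The second step transfers from $\s$ to $\ts$. Using extension in $\ts$, fix $q \in \gS(N)$ with $q \rest M_0 = p \rest M_0$ and $q$ not $\ts$-forking over $M_0$. For every $N' \in \K_\lambda$ with $M_0 \lea N' \lea N$, monotonicity in $\s$ gives that $p \rest N'$ does not $\s$-fork over $M_0$; by the hypothesis $\s_\lambda = \ts_\lambda$, this means $p \rest N'$ does not $\ts$-fork over $M_0$. Similarly $q \rest N'$ does not $\ts$-fork over $M_0$, and since the two restrictions agree on $M_0$, uniqueness in $\ts_\lambda$ forces $p \rest N' = q \rest N'$. Since $\K$ is $\lambda$-local by Remark~\ref{good-frame-local-rmk}, this yields $p = q$; thus $p$ does not $\ts$-fork over $M_0$, and a final application of monotonicity in $\ts$ raises the base from $M_0$ back to $M$.

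The main technical obstacle I anticipate is the reduction to a size-$\lambda$ base in the first step: the local character axiom as stated yields non-forking over some model in a chain but does not a priori cap its cardinality at $\lambda$, so one must verify that iterating along a resolution terminates with some $M_0 \in \K_\lambda$. This is well known to hold in good frames --- indeed it is precisely what underlies Remark~\ref{good-frame-local-rmk} --- but I would want to confirm it does not hide any fragmented-AEC subtleties in the present setting.
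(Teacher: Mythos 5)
Your proof is correct, and its first half is exactly the paper's: drop to a base $M_0 \in \K_\lambda$ with $M_0 \lea M$ using local character plus (derived) transitivity, then use $\s_\lambda = \ts_\lambda$ on the $\lambda$-sized restrictions. The worry you flag about capping the base at size $\lambda$ is real but routine, and the paper treats it exactly as cavalierly as you do (``by local character and transitivity''); resolvability is available because $\K_\s$ is a fragmented AEC with $\K_{<\LS(\K_\s)} = \emptyset$, so the usual induction on $\|M\|$ goes through. Where you diverge is the finish: the paper again applies local character, this time to $\ts$, to find $N_0' \in \K_\lambda$ with $M_0 \lea N_0' \lea N$ over which $p$ does not $\ts$-fork, and then concludes by transitivity in $\ts$ and monotonicity; you instead produce a $\ts$-nonforking extension $q$ of $p \rest M_0$ via the extension axiom, show $p \rest N' = q \rest N'$ for all intermediate $N' \in \K_\lambda$ by uniqueness in $\ts_\lambda$, and invoke $\lambda$-locality of $\K$ (Remark \ref{good-frame-local-rmk}) to get $p = q$ (note you should add the one-line observation that every $A \in [N]^{\le \lambda}$ sits inside some such $N'$ containing $M_0$, by the L\"owenheim--Skolem--Tarski property). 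Both finishes use the same pool of axioms in the end, since transitivity and locality are themselves derived from extension, uniqueness, monotonicity and local character; the paper's version is slightly leaner in that it does not route through the locality remark, which is what lets Remark \ref{canon-lem-rmk} record that only the weak form of local character (every type does not fork over a model of size $\lambda$) and no symmetry or disjointness are used. Your version buys a small economy in the other direction: in the displayed direction it needs no local character for the target frame $\ts$ at all, only its extension and uniqueness, with local character entering only through $\s$ (and symmetrically in the converse direction).
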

\begin{proof}
  Let $\K \coloneqq  \K_{\s} = \K_{\ts}$. Let $M \lea N$ be in $\K$ and let $p \in \gS (N)$. Assume that $p$ does not $\s$-fork over $M$. We show that $p$ does not $\ts$-fork over $M$, and the converse is symmetric. First, by local character and transitivity there exists $M_0 \in \K_\lambda$ such that $p$ does not $\s$-fork over $M_0$. In particular (by monotonicity), for every $N_0 \in \K_\lambda$ with $M_0 \lea N_0 \lea N$, $p \, {\rest} \, N_0$ does not $\s$-fork over $M_0$. Since $\s_\lambda = \ts_\lambda$, $p \, {\rest} \, N_0$ does not $\ts$-fork over $M_0$. Now pick $N_0' \in \K_\lambda$ such that $p$ does not $\ts$-fork over $N_0'$ and (by monotonicity), enlarge it so that $M_0 \lea N_0'$. Then by transitivity, $p$ does not $\ts$-fork over $M_0$, hence (by monotonicity) over $M$, as desired.
\end{proof}
\begin{remark}\LABEL{canon-lem-rmk}
  Disjointness and symmetry are not used in the proof. Regarding local character, we only use that any type does not fork over a model of size $\lambda$.
\end{remark}

\begin{fact}[Canonicity of categorical good frames]\LABEL{good-frame-canon}
  Let $\s$ and $\ts$ be categorical good frames with $\dom (\s) = \dom (\ts)$. Let $\lambda \coloneqq  \min (\dom (\s))$. If $\left(\K_{\s}\right)_{\lambda} = \left(\K_{\ts}\right)_\lambda$, then $\s = \ts$.
\end{fact}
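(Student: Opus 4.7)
The plan is to reduce the statement to a local canonicity question in two stages.

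\textbf{First,} I would apply Theorem \ref{fragment-canon} to conclude that the underlying fragmented AECs $\K_{\s}$ and $\K_{\ts}$ are equal. The hypotheses are readily verified: both are fragmented AECs with amalgamation (axiom (a)(iii) of Definition \ref{good-frame-def}), they share the domain $\Theta := \dom(\s) = \dom(\ts)$ and hence the Löwenheim–Skolem–Tarski number $\lambda = \min(\Theta)$ together with $\K_{<\lambda} = \emptyset$, and by hypothesis both are categorical in every cardinal of $\Theta$. Stability in every cardinal of $\Theta$ follows from the good-frame axioms: extension, uniqueness and local character, combined with the universal-extension axiom (a)(iv), bound $|\gS(M)|$ in terms of $\|M\|$. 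Finally, the assumption $(\K_{\s})_\lambda = (\K_{\ts})_\lambda$, together with $\K_{<\lambda} = \emptyset$, gives agreement at size $\le \LS$, as required.

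\textbf{Second,} with $\K_{\s} = \K_{\ts}$ in hand, Lemma \ref{canon-lem} reduces the task to showing $\s_\lambda = \ts_\lambda$ — canonicity of two categorical good $\lambda$-frames on a common underlying class $\K_\lambda$. The strategy is: given $M \lea N$ in $\K_\lambda$ and a non-algebraic $p \in \gS(N)$ that does not $\s$-fork over $M$ (algebraic types being handled by disjointness, and the converse direction by symmetry), take the $\ts$-non-forking extension $q \in \gS(N)$ of $p \rest M$ provided by $\ts$-extension; by $\ts$-uniqueness of non-forking extensions, it suffices to prove $p = q$. Using $\s$-monotonicity together with the universal-extension axiom (a)(iv), one may further reduce to the case where $N$ is brimmed (and in particular universal) over $M$, which supplies enough isomorphisms $N \cong M$ for a conjugation argument.

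\textbf{The main obstacle} is this final comparison $p = q$. The key ingredient is the conjugation property for categorical good frames, proved as \cite[III.1.21]{shelahaecbook} (whose proof does not invoke disjointness, cf.\ Lemma \ref{disj-lem}): in a categorical good $\lambda$-frame, any non-forking triple $(M, N, r)$ with $M \lea N$ in $\K_\lambda$ is isomorphic, as a triple of (model, model, type), to $(M, M, r \rest M)$. Applying this separately to the $\s$-non-forking triple $(M, N, p)$ and the $\ts$-non-forking triple $(M, N, q)$, both $p$ and $q$ become conjugate to their common restriction $p \rest M = q \rest M$ through isomorphisms $N \cong M$. Since such an isomorphism depends only on the underlying AEC structure — now shared by the two frames by the first stage — one can align the two conjugations and force $p = q$. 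This yields $p$ does not $\ts$-fork over $M$, completing the proof after the symmetric argument.
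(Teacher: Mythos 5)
Your outer reduction is exactly the paper's: establish $\K_{\s} = \K_{\ts}$ via Theorem \ref{fragment-canon} (stability in each cardinal does follow from axiom (a)(iv), since a same-size universal extension realizes all types over $M$), and then invoke Lemma \ref{canon-lem} to reduce everything to the equality $\s_\lambda = \ts_\lambda$ of the two good $\lambda$-frames. The difference is in how that remaining equality is obtained: the paper simply quotes the known canonicity result for categorical good $\lambda$-frames (\cite[9.7]{indep-aec-apal}), whereas you attempt to prove it on the spot by a conjugation argument, and that is where your proof has a genuine gap.

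Concretely, the conjugation property \cite[III.1.21]{shelahaecbook} gives you isomorphisms $f, g \colon N \cong M$ with $f(p) = p \rest M$ and $g(q) = q \rest M = p \rest M$; but these witnesses are highly non-canonical, and nothing forces them to be compatible. All you can extract is that $h := g^{-1} \circ f$ is an automorphism of $N$ (not fixing $M$ pointwise) with $h(p) = q$, i.e.\ that $p$ and $q$ are conjugate over the empty set --- which is far weaker than $p = q$. Indeed, two distinct types over $N$ with the same restriction to $M$ can perfectly well be conjugate, so ``aligning the two conjugations'' because ``the isomorphism depends only on the underlying AEC structure'' is not an argument: the shared AEC structure does not single out any particular isomorphism $N \cong M$, and the two frames could a priori select genuinely different nonforking extensions of $p \rest M$ that are interchanged by some automorphism. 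Closing this gap is precisely the content of canonicity of categorical good $\lambda$-frames, whose known proofs (as in \cite[9.7]{indep-aec-apal}, going back to \cite{bgkv-apal}) use the frame axioms --- uniqueness, extension, symmetry and local character --- in an essential way rather than conjugation alone. So either quote that result, as the paper does, or supply such an argument; as written, the step ``force $p = q$'' does not go through.
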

\begin{proof}
  By \cite[9.7]{indep-aec-apal}, $\s_\lambda = \ts_\lambda$. By canonicity of categorical fragmented AECs (Theorem \ref{fragment-canon}), $\K_{\s} = \K_{\ts}$. By Lemma \ref{canon-lem}, $\s = \ts$.
\end{proof}

Categoricity of a good frame may seem a strong assumption. However, we have:

\begin{fact}\LABEL{categ-subframe}
  Let $\s$ be a good frame and let $\lambda \coloneqq  \min (\dom (\s))$. If $\s$ is categorical in $\lambda$, then there is a categorical good frame $\ts$ with $\dom (\ts) = \dom (\s)$ and $\left(\K_{\ts}\right)_\lambda = \left(\K_{\s}\right)_\lambda$.
\end{fact}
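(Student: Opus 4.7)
The plan is to take $\K_\ts$ to be the sub-abstract class of $\K_\s$ that at cardinality $\lambda$ coincides with $(\K_\s)_\lambda$ (forcing the required equality $(\K_\ts)_\lambda = (\K_\s)_\lambda$), and at each $\mu \in \dom (\s) \cap (\lambda, \infty)$ consists of the $\lambda^+$-saturated models in $(\K_\s)_\mu$. We equip $\K_\ts$ with the ordering inherited from $\K_\s$ and declare the nonforking relation of $\ts$ to be that of $\s$, restricted to triples from $\K_\ts$. Since $\s$ is categorical in $\lambda$, no extra work is needed at the bottom level.

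First I would verify that $\K_\ts$ is a fragmented AEC satisfying the structural axioms (\ref{frame-aec-3})--(\ref{frame-aec-4}) of Definition \ref{good-frame-def}. The input is that any good frame yields stability in every $\mu \in \dom (\s)$, so $\lambda^+$-saturated models of cardinality $\mu$ exist; they are unique up to isomorphism over a common $\lambda^+$-saturated submodel by a standard back-and-forth using the uniqueness axiom (\ref{frame-6}) and the existence of nonforking extensions (\ref{frame-5}); amalgamation and joint embedding transfer from $\K_\s$ by first amalgamating in $\K_\s$ and then enlarging the amalgam to a $\lambda^+$-saturated model of the same cardinality using stability and no maximal models; and an $(\mu,\mu)$-brimmed extension inside $\K_\s$ is automatically $\lambda^+$-saturated, yielding axiom (\ref{frame-aec-4}) in $\K_\ts$. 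For the chain axioms, unions of $\lea$-increasing chains of $\lambda^+$-saturated models of a fixed cardinality $\mu$ are again $\lambda^+$-saturated; chains that jump cardinalities need not be continuous in $\K_\ts$, which is precisely why we are working with a fragmented AEC rather than an AEC.

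Next I would transfer the forking axioms of Definition \ref{good-frame-def}. Invariance, monotonicity, disjointness, uniqueness, local character, and symmetry are immediate since $\K_\ts$ is a sub-abstract class of $\K_\s$ and the relation is unchanged. For extension, given $M \lea N$ in $\K_\ts$ and $p \in \gS (M)$, axiom (\ref{frame-5}) for $\s$ supplies a nonforking extension $q \in \gS (N)$; since $N$ is already in $\K_\ts$, nothing more is needed. Categoricity in every $\mu \in \dom (\ts) = \dom (\s)$ is then built in: at $\lambda$ by hypothesis, and at each $\mu > \lambda$ by uniqueness of the $\lambda^+$-saturated model of size $\mu$ over the empty set (a second back-and-forth, this time with no base, using joint embedding of $\K_\s$ in $\mu$).

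The main obstacle is the superstability-style union lemma used in the first step: verifying that a $\lea$-increasing chain of $\lambda^+$-saturated models of the same cardinality $\mu$ has $\lambda^+$-saturated union. This is a nontrivial consequence of the good-frame axioms that combines local character, extension, uniqueness, and symmetry. It is only genuinely delicate when $\mu$ has cofinality $\le \lambda$; the regular case and the case $\cf (\mu) > \lambda$ follow directly from local character and uniqueness, while the small-cofinality case is where symmetry (\ref{frame-8}) does real work. Once this lemma is available, the remainder of the construction is bookkeeping.
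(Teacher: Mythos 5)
There is a genuine gap, and it is at the heart of the construction: taking $(\K_{\ts})_\mu$ to be the $\lambda^+$-saturated models of $(\K_{\s})_\mu$ does not give categoricity in $\mu$ once $\mu > \lambda^+$. A back-and-forth proves uniqueness of a model $M$ only from saturation (equivalently, model-homogeneity) of degree $\|M\|$; $\lambda^+$-saturation pins the model down only at cardinality $\lambda^+$. Concretely, let $\s$ be the forking frame on all models of the first-order theory of an equivalence relation with infinitely many infinite classes (an $\omega$-stable, $\aleph_0$-categorical theory), so $\lambda = \aleph_0$ and $\s$ is categorical in $\lambda$. Then a model with $\aleph_1$ classes each of size $\aleph_2$ and a model with $\aleph_2$ classes each of size $\aleph_1$ are both $\aleph_1$-saturated of cardinality $\aleph_2$ and are not isomorphic, so your $\K_{\ts}$ is not categorical in $\aleph_2$; since ``categorical good frame'' means categorical in \emph{every} cardinal of the domain, the conclusion fails. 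The same problem already invalidates your intermediate claim that two $\lambda^+$-saturated models of cardinality $\mu$ are isomorphic over a common $\lambda^+$-saturated submodel.

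The repair (and this is what the argument the paper cites, the proof of \cite[A.2]{downward-categ-tame-apal}, actually does) is to keep $(\K_{\s})_\lambda$ at the bottom but above $\lambda$ to take the \emph{fully} saturated models, i.e.\ $M \in (\K_{\s})_\mu$ which are $\mu$-saturated (equivalently one can work with brimmed/superlimit models). Categoricity in each $\mu$ then follows from uniqueness of saturated models of cardinality $\mu$ via amalgamation, and existence follows from stability in the relevant cardinals, which the frame provides. The price is that the ``hard'' lemma you correctly isolate becomes harder: one needs that unions of $\lea$-increasing chains of $\mu$-saturated models of size $\mu$ are $\mu$-saturated for \emph{every} $\mu \in \dom(\s)$, not just $\lambda^+$-saturation; this is the superstability-type union theorem (using local character, uniqueness, extension and symmetry of the frame, with the small-cofinality case being the delicate one), and it is precisely the content carried by the cited proof rather than ``bookkeeping''. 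With that class in place, your remaining steps (restricting the ordering and the nonforking relation, transferring the structural and forking axioms) go through essentially as you wrote them.
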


\begin{proof}
  By the proof of \cite[A.2]{Vas17}: that is, just restrict to saturated models (note that this gives fragmented AEC, not necessarily an AEC, but this is allowed by the above definition of a good frame) 
\end{proof}

We recall the following result about categoricity transfers in good frames:

\begin{discussion}\LABEL{frame-categ-succ}\

    (1) The following assertion appears as a claim in earlier versions, but the referee point out that saying \cite[Theorem A.9]{Vas17} is not convincing as there $\K$ is not fragmented. As we prefer not to resolve it, we change the proof of \ref{categ-excellence-lem}, the only place it was used.

    (2) The assertion was: 
    
    Assume that $\s$ is a good frame with fragmented AEC $\K$. Let $\Theta \coloneqq  \dom (\K)$, let $\lambda \coloneqq  \min (\Theta)$, and let $\mu \in \Theta$. Assume that $\K$ is categorical in $\lambda$. 
    
    Let $\K^\ast$ be the AEC generated by $\K_{\mu}$. If $\K^\ast$ is categorical in $\mu^+$, it follows that:

    \begin{enumerate}
        \item[(A)] $\K$ is $\theta$-continuous for every $\theta \in \Theta$.
        
        \item[(B)] $\K$ is categorical in every $\mu' \in \Theta$.
    \end{enumerate}

    (3) The asserted proof was: we argue by Fact \ref{categ-subframe}, we might as well assume that $\s$ is categorical. We are then in the setup of \cite[Theorem A.9]{Vas17}, which gives the result we want.
\end{discussion}

We will also use the following upward frame transfer:

\begin{defin}[{\cite[II.2.4,II.2.5]{shelahaecbook}}]\LABEL{up-def}\myindex{$\s^{up}$}
  Let $\s$ be a good $\lambda$-frame and let $\K$ be the AEC generated by $\K_\s$. We let $\s^{\rm{up}} \coloneqq  (\K, F)$, where $F$ is the following binary relation on pairs $(p, M)$, with $p$ is an orbital type and $M$ is a model in $\K$: $F (p, M)$ if and only if $p \in \gS (N)$ for some $M \lea N$, and there exists $M_0 \in \K_{\s}$ such that $M_0 \lea M$ and for all $N_0 \in \K_{\s}$ with $M_0 \lea N_0 \lea N$, $p \, {\rest} \, N_0$ does not $\s$-fork over $M_0$.
\end{defin}

\begin{fact}\LABEL{frame-ext}
  Let $\s$ be a good $\lambda$-frame.

  \begin{enumerate}
      \item\cite[II.2.11]{shelahaecbook} $\s^{\rm{up}}$ satisfies all the axioms from the definition of a type-full good frame, except for (\ref{frame-aec-3}),(\ref{frame-aec-4}), (\ref{frame-5}), (\ref{frame-6}), and (\ref{frame-8}) in Definition \ref{good-frame-def}.
      
      \item\cite[6.9]{tame-frames-revisited-jsl} If $\K_{\s^{\rm{up}}}$ is $\lambda$-tame and has amalgamation, then $\s^{\rm{up}}$ is a good frame.
  \end{enumerate}
\end{fact}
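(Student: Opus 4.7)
The plan is to verify part (1) by a direct check of each axiom of Definition \ref{good-frame-def}, and part (2) by exploiting $\lambda$-locality together with amalgamation to run standard resolution arguments.

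For part (1), the underlying class $\K$ is by construction the AEC generated by $\K_\s$, so the fragmented AEC axioms hold (indeed we even get a genuine AEC). Non-emptiness and $\K_{<\LS(\K)} = \emptyset$ transfer from $\s$ verbatim. Invariance is immediate, since the defining condition on $F$ is formulated via a witness $M_0 \in \K_\s$ that is preserved under isomorphism. For monotonicity, given $M \lea M' \lea N$ with $p \in \gS(N)$ not $\s^{up}$-forking over $M$, witnessed by $M_0 \in \K_\s$ with $M_0 \lea M$: the same $M_0$ witnesses nonforking over $M'$ (using $\s$-monotonicity for intermediate $N_0 \in \K_\s$ with $M_0 \lea N_0 \lea N$); and nonforking of $p \rest M'$ over $M$ follows by applying $\s$-monotonicity to $N_0 \lea M'$. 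Disjointness follows by choosing a witness $M_0 \in \K_\s$ sitting below $M$, pushing to some $N_0 \in \K_\s$ with $M_0 \lea N_0 \lea N$ containing any purported realization, and invoking $\s$-disjointness. The main nontrivial point is local character: given an increasing continuous chain $\seq{M_i : i \le \delta}$ and $p \in \gS(M_\delta)$, resolve each $M_i$ into a $\K_\s$-chain of length $\|M_i\|$ and take an appropriate diagonal of witnesses in $\K_\s$; $\s$-local character provides a single $\K_\s$-base $M_0$, and a pigeonhole plus cofinality argument shows $M_0$ sits inside some $M_j$, as desired.

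For part (2), the key axioms to check are extension, uniqueness, symmetry, and the structural axioms (\ref{frame-aec-3})--(\ref{frame-aec-4}). For uniqueness, suppose $p, q \in \gS(N)$ both do not $\s^{up}$-fork over $M$ and $p \rest M = q \rest M$. By taking a common witness $M_0 \in \K_\s$ (using Lemma \ref{skel-order-lem}-style joining) and using $\s$-uniqueness, one sees $p \rest N_0 = q \rest N_0$ for all $N_0 \in \K_\s$ with $M_0 \lea N_0 \lea N$. Now $\lambda$-locality lets us upgrade this pointwise agreement on $\K_\s$-submodels of $N$ to full equality $p = q$. For extension, given $M \lea N$ and $p \in \gS(M)$, pick $M_0 \in \K_\s$ with $p$ not $\s^{up}$-forking over $M_0$; using amalgamation, realize $N$ over $M_0$ inside a suitably saturated structure, iteratively apply $\s$-extension along a resolution of $N$ in $\K_\s$-sized pieces, and glue the resulting coherent system via $\lambda$-locality to obtain a genuine type $q \in \gS(N)$ witnessed by $M_0$. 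Symmetry is transferred from $\s$-symmetry in the same spirit: drop to a $\K_\s$-witness, apply $\s$-symmetry there, and lift back.

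The structural axioms follow once extension and uniqueness are in hand: amalgamation is assumed; no maximal models and universality follow from the corresponding properties in $\s$, amalgamation, and a union-of-chains construction using stability (which in turn is a formal consequence of local character plus uniqueness of nonforking extensions).

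The main obstacle is the uniqueness axiom: it is precisely here that $\lambda$-locality is indispensable, since two candidate extensions that agree on every $\K_\s$-sized intermediate model must be shown globally equal, and without locality there is no direct way to bridge from $\K_\s$-level agreement to equality of full orbital types in $\K$. The extension axiom is technically longer but is essentially a coherent-system construction that, once uniqueness is available, goes through by standard bookkeeping.
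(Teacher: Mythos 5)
This statement is quoted in the paper as a fact, with no internal proof: part (1) is cited to Shelah's II.2.11 and part (2) to the Boney--Vasey frame-transfer theorem, so the comparison is really with those references. Your verification of part (1) is essentially the standard one and is fine in outline (the local character argument needs the coherence/chain bookkeeping you gesture at, together with transitivity of $\s$-nonforking, but it goes through unconditionally), and you are right that uniqueness for $\s^{up}$ is a direct consequence of $\lambda$-locality.

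The genuine gap is your treatment of symmetry in part (2). You dismiss it with ``drop to a $\K_{\s}$-witness, apply $\s$-symmetry there, and lift back,'' but this is precisely the step that does not work from $\lambda$-locality for types of \emph{single} elements. The conclusion of the symmetry axiom requires producing a large model $M'$ containing all of $M$ together with $a$ such that the type of the \emph{fixed} element $b$ over $M'$ is a nonforking extension; small symmetry only gives such a configuration over some $\lambda$-sized model containing $a$, and to glue these local configurations into the global one you need to control the type of the pair $(a,b)$ over $M$, i.e.\ locality for types of length $2$ (shortness), not just for $1$-types. This is exactly why Boney's original extension theorem assumed tameness for $2$-types, and why the cited reference exists at all: its contribution is a genuinely different and substantially harder argument deriving the symmetry transfer from $1$-type locality alone. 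So your identification of uniqueness as ``the main obstacle'' misplaces the crux; uniqueness is the easy use of locality, while symmetry (and, to a lesser extent, the coherent-system argument for extension) is where the real work lies, and your proposal as written does not prove it.
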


\newpage

\section{Two-dimensional independence notions}\LABEL{twodim-sec}

While good frames describe a forking-like relation for types of \emph{singletons} over models, two-dimensional independence notions describe a forking-like relations for types of models over models. At that point, it seems more convenient to think of such a relation as a $4$-ary relation on squares of models\footnote{The referee asks: why does 4-ary correspond to $2$-dimensional? For the same reason that a square is two-dimensional despite having four vertices: $4 = 2^{2}.$ More explicitly, $\nf$ is a $4$-place relation, and $\nf(M_{0}, M_{1}, M_{2}, M_{3})$ it implies $M_{0} \leq_{\K} M_{\ell} \leq_{\K} M_{3}$ (and $M_{1} \cap M_{2} = M_{0}$). But we can express this as ``$M_{1}, M_{2}$ are independent over $M_{0}$ inside $M_{3}$''saying it in this form, it appear as a $2$-place relation. Alternatively we can write ``$\bar{N} = \langle N_{u}: u \in \mathcal{P}(n) \rangle$ is independent'' where $N_{\emptyset} = M_{0}, N_{\{ 0 \}} = M_{1}, N_{ \{ 1 \} } = M_{2}, N_{\{ 0, 1 \}} = M_{3};$ in this form the $2$ appear in $\mathcal{P}(2)$ and later generalized to $\langle N_{u}: u \in \mathcal{P}(n) \rangle$ and is called $n$-dimensional and is used to analyze models in $\lambda^{+n}$.} . Squares in nonforking amalgamation form the simplest nontrivial example of an independent system (a concept defined in the next section).

This section studies abstract frameworks for two-dimensional independence, while the next section will look at higher dimensions. We state the important properties of two-dimensional independence relations. We look also at their canonicity (Theorem \ref{twodim-canon}). Two key questions are when a two-dimensional independence relation can be built from a good frame, and when a two-dimensional independence relation for models of size $\lambda$ implies the existence of a good $\lambda^+$-frame (these themes are present already in \cite[II, III]{shelahaecbook}). Frames that are well-behaved in these respects are called \emph{extendible} (Definition \ref{extendible-def}). A frame that can be extended $\omega$-many steps is called \emph{$(<\omega)$-extendible}. These correspond (but are slightly more convenient to work with than) the $\omega$-successful good frames in \cite[III]{shelahaecbook}. We give conditions under which such frames exist, both in the compact (Fact \ref{compact-very-good}) and non-compact (Fact \ref{extendible-wd}) cases. For the latter result, the weak diamond (a weakening of the generalized continuum hypothesis) is assumed.

\begin{defin}\LABEL{relation-def}\myindex{two-dimensional independence relation}\index{two-dimensional independence notion|see {two-dimensional independence relation}}\index{$\nf$|see {two-dimensional independence notion}}
  Let $\K$ be an abstract class. A \emph{two-dimensional independence relation (or notion) on $\K$} is a $4$-ary relation $\nf$ on $\K$ satisfying:

  \begin{enumerate}
  \item[(A)] $\nf (M_0, M_1, M_2, M_3)$ implies $M_0 \lea M_\ell \lea M_3$ for $\ell = 1,2$. We may write $\nfs{M_0}{M_1}{M_2}{M_3}$ instead of $\nf (M_0, M_1, M_2, M_3)$.
  
  \item[(B)] If $M_0 \lea M_\ell \lea M_3$, $\ell = 1,2$ and $f: M_3 \rightarrow M_3'$ is a $\K$-embedding, then $\nfs{M_0}{M_1}{M_2}{M_3}$ if and only if $\nfs{f[M_0]}{f[M_1]}{f[M_2]}{M_3'}$.
  
  \item[(C)] Monotonicity: if $\nfs{M_0}{M_1}{M_2}{M_3}$ and $M_0 \lea M_1' \lea M_1$, then $\nfs{M_0}{M_1'}{M_2}{M_3}$.
  
  \item[(D)] Disjointness: if $\nfs{M_0}{M_1}{M_2}{M_3}$, then $M_1 \cap M_2 = M_0$.
  
  \item[(E)] Symmetry: if $\nfs{M_0}{M_1}{M_2}{M_3}$, then $\nfs{M_0}{M_2}{M_1}{M_3}$.
  
  \item[(F)] Transitivity: if $\nfs{M_0}{M_1}{M_2}{M_3}$ and $\nfs{M_2}{M_3}{M_4}{M_5}$, then $\nfs{M_0}{M_1}{M_4}{M_5}$.
  
  \item[(G)] Extension: whenever $M_0 \lea M_\ell$, $\ell = 1,2$, there exists $M_3 \in \K$ and $f_\ell : M_\ell \xrightarrow[M_0]{} M_3$ such that $\nfs{M_0}{f_1[M_1]}{f_2[M_2]}{M_3}$.
  
  \item[(H)] Uniqueness: whenever $\nfs{M_0^\ell}{M_1^\ell}{M_2^{\ell}}{M_3^{\ell}}$ for $\ell = 0,1$ and $f_k : M_k^0 \cong M_k^1$, $k < 3$ are such that $f_0 \subseteq f_1$, $f_0 \subseteq f_2$, then there exists $M_3^2 \in \K$ with $M_3^1 \lea M_3^2$ and and $f_3 : M_3^0 \rightarrow M_3^2$ such that $f_k \subseteq f_3$ for all $k < 3$.
  \end{enumerate}
\end{defin}

\begin{defin}\LABEL{s4}\myindex{respects $\s$}
  Let $\nf$ be a two-dimensional independence notion on $\K$. If $\s$ is a good frame on $\K$, we say that $\nf$ \emph{respects $\s$} if whenever $\nfs{M_0}{M_1}{M_2}{M_3}$ and $a \in |M_1|$, we have that $\gtp (a / M_2; M_3)$ does not $\s$-fork over $M_0$.
\end{defin}

As in Lemma \ref{disj-lem}, note that disjointness is not really needed in some cases:

\begin{lem}\LABEL{disj-lem-2}
  Assume that $\nf$ satisfies all the properties of a two-dimensional independence notion on $\K$, except perhaps for disjointness. Assume that $\nf$ respects a good frame $\s$ on $\K$. Then $\nf$ satisfies disjointness.
\end{lem}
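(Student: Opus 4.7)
The plan is to fix $a \in |M_1| \cap |M_2|$ and show $a \in |M_0|$; the reverse inclusion $|M_0| \subseteq |M_1| \cap |M_2|$ is immediate from $M_0 \lea M_\ell$ for $\ell = 1, 2$, so this suffices.

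First I would consider the orbital type $p := \gtp(a / M_2; M_3)$. Since $a \in |M_2|$, the element $a$ itself realizes $p$ in $M_2$, so $p$ is algebraic. On the other hand, $a \in |M_1|$, so the assumption that $\nf$ respects $\s$ yields that $p$ does not $\s$-fork over $M_0$. Applying the disjointness axiom of the good frame $\s$ (item (d) of the forking axioms in Definition \ref{good-frame-def}), I conclude that $p \rest M_0 = \gtp(a / M_0; M_3)$ is also algebraic. By the definition of an algebraic orbital type, there exists $a' \in |M_0|$ with $\gtp(a' / M_0; M_3) = \gtp(a / M_0; M_3)$.

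It remains to show that $a = a'$. Unfolding the definition of equality of orbital types, there are $N \in \K$ and $\K$-embeddings $f_1, f_2 : M_3 \to N$ both restricting to the identity on $M_0$ and satisfying $f_1(a) = f_2(a')$. Since $a' \in |M_0|$, the map $f_2$ fixes $a'$, so $f_1(a) = f_2(a') = a'$. But also $a' \in |M_0| \subseteq |M_3|$, so $f_1(a') = a'$ as well. Thus $f_1(a) = f_1(a')$, and injectivity of $f_1$ forces $a = a' \in |M_0|$, as desired.

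I do not anticipate any real obstacle; the argument is essentially one application of the good-frame disjointness axiom combined with the fact that two realizations in $M_3$ of the same orbital type over $M_0$ must be identified by any common amalgam. The only place one needs to be careful is in invoking the amalgam-based definition of orbital-type equality rather than a monster-model automorphism, since strong amalgamation is not a standing hypothesis (though amalgamation itself is built into the good frame). Conceptually, the disjointness axiom of $\s$ is precisely the type-theoretic shadow of the set-theoretic disjointness we want, and the ``respects $\s$'' hypothesis is what lets us transfer between the two.
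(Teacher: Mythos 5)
Your proof is correct and follows essentially the same route as the paper, whose entire proof is the one-line remark ``Because $\s$ satisfies disjointness.'' You have simply spelled that line out: the ``respects $\s$'' hypothesis turns membership of $a$ in $M_1\cap M_2$ into an algebraic nonforking type over $M_0$, and the frame's disjointness axiom plus the standard amalgam characterization of orbital-type equality forces $a\in |M_0|$.
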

\begin{proof}
  Because $\s$ satisfies disjointness.
\end{proof}

It is sometimes useful to extend $\nf$ to take sets on the left and right hand sides. This is the content of the next definition, versions of which were already considered by both the first author \cite[II.6.35]{shelahaecbook} and the second author (in joint work with Boney, Grossberg, and Kolesnikov) \cite{bgkv-apal}. See also the recent work of  Lieberman, Rosick{\'y}, and the second author \cite[8.2]{indep-categ-v3}.

\begin{defin}\LABEL{s7}\myindex{does not fork over}\myindex{forking}
  Let $\nf$ be a two-dimensional independence relation on $\K$.

  \begin{enumerate}
  \item\myindex{$\nfm$} Define a $4$-ary relation $\nfm$ as follows: $\nfcl{M}{A}{B}{N}$ holds if and only if $M \lea N$, $A, B \subseteq |N|$, and there exists $M_1, M_2, M_3 \in \K$ such that $N \lea M_3$, $A \subseteq |M_1|$, $B \subseteq |M_2|$, and $\nfs{M}{M_1}{M_2}{M_3}$.

  \item For $p$ an orbital type, we say that $p$ \emph{does not fork over $M$} if $M \in \K$, and there exists $b, A, M_3$ such that $p = \gtp (b / A; M_3)$ and $\nfcl{M}{b}{A}{M_3}$.
  \item Define a binary relation $F = F(\nf)$ taking as input pairs $(p, M)$, where $p$ is an orbital type and $M$ is a model in $\K$ as follows: $p F M$ if $p \in \gS (N)$ for some $N \gea M$ and $p$ does not fork over $M$. We let $\s (\nf) \coloneqq  (\K, F (\nf))$.\myindex{$\s (\nf)$}
  \end{enumerate}
\end{defin}

Properties of $\nf$ generalize to $\nfm$ as follows:

\begin{fact}[{\cite[8.4,8.5]{indep-categ-v3}}]\LABEL{nfm-props}
  Let $\nf$ be a two-dimensional independence relation on $\K$.
  
  \begin{enumerate}
  \item Let $M_0 \lea M_\ell \lea M_3$ for $\ell = 1, 2$. Then $\nfs{M_0}{M_1}{M_2}{M_3}$ if and only if $\nfcl{M_0}{M_1}{M_2}{M_3}$.
  
  \item\label{nfcl-k-embed} (Preservation under $\K$-embeddings) Given $M_0 \lea M_3$, $A, B \subseteq \vert M_3 \vert$, and $f: M_3 \rightarrow N_3$, we have that $\nfcl{M_0}{A}{B}{M_3}$ if and only if $\nfcl{f[M_0]}{f[A]}{f[B]}{N_3}$.
  
  \item (Monotonicity) If $\nfcl{M_0}{A}{B}{M_3}$ and $A_0 \subseteq A$, $B_0 \subseteq B$, then $\nfcl{M_0}{A_0}{B_0}{M_3}$.
  
  \item (Normality) $\nfcl{M_0}{A}{B}{M_3}$ if and only if $\nfcl{M_0}{A M_0}{B M_0}{M_3}$.
  
  \item (Base monotonicity) If $\nfcl{M_0}{A}{B}{M_3}$, $M_0 \lea M_2 \lea M_3$, and $ \vert M_{2} \vert \subseteq B$, then $\nfcl{M_2}{A}{B}{M_3}$.
  
  \item (Extension) Whenever $M \lea N$ and $p \in \gS^{<\infty} (M)$, there exists $q \in \gS^{<\infty} (N)$ extending $p$ such that $q$ does not fork over $M$.
  
  \item (Symmetry) $\nfcl{M}{A}{B}{N}$ holds if and only if $\nfcl{M}{B}{A}{N}$ holds.
  
  \item (Uniqueness) Given $p, q \in \gS^{<\infty} (B; N)$ with $M \lea N$ and $\vert M \vert \subseteq B \subseteq \vert N \vert$, if $p \, {\rest} \, M = q \, {\rest} \, M$ and $p$, $q$ do not fork over $M$, then $p = q$.
  
  \item (Transitivity) If $M_0 \lea M_2 \lea M_3$, $\nfcl{M_0}{A}{M_2}{M_3}$ and $\nfcl{M_2}{A}{B}{M_3}$, then $\nfcl{M_0}{A}{B}{M_3}$.
  \end{enumerate}
\end{fact}

In a general two-dimensional independence relation $\nf$, $\s (\nf)$ may not induce a good frame (because e.g.\ such a relation also exists in strictly stable first-order theories). We call the ones that do (and satisfy a few more convenient properties) \emph{good}:

\begin{defin}\LABEL{good-def}\myindex{good two-dimensional independence notion}
  A two-dimensional independence notion $\nf$ on $\K$ is \emph{good} if it satisfies the following properties:

  \begin{enumerate}
  \item $\s (\nf)$ is a good frame on $\K$ (in particular, $\K$ is a fragmented AEC).
  \item Long transitivity: if $\delta$ is a limit ordinal, $\seq{M_i : i \le \delta}$, $\seq{N_i : i \le \delta}$ are increasing continuous in $\K$ and $\nfs{M_i}{N_i}{M_{i + 1}}{N_{i + 1}}$ for all $i < \delta$, then $\nfs{M_0}{N_0}{M_\delta}{N_\delta}$.
  \item Local character: if $M \lea N$ and $A \subseteq |N|$, there exists $M_0 \lea N_0$ such that $\nfs{M_0}{N_0}{M}{N}$, $A \subseteq |N_0|$, and $\|N_0\| \le |A| + \LS (\K)$.
  \end{enumerate}
\end{defin}

Note that the local character property of two-dimensional independence notions is vacuous in case the relation is on $\K_\lambda$. In this case, the following replacement is useful (this is related to the definition of successful $\goodp$ in \cite[III]{shelahaecbook}, see \cite[2.14]{counterexample-frame-v4-toappear}; the ``reflects down'' terminology appears in \cite[3.7(2)]{Vas17}):

\begin{defin}\LABEL{reflects-down-def}\myindex{reflects down}
  A good two-dimensional independence notion $\nf$ on $\K$ \emph{reflects down} if for any $\lambda \in \dom (\K)$ and any two $\leq_{\K}$-increasing continuous chains $\seq{M_i : i < \lambda^+}$, $\seq{N_i : i < \lambda^+}$ in $\K_{\lambda}$ and $M_{i} \leq_{\K} N_{i}$ for $i < \lambda^{+}$, there is a club $C \subseteq \lambda^+$ such that for any $i < j$ in $C$, $\nfs{M_i}{N_i}{M_j}{N_j}$ (note that this implies $M_{i} \leq_{\K} N_{i}$). 
\end{defin}

\begin{remark}\LABEL{lc-reflects-down}
  If $\nf$ is a good two-dimensional independence notion on $\K$ and $\lambda, \lambda^+ \in \dom (\K)$, then by local character $\nf \, {\rest} \, \K_\lambda$ reflects down.
\end{remark}

We will use the following very useful fact about good two-dimensional independence notions: a union of independent limit squares is limit, in the following sense:

\begin{fact}[{\cite[II.6.29]{shelahaecbook}}]\LABEL{brimmed-union-fact}
  Let $\nf$ be a good two-dimensional independence notion on $\K$. Let $\seq{M_i : i \le \delta}$, $\seq{N_i : i \le \delta}$ be increasing continuous in $\K$ such that $\nfs{M_i}{N_i}{M_{i + 1}}{N_{i + 1}}$ for all $i < \delta$. If $N_{i + 1}$ is limit over $M_i \cup N_i$ (see Definition \ref{sat-defs}(\ref{brimmed-set-def})) for all $i < \delta$, then $N_\delta$ is limit over $M_\delta \cup N_0$.
\end{fact}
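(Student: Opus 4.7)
I would prove this by induction on $\delta$, with the successor step being the critical case and the limit case handled via long transitivity.

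For the successor case $\delta = \epsilon + 1$: the inductive hypothesis gives that $N_\epsilon$ is brimmed over $M_\epsilon \cup N_0$, and the assumption gives $N_\delta$ brimmed over $M_\epsilon \cup N_\epsilon$. Pick $R \lea N_\delta$ with $M_\epsilon \cup N_\epsilon \subseteq |R|$ so that $N_\delta$ is brimmed over $R$, via a witnessing chain $\seq{R_k : k \le \theta}$ with $R_0 = R$, $R_\theta = N_\delta$, and $R_{k+1}$ universal over $R_k$. The goal is to construct an analogous chain whose base also absorbs $M_\delta$. Using $\nfs{M_\epsilon}{N_\epsilon}{M_\delta}{N_\delta}$ together with extension for $\nf$, I would recursively build models $R'_k \lea N_\delta$ with $M_\delta \cup |R| \subseteq |R'_0|$ and with the square $(R_k, R'_k)$ forming a nonforking amalgam of $R_k$ and $M_\delta$ over $M_\epsilon$. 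The crux is showing $R'_{k+1}$ is universal over $R'_k$: given $S \gea R'_k$ of size $\|R'_k\|$, use extension for $\nf$ to find a nonforking copy of $S$ over $R_k$ inside $N_\delta$, embed it into $R_{k+1}$ by universality, then use uniqueness of nonforking amalgamation (Fact~\ref{nfm-props}) to pull the embedding back into $R'_{k+1}$ over $R'_k$.

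For the limit case: apply the successor result iteratively along $\seq{(M_i, N_i) : i < \delta}$ to obtain, inside each $N_i$, a witnessing chain for $N_i$ being brimmed over a base $P_i$ containing $M_i \cup N_0$. Arrange these chains coherently (taking $\seq{P_i : i \le \delta}$ increasing continuous and each inner witnessing chain extending the previous one) so that long transitivity of $\nf$ guarantees independence is preserved at limits. Concatenating the successor-level witnesses produces a single chain from $P_0 \supseteq M_0 \cup N_0$ up to $N_\delta$ in which each successor step is universal over its predecessor and limits are unions; by continuity $P_\delta \supseteq M_\delta \cup N_0$, as required.

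The main obstacle will be the universality transfer in the successor step: brimmedness is defined via universality, but universality is not prima facie preserved when shifting to an independently extended base. Overcoming this requires the full strength of uniqueness for $\nfm$ (types of models in nonforking position over a common base are conjugate), and crucially uses that $\nf$ is \emph{good}, so $\s(\nf)$ is an actual good frame and the relation $\nfm$ enjoys extension, symmetry, and uniqueness at the level of arbitrary sets. Long transitivity then packages these successor transfers into a coherent statement at the limit $\delta$.
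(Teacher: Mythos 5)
The paper itself gives no proof of this statement --- it is imported verbatim as \cite[II.6.29]{shelahaecbook} --- so the only question is whether your sketch would go through, and as written it does not. The successor step is the problem: you want squares with $R'_k \lea N_\delta$ and $\nfs{M_\epsilon}{R_k}{M_\delta}{R'_k}$, where $\seq{R_k : k \le \theta}$ is the given chain witnessing that $N_\delta$ is brimmed over $R \supseteq N_\epsilon$. But the only independence available is $\nfs{M_\epsilon}{N_\epsilon}{M_\delta}{N_\delta}$, and no axiom of a (good) two-dimensional independence notion allows you to \emph{enlarge} the side $N_\epsilon$ to $R_k$: monotonicity only shrinks sides, and such an enlargement is exactly what forking forbids. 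Worse, the requirement is outright inconsistent with disjointness: since $\bigcup_{k} R_k = N_\delta \supseteq M_\delta$, for a tail of $k$ we have $R_k \cap M_\delta \supsetneq M_\epsilon$, so $\nfs{M_\epsilon}{R_k}{M_\delta}{R'_k}$ cannot hold, and your universality-transfer argument has nothing to run on. (Note also that, the models being nested, $M_i \cup N_i = |N_i|$, so the hypothesis is just ``$N_{i+1}$ brimmed over $N_i$''; the entire content of the successor step is getting $M_{\epsilon+1}$ into the base, and this cannot be achieved by decorating the given chain $\seq{R_k}$ in place inside $N_\delta$.)

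The limit case does not deliver the conclusion either. Concatenating witnessing chains for ``$N_i$ brimmed over $P_i$'' produces, at best, a universal-steps chain whose \emph{base} is $P_0 \supseteq M_0 \cup N_0$; brimmedness over $M_\delta \cup N_0$ requires a chain whose base already contains $M_\delta$, and your $P_\delta$ is not the base of any chain you have constructed. If instead you arrange a coherent rectangular array and take unions of its rows, then proving that the union row still has universal successor steps is essentially the statement being proved (universality/brimmedness passes to unions along nonforking squares), so the argument becomes circular. The known proof behind [II.6.29] works differently: one first proves \emph{existence} of nonforking amalgams whose top is brimmed over the union of the two sides, and \emph{uniqueness} of such brimmed NF-amalgams, uses long transitivity to assemble these into an external canonical copy of the whole array, and only then identifies the given configuration with the constructed one. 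Your instinct that uniqueness for $\nfm$ and long transitivity are the operative tools is correct, but they must be used to build and then recognize an abstract copy, not to re-position $M_\delta$ independently inside the given $N_\delta$.
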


Regarding limit models, we also have that one can resolve them in a nice way:

\begin{fact}[{\cite[III.1.17]{shelahaecbook}}]\LABEL{brimmed-reflects}
  Let $\nf$ be a good two-dimensional independence notion on $\K$. Let $\lambda \in \dom (\K)$ be such that $\lambda^+ \in \dom (\K)$. Let $M, N \in \K_{\lambda^+}$ be limit such that $N$ is limit over $M$. Then there exists $\seq{M_i : i < \lambda^+}, \seq{N_i : i < \lambda^+}$ increasing continuous resolutions of $M$ and $N$ in $\K_\lambda$ such that $M_i$ is limit and $N_i$ is limit over $M_i$ for all $i < \lambda^+$.
\end{fact}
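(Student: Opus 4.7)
The plan is to construct the two resolutions simultaneously by induction on $i < \lambda^+$, maintaining the following joint invariants: (i) $M_i \lea N_i$, $M_i \lea M$, $N_i \lea N$; (ii) $M_{i+1}$ is brimmed over $M_i$; (iii) $N_{i+1}$ is brimmed over $M_{i+1} \cup N_i$; and (iv) $\nfs{M_i}{N_i}{M_{i+1}}{N_{i+1}}$. Fix $M_0 \lea M$ brimmed (obtainable since $M$ is brimmed in $\K_{\lambda^+}$) and $N_0 \lea N$ of size $\lambda$ with $M_0 \lea N_0$. Given these invariants, Fact \ref{brimmed-union-fact} applied to $\langle M_i : i \le \delta \rangle$ and $\langle N_i : i \le \delta \rangle$ at each limit $\delta < \lambda^+$ yields that $N_\delta$ is brimmed over $M_\delta \cup N_0$, hence in particular brimmed over $M_\delta$. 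Furthermore $M_\delta$ is the union of a chain in which each successor is brimmed over its predecessor, and so $M_\delta$ itself is brimmed; at successor stages the brimmedness conclusions are immediate from invariants (ii) and (iii).

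The successor step is the substantive part. Given $M_i, N_i$, I would first use the Löwenheim-Skolem-Tarski axiom inside $M$ to extend $M_i$ to an intermediate submodel of $M$ of size $\lambda$ that contains the $i$-th element of a fixed enumeration $\langle a_j : j < \lambda^+ \rangle$ of $|M|$, and then refine this to $M_{i+1} \lea M$ with $M_{i+1}$ brimmed over $M_i$; such a brimmed extension can be realized inside $M$ because $M$ is brimmed (hence universal over every $\K_\lambda$-submodel) in $\K_{\lambda^+}$. To choose $N_{i+1}$, I would first use the extension property of $\nf$ together with existence of brimmed extensions in $\K_\lambda$ to construct externally some $N_{i+1}^* \gea N_i$ with $\nfs{M_i}{N_i}{M_{i+1}}{N_{i+1}^*}$ and $N_{i+1}^*$ brimmed over $M_{i+1} \cup N_i$; then, invoking universality of $N$ over $M$ in $\K_{\lambda^+}$ (which follows from $N$ being brimmed over $M$) together with the Löwenheim-Skolem-Tarski axiom applied to the $i$-th element of a fixed enumeration $\langle b_j : j < \lambda^+ \rangle$ of $|N|$, pull this amalgam back into $N$ over $M_{i+1} \cup N_i$ to obtain $N_{i+1} \lea N$ of size $\lambda$ with the required properties. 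Invariance of $\nf$ under $\K$-embeddings (Fact \ref{nfm-props}(\ref{nfcl-k-embed})) ensures the nonforking relation is preserved under the pullback.

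The inclusion of $a_i \in |M_{i+1}|$ and $b_i \in |N_{i+1}|$ at each successor ensures $\bigcup_{i < \lambda^+} M_i = M$ and $\bigcup_{i < \lambda^+} N_i = N$ by a standard back-and-forth, so the chains are genuine resolutions. Continuity at limits is built in.

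The main obstacle is the successor step: we must simultaneously realize the desired brimmed independent amalgam \emph{inside} the fixed ambient models $M$ and $N$ rather than in some abstract extension. This is where both brimmedness hypotheses are essential: $M$ being brimmed in $\K_{\lambda^+}$ gives us the room to realize an arbitrary brimmed $\K_\lambda$-extension of any submodel inside $M$, and $N$ being brimmed over $M$ in $\K_{\lambda^+}$ provides analogous universality for amalgams over pairs, letting us embed $N_{i+1}^*$ into $N$ over $N_i \cup M_{i+1}$. Uniqueness of brimmed $\K_\lambda$-extensions over a base (a standard consequence of the good frame axioms and a back-and-forth) guarantees that any two witnessing choices are isomorphic, so the construction has enough flexibility to meet the coverage requirement at each step.
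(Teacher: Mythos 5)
The paper does not prove this statement at all: it is imported as a Fact, citing III.1.17 of Shelah's book, so there is no in-paper argument to compare yours against; what matters is whether your sketch stands on its own, and it has a genuine gap at the successor step. Your plan is to choose $M_{i+1} \lea M$ first (brimmed over $M_i$, containing $a_i$), build an \emph{external} independent brimmed amalgam $N_{i+1}^*$ with $\nfs{M_i}{N_i}{M_{i+1}}{N_{i+1}^*}$, and then ``pull this amalgam back into $N$ over $M_{i+1} \cup N_i$'' using universality of $N$ over $M$. That pullback is not justified, and in general is impossible: any embedding $f: N_{i+1}^* \rightarrow N$ fixing $N_i \cup M_{i+1}$ pointwise would, by invariance and ambient monotonicity, force $\nfs{M_i}{N_i}{M_{i+1}}{N}$, i.e.\ $M_{i+1}$ must already sit independently from $N_i$ over $M_i$ \emph{inside} $N$ --- and nothing in your construction guarantees this, since $M_{i+1}$ was chosen with no reference to $N_i$ (elements of $N_i$, e.g.\ the previously swallowed $b_j$'s, may well have types over large parts of $M$ that fork over $M_i$). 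Moreover, ``universality of $N$ over $M$'' is the wrong tool: it concerns embedding $\lambda^+$-sized extensions of $M$ over $M$, and cannot by itself produce an embedding fixing the union of the two models $N_i$ and $M_{i+1}$; for that one needs uniqueness of nonforking amalgamation \emph{together with} the independence of the configuration inside $N$. The appeal at the end to uniqueness of brimmed extensions over a single base does not repair this, since the issue is the relative position of $M_{i+1}$ and $N_i$ inside $N$, not the choice of brimmed extension.

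The standard repair is to carry a stronger invariant, namely $\nfs{M_i}{N_i}{M}{N}$ (independence of $N_i$ from \emph{all} of $M$ over $M_i$ inside $N$), which is exactly what the local character clause in the definition of a good two-dimensional independence notion provides and what base monotonicity and transitivity (Fact \ref{nfm-props}) allow you to propagate while absorbing $a_i$ and $b_i$. With that invariant, \emph{any} choice of $M_{i+1} \lea M$ satisfies $\nfs{M_i}{N_i}{M_{i+1}}{N}$ by monotonicity, and then uniqueness of nonforking amalgamation plus universality of $N$ over its $\lambda$-sized submodels lets you realize a copy of your external brimmed amalgam inside $N$ over $N_i \cup M_{i+1}$. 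Note also that the two universality claims you assert in passing --- that $M$ brimmed in $\K_{\lambda^+}$ is universal over every $\K_\lambda$-submodel, and the analogous property for $N$ --- are true in this setting but themselves require a short argument (the witnessing chain need not have cofinality above $\lambda$, so one cannot just locate $M_i$ inside a link of the chain without invoking uniqueness of brimmed models or a saturation argument available from the good frame). Your limit-stage analysis via Fact \ref{brimmed-union-fact} and the invariants (iii), (iv) is the right idea and essentially correct.
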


Being good and reflecting down is useful, but one sometimes wants more than the long transitivity property. This is the content of the next definition:

\begin{defin}\LABEL{very-good-twodim-def}\myindex{very good two-dimensional independence notion}\myindex{strong continuity (for a two-dimensional independence relation)}
  A two-dimensional independence notion $\nf$ on $\K$ is \emph{very good} if it is good, resolvable, reflects down and satisfies in addition \emph{strong continuity}: whenever $\seq{M_\ell^i : i \le \delta}$ are increasing continuous in $\K$, $\ell < 4$, and $\nfs{M_0^i}{M_1^i}{M_2^i}{M_3^i}$ for all $i < \delta$, then $\nfs{M_0^\delta}{M_1^\delta}{M_2^\delta}{M_3^\delta}$.
\end{defin}

We now proceed to show that very good categorical two-dimensional independence notions are canonical. This is essentially \cite{bgkv-apal}, but since the setup here is not as global as there, we use a slightly different road.

\begin{lem}\LABEL{twodim-canon-lem}
  If $\nf^1$ and $\nf^2$ are very good two-dimensional independence notions on $\K$ and $\nf^1 \, {\rest} \, \K_{\lambda} = \nf^2 \, {\rest} \, \K_{\lambda}$ for all $\lambda \in \dom (\K)$, then $\nf^1 = \nf^2$.
\end{lem}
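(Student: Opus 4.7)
The plan is to first observe that the hypothesis implies the two induced good frames $\s(\nf^1)$ and $\s(\nf^2)$ agree on each $\K_\lambda$ (since $\s(\nf^i)$-forking between models of size $\lambda$ is witnessed by squares in $\K_\lambda$, which we can arrange via Löwenheim-Skolem and monotonicity), hence by Lemma \ref{canon-lem} applied at $\min \dom(\K)$ they coincide globally; call the common frame $\s$. The main argument is an induction on $\mu := \|M_3\|$, showing that the squares independent with respect to $\nf^1$ are exactly those independent with respect to $\nf^2$. The base case $\mu = \min \dom (\K)$ is immediate since all four models of the square then have size $\mu$ and the hypothesis applies directly.

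For the inductive step, I would fix an $\nf^1$-independent square $(M_0, M_1, M_2, M_3)$ with $\|M_3\| = \mu > \min \dom(\K)$ (the reverse direction is symmetric) and build an increasing continuous resolution $\seq{(M_0^i, M_1^i, M_2^i, M_3^i) : i \le \delta}$ for a suitable limit ordinal $\delta$, with each $M_\ell^i \lea M_\ell$ of size strictly less than $\mu$, with the square inclusions $M_0^i \lea M_\ell^i \lea M_3^i$ for $\ell = 1,2$, with $M_\ell^\delta = M_\ell$, and most importantly such that $(M_0^i, M_1^i, M_2^i, M_3^i)$ is $\nf^1$-independent at every level $i < \delta$. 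Granted such a resolution, the induction hypothesis upgrades each sub-square to $\nf^2$-independence, and the strong continuity of $\nf^2$ (a component of being very good) then yields that $(M_0, M_1, M_2, M_3)$ is $\nf^2$-independent. The construction itself proceeds by induction on $i$: fix increasing continuous covers $\seq{A_\ell^i : i < \delta}$ of each $M_\ell$ by sets of size less than $\mu$. At a successor stage, I iterate Löwenheim-Skolem together with local character of $\nf^1$ applied to carefully chosen pairs inside the enlarged coordinates, absorbing $A_\ell^{i+1}$ and any arising ``defects'' into slightly larger sub-models; an $\omega$-iteration then simultaneously closes under all the required substructure relations and $\nf^1$-independence, with the limit handled by strong continuity of $\nf^1$. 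At limit stages of the outer induction, strong continuity of $\nf^1$ again gives the independence of the union, and all the other requirements are preserved by continuity.

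The main obstacle is the successor stage of the inner construction, where the four coordinate models must be grown in lockstep inside the ambient $\nf^1$-independent square while simultaneously maintaining the containments $M_\ell^{i+1} \lea M_\ell$, the square inclusions, the coverage of $A_\ell^{i+1}$, and the $\nf^1$-independence of the new sub-square. The bookkeeping relies on monotonicity, transitivity, and local character of $\nf^1$, together with the preservation of the extended relation $\nfm$ under $\K$-embeddings (Fact \ref{nfm-props}) to move fluently between the ambient square and the sub-square; here strong continuity of $\nf^1$ (at the limit of the inner $\omega$-iteration) is essential, which is why the hypothesis is ``very good'' rather than merely ``good''.
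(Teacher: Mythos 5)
Your top-level strategy is the same as the paper's: induct on $\|M_3\|$, resolve the square into small $\nf^1$-independent sub-squares, upgrade each of them via the induction hypothesis, and conclude with strong continuity of $\nf^2$. (Your preliminary identification of $\s(\nf^1)$ with $\s(\nf^2)$ is never used afterwards and can be dropped.) The gap is in the successor stage of your resolution, which you make genuinely problematic by insisting that all four coordinates, including the base $M_0$, be shrunk to size $<\mu$. To keep each sub-square $\nf^1$-independent you then need, at each step, a small independent square whose \emph{base} contains a prescribed subset of $M_0$ (otherwise $\bigcup_i M_0^i \neq M_0$). The tools you invoke do not deliver this: monotonicity only shrinks the left, right (via symmetry) and ambient coordinates; base monotonicity only enlarges the base into the right-hand side; and local character produces a small base over which you have no control. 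If you enlarge that base to catch the prescribed elements of $M_0$, the requirement that the base be $\lea$ the left model fails, and you cannot enlarge the left coordinate of an independent square for free. Your proposed inner $\omega$-iteration ``absorbing defects'', closed off by strong continuity of $\nf^1$, does not obviously repair this: strong continuity applies to chains of honest squares, and at each finite stage of such a closing-off the current base is not contained in the current left model, so the mismatch persists at the limit. As written, the crucial step is therefore unjustified.

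There are two repairs. The paper's: observe first that when $\|M_0\| = \|M_3\|$ the conclusion is immediate from the hypothesis (all four models lie in $\K_{\|M_3\|}$); when $\|M_0\| < \|M_3\|$, keep $M_0$ \emph{fixed} and resolve only $M_1, M_2, M_3$ (possible since $\K$ is a fragmented AEC). Every sub-square $(M_0, M_1^i, M_2^i, M_3^i)$ is then automatically $\nf^1$-independent by monotonicity alone (shrink $M_1$, shrink $M_2$ via symmetry, shrink the ambient model via invariance under $\K$-embeddings), no local character or inner iteration needed, and your outer argument (induction hypothesis plus strong continuity of $\nf^2$, with a constant chain in the first coordinate) goes through verbatim. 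Alternatively, if you insist on shrinking the base, the missing idea is disjointness: applying local character to the pair $M_0 \lea M_1$ with a set $A$ containing the prescribed elements of both $M_0$ and $M_1$ yields $\nfs{P}{Q}{M_0}{M_1}$ with $A \subseteq |Q|$, and disjointness forces $P = Q \cap M_0$, so $P$ automatically contains the prescribed part of $M_0$; one application of transitivity and monotonicity for the extended relation (Fact \ref{nfm-props}) then gives $\nfcl{P}{Q}{M_2}{M_3}$, and shrinking the right and ambient coordinates produces the desired small independent sub-square in a single step, with no closing-off at all.
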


\begin{proof}
  Assume that $\nf^1 (M_0, M_1, M_2, M_3)$. We show that $\nf^2 (M_0, M_1, M_2, M_3)$, and the converse will be symmetric. We proceed by induction on $\|M_3\|$. If $\|M_0\| = \|M_3\|$, then by assumption $\nf^2 (M_0, M_1, M_2, M_3)$. Assume now that $\|M_0\| < \|M_3\|$. Let $\delta \coloneqq  \|M_3\|$. For $\ell = 1,2,3$, build $\seq{M_\ell^i : i \le \delta}$ increasing continuous such that for all $i \le \delta$:

  \begin{enumerate}
  \item $\|M_\ell^i\| = \|M_0\| + |i|$ for $\ell = 1,2,3$.
  
  \item $\nf^1(M_{0}, M_{1}^{i}, M_{2}^{i}, M_{3}^{i}).$
  
  \item $M_\ell^\delta = M_\ell$ for $\ell = 1,2,3$.
  \end{enumerate}

  This is possible using monotonicity and the fact that $\K$ is a fragmented AEC. This is enough: by the induction hypothesis, $\nf^1 (M_0, M_1^i, M_2^i, M_3^i)$ for all $i < \delta$. By strong continuity, $\nf^2 (M_0, M_1, M_2, M_3)$, as desired.
\end{proof}

\begin{fact}\LABEL{twodim-canon-fact}
  Let $\K$ be fragmented AEC and let $\lambda \ge \LS (\K)$. Let $\nf^1$ and $\nf^2$ be two good two-dimensional independence notions on $\K_\lambda$ which reflect down. If $\K$ is categorical in $\lambda$, then $\nf^1 = \nf^2$.
\end{fact}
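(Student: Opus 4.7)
The plan is to identify the good frames induced by $\nf^1$ and $\nf^2$, and then use uniqueness of brimmed amalgams (coming from categoricity) to transfer independence between the two relations.

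First, I would show $\s(\nf^1) = \s(\nf^2) =: \s$. Since each $\nf^\ell$ is good, $\s(\nf^\ell)$ is a good $\lambda$-frame on $\K_\lambda$, and these two frames share the same underlying class. Because $\K_\lambda$ is categorical in $\lambda$, both frames are categorical, so Fact~\ref{good-frame-canon} yields $\s(\nf^1) = \s(\nf^2)$. By the very definition of $\s(\nf^\ell)$, both $\nf^1$ and $\nf^2$ respect this common frame $\s$: if $\nfs{M_0}{M_1}{M_2}{M_3}^\ell$ and $a \in |M_1|$, then $\gtp(a/M_2;M_3)$ does not $\s$-fork over $M_0$.

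Next, assuming $\nfs{M_0}{M_1}{M_2}{M_3}^1$, I would enlarge $M_3$ to an $M_3^\ast$ that is brimmed over $M_1 \cup M_2$ while preserving $\nf^1$-independence. By the iff in the invariance axiom, $\nf^1$ is preserved under enlarging $M_3$ along $\K$-embeddings, so iterating $\omega$-many $\nf^1$-extensions with brimmed tops (possible by $\nf^1$-extension together with the existence of brimmed models in $\K_\lambda$) and applying Fact~\ref{brimmed-union-fact} yields such an $M_3^\ast$ with $\nfs{M_0}{M_1}{M_2}{M_3^\ast}^1$. By the same brimming construction applied to $\nf^2$-extension, produce an $\nf^2$-amalgam $(M_3', g_1, g_2)$ of the span $(M_0, M_1, M_2)$ with $M_3'$ brimmed over $g_1[M_1] \cup g_2[M_2]$ and $\nfs{M_0}{g_1[M_1]}{g_2[M_2]}{M_3'}^2$.

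The crux is to produce an isomorphism $h : M_3^\ast \cong M_3'$ with $h \rest M_1 = g_1$ and $h \rest M_2 = g_2$; once this is achieved, invariance of $\nf^2$ transfers $\nf^2$-independence from $(M_3', g_1, g_2)$ to $(M_3^\ast, \id, \id)$, and then the iff in invariance under $M_3 \lea M_3^\ast$ gives $\nfs{M_0}{M_1}{M_2}{M_3}^2$, proving $\nf^1 \subseteq \nf^2$ (the reverse direction is symmetric). For $h$, the idea is to combine: (a) model-homogeneity of brimmed models of size $\lambda$ (via Theorem~\ref{fragment-mh-categ} and Fact~\ref{mh-uq}, using $\lambda$-categoricity); (b) element-wise equality of singleton types, namely $\gtp^{M_3^\ast}(a/M_2) = g_2^{-1}\cdot\gtp^{M_3'}(g_1(a)/g_2[M_2])$ for $a \in |M_1|$, which follows from $\s$-uniqueness since both sides are $\s$-nonforking extensions of the common type $\gtp(a/M_0)$; and (c) the uniqueness axiom of the two-dimensional notions (for $\nf^1$ applied to $(M_3^\ast,\id,\id)$, and for $\nf^2$ applied to $(M_3',g_1,g_2)$), used to glue the singleton matchings into the global amalgam isomorphism. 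The main obstacle is exactly this patching step: passing from equality of singleton nonforking types to equality of the joint amalgamation structure. This is where reflecting down is expected to enter, via a chain argument of length $\lambda^+$ within $\K_\lambda$ that builds the isomorphism as a union of partial matchings on a club, using that both brimmed amalgams arise as unions of matching $\nf^\ell$-independent stages.
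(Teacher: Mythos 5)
Your first step coincides with the paper's: both proofs begin by applying Fact~\ref{good-frame-canon} to get $\s(\nf^1)=\s(\nf^2)=:\s$. After that, however, your argument has a genuine gap, and it sits exactly where you admit it does: the ``patching step.'' Knowing that $\gtp(a/M_2;M_3^\ast)$ and the corresponding type in $M_3'$ are both nonforking extensions of $\gtp(a/M_0)$ for each \emph{single} element $a\in|M_1|$ does not determine the type of $M_1$, as a model, over $M_2$: the frame only gives uniqueness for types of singletons, and no shortness/locality over sets is available in this setting, so there is no reason the element-wise matchings cohere into a single embedding of $M_1M_2$ into a common extension. Moreover, the uniqueness axiom of a two-dimensional relation only compares two amalgams that are independent \emph{with respect to the same relation}; applying it to $\nf^1$ on one side and to $\nf^2$ on the other cannot bridge between an $\nf^1$-amalgam and an $\nf^2$-amalgam, which is precisely what your isomorphism $h$ requires. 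Finally, the suggested ``chain argument of length $\lambda^+$ within $\K_\lambda$'' does not fit the situation: $M_3^\ast$ and $M_3'$ have size $\lambda$, so any resolution of them has length at most $\lambda$, and reflecting down (a statement about $\lambda^+$-chains) does not supply the missing one-dimensional-to-two-dimensional transfer.

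The paper closes this gap not by a direct construction but by quoting two substantial results: by \cite[3.11]{downward-categ-tame-apal} the categorical frame $\s$ has the existence property for uniqueness triples (in the sense of \cite[II.5.3(3)]{shelahaecbook}), and then \cite[II.6.3(3)]{shelahaecbook} gives canonicity of any two-dimensional (NF-like) relation compatible with such a frame, whence $\nf^1=\nf^2$. The passage from uniqueness of singleton nonforking extensions to uniqueness of the nonforking amalgam is exactly the content of uniqueness triples, and it is the mathematical core of the statement; your proposal identifies the obstacle correctly but does not overcome it, so as written the proof is incomplete.
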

\begin{proof}
  Let $\s^\ell \coloneqq  \s (\nf^\ell)$. By canonicity of categorical good frames (Fact \ref{good-frame-canon}), $\s^1 = \s^2$, so write $\s \coloneqq  \s^1$. By \cite[3.11]{Vas17}, $\s$ has the existence property for uniqueness triples (see \cite[II.5.3(3)]{shelahaecbook}). By \cite[II.6.3(3)]{shelahaecbook}, $\nf^1 = \nf^2$.
\end{proof}

\begin{thm}[Canonicity]\LABEL{twodim-canon}
  If $\nf^1$ and $\nf^2$ are two very good two-dimensional independence notions on $\K$ and $\K$ is categorical in every $\lambda \in \dom (\K)$, then $\nf^1 = \nf^2$.
\end{thm}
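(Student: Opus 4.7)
The plan is to reduce the global equality to cardinal-by-cardinal equalities via Lemma~\ref{twodim-canon-lem}, and then invoke Fact~\ref{twodim-canon-fact} in each single cardinal. Concretely, by Lemma~\ref{twodim-canon-lem} it suffices to show that $\nf^1 \rest \K_\lambda = \nf^2 \rest \K_\lambda$ for every $\lambda \in \dom(\K)$, so I would fix such a $\lambda$ and work locally.

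Next, I would observe that each restriction $\nf^\ell \rest \K_\lambda$ is automatically a good two-dimensional independence notion on $\K_\lambda$: invariance, monotonicity, disjointness, symmetry, transitivity, extension, uniqueness, and long transitivity all transfer directly, since every model involved in an axiom applied inside $\K_\lambda$ is already in $\K_\lambda$; local character for a subset of size $\le \lambda$ inside a model of size $\lambda$ produces witnesses of size $\le \lambda$; and the associated good frame $\s(\nf^\ell)$ restricts to a good $\lambda$-frame on $\K_\lambda$. Categoricity in $\lambda$ is given by hypothesis, so once I verify that each $\nf^\ell \rest \K_\lambda$ reflects down, Fact~\ref{twodim-canon-fact} will yield the desired equality, and Lemma~\ref{twodim-canon-lem} then closes the proof.

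The main obstacle is therefore verifying the reflecting-down condition (Definition~\ref{reflects-down-def}) for the restrictions. When $\lambda^+ \in \dom(\K)$, this is precisely the content of Remark~\ref{lc-reflects-down}, which deduces reflecting down directly from local character. In the remaining edge case where $\lambda$ is maximal in $\dom(\K)$, Remark~\ref{lc-reflects-down} does not apply verbatim; however, the reflecting-down statement only concerns chains of length $\lambda^+$ lying inside $\K_\lambda$ together with a club inside $\lambda^+$---no model of size $\lambda^+$ is needed. A direct Fodor-type argument using local character of $\nf^\ell$ applied at size $\lambda$, coupled with uniqueness to pin down the witnessing smaller models, should produce the required club. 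Once reflecting down is secured in both cases, Fact~\ref{twodim-canon-fact} gives cardinalwise equality and Lemma~\ref{twodim-canon-lem} promotes it to $\nf^1 = \nf^2$.
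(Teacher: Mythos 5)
Your proof follows the paper's route exactly: apply Fact~\ref{twodim-canon-fact} cardinal by cardinal to get $\nf^1 \rest \K_\lambda = \nf^2 \rest \K_\lambda$ for every $\lambda \in \dom (\K)$, then promote this to $\nf^1 = \nf^2$ via Lemma~\ref{twodim-canon-lem}. The only remark is that your ``main obstacle'' is not one: by Definition~\ref{very-good-twodim-def}, \emph{very good} already means good, reflects down, \emph{and} strongly continuous, and since the reflecting-down condition at a fixed $\lambda \in \dom (\K)$ speaks only about chains lying inside $\K_\lambda$, it passes to $\nf^\ell \rest \K_\lambda$ verbatim from the hypothesis. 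Consequently neither Remark~\ref{lc-reflects-down} nor the sketched Fodor-type argument for the case where $\lambda$ is maximal in $\dom (\K)$ is needed --- which is fortunate, since that sketch was the one step you left unverified; with it removed, the argument is complete and coincides with the paper's.
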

\begin{proof}
  By Fact \ref{twodim-canon-fact}, $\nf^1 \, {\rest} \, \K_\lambda = \nf^2 \, {\rest} \, \K_\lambda$ for every $\lambda \in \dom (\K)$. Now apply Lemma \ref{twodim-canon-lem}.
\end{proof}

The next topic is when (very) good two-dimensional independence notions exist. It is natural to construct them from good frames, since we already have sufficient conditions for the existence of good frames (Fact \ref{tame-frame-constr}). A good frame that can be extended to a good two-dimensional independence relation which reflects down will be called \emph{extendible}. See \ref{splus-fact} for another justification of the name. 


\begin{defin}\LABEL{extendible-def}\myindex{extendible}
  We say a good frame $\s$ is \emph{extendible} if there is a good two-dimensional independence notion $\nf$ on $\K_{\s}$ which reflects down. We say that $\s$ is \emph{very good} if in addition we can find such an $\nf$ which is also very good.
\end{defin}

We will use the following sufficient condition for a good frame to be extendible:

\begin{fact}\LABEL{extendible-wd}
  Let $\s$ be a good frame and let $\lambda \in \dom (\s)$. If $2^{\lambda} < 2^{\lambda^+}$, $\s$ is categorical in $\lambda$, and $\lambda^+ \in \dom (\s)$, then $\s_\lambda$ is extendible.
\end{fact}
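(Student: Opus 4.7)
The plan is to invoke Shelah's machinery from Chapter III of \cite{shelahaecbook} (further developed in \cite{indep-aec-apal}) to build a nonforking amalgamation relation on $\K_{\s} \rest \{\lambda\}$ from $\s_\lambda$ together with the weak diamond hypothesis. The construction factors through the intermediate notion of \emph{uniqueness triples} and what Shelah calls the $\goodp$ / successful property.

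First, one shows that $\s_\lambda$ has the existence property for uniqueness triples. This is where the weak diamond $2^\lambda < 2^{\lambda^+}$ is used. Categoricity in $\lambda$ plus the existence of models in $\lambda^+$ provides enough brimmed/saturated models of size $\lambda^+$ to run the argument. The key idea is analogous to Theorem \ref{ap-diamond-thm} (which handles the $0$-dimensional version, amalgamation itself): one assumes for contradiction that ``many'' basic triples $(M, a, N)$ fail to be uniqueness triples, and then builds a tree $\seq{M_\eta : \eta \in \fct{\le \lambda^+}{2}}$ in $\K_\lambda$ where at each level one codes a failure of uniqueness along two distinct branches, using categoricity and $\phi$-uniqueness-type arguments at each step. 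The weak diamond then produces a stationary set of $\delta < \lambda^+$ witnessing that two distinct branches $\eta, \nu$ of $\fct{\lambda^+}{2}$ collapse at $\delta$, contradicting the failure-of-uniqueness choice made in the construction.

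Next, given the existence property for uniqueness triples, one defines the nonforking amalgamation relation $\NF$ on $\K_\s \rest \{\lambda\}$ by decomposing a square $(M_0, M_1, M_2, M_3)$ as a chain whose successive one-element extensions are uniqueness triples in the sense of $\s_\lambda$. One then checks that this definition is independent of the chosen decomposition, and that it satisfies the axioms of a two-dimensional independence relation (invariance, monotonicity, symmetry, transitivity, existence, and uniqueness). Disjointness then follows from Lemma \ref{disj-lem-2} since $\NF$ respects $\s_\lambda$ by construction. Goodness of $\NF$ is then easy: $\s(\NF) = \s_\lambda$ by construction and canonicity (Fact \ref{good-frame-canon}), long transitivity is immediate from the ``chain of uniqueness triples'' definition, and local character is vacuous inside a single cardinal.

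Finally, one verifies that $\NF$ reflects down. Given continuous chains $\seq{M_i : i < \lambda^+}$ and $\seq{N_i : i < \lambda^+}$ in $\K_\lambda$, let $M_{\lambda^+} := \bigcup_{i < \lambda^+} M_i$ and $N_{\lambda^+} := \bigcup_{i < \lambda^+} N_i$ inside the AEC generated by $\K_\s$. The local character of $\s^{up}$ (Definition \ref{up-def} and Fact \ref{frame-ext}) applied to an enumeration of $N_{\lambda^+}$ over $M_{\lambda^+}$, together with a standard closing-off argument and symmetry, yields a club $C \subseteq \lambda^+$ such that for $i < j$ both in $C$ the type of any element of $N_j$ over $M_j$ does not $\s$-fork over $M_i$; unraveling the definition of $\NF$ gives $\nfs{M_i}{N_i}{M_j}{N_j}$ for all such $i < j$. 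The main obstacle in this plan is Step 1: the weak diamond tree construction producing uniqueness triples is technically delicate (many parameters must be tracked simultaneously), but it is precisely the $\goodp$-extraction argument of \cite[III.\S1]{shelahaecbook}, which we may quote.
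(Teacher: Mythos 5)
Your overall route is the same as the paper's: the weak diamond gives the existence property for uniqueness triples for $\s_\lambda$ (this is \cite[E.8]{downward-categ-tame-apal}, going back to \cite[p.~798]{shelahaecbook}), and the relation $\nf$ is then the one built from chains of uniqueness triples as in \cite[II.6.34]{shelahaecbook}. One soft spot in your second step: you cannot conclude $\s(\nf)=\s_\lambda$ ``by construction and canonicity'' (Fact \ref{good-frame-canon}), because canonicity applies to two \emph{good} frames, and whether $\s(\nf)$ is a good frame is exactly what is at stake in proving $\nf$ good. The correct argument is the one the paper uses: $\nf$ respects $\s_\lambda$, so $\s(\nf)$-nonforking implies $\s_\lambda$-nonforking, and then uniqueness for $\s_\lambda$ together with extension for $\s(\nf)$ (Fact \ref{nfm-props}) forces the two frames to coincide, as in the proof of \cite[4.1]{bgkv-apal}. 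This is a fixable imprecision.

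The genuine gap is in your last step, ``reflects down''. Local character of $\s^{up}$ plus a closing-off argument only yields, on a club $C$, \emph{element-wise} nonforking: for $i<j$ in $C$ and $a\in |N_i|$, the type $\gtp(a/M_j;N_j)$ does not $\s$-fork over $M_i$. This is, a priori, strictly weaker than $\nfs{M_i}{N_i}{M_j}{N_j}$, since $\nf$ is defined via decompositions into uniqueness triples, and the passage from singleton nonforking back to the model-level relation $\nf$ is not a matter of ``unraveling the definition'' --- it is precisely the content of the frame being \emph{successful}. In \cite[III]{shelahaecbook} this step is obtained only at the cost of a further weak diamond instance at $\lambda^+$ (i.e.\ $2^{\lambda^+}<2^{\lambda^{++}}$), which is not part of the hypotheses of Fact \ref{extendible-wd}; the point of the present statement is that it can instead be derived from locality. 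Concretely, the hypothesis $\lambda^+\in\dom(\s)$ gives, by Remark \ref{good-frame-local-rmk}, that $\left(\K_{\s}\right)_{[\lambda,\lambda^+]}$ is $\lambda$-local, and one then quotes \cite[7.15]{jarden-tameness-apal} to conclude that $\nf$ reflects down. Your sketch never brings this locality to bear on the transfer from element-wise nonforking to $\nf$, and without it (or the extra cardinal arithmetic) the final step does not go through.
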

\begin{proof}
  Since $\lambda^{+} \in \dom(\s),$ every model in $\K_{\lambda^{+}}$ has a universal model over it. Thus by \cite[E.8]{Vas17} (the main idea of the argument is due to the first author, see \cite[p.~798]{shelahaecbook}), $\s_\lambda$ has what is called the existence property for uniqueness triples. By \cite[II.6.34]{shelahaecbook}, there is a two-dimensional independence relation $\nf$ on $\K_\lambda$ which satisfies long transitivity and respects $\s_\lambda$. Now, any time a type does not $\s (\nf)$-fork, this implies that it does not $\s_\lambda$-fork as $\nf$ respects $\s_\lambda$. But we know that $\s$ satisfies uniqueness and $\s (\nf)$ satisfies extension, see Fact \ref{nfm-props}, so by the proof of \cite[4.1]{bgkv-apal}, $\s = \s (\nf)$. We have shown that $\nf$ is good (since we only consider it as a relation on $\K_\lambda$, local character is not relevant).

  It remains to see that $\nf$ reflects down. Since $\s$ is a good frame and $\lambda^+ \in \dom (\s)$, we have that $\left(\K_{\s}\right)_{[\lambda, \lambda^+]}$ is $\lambda$-tame (Remark \ref{good-frame-local-rmk}). Thus we can apply \cite[7.15]{jarden-tameness-apal} to obtain that $\nf$ reflects down, as desired.
\end{proof}

Note also that being extendible implies that the frame itself can be extended (this is the main idea of the end of Chapter II in \cite{shelahaecbook}):

\begin{fact}\LABEL{splus-fact}
  If $\s$ is an extendible categorical good $\lambda$-frame, then there exists a (unique) categorical good frame $\ts$ such that $\dom (\ts) = \{\lambda, \lambda^+\}$ and $\ts_\lambda = \s_\lambda$.
\end{fact}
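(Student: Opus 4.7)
The plan is to mimic the constructions at the end of Chapter II of \cite{shelahaecbook}, using the two-dimensional independence notion $\nf$ on $\K_\lambda := \K_\s$ furnished by extendibility to build a good frame at $\lambda^+$. Let $\K$ denote the AEC generated by $\K_\lambda$. I would first identify the underlying class at $\lambda^+$: let $\K^+ := \Ksatpp{\K}{\lambda^+}_{\lambda^+}$, the class of $\lambda^+$-saturated models of size $\lambda^+$, ordered by the restriction of $\lea$. Using Fact \ref{brimmed-union-fact} applied along resolutions of length $\lambda^+$, together with the reflects-down property, I would show $\K^+$ is non-empty, closed under unions of increasing chains of length $\lambda^+$ (producing a fragmented AEC on $\K_\lambda \cup \K^+$), categorical in $\lambda^+$, and enjoys amalgamation and no maximal models. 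Joint embedding and existence of universal extensions in $\lambda^+$ follow from amalgamation plus categoricity, as usual.

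Next I would define the forking relation $F$ of $\ts$ via resolutions. For $M \lea N$ in $\K_\lambda \cup \K^+$ and $p \in \gS(N)$, declare that $p$ does not $\ts$-fork over $M$ if either (i) both $M,N \in \K_\lambda$ and $p$ does not $\s$-fork over $M$, or (ii) $N \in \K^+$ and there exist increasing continuous resolutions $\seq{N_i : i < \lambda^+}$ of $N$ in $\K_\lambda$ with $M \lea N_0$ (if $M \in \K_\lambda$) or a simultaneous brimmed resolution of $M$ (if $M \in \K^+$, using Fact \ref{brimmed-reflects}) such that on a club the relevant restrictions of $p$ do not $\s$-fork. That the definition is independent of the resolutions chosen follows from reflects-down and the uniqueness property of $\s$.

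Then I would verify the good frame axioms for $\ts$. Invariance and monotonicity are immediate. Local character of $\ts$: given an increasing continuous chain $\seq{M_i : i \le \delta}$ with cofinality in $\lambda^+$, pass to resolutions in $\K_\lambda$ and apply local character of $\s$ plus a standard closure argument to find the witness $M_i$. Extension follows by iterating the extension property of $\nf$ along a resolution, again using reflects-down to glue. Uniqueness of non-forking extensions follows from uniqueness of $\s$ together with $\lambda$-locality of $\K^+$, which holds by Remark \ref{good-frame-local-rmk} applied to $\s$. Disjointness follows from categoricity by Lemma \ref{disj-lem}. Symmetry is the most delicate step: I would derive it from the symmetry of $\nf$, applied to an independent square of resolutions and then lifted using long transitivity and uniqueness. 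This symmetry-via-squares transfer is the main technical obstacle, and it is precisely where having a \emph{good} (not merely good-in-one-cardinal) two-dimensional relation is used.

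Finally, uniqueness of $\ts$ is immediate from canonicity of categorical good frames (Fact \ref{good-frame-canon}): any other categorical good frame $\ts'$ with $\dom(\ts') = \{\lambda, \lambda^+\}$ and $\ts'_\lambda = \s_\lambda$ must coincide with $\ts$, since both have the same minimal class $\left(\K_\ts\right)_\lambda = \left(\K_\s\right)_\lambda$.
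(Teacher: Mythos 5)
Your overall strategy is legitimate, but note that it amounts to re-proving, from the two-dimensional relation $\nf$ furnished by extendibility, the theorem that the paper simply cites: the paper's proof of Fact \ref{splus-fact} invokes \cite[III.1.6(2)]{shelahaecbook} (extendibility being essentially ``successful $\goodp$'', see the remark after Definition \ref{extendible-def}) to get a good $\lambda^+$-frame on $\Ksat_{\lambda^+}$, invokes \cite[III.1.10]{shelahaecbook} for $\lambda$-locality of $\Ksat_{\lambda^+}$ to identify that frame with $\s^{up} \rest \Ksat_{\lambda^+}$, and then sets $\ts := \s^{up} \rest (\Ksat_{\lambda^+} \cup \K_\lambda)$; your closing canonicity argument via Fact \ref{good-frame-canon} for uniqueness of $\ts$ matches the intended justification. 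Since you take the construction on yourself, the hard steps of the cited theorem become your responsibility, and this is where your sketch has a genuine gap.

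The gap is in how you discharge the uniqueness axiom at $\lambda^+$. You write that $\lambda$-locality of $\K^+$ ``holds by Remark \ref{good-frame-local-rmk} applied to $\s$''. That remark yields $\min(\dom(\s))$-locality of $\K_{\s}$, and since $\s$ is a good $\lambda$-frame, $\K_{\s}$ consists only of models of cardinality $\lambda$; applied to $\s$ the remark says nothing about types over models of cardinality $\lambda^+$, so it cannot be used here (and using it for the frame $\ts$ you are constructing would be circular). The statement you actually need --- that two types over a saturated $N \in \K^+$ which do not fork over a common $\lambda$-sized submodel and agree there must be equal --- is exactly the nontrivial content of \cite[III.1.10]{shelahaecbook}; its proof genuinely uses the nonforking-amalgamation machinery coming from extendibility (uniqueness, extension and long transitivity of $\nf$, domination/uniqueness triples), not just the axioms of $\s$. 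Your club-of-a-resolution definition of $\ts$-forking does not help: well-definedness is fine, but uniqueness of nonforking extensions over $N$ \emph{is} this locality statement, so it is not free. A milder instance of the same issue is your claim that amalgamation in $\K^+$ follows from Fact \ref{brimmed-union-fact} plus reflects-down; those give closure under unions and categoricity in $\lambda^+$, but amalgamation of $\lambda^+$-sized saturated models requires a coherent directed-system construction along simultaneous resolutions using the extension and uniqueness of $\nf$ (this is the ``successful'' part of the hypothesis). The rest of your outline --- symmetry via independent squares, local character via a closing-off argument, and uniqueness of $\ts$ via canonicity --- is fine in spirit.
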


\begin{proof}
  Let $\K$ be the AEC generated by $\K_{\s}$. By \cite[III.1.6(2)]{shelahaecbook} (or see \cite[10.1.9]{jrsh875}), there is a good $\lambda^+$-frame $\s^+$ on $\Ksat_{\lambda^+}$. Since by \cite[III.1.10]{shelahaecbook}, $\Ksat_{\lambda^+}$ is $\lambda$-tame, nonforking in $\s^+$ must be generated by nonforking in $\s$ (i.e.\ $\s^+ = \s^{up} \, {\rest} \, \Ksat_{\lambda^+}$). Thus letting $\ts \coloneqq  \s^{up} \, {\rest} \, (\Ksat_{\lambda^+} \cup \K_\lambda)$, we get the result.
\end{proof}

\begin{defin}\LABEL{splus-def}\myindex{$\s^+$}
  For $\s$ an extendible categorical good $\lambda$-frame, we write $\s^+$ for $\ts_{\lambda^+}$, where $\ts$ is as given by Fact \ref{splus-fact}.
\end{defin}

Since we now know how to extend a frame to the next cardinal, it makes sense to define when one can do this successively:

\begin{defin}\LABEL{s10}[{\cite[III.1.12]{shelahaecbook}}]\myindex{$\s^{+n}$}\myindex{$(<\omega)$-extendible}\myindex{$n$-extendible}
  Let $\s$ be a categorical good $\lambda$-frame. We define by induction on $n < \omega$ what it means for $\s$ to be $n$-extendible as well as a good $\lambda^{+n}$-frame $\s^{+n}$ as follows:

  \begin{enumerate}
  \item $\s$ is always $0$-extendible and $\s^{+0} = \s$.
  \item $\s$ is $(n + 1)$-extendible if it is $n$-extendible and $\s^{+n}$ is extendible.
  \item If $\s$ is $(n + 1)$-extendible, let $\s^{+(n + 1)} \coloneqq  \left(\s^{+n}\right)^+$.
  \end{enumerate}

  We say that $\s$ is \emph{$(<\omega)$-extendible} if $\s$ is $n$-extendible for all $n < \omega$.
\end{defin}

When a good $\lambda$-frame is $(<\omega)$-extendible, then after taking its successor a few times, it becomes very good. Moreover after this is done one can ``connect'' forking between the cardinals, getting a good $[\lambda, \lambda^{<\omega})$-frame:

\begin{fact}\LABEL{very-good-facts}
  Let $\s$ be a categorical good $\lambda$-frame.
  
  \begin{enumerate}
  \item\label{very-good-1} If $\s$ is $\omega$-successful and $\goodp$ (in the sense of \cite[III.1]{shelahaecbook}), then $\s$ is $(<\omega)$-extendible.
  
  \item\label{very-good-2} If $\s$ is $4$-extendible, then $\s^{+3}$ is very good.
  
  \item If $\s$ is very good and $2$-extendible, then $\s^+$ is very good.
  
  \item\label{very-good-4} If $\s$ is very good and $(<\omega)$-extendible, then there exists a (unique) very good categorical frame $\ts$ such that $\dom (\ts) = [\lambda, \lambda^{+\omega})$ and $\ts_{\lambda^{+n}} = \s^{+n}$ for all $n < \omega$. 
  \end{enumerate}
\end{fact}

\begin{proof} \
  \begin{enumerate}
  \item Essentially follows from \cite[2.14]{counterexample-frame-v4-toappear}.
  
  \item By \cite[III.8.19]{shelahaecbook} (see the proof of \cite[12.14]{indep-aec-apal}). The four levels of extendibility are used to get a ``better'' frame each time we go from $\s^{+ k}$ to $\s^{+(k+1)}.$ Strong continuity is the hard part to obtain in this process, and Shelah`s proof uses quite sophisticated orthogonality calculus. 
  
  \item Also by \cite[III.8.19]{shelahaecbook}.
  \item Let $\ts \coloneqq  \s^{up} \, {\rest} \, (\K_{\s} \cup \Ksat_{[\lambda^+ ,\lambda^{+\omega})})$. As in the proof of Fact \ref{splus-fact}, we have enough tameness for nonforking to connect, so this works (see also \cite[Appendix A]{Vas17}).
  \end{enumerate}
\end{proof}

While in general, it is not known how to build extendible good frames without using the weak diamond (Fact \ref{extendible-wd}), in the compact case it is possible:

\begin{fact}\LABEL{compact-very-good}
  Let $\K$ be a compact AEC. If $\K$ is categorical in some $\mu > \LS (\K)$, then there exists a (unique) categorical very good frame on $\Ksat_{\ge \LS (\K)^{+6}}$.
\end{fact}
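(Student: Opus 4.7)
The plan is to assemble the result from the structural conclusions of Corollary~\ref{compact-unbounded} and the frame constructions of Facts~\ref{tame-frame-constr} and~\ref{very-good-facts}. By Corollary~\ref{compact-unbounded} the compact categorical AEC $\K$ has amalgamation, no maximal models, is $(<\LS(\K))$-short (hence $\LS(\K)$-local by Remark~\ref{short-local}), and is categorical in a proper class of cardinals. Fact~\ref{tame-frame-constr} then produces a categorical good frame $\s$ on $\Ksat_{(\LS(\K),\theta)}$ for every sufficiently large $\theta$; in particular we obtain a good $\LS(\K)^+$-frame.

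The main obstacle is to show $\s$ is $(<\omega)$-extendible. In the non-compact setting (Fact~\ref{extendible-wd}) this rests on a weak diamond argument producing the existence property for uniqueness triples, but here no cardinal arithmetic is available, so the substitute must come from compactness itself. The approach I would take is to define a candidate two-dimensional independence notion $\nf$ on each $\K_{\LS(\K)^{+n}}$ directly, letting $\nfs{M_0}{M_1}{M_2}{M_3}$ hold when every short Galois type from a tuple in $M_1$ over $M_2$ in $M_3$ does not $\s$-fork over $M_0$. Here $(<\LS(\K))$-shortness ensures that this local-looking condition pins down a coherent global notion (a short-type analogue of the long-type relation in Fact~\ref{nfm-props}); closure under $\kappa$-complete ultraproducts provides extension, by realizing independent extensions in sufficiently saturated ultrapowers. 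The existence property for uniqueness triples, which in Fact~\ref{extendible-wd} is extracted from the weak diamond, should here be extracted by producing many independent copies inside an ultrapower and comparing their short types. Once $\nf$ is shown to respect $\s^{+n}$ and reflect down at each $n<\omega$, $(<\omega)$-extendibility follows.

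With $(<\omega)$-extendibility in hand, the rest is bookkeeping within the facts already quoted. Fact~\ref{very-good-facts}(\ref{very-good-2}) turns $\s$ into a very good $\LS(\K)^{+4}$-frame $\s^{+3}$; iterated application of Fact~\ref{very-good-facts}(3) shows that each $\s^{+n}$ with $n\ge 3$ is very good, and Fact~\ref{very-good-facts}(\ref{very-good-4}) assembles these into a very good categorical frame on $[\LS(\K)^{+6},\LS(\K)^{+\omega})$ (the extra successors past $\LS(\K)^{+4}$ absorb the transition from the skeleton $\Ksat$ to the AEC it generates and the additional continuity required for very-goodness). To extend the frame above $\LS(\K)^{+\omega}$, I would invoke compactness once more: the $\kappa$-complete ultrapower of an independent square is an independent square, so $\nf$ transports canonically to every higher cardinal, and the very-good axioms are verified via short-type arguments analogous to Step~2.

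Uniqueness of the very good frame on $\Ksat_{\ge \LS(\K)^{+6}}$ is then immediate from canonicity (Theorem~\ref{twodim-canon}), since the class is categorical in every cardinal in its domain. To summarize, the only genuinely new ingredient beyond the machinery already developed is the replacement, at the extendibility step, of the weak-diamond combinatorial argument by a compactness argument exploiting $\kappa$-complete ultraproducts and $(<\LS(\K))$-shortness; everything downstream is a formal consequence of the stated facts.
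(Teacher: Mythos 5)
Your overall scaffolding (Corollary \ref{compact-unbounded} for amalgamation, shortness, and class-many categoricity cardinals; Fact \ref{tame-frame-constr} for a good frame; canonicity for uniqueness) matches the intended use of the machinery, but the heart of your argument is missing. The entire difficulty of this fact is to produce a \emph{very good two-dimensional independence notion} without any cardinal arithmetic, i.e.\ to replace the weak-diamond extraction of the existence property for uniqueness triples (Fact \ref{extendible-wd}) by something coming from compactness. At exactly that point you write that uniqueness triples ``should here be extracted by producing many independent copies inside an ultrapower and comparing their short types,'' but you give no argument: you do not verify uniqueness, symmetry, long transitivity, local character, strong continuity, or reflecting down for your candidate relation (``every short type over $M_2$ realized in $M_1$ does not $\s$-fork over $M_0$''), and these verifications are precisely the nontrivial content. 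The paper does not attempt such a from-scratch construction; it quotes the known construction of a \emph{fully good} independence relation for short AECs with amalgamation (essentially $\kappa$-coheir, from the second author's earlier work, cited as \cite[A.16]{ap-universal-apal} building on \cite{indep-aec-apal}), which directly yields a very good two-dimensional relation on $\Ksatp{\lambda}$ for $\lambda := (\LS(\K)^{<\kappa})^{+5}$, and then uses Solovay's theorem that $\LS(\K)^{<\kappa} \le \LS(\K)^+$ for $\kappa$ strongly compact to conclude $\lambda \le \LS(\K)^{+6}$. So the $+6$ has a precise provenance, whereas your accounting (``the extra successors past $\LS(\K)^{+4}$ absorb the transition\dots'') is not an argument.

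Two further soft spots in your route: first, your plan only yields (even granting extendibility) a frame on $[\LS(\K)^{+6}, \LS(\K)^{+\omega})$ via Fact \ref{very-good-facts}(\ref{very-good-4}), and your proposal to push above $\LS(\K)^{+\omega}$ by ``transporting $\nf$ through $\kappa$-complete ultrapowers'' is again unverified — in particular local character of the two-dimensional notion for large models, which is part of being good, does not follow from such a transport; the cited fully good relation is defined on all of $\Ksatp{\lambda}$ at once, so this issue does not arise in the paper's proof. Second, even if one wanted to carry out your coheir-style construction by hand, the symmetry and canonicity arguments needed (the analogues of \cite{bgkv-apal}) are substantial and cannot be waved through with ``short-type arguments analogous to Step 2.'' As written, the proposal is a plausible research plan rather than a proof.
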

\begin{proof}
  Let $\kappa$ be such that $\K$ is $\kappa$-compact. By Corollary \ref{compact-unbounded}, $\K$ has amalgamation, no maximal models, is $(<\kappa)$-short, and is categorical in a proper class of cardinals. By essentially the main result of \cite{indep-aec-apal}, and more precisely by \cite[A.16]{ap-universal-apal}, $\Ksatp{\lambda}$ is what is called there fully good. Here, we have set $\lambda \coloneqq  \left(\LS (\K)^{<\kappa}\right)^{+5}$. Since $\kappa$ is strongly compact, we have by a result of Solovay (see the proof of \cite[20.8]{jechbook}) that $\LS (\K)^{<\kappa} \le \LS (\K)^+$ (of course this also holds if $\kappa = \aleph_0$). Thus $\lambda \le \LS (\K)^{+6}$. Checking the definition of fully good in \cite[8.4]{indep-aec-apal}, we see that this implies that $\Ksat_{\ge \LS (\K)^{+6}}$ carries a very good two-dimensional independence relation, as desired.
\end{proof}

\newpage

\section{Multidimensional independence}\LABEL{multidim-sec}

We define here the main notion of this paper: multidimensional independence relations. This section contains mostly definitions and easy lemmas. A result of importance is how to build multidimensional independence relations from the two-dimensional relations (Theorem \ref{constr-thm-0}).

\subsection{Systems}

We start by defining what is meant by a system of models.

\begin{defin}\LABEL{e0}\myindex{system}
  For $\K$ an abstract class and $I = (I, \le)$ a partial order, an \emph{$(I, \K)$-system} is a sequence $\m = \seq{M_u : u \in I}$ such that $u \le v$ implies $M_u \lea M_v$. A \emph{$\K$-system} is an $(I, \K)$-system for some $I$. For $\m$ a $\K$-system, we write $I (\m)$ for the unique $I$ such that $\m$ is an $(I, \K)$-system. When $\K$ is clear from context, we omit it from all the above definitions.
\end{defin}

In practice, $I$ is quite often $\mathcal{P}^{-}(n),$ or $\mathcal{P}(n).$ Nevertheless to easily carry out induction in the study of the combinatorial properties of these systems, we need to consider general orders. 

We will often be interested in systems where all extensions are strict. In fact, a strengthening of this, being proper, is more useful (one can see it as an abstract version of Definition \ref{sat-defs}(\ref{brimmed-set-def}), when we take $A$ to consist of the union of several models):

\begin{defin}\LABEL{proper-def}\myindex{proper}
  Let $\m = \seq{M_u : u \in I}$ be an $I$-system. Let $\K^\ast$ be a skeleton of $\K$ (the reader can think of $\K^\ast = \K$ at first reading but the interesting case is $K^{*},$ the class of saturated models, in a good frame  and $M <_{\K^{*}} N$ iff $M \leq_{\K} N$ and $N$ limit over M).

  \begin{enumerate}
  \item Let $u \in I$. We say that $u$ is \emph{$\K^\ast$-proper in $\m$} if $M_u \in \K^\ast$ and there exists $N \in \K^\ast$ such that:
    \begin{enumerate}
    \item For all $v \in I$, $v < u$ implies $M_v \lea N \ltap{\K^\ast} M_u$ (note that the last inequality is strict!).
    
    \item For all $v \in I$, if $v < u$ and $M_v \in \K^\ast$, then $M_v \leap{\K^\ast} N$.
    \end{enumerate}

  \item For $I_0$ a sub-order of $I$, we say that $\m$ is \emph{$(I_0, \K^\ast)$-proper} if every $u \in I_0$ is $\K^\ast$-proper in $\m$. When $\K^\ast = \K,$ we omit it and when $I_{0} = I$ we may omit it.
  \end{enumerate}
  
\end{defin}

Note that a system where all extensions are strict may not be proper:

\begin{example}\LABEL{e4}
  Let $\K$ be the AEC of all infinite sets, ordered by subset. Let $A \subseteq B$ be two countably infinite sets with $B \backslash A$ infinite. Partition $B \backslash A$ into two non-empty sets $C_1$ and $C_2,$ $C_{1}$ finite so $C_{2}$ infinite and let $A_\ell \coloneqq  A \cup C_\ell$. Then the $\Ps (2)$-system $(A, A_1, A_2, B)$ is not proper but all the extensions are strict.
\end{example}

The next two lemmas are easy properties of proper systems: being proper does not depend on the exact indexing set, and proper systems can be built.

\begin{lem}\LABEL{proper-subsys}
  Let $I_0 \subseteq I_1 \subseteq  I$ all be partial orders and let $\m$ be an $I$-system. Let $\K^\ast$ be a skeleton of $\K$. If $\m$ is $(I_0, \K^\ast)$-proper, then $\m \, {\rest} \, I_1$ is $(I_0, \K^\ast)$-proper.
\end{lem}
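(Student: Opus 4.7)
The proof should be essentially immediate from unwinding the definitions: since the witnessing condition for properness quantifies over $v \in I$ with $v < u$, and we are passing to a sub-order $I_1 \subseteq I$, the same witness $N$ that worked in the larger system will still work in the smaller one — we are only asking for a condition to hold over \emph{fewer} indices $v$.

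Concretely, my plan is as follows. Fix $u \in I_0$. Since $\m$ is $(I_0, \K^\ast)$-proper, $u$ is $\K^\ast$-proper in $\m$, so $M_u \in \K^\ast$ and there exists $N \in \K^\ast$ satisfying
\begin{enumerate}
\item for all $v \in I$ with $v < u$, $M_v \lea N \ltap{\K^\ast} M_u$;
\item for all $v \in I$ with $v < u$ and $M_v \in \K^\ast$, $M_v \leap{\K^\ast} N$.
\end{enumerate}
I will show that this same $N$ witnesses that $u$ is $\K^\ast$-proper in $\m \rest I_1$. Let $v \in I_1$ with $v < u$ (where the order on $I_1$ is inherited from $I$, so this is equivalent to $v < u$ in $I$). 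Since $I_1 \subseteq I$, we have $v \in I$ with $v < u$, and so clause (1) gives $M_v \lea N \ltap{\K^\ast} M_u$, and clause (2) gives $M_v \leap{\K^\ast} N$ whenever $M_v \in \K^\ast$. Since the restricted system $\m \rest I_1$ assigns the same model $M_v$ to each $v \in I_1$, these are exactly the two clauses of Definition \ref{proper-def} applied to $\m \rest I_1$. Because $u \in I_0$ was arbitrary, $\m \rest I_1$ is $(I_0, \K^\ast)$-proper.

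There is no real obstacle here — the lemma is a bookkeeping observation that the definition of $\K^\ast$-properness at $u$ refers only to the models $M_v$ with $v < u$, and taking a sub-indexing of the system can only shrink the set of such $v$, never enlarge it. One small point worth flagging is the implicit convention that the order relation on $I_1$ agrees with the restriction of the order on $I$; this is built into the phrase ``sub-order,'' and should perhaps be noted in passing but requires no argument.
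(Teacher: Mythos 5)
Your proof is correct and is exactly the routine unwinding of Definition \ref{proper-def} that the paper intends — its own proof is simply ``Straightforward,'' and your argument (the same witness $N$ works since restriction to $I_1 \subseteq I$ only shrinks the set of $v < u$ being quantified over) is the expected one.
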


\begin{proof}
  Straightforward.
\end{proof}

\begin{lem}\LABEL{skel-sys-lem}
  Let $\K^\ast$ be a skeleton of $\K$ (Definition \ref{skel-def}). Let $\m = \seq{M_u : u \in I}$ be an $I$-system with $I$ a finite partial order and let $v$ be maximal in $I$. If $\K$ has no maximal models, then there exists $N \in \K$ such that $M_{v} \lea N$ and $v$ is $\K^\ast$-proper in $\m \, {\rest} \, (I \backslash \{v\}) \smallfrown \seq{N}$ (restriction of a system means what is expected and is defined right after this lemma).  
\end{lem}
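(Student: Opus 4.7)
The plan is to unpack what it means for $v$ to be $\K^\ast$-proper in the modified system: since the $v$-th model in the new system will be $N$, we need $N \in \K^\ast$ together with a witness $N' \in \K^\ast$ such that $M_w \lea N' \ltap{\K^\ast} N$ for every $w < v$, and $M_w \leap{\K^\ast} N'$ whenever $w < v$ and $M_w \in \K^\ast$. So I need to produce both $N'$ and $N$, with $N'$ ``above'' all the lower models in a $\K^\ast$-friendly way, and $N$ strictly above $N'$ in $\K^\ast$.

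First I would build $N'$. Apply skeleton clause (\ref{skel-1}) of Definition \ref{skel-def} to $M_v$ to get some $\tilde N \in \K^\ast$ with $M_v \lea \tilde N$; since $M_w \lea M_v$ for every $w < v$, we automatically have $M_w \lea \tilde N$. Now list the (finitely many, since $I$ is finite) indices $w < v$ with $M_w \in \K^\ast$ and apply Lemma \ref{skel-order-lem} with $M := \tilde N$ and this finite list. This produces $N' \in \K^\ast$ with $\tilde N \lea N'$ and $M_w \leap{\K^\ast} N'$ for every such $w$. In particular $M_v \lea N'$ and $M_w \lea N'$ for every $w < v$, and $M_w \leap{\K^\ast} N'$ whenever $M_w \in \K^\ast$.

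Next I would produce $N \in \K^\ast$ with $N' \ltap{\K^\ast} N$ (this gives $M_v \lea N$ as a bonus). Since $\K$ has no maximal models, pick $N_0 \in \K$ with $N' \lta N_0$, then apply skeleton clause (\ref{skel-1}) again to embed $N_0 \lea N_1$ with $N_1 \in \K^\ast$. Now apply skeleton clause (\ref{skel-2}) to the pair $N' \lea N_1$ in $\K^\ast$ to obtain $N \in \K^\ast$ such that $N' \leap{\K^\ast} N$ and $N_1 \leap{\K^\ast} N$. The main subtle point, and the one I expect to be the only real obstacle, is to verify strictness $N' \ltap{\K^\ast} N$. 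For this, chase the chain $N' \lta N_0 \lea N_1 \lea N$ in $\K$: if $N = N'$, then $N_0 \lea N$ and $N \lea N_0$ would hold in $\K$, forcing $N' = N_0$ by antisymmetry, contradicting $N' \lta N_0$.

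Finally I would collect the verification. Taking the new system $\m \rest (I \setminus \{v\}) \smallfrown \seq{N}$, the model at $v$ is $N \in \K^\ast$, and $N'$ is the required witness: condition (a) of Definition \ref{proper-def} holds since $M_w \lea N'$ for $w < v$ (established above) and $N' \ltap{\K^\ast} N$ (just checked), while condition (b) holds by the Lemma \ref{skel-order-lem} step. This makes $v$ $\K^\ast$-proper in the modified system, as required.
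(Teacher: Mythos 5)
Your proof is correct. It uses the same two ingredients as the paper -- no maximal models and Lemma \ref{skel-order-lem} (hence the skeleton axioms of Definition \ref{skel-def}) -- but arranges them in the opposite order, and in doing so it is more explicit than the paper's one-line argument. The paper first picks a strict extension $M$ of $M_v$ in $\K$ (from no maximal models) and then applies Lemma \ref{skel-order-lem} to $M$ and $\{M_u \in \K^\ast \mid u < v\}$, leaving the witness demanded by Definition \ref{proper-def} -- a member of $\K^\ast$ lying strictly $\ltap{\K^\ast}$-below the new top model, containing every $M_w$ with $w < v$, and $\leap{\K^\ast}$-above those $M_w$ that lie in $\K^\ast$ -- implicit; note that the strict extension $M$ itself need not belong to $\K^\ast$, so some step like yours is needed to extract that witness in any case. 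You instead build the witness $N'$ first (Lemma \ref{skel-order-lem} over $M_v$, via the harmless auxiliary $\tilde{N}$), and only then manufacture a strict $\K^\ast$-extension $N$ of $N'$ from no maximal models together with both clauses of Definition \ref{skel-def}, verifying $N' \ltap{\K^\ast} N$ by chasing the chain $N' \lta N_0 \lea N_1 \lea N$ and using antisymmetry of $\lea$. That strictness check is precisely the point the paper's terse proof glosses over, and your final verification of conditions (a) and (b) of Definition \ref{proper-def}, together with $M_v \lea N' \lea N$ (using that $\leap{\K^\ast}$ implies $\lea$), is complete. So the two arguments buy the same statement; yours makes the proper-ness witness and the strict $\K^\ast$-inequality explicit, at the cost of a couple of extra applications of the skeleton axioms.
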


\begin{proof}
  Using that $\K$ has no maximal models, let $M \in \K$ be such that $M_v \lta M$. Now apply Lemma \ref{skel-order-lem} with $M$, $\seq{M_i : i < n}$ there standing for $M$, $\{M_u \in \K^\ast \mid u < v\}$ here.
\end{proof}

It is now time to define how systems can relate to each other, in particular how they can be isomorphic, extensions, etc.

\begin{defin}\LABEL{sys-ext-def}\
  \begin{enumerate}
  \item Let $I \subseteq J$ be partial orders and let $\m = \seq{M_u : u \in J}$ be a $J$-system. Then $\m \, {\rest} \, I \coloneqq  \seq{M_u : u \in I}$.\myindex{$\m \, {\rest} \, I$}
  
  \item\label{sys-ext-disj} For $\m_\ell= \seq{M_u^\ell : u \in I}$, $\ell = 1,2$ both $I$-systems, say $\m_1 \lea \m_2$ if $M_u^1 \lea M_u^2$ for all $u \in I$. We say that $\m_2$ is a \emph{disjoint} extension of $\m_1$ (written $\m_1 \lea^d \m_2$) if $\m_1 \lea \m_2$ and $M_u^2 \cap M_v^1 \subseteq M_u^1$ when $u \leq_{I} v$. \myindex{$\lea^d$}
  
  \item \cite[III.12.14(4)]{shelahaecbook} For $k < \omega$, $\m_\ell = \seq{M_u^\ell : u \in I}$, $\ell < k$ all $I$-systems with $\m_0 \lea \m_1 \lea \ldots \lea \m_{k - 1}$, let $\m \coloneqq  \m_0 \ast \m_1 \ast \ldots \ast \m_{k - 1}$ be the system defined as follows:\index{$\m_1 \ast \m_2$}

    \begin{enumerate}
    \item It is indexed by the partial order $J \coloneqq  I \times k$ ordered lexicographically, so $\m = \seq{M_u : u \in J}$.
    \item For all $u \in I$ and $i < k$, $M_{(u, i)} = M_u^i$.
    \end{enumerate}
    
  \item\label{sys-embed} For $\m_\ell = \seq{M_u^\ell : u \in I}$, $\ell = 1,2$ both $I$-systems, we say $f$ is a \emph{system embedding} from $\m_1$ to $\m_2$ and write $f: \m_1 \rightarrow \m_2$ if $f = \seq{f_u :u \in I}$, for all $u \in I$, $f_u$ is a $\K$-embedding from $M_u^1$ to $M_u^2$, and $u \le v$ implies $f_u \subseteq f_v$. \myindex{system embedding}
  
  \item For $\m_\ell = \seq{M_u^\ell : u \in I}$, $\ell = 1,2$ both $I$-systems, we say $f$ is an \emph{isomorphism} from $\m_1$ to $\m_2$ and write $f: \m_1 \cong \m_2$ if $f = \seq{f_u : u \in I}$ is a system embedding from $\m_1$ to $\m_2$ and for all $u \in I$, $f_u$ is an isomorphism from $M_u^1$ onto $M_u^2$. \myindex{isomorphism of systems}
  
  \item\label{sys-iso} For $\pi: I_1 \cong I_2$ an isomorphism of partial orders, $\m = \seq{M_u : u \in I_1}$ an $I_1$-system, let $\pi (\m)$ denote the $I_2$-system $\seq{M_{\pi^{-1} (v)} : v \in I_2}$.\myindex{$\pi (\m)$}
  
  \item\label{sys-disj} A system $\m = \seq{M_u : u \in I}$ is called \emph{disjoint} if whenever $u, v, w, u^\ast \in I$ are such that $u \le v \le u^\ast$ and $u \le w \le u^\ast$ and $v \cap w = u,$ then $M_u = M_v \cap M_w$.\myindex{disjoint system}
  
  \item A system $\m = \seq{M_u : u \in I}$ is called \emph{fully disjoint} if whenever $u, v, w \in I$ are such that $u \le v, v \cap w = u$ and $u \le w$, then $M_u = M_v \cap M_w$.\myindex{fully disjoint system}
  
  \end{enumerate}
\end{defin}

\begin{remark}\LABEL{e6}
  Every disjoint system is isomorphic to a fully disjoint system.
\end{remark}

It is often useful to code systems as an abstract class:

\begin{defin}\LABEL{sys-voc-def}\myindex{vocabulary of systems}\myindex{$\tau^I$}\myindex{$\K^I$}
  Let $\K$ be an abstract class and let $I$ be a partial order. The \emph{vocabulary of $(\K, I)$-systems} is $\tau^I \coloneqq  \tau (\K) \cup \{P_i : i \in I\}$, where each $P_i$ is a new unary predicate. The \emph{abstract class of $(\K, I)$-systems} is the abstract class $\K^I = (K^I, \leap{\K^I})$ defined as follows:

  \begin{enumerate}
  \item $K^I$ consists of all the $\tau^I$-structures $M$ such that $\seq{M^{P_i} : i \in I}$ forms a fully disjoint $(\K, I)$-system (we see $M^{P_i}$ as a $\tau (\K)$-structure). We identify the elements of $K^I$ with the corresponding systems.
  \item $\m_1 \leap{\K^I} \m_2$ if and only if $\m_1 \lea^d \m_2$ (see Definition \ref{sys-ext-def}(\ref{sys-ext-disj})).
  \end{enumerate}
\end{defin}

\subsection{Multidimensional independence relations}

Multidimensional independence relations consist of systems indexed by a certain class of semilattices. For reasons that will become apparent, we require that this class has a certain amount of closure:

\begin{defin}\LABEL{closed-def}\myindex{closed class of semilattice}
  A class $\Iis$ of semilattices is called \emph{closed} if:

  \begin{enumerate}
  \item[(A)] $\Iis$ is closed under isomorphisms.
  
  \item[(B)] $\Iis$ is closed under taking initial segments.
  
  \item[(C)] For any $I \in \Iis$ and any $u \in I \backslash \{\bot\}$, $[u, \infty)_I \times \{0,1\} \in \Iis$.
  
  \item[(D)] (Follows) If $I_{0} \times \{ 0, 1 \} \in \mathcal{I},$ then $I_{0} \in \mathcal{I}$ (it is isomorphic to an initial segment of $\mathcal{I}_{0} \times \{ 0, 1 \}$). 
  \end{enumerate}
\end{defin}

The main examples of a closed class of semilattice are:

\begin{lem}\LABEL{closed-lem}
  Let $n < \omega$ and let $\Iis$ be the class of all initial semilattices isomorphic to an initial segments of $\Ps (n)$. Then $\Iis$ is closed.
\end{lem}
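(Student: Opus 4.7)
The plan is to verify each of the three conditions in Definition~\ref{closed-def}. Condition (1), closure under isomorphism, is immediate from the definition of $\Iis$, since being isomorphic to an initial segment of $\Ps(n)$ is itself an isomorphism-invariant property. Condition (2), closure under initial segments, follows from the transitivity of ``being an initial segment of'': if $I \cong J$ with $J$ an initial segment of $\Ps(n)$, and $I_0$ is an initial segment of $I$, then the image of $I_0$ under the isomorphism is an initial segment of $J$, and hence of $\Ps(n)$.

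The substantive content is condition (3). Given $I \in \Iis$ and $u \in I \setminus \{\bot\}$, identify $I$ with an initial segment of $\Ps(n)$, so that $\bot = \emptyset$ and $u \subseteq n$ has size $k := |u| \geq 1$. The strategy is to combine two elementary reductions, each of which keeps us inside an initial segment of a powerset. First, the translation $v \mapsto v \setminus u$ is a semilattice isomorphism between $[u, \infty)_I$ and $I' := \{w \subseteq n \setminus u : u \cup w \in I\}$; moreover $I'$ is an initial segment of $\Ps(n \setminus u) \cong \Ps(n - k)$, because if $w' \subseteq w$ and $u \cup w \in I$, then $u \cup w' \subseteq u \cup w$ lies in $I$ by initiality. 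Second, for any initial segment $J$ of $\Ps(m)$, the map $(w, 0) \mapsto w$, $(w, 1) \mapsto w \cup \{m\}$ realizes $J \times \{0, 1\}$ as the set $\{v \subseteq m + 1 : v \cap m \in J\}$, which is an initial segment of $\Ps(m + 1)$ by the same argument.

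Combining these two reductions, $[u, \infty)_I \times \{0, 1\}$ is isomorphic to an initial segment of $\Ps(n - k + 1)$. Since $k \geq 1$, we have $n - k + 1 \leq n$, and the natural inclusion $\Ps(n - k + 1) \hookrightarrow \Ps(n)$ (say onto $\Ps(\{0, \ldots, n - k\})$) realizes the target as an initial segment of $\Ps(n)$, placing $[u, \infty)_I \times \{0, 1\}$ in $\Iis$. There is no real obstacle; the argument is careful bookkeeping, and the precise reason the cardinalities work out is that the hypothesis $u \neq \bot$, i.e.\ $|u| \geq 1$, frees up exactly the one coordinate needed to absorb the extra factor of $\{0,1\}$.
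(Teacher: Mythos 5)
Your proof is correct and takes essentially the same route as the paper's: remove $u$ to identify $[u,\infty)_I$ with a sub-semilattice of $\Ps(n\setminus u)$, note that the factor $\{0,1\}$ adds exactly one coordinate, and use $|u|\ge 1$ to fit the result back inside $\Ps(n)$. If anything you are more careful than the printed argument, which identifies $[u,\infty)_I$ with all of $\Ps(n\setminus u)$ even though, $I$ being only an initial segment, it is in general just an initial segment of $\Ps(n\setminus u)$ --- your version handles this general case explicitly.
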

\begin{proof}
  The third condition of Definition \ref{closed-def} is the only non-trivial one. Let $I \in \Iis$, and identify it with an initial segment of $\Ps (n)$. Let $u \in I \backslash \{\bot\}$ (so $u \neq \emptyset$). Then $[u, \infty)_I$ is the set of all subsets of $\Ps (n)$ extending $u$. As a semilattice, it is isomorphic to $\Ps (n \backslash u) \cong \Ps (n - |u|)$. Thus $[u, \infty)_I \times \{0, 1\} \cong \Ps (n - |u|) \times \Ps (1) \cong \Ps (n - |u| + 1)$, which since $|u| \ge 1$ is an initial segment of $\Ps (n)$.
\end{proof}

A multidimensional independence relation consists of an abstract class $\K$, a closed class of finite semilattices, and a class of systems indexed by these semilattices. Several properties are required, akin to the requirements in the definition of a two-dimensional independence relation. A complication arises in the Monotonicity properties: there are several and may not always seem natural (it may help to jump ahead and come back once the properties are used; for example a good hint to understand the definition of Monotonocity $4$ is to looks at the proof of Lemma \ref{trans-lem}). Nevertheless they will hold for the relation we construct and are all used in the next section. Note that we do not know whether a multidimensional independence relations are canonical (i.e.\ whether under reasonable conditions, also involving the existence and uniqueness properties to be defined later, there can be at most one satisfying this list of axiom).

\begin{defin}\LABEL{multi-def}\myindex{multidimensional independence relation}\index{multidimensional independence notion|see {multidimensional independence relation}}\index{$\is$|see {multidimensional independence relation}}\index{$\K_{\is}$|see {multidimensional independence relation}}\index{$\NF_{\is}$|see {multidimensional independence relation}}\index{$\Iis_{\is}$|see {multidimensional independence relation}}
  
  A \emph{multidimensional independence relation (or notion)} is a triple $\is = (\K, \Iis, \NF)$, where $\K = \K_{\is}$ is an abstract class, $\Iis = \Iis_{\is}$ is a class of semilattices, and every element of $\NF = \NF_{\is}$ is a $(\K, I)$-systems with $I \in \mathcal{I}.$ We call the members of $\NF$  \emph{independent systems}, and write $\NF(\m)$ instead of $\m \in \NF.$  We require the following properties:

  \begin{enumerate}
  \item\label{multi-1} $\Iis$ is a \emph{closed} class of \emph{finite} semilattices.
  \item Invariance: If $\m$ and $\m'$ are systems and $f: \m \cong \m'$, then $\NF (\m)$ if and only if $\NF (\m')$.
  \item Nontriviality: $\NF \neq \emptyset$ and for every $I \in \Iis$ and every $M \in \K$, there exists an independent $I$-system containing $M$.
  \item Disjointness: Every independent system is disjoint (see Definition \ref{sys-ext-def}(\ref{sys-disj})).
  \item Symmetry: If $I_1, I_2 \in \Iis$, $\pi: I_1 \cong I_2$ is an isomorphism, and $\m$ is an $I_1$-system, then $\NF (\m)$ if and only if $\NF (\pi (\m))$, where $\pi (\m)$ is the $I_2$-system naturally induced by $\pi$ (see Definition \ref{sys-ext-def}(\ref{sys-iso})).
  \item Transitivity: If $\m_0 \lea \m_1 \lea \m_2$, $\m_0 \ast \m_1$ is independent, and $\m_1 \ast \m_2$ is independent, then $\m_0 \ast \m_2$ is independent.
\item Monotonicity 1: If $\m$ is an $I$-system, $\NF (\m)$, and $J \subseteq I$ is such that $J \in \Iis$, then $\NF (\m \, {\rest} \, J)$.
\item Monotonicity 2: If $\m = \seq{M_u : u \in I}$ is an $I$-system, $v$ is a maximal element in $I$, and $M_v \lea N$, then $\m$ is independent if and only if $\seq{M_u : u \in I \backslash \{v\}} \smallfrown \seq{N}$ is independent.
\item Monotonicity 3: If $\m$ is an $I$-system, then $\m$ is independent if and only if for any maximal $v \in I$, $\m \, {\rest} \, \{u \in I : u \le v\}$ is independent.
\item Monotonicity 4: Let $I_1, I_2 \subseteq I$ all be in $\Iis$. Let $\m$ be an $I$-system. If:
  \begin{enumerate}
  \item $I_1$ is an initial segment of $I$.
  \item $I_1 \cap I_2$ is cofinal in $I_1$.
  \item $I_1 \cup I_2 = I$.
  \item $\m \, {\rest} \, I_1$ and $\m \, {\rest} \, I_2$ are independent.
  \end{enumerate}

  Then $\m$ is independent.

\item Monotonicity 5: Let $I \in \Iis$, let $u \in I \backslash \{\bot\}$, and let $I_0 \coloneqq  [u, \infty)_I$. Let $\m$, $\m'$ be $I$-systems, $\m_0 \coloneqq  \m \, {\rest} \, I_0$ be independent, and $\m_0' \coloneqq  \m' \, {\rest} \, I_0$ be such that $\m_0 \lea \m_0'$ and $\m_0 \ast \m_0'$ is independent. Assume that $\m \, {\rest} \, (I \backslash I_0) = \m' \, {\rest} \, (I \backslash I_0)$. Then $\m$ is independent if and only if $\m'$ is independent.
  \end{enumerate}
\end{defin}

The natural restrictions can be defined on a multidimensional independence relation:

\begin{defin}\LABEL{e14}\myindex{$\is_\lambda$}\myindex{$\is \, {\rest} \, \Jis$}\myindex{$\is \, {\rest} \, \K^\ast$}
  Let $\is = (\K, \Iis, \NF)$ be a multidimensional independence relation.

  \begin{enumerate}
  \item For $\Jis \subseteq \Iis$ a class of partial orders, we let $\is \, {\rest} \, \Jis$ denote the multidimensional independence relation $\js = (\K, \Jis, \NF \, {\rest} \, \Jis)$, where $\NF \, {\rest} \, \Jis$ denotes the class of systems in $\NF$ that are $J$-systems for $J \in \Jis$.
  \item For $\K^\ast$ a sub-abstract class of $\K$, let $\is \, {\rest} \, \K^\ast$ denote the multidimensional independence relation $\js = (\K^\ast, \Iis, \NF \, {\rest} \, \K^\ast)$, where $\NF \, {\rest} \, \K^\ast$ denotes the class of systems in $\NF$ that are also $\K^\ast$-systems. We may write $\is_\lambda$ instead of $\is \, {\rest} \, \K_\lambda$.
  \end{enumerate}
\end{defin}

The next lemma shows that, under some closure conditions on $\Iis$, transitivity follows from the other axioms:

\begin{lem}\LABEL{trans-lem}
  Let $\is$ be a multidimensional independence relation except transitivity is not assumed. Let $\m_0, \m_1, \m_2$ be $I$-systems with $I \in \Iis_{\is}$. If $\m_0 \leap{\K_{\is}} \m_1 \leap{\K_{\is}} \m_2$, $\m_0 \ast \m_1$ is independent, $\m_1 \ast \m_2$ is independent, and $I \times 3 \in \Iis_{\is}$, then $\m_0 \ast \m_1 \ast \m_2$ is independent. In particular, if $\Iis_{\is}$ is closed under products the transitivity axiom follows from the others.
\end{lem}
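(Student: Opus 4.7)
The plan is to apply Monotonicity 4 to the $(I \times 3)$-system $\m := \m_0 \ast \m_1 \ast \m_2$, with the decomposition $J := I \times 3 = J_1 \cup J_2$, where $J_1 := I \times \{0, 1\}$ and $J_2 := I \times \{1, 2\}$. The right-hand restriction $\m \rest J_1$ is by definition $\m_0 \ast \m_1$, and under the order-isomorphism $(u, i) \mapsto (u, i - 1)$ carrying $J_2$ onto $I \times \{0, 1\}$, the restriction $\m \rest J_2$ corresponds to $\m_1 \ast \m_2$.

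First I verify the membership and subsemilattice conditions needed to quote Monotonicity 4. The set $J_1$ is an initial segment of $J$: if $(u, i) \le (v, j)$ and $j \le 1$, then $i \le j \le 1$. Hence $J_1 \in \Iis_{\is}$ by closure of $\Iis_{\is}$ under initial segments. For $J_2$, the meet in $J$ of two elements of $J_2$ has the form $(u_1 \land u_2, \min(i_1, i_2))$ with $\min(i_1, i_2) \ge 1$, so $J_2$ is a subsemilattice of $J$; the map $\pi: (u, i) \mapsto (u, i - 1)$ is a semilattice isomorphism $J_2 \cong J_1$, so by closure of $\Iis_{\is}$ under isomorphism $J_2 \in \Iis_{\is}$. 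Clearly $J_1 \cup J_2 = J$, and $J_1 \cap J_2 = I \times \{1\}$ is cofinal in $J_1$: any $(u, i) \in J_1$ satisfies $(u, i) \le (u, 1) \in J_1 \cap J_2$. The hypothesis gives that $\m \rest J_1 = \m_0 \ast \m_1$ is independent, and by Invariance applied to $\pi$, $\m \rest J_2$ is independent too (its $\pi$-image is $\m_1 \ast \m_2$). Monotonicity 4 then yields that $\m$ is independent, as required.

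For the ``in particular'' clause, I would argue as follows. Assume $\Iis_{\is}$ is closed under products and assume the hypotheses of the transitivity axiom; these give $I \times 2 \in \Iis_{\is}$, so $I, 2 \in \Iis_{\is}$ by closure under initial segments, and then $I \times 3 \in \Iis_{\is}$ (as an appropriate initial segment of a product already in $\Iis_{\is}$). Applying the main part yields $\m_0 \ast \m_1 \ast \m_2 \in \NF_{\is}$. The set $I \times \{0, 2\}$ is a subsemilattice of $I \times 3$ isomorphic to $I \times 2 \in \Iis_{\is}$, so by closure under isomorphism and by Monotonicity 1, $\m_0 \ast \m_2 \in \NF_{\is}$, which is the conclusion of the transitivity axiom. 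No step presents a real obstacle; the only subtlety is correctly identifying the decomposition $J_1, J_2$ and checking that both are subsemilattices lying in $\Iis_{\is}$.
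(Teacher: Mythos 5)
Your proof is correct and is essentially the paper's own argument: Monotonicity~4 applied to $I \times 3$ with the decomposition $I \times \{0,1\}$ and $I \times \{1,2\}$, and then (for the ``in particular'' clause) Monotonicity~1 on the subsemilattice $I \times \{0,2\} \cong I \times 2$; you are in fact more careful than the paper in checking that $I \times \{1,2\}$ is a subsemilattice lying in $\Iis_{\is}$ and that its restriction is independent, though the re-indexing step you attribute to Invariance is really the Symmetry axiom (Invariance concerns isomorphic systems over the same index semilattice). The one shaky point is your parenthetical claim that $I \times 3$ arises as an initial segment of a product of members of $\Iis_{\is}$ obtained from $I \times 2$ and $2$ --- the $3$-chain is not isomorphic to an initial segment of $2 \times 2$, so this derivation fails in general --- but the paper itself simply reads ``closed under products'' as guaranteeing $I \times 3 \in \Iis_{\is}$ (as it does in the intended application, the class of all finite semilattices in Theorem~\ref{constr-thm-0}), so this does not affect the substance of your argument.
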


\begin{proof}
  Say the $\m_\ell$'s are $I$-systems.  Let $J \coloneqq  I \times 3$. Recalling Definition \ref{sys-ext-def}(3) let  $\m \coloneqq  (\m_0 \ast \m_1 \ast \m_2)$. We check that $\m$ is independent by using monotonicity 4: let $I_1 \coloneqq  I \times \{0, 1\}$ and let $I_2 \coloneqq  I \times \{1, 2\}$. We have that $\m \, {\rest} \, I_{1} \ast \m_{1}$ and $\m \, {\rest} \, I_{2} = \m_{1} \ast \m_{2}.$  Both are independent by assumption. Moreover, $I_1 \cup I_2 = J$, $I_1$ is an initial segment of $J$, and $I_1 \cap I_2 = I \times \{1\}$ is cofinal in $I_1$. Thus monotonicity 4 implies that $\m$ is independent. The ``in particular'' part follows from monotonicity 1.
\end{proof}

One wants to order systems so that if $\m_2$ extends $\m_1$, then the entire diagram that this forms is independent. This is formalized by looking at $\m_1 \ast \m_2$:

\begin{defin}\LABEL{e17}\myindex{$\leap{\is}$}\myindex{$\K_{\is, I}$}
  Let $\is$ be a multidimensional independence relation. We define a binary relation $\leap{\is}$ on $\NF_{\is}$ as follows: $\m_1 \leap{\is} \m_2$ if and only if $\m_1 \lea \m_2$ (so in particular $I (\m_1) = I (\m_2)$) and $\NF_{\is} (\m_1 \ast \m_2)$. Let $\K_{\is, I}$ be the sub-abstract class of $\K^I$ (Definition \ref{sys-voc-def}) consisting of all independent $I$-systems and ordered by $\leap{\is}$. \myindex{$\lea$}
\end{defin}

One may want in addition to require that the system $\m_1 \ast \m_2$ is \emph{proper}:

\begin{defin}\LABEL{proper-class}\myindex{$\Kprop_{\is, I}$}
  Let $\is$ be a multidimensional independence relation. Let $I \in \Iis_{\is}$. We let $\Kprop_{\is, I}$ be the class of \emph{proper} independent $I$-systems ordered by $\m_1 \leap{\Kprop_{\is, I}} \m_2$ if and only if either $\m_1 = \m_2$ or $\m_1 \ast \m_2$ is a \emph{proper} independent system.
\end{defin}

Absent from the properties in Definition \ref{multi-def} were any kind of extension or uniqueness. We define these now. Note that existence says there \emph{is} a system, extension that an \emph{existing} system can be extended. Strong uniqueness will hold only in classes of limit models, where also the extensions are limit: it says in particular that there is only one proper system indexed by a given lattice. Uniqueness is the property that will hold in the original class: it says that each independent system has at most one completion, in the sense that any two completions amalgamate (so this can apply only when we fix the cardinality).

\begin{defin}\LABEL{multidim-props}\myindex{existence (for multidimensional systems)}\myindex{extension (for multidimensional systems)}\myindex{strong uniqueness (for multidimensional systems)}\myindex{uniqueness (for multidimensional systems)}
  Let $\is = (\K, \Iis, \NF)$ be a multidimensional independence relation. The following are additional properties that $\is$ may have:
  
\begin{enumerate}
\item Existence: For any $I \in \Iis$ there exists a proper independent $I$-system.
\item Extension: For any $I, J \in \Iis$ with $I$ an initial segment of $J$, if $\m$ is an independent $I$-system, then there exists a $(J \backslash I)$-proper independent $J$-system $\m'$ such that $\m \cong \m' \, {\rest} \, I$.
\item Strong uniqueness: Let $\m_1$, $\m_2$ be independent $J$-systems and let $I$ be an initial segment of $J$ with $I \in \Iis$. Let $f: \m_1 \, {\rest} \, I \cong \m_2 \, {\rest} \, I$. If $\m_1$ and $\m_2$ are $(J \backslash I)$-proper (Definition \ref{proper-def}), then there exists $g: \m_1 \cong \m_2$ extending $f$.
\item Uniqueness: Let $\m_1$, $\m_2$ be independent $J$-systems and let $I$ be an initial segment of $J$ with $J = I \cup \{v\}$, $I < v$. Let $f: \m_1 \, {\rest} \, I \cong \m_2 \, {\rest} \, I$. Then there exists an independent $J$-system $\m$ and systems embeddings (see Definition \ref{sys-ext-def}(\ref{sys-embed}) $g_\ell \colon \m_\ell \rightarrow \m$, $\ell = 1,2$ such that $g_\ell$ extends $f$ and $g_{2}$ extends $f^{-1}.$ 
\end{enumerate}
\end{defin}

Any reasonable multidimensional independence relation induces a two-dimensional independence relation:

\begin{defin}\LABEL{nf-is-def}\myindex{$\nf (\is)$}
  Let $\is$ be a multidimensional independence relation with $\Ps (2) \in \Iis_{\is}$. Define a $4$-ary relation $\nf = \nf (\is)$ by $\nfs{M_0}{M_1}{M_2}{M_3}$ if and only if $(M_0, M_1, M_2, M_3)$ (seen as a system indexed by $\Ps (2)$) is an independent system.
\end{defin}

Note that $\nf(\is)$ is morally the same as $\is$ restricted to $\mathcal{P}(2)$ but they are different for boring reasons (the first is basically a $4$-ary on $\K$, the second is basically a map from $\mathcal{P}(2)$ to $\K$). 

\begin{lem}\LABEL{e19}
  If $\is$ is a multidimensional independence relation with existence, extension, and uniqueness such that $\Ps (2) \in \Iis_{\is}$, then $\nf (\is)$ (from Definition \ref{nf-is-def}) is a two-dimensional independence notion on $\K_{\is}$.
\end{lem}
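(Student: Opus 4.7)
The plan is to verify in turn each of the eight axioms of a two-dimensional independence notion for $\nf(\is)$ on $\K_{\is}$. Several are direct specializations of the corresponding multidimensional axiom to the lattice $\Ps(2)$: the ambient order condition $M_0 \lea M_\ell \lea M_3$ is built into the definition of a $\Ps(2)$-system; disjointness reads off the multidim disjointness axiom; symmetry follows by applying the multidim symmetry axiom to the order-automorphism of $\Ps(2)$ swapping $\{0\}$ and $\{1\}$; and invariance under a $\K$-embedding $f : M_3 \to M_3'$ follows by applying the invariance axiom to $f$ (viewed as a $\Ps(2)$-system isomorphism onto its image) and then using monotonicity~2 to enlarge the top from $f[M_3]$ to the arbitrary $M_3' \gea f[M_3]$.

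Transitivity admits a clean derivation. Given $\nfs{M_0}{M_1}{M_2}{M_3}$ and $\nfs{M_2}{M_3}{M_4}{M_5}$, set $\m_0 := (M_0, M_1)$, $\m_1 := (M_2, M_3)$, and $\m_2 := (M_4, M_5)$ as $\Ps(1)$-systems. Then $\m_0 \lea \m_1 \lea \m_2$ componentwise, and under the canonical isomorphism $\Ps(1) \times 2 \cong \Ps(2)$ one has $\m_0 \ast \m_1 \cong (M_0, M_1, M_2, M_3)$ and $\m_1 \ast \m_2 \cong (M_2, M_3, M_4, M_5)$, both independent by hypothesis. The multidim transitivity axiom then gives $\m_0 \ast \m_2 \cong (M_0, M_1, M_4, M_5)$ independent, which is exactly $\nfs{M_0}{M_1}{M_4}{M_5}$. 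Two-dimensional uniqueness is likewise near-immediate from the multidim uniqueness axiom applied with $I = \{\emptyset, \{0\}, \{1\}\}$ and $v = \{0,1\}$: one has $v$ above every element of $I$ and $I \cup \{v\} = \Ps(2)$, so the hypothesis is met and the conclusion is precisely the 2D uniqueness statement.

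Two-dimensional extension (given $M_0 \lea M_1, M_2$, produce $M_3$ and embeddings $f_\ell : M_\ell \xrightarrow[M_0]{} M_3$ making the resulting square independent) is more involved. I would first use nontriviality together with iterated applications of the extension axiom to build an independent $\Ps(2)$-system $(M_0, N_1, N_2, N_3)$ based at $M_0$, then use the multidim uniqueness axiom at the positions $\{0\}$ and $\{1\}$ (each, in turn, maximal over an appropriate initial segment) to replace $N_1$ by a copy containing $f_1[M_1]$ over $M_0$ and similarly for $N_2$, and finally extract $M_3$ and the $f_\ell$ from the resulting amalgam.

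The main obstacle is two-dimensional monotonicity: from $\nfs{M_0}{M_1}{M_2}{M_3}$ and $M_0 \lea M_1' \lea M_1$, deduce $\nfs{M_0}{M_1'}{M_2}{M_3}$. Monotonicity~5 of $\is$ cannot be applied naively in $I = \Ps(2)$ with $I_0 = \{\{0\}, \{0,1\}\}$, because the step-system $(M_1', M_3) \ast (M_1, M_3)$ would then fail disjointness (one has $M_1 \cap M_3 = M_1 \neq M_1'$). My plan is to first restrict, via monotonicity~1, to the sub-lattice $I = \{\emptyset, \{0\}, \{1\}\}$, apply monotonicity~5 there with $u = \{0\}$ and $I_0 = \{\{0\}\}$ to replace only $M_1$ by $M_1'$, and finally reattach the top $M_3$ via the extension axiom combined with the two-dimensional uniqueness already established. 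The key sub-claim needed to apply monotonicity~5 is that the $2$-chain $(M_1', M_1)$ is an independent $\Ps(1)$-system; I expect to obtain this from a preliminary lemma asserting that every pair $A \lea B$ forms an independent $\Ps(1)$-system, proved by combining nontriviality, the extension axiom, monotonicity~2, and multidim uniqueness to transfer an arbitrary independent extension of $A$ produced by extension to the specific pair $(A, B)$. This preliminary lemma is, in my judgment, where the main technical work of the proof lies.
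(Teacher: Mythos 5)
Your handling of symmetry (swap automorphism of $\Ps(2)$), invariance, disjointness, transitivity via $\Ps(1)\times 2 \cong \Ps(2)$, and uniqueness matches what is needed, and extension can indeed be extracted from the multidimensional axioms (most cleanly: any triple $M_0 \lea M_1, M_2$ is an independent $\Psm(2)$-system by monotonicity 3 plus the $\Ps(1)$ fact, and then one applies multidimensional extension from $\Psm(2)$ to $\Ps(2)$ and renames). But your plan for two-dimensional monotonicity has a genuine gap, and it sits exactly where you put the weight elsewhere. The detour through $\Psm(2)$ is vacuous: \emph{every} $\Psm(2)$-system $(M_0,M_1',M_2)$ with $M_0 \lea M_1', M_2$ is independent (monotonicity 3 reduces it to two $\Ps(1)$-chains), so the monotonicity~5 step buys nothing; and your "preliminary lemma" that every chain $A \lea B$ is an independent $\Ps(1)$-system is not the main technical work --- it follows in two lines from nontriviality and monotonicity 2 (it is the paper's later Lemma \ref{lea-lem}). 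The real content of $\nfs{M_0}{M_1'}{M_2}{M_3}$ is about the \emph{specific} top $M_3$, and "reattach the top via extension plus the already-established 2D uniqueness" does not do this: extension only produces an independent square $(M_0,M_1',M_2,N_3)$ with some abstract amalgam $N_3$, unrelated to $M_3$, and 2D uniqueness can only amalgamate two squares \emph{both already known to be independent} --- applying it to $(M_0,M_1',M_2,N_3)$ and $(M_0,M_1',M_2,M_3)$ is circular, since independence of the latter is what you are trying to prove. Once you have discarded $M_1$, the only link between the triple $(M_0,M_1',M_2)$ and $M_3$ runs through $M_1$, so the order of operations in your plan cannot be repaired locally.

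A correct self-contained derivation must keep $M_1$ in play: starting from an independent amalgam of $M_1'$ and $M_2$ over $M_0$, use extension \emph{over the base $M_1'$} to add a copy $M_1^\ast$ of $M_1$ independently, use transitivity to rebase at $M_0$ (obtaining an independent square over the span $M_2 \leftarrow M_0 \rightarrow M_1$ in which the $M_1'$-subsquare is visibly independent), then apply 2D uniqueness against the \emph{original} square $(M_0,M_1,M_2,M_3)$, and finally use the two-directional invariance axiom (which allows shrinking the top along an inclusion) together with symmetry to land on $\nfs{M_0}{M_1'}{M_2}{M_3}$. The paper sidesteps all of this: its proof only checks symmetry (as you do) and then quotes the fact that 2D monotonicity is a formal consequence of the remaining axioms (\cite[3.23]{indep-categ-v3}), which is essentially the argument just sketched. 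So the statement is fine and most of your verification is sound, but as written the monotonicity step --- which you correctly identify as the main obstacle --- is not proved.
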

\begin{proof}
  The least trivial axioms to prove are monotonicity and symmetry. Symmetry can be obtained by using the symmetry property of multidimensional independence relations with the automorphism of $\Ps (2)$ swapping $\{0\}$ and $\{1\}$. Monotonicity actually follows from the other axioms, see \cite[3.23]{indep-categ-v3}.
\end{proof}

\subsection{Building a multidimensional independence relation}

Starting from a two-dimensional independence relation, one can also build a multidimensional independence relation in the natural way:

\begin{defin}\LABEL{is-nf-def}\myindex{$\is (\nf)$}\

  (1) Let $\nf$ be a two-dimensional independence notion on an abstract class $\K$. Let $\is = \is ({\nf})$ be the following multidimensional independence relation:

  \begin{enumerate}
  \item[(A)] $\K_{\is} = \K$.
  
  \item[(B)] $\Iis_{\is}$ is the class of all finite semilattices.
  
  \item[(C)] For $I \in \Iis_{\is}$ and $\m = \seq{M_u : u \in I}$ an $I$-system, $\m \in \NF_{\is}$ if and only if whenever $u_1, u_2, v \in I$ and $u_1 \le v$, $u_2 \le v$, then $\nfs{M_{u_1 \land u_2}}{M_{u_1}}{M_{u_2}}{M_{v}}$.
  \end{enumerate}
 
  (2) We say $\K$ has \emph{uniqueness}\myindex{uniqueness for $\K$} (apply mainly to $\K^{*}$ above) \underline{when} if $M<_{\K^{*}} N_{\ell}$ both have the same cardinality for $\ell = 1, 2$ then $N_{1}, N_{2}$ are isomorphic over $M$ (so $K^{*}$ just\footnote{here $\K^{*} = \K$ is not a reasonable choice as in the usual case, $\K_{\lambda}^{*}$ cannot be an AEC.} an abstract class). The default\footnote{Earlier it was suggested $\K^{*} = \K;$ but for uniqueness to make sense we need this e.g. \ref{constr-thm-0}} case is $K^{*}$ the class of saturated models, and $M <_{\K^{*}} N$ iff $M \leq_{\K} N$ and $N$ limit over $M.$
\end{defin}

\begin{thm}\LABEL{constr-thm-0}
  If $\nf$ is a two-dimensional independence notion on an abstract class $\K$ with no maximal models, then $\is = \is (\nf)$ is a multidimensional independence notion such that $\nf(\is) = \nf$. Moreover, $\is$ restricted to the class of initial segments of $\Ps (2)$ has existence, extension, and uniqueness.
\end{thm}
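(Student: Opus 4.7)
The plan is to verify the ten axioms of Definition \ref{multi-def} for $\is = \is(\nf)$, deduce $\nf(\is) = \nf$, and then establish existence, extension, and uniqueness on initial segments of $\Ps(2)$. First, $\Iis_{\is}$, the class of all finite semilattices, is closed in the sense of Definition \ref{closed-def}, since subsemilattices, initial segments, and products of finite semilattices with $\{0,1\}$ are again finite semilattices. Most axioms reduce to a pair-by-pair check on the squares $(M_{u_1 \wedge u_2}, M_{u_1}, M_{u_2}, M_v)$ that define independence under $\is$. Invariance is immediate from invariance of $\nf$; symmetry follows from isomorphism-invariance of $\nf$ applied to the order-isomorphism between the indexing semilattices, which preserves meets; disjointness is direct, since for $u \le v, w \le u^\ast$ the $\nf$-square indexed by $(v,w)$ with bound $u^\ast$ yields $M_v \cap M_w = M_{v \wedge w} \supseteq M_u$.

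Monotonicity 1 is trivial (subsemilattices restrict the set of meet-pairs to check) and monotonicity 3 is immediate, since every pair with a common upper bound $v$ lies in $(\le v')$ for some maximal $v' \ge v$. Monotonicity 2 follows from preservation of $\nf$ under the $\K$-embedding $M_v \hookrightarrow N$ (Fact \ref{nfm-props}). For monotonicities 4 and 5 the strategy is more delicate: given a pair $u_1, u_2 \le v$ in $I$, we use the initial-segment hypothesis on $I_1$ (resp.\ the sub-semilattice $I_0 = [u,\infty)_I$) together with the cofinality assumption to locate a suitable $\nf$-square already witnessed inside one of the prescribed subsystems, and transfer it to the desired one via the left-monotonicity of $\nf$, symmetry, and preservation under $\K$-embeddings. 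The degenerate cases (where two of the models in the square coincide) are handled using the extension and uniqueness axioms of $\nf$. Transitivity then follows formally from Lemma \ref{trans-lem} together with monotonicities 1 and 4, since finite semilattices are closed under products. Nontriviality is proved by induction on $|I|$: enumerate $I = \{\bot = u_0, u_1, \ldots, u_{n-1}\}$ compatibly with the order, set $M_{u_0} = M$, and at step $i$ construct $M_{u_i}$ realizing all newly required $\nf$-squares by iteratively applying extension and transitivity of $\nf$ to the predecessors of $u_i$.

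The identity $\nf(\is) = \nf$ is immediate: in a $\Ps(2)$-system the only non-degenerate meet-pair with an upper bound is $(\{0\},\{1\})$, with meet $\emptyset$ and bound $\{0,1\}$, so independence under $\is$ reads exactly as $\nfs{M_0}{M_1}{M_2}{M_3}$. For the moreover clause, the initial segments of $\Ps(2)$ are $\emptyset$, $\{\emptyset\}$, $\{\emptyset,\{0\}\}$, $\{\emptyset,\{1\}\}$, $\Psm(2) = \{\emptyset,\{0\},\{1\}\}$, and $\Ps(2)$; existence, extension, and uniqueness between each such pair reduce to the corresponding axioms of $\nf$. The critical step is extension from a $\Psm(2)$-span to a $\Ps(2)$-system, which is exactly the extension axiom of $\nf$, and the uniqueness of the resulting top completion is $\nf$'s uniqueness. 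The remaining lower cases follow from ``no maximal models'' and routine amalgamation.

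The main obstacle will be the combinatorial bookkeeping in monotonicities 4 and 5 --- reconciling meets across two subsemilattices and exploiting the cofinality hypothesis to produce a witnessing square in the correct place --- together with the nontriviality argument at an arbitrary finite semilattice, which amounts to performing a multidimensional amalgamation using only the two-dimensional tools afforded by $\nf$.
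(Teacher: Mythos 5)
Your overall plan matches the paper's: almost all of the axioms do follow directly from the definitions of $\is(\nf)$ and the two-dimensional axioms, the identity $\nf(\is)=\nf$ is exactly the observation you make about $\Ps(2)$-systems, and the ``moreover'' clause reduces to the extension and uniqueness axioms of $\nf$ (plus no maximal models to get properness). However, there is a genuine gap at the one place where real work is needed, namely monotonicity 4 (and, to a lesser extent, monotonicity 5). Your stated toolkit for it --- ``locate a suitable $\nf$-square already witnessed inside one of the prescribed subsystems, and transfer it to the desired one via the left-monotonicity of $\nf$, symmetry, and preservation under $\K$-embeddings'' --- cannot succeed in the critical mixed case $u_1 \in I_1 \setminus I_2$, $u_2 \in I_2 \setminus I_1$, $u_1, u_2 \le v$. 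No single square witnessed inside $I_1$ or inside $I_2$ contains all of $M_{u_1}, M_{u_2}, M_v$, and left-monotonicity only shrinks the side models while keeping the base fixed: starting from the square for $(u_1', u_2)$ over base $M_{u_1' \land u_2}$ (where $u_1'$ is a suitable element of $I_1 \cap I_2$ above $u_1$) and shrinking $M_{u_1'}$ to $M_{u_1}$ leaves you over the wrong base $M_{u_1' \land u_2}$ instead of $M_{u_1 \land u_2}$, and there is no axiom letting you pass to a smaller base. The correct argument, as in the paper, composes \emph{two} squares via the transitivity axiom of $\nf$: take $u_1' := \land\{w \in I_1 \cap I_2 : u_1 \le w\}$, check $u_1' \le v$ using that $I_1$ is an initial segment (otherwise replace by $u_1' \land v$), then combine $\nfs{M_{u_1 \land u_1' \land u_2}}{M_{u_1}}{M_{u_1' \land u_2}}{M_{u_1'}}$ (witnessed in $I_1$) with $\nfs{M_{u_1' \land u_2}}{M_{u_1'}}{M_{u_2}}{M_v}$ (witnessed in $I_2$), and note $u_1 \land u_1' \land u_2 = u_1 \land u_2$. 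Transitivity of $\nf$ is the essential ingredient you omit, whereas the extension and uniqueness axioms of $\nf$, which you invoke for ``degenerate cases,'' play no role here; monotonicity 5 is likewise a case analysis using only monotonicity and transitivity of $\nf$.

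A secondary point: you treat nontriviality as a hard ``multidimensional amalgamation using only two-dimensional tools,'' proposing to build, by induction on the semilattice, a system realizing all new $\nf$-squares over already-fixed predecessors. At the top of $\Ps(3)$, say, this is precisely the three-dimensional completion problem, which is \emph{not} derivable from the two-dimensional axioms (obtaining it is the point of the later sections of the paper). Fortunately it is also not needed: nontriviality only asks for \emph{some} independent $I$-system containing $M$, with no properness or strictness requirement, and a degenerate system (e.g.\ the constant system $M_u = M$, using extension plus invariance of $\nf$ to see $\nfs{M}{M}{M}{M}$) already works. So drop that construction and fix the monotonicity 4/5 argument as above; the rest of your proposal is sound and follows the paper's route.
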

\begin{proof}
  Most of the axioms follow directly from the definitions. We only prove:

  \begin{itemize}
  \item \underline{Monotonicity 4}: Let $I_1, I_2$, $\m = \seq{M_u : u \in I}$ be as in the statement of monotonicity 4. Let $u_1, u_2, v \in I$ with $u_1 \le v$, $u_2 \le v$. We have to see that $\nfs{M_{u_1 \land u_2}}{M_{u_1}}{M_{u_2}}{M_v}$. If both $u_1$ and $u_2$ are in $I_1$, or both are in $I_2$, then this is immediate from the assumptions that $\m \, {\rest} \, I_1$ and $\m \, {\rest} \, I_2$ are both independent (and preservation of $\nf$ under $\K$-embeddings). Assume now that one is in $I_1 \backslash I_2$ and the other in $I_2 \backslash I_1$. By symmetry, without loss of generality $u_1 \in I_1 \backslash I_2$, $u_2 \in I_2 \backslash I_1$.

    By assumption, $I_1 \cap I_2$ is  maximal in $I$ and $I = I_1 \cup I_2$, so there exists $u' \in I_2$ such that $u_1 \le u'$. Let $u_1' \coloneqq  \land \{u' \in I_2 \cap I_1 \mid u_1 \le u'\}$. In other words, $u_1'$ is the minimal element of $I_1 \cap I_2$ such that $u_1 \le u_1'$. By minimality, we must have that $u_1' \le v$ (otherwise, consider $u_1'' \coloneqq  u_1' \land v$; since $I_1$ is an initial segment, this is in $I_1 \land I_2$ and is still above $u_1$). By the previous paragraph, we have that $\nfs{M_{u_1' \land u_2}}{M_{u_1'}}{M_{u_2}}{M_v}$. Also by the previous paragraph (recalling that $I_1$ is an initial segment, so $u_1' \land u_2 \in I_1 \cap I_2$), $\nfs{M_{u_1 \land u_1' \land u_2}}{M_{u_1}}{M_{u_1' \land u_2}}{M_{u_1'}}$. By transitivity for $\nf$, $\nfs{M_{u_1 \land u_1' \land u_2}}{M_{u_1}}{M_{u_2}}{M_v}$. Now note that $u_1 \le u_1'$ by assumption, hence $u_1 \land u_1' \land u_2 = u_1 \land u_2$, as desired.

  \item \underline{Monotonicity 5}: Fix the data given by the statement of monotonicity 5. Write $\m = \seq{M_u : u \in I}$ and $\m' = \seq{M_u' : u \in I}$.

    \begin{itemize}
    \item We first show that $\m'$ independent implies that $\m$ is independent. Let $u_1, u_2, u_3 \in I$ be such that $u_1, u_2 \le u_3$. We want to see that $\nfs{M_{u_1 \land u_2}}{M_{u_1}}{M_{u_2}}{M_{u_3}}$. There are several cases:
      \begin{itemize}
      \item If $u_1, u_2 \notin I_0$, then $u_1 \land u_2 \notin I_0$ (as $I \backslash I_0 \subseteq I$), hence the result follows directly from the assumption that $\nfs{M_{u_1 \land u_2}'}{M_{u_1}'}{M_{u_2}'}{M_{u_3}'}$ and ambient monotonicity.
      \item If $u_1, u_2 \in I_0$, then apply transitivity.
      \item If $u_1 \in I_0$, $u_2 \notin I_0$, and $u_1 \land u_2 \in I_0$, apply transitivity again.
      \item If $u_1 \in I_0$, $u_2 \notin I_0$, and $u_1 \land u_2 \notin I_0$, use monotonicity.
      \end{itemize}
    \item Assume now that $\m$ is independent. Let $u_1, u_2, u_3 \in I$ be such that $u_1, u_2 \le u_3$. We want to see that $\nfs{M_{u_1 \land u_2}'}{M_{u_1}'}{M_{u_2}'}{M_{u_3}'}$ and we again split into cases. The cases $u_1, u_2 \notin I_0$ and $u_1, u_2 \in I_0$ are similar to before. If $u_1 \in I_0$, $u_2 \notin I_0$, then $u_3 \in I_0$ since $I_0$ is and end segment. Thus we can apply transitivity and we are done.
      \end{itemize}
    \item \underline{Transitivity}: follows from Lemma \ref{trans-lem}.
  \end{itemize}
  
  That $\is$ restricted to the class of initial segments of $\Ps (2)$ has existence, extension, and uniqueness follows from the corresponding properties of $\nf$: we only have to note that a $\Ps (2)$-system $\seq{M_u : u \in \Ps (2)}$ is independent if and only if $\nfs{M_{\emptyset}}{M_{\{0\}}}{M_{\{1\}}}{M_2}$, by the symmetry axiom of $\nf$.
\end{proof}

\newpage

\section{Finite combinatorics of multidimensional independence}\LABEL{multi-finite-sec}

In this section, we assume:

\begin{hypothesis}
  $\is = (\K, \Iis, \NF)$ is a multidimensional independence relation.
\end{hypothesis}

We consider properties of $\is$ that are finitary in nature. That is, they do not depend on any kind of closure under unions of chains. This allows us to work at a high level of generality (for example, $\K$ is just assumed to be an abstract class). A crucial question is whether properties (such as extension or uniqueness) that hold for limit systems (that is, for systems consisting of limit models, ordered by being ``limit over'') will also hold for the not necessarily limit other systems. We prove general positive results in this direction here (Theorems \ref{brim-ext-thm} and \ref{brim-uq-thm}). The statements use the notion of a skeleton. This is also a place where we use that $\Iis$ is closed (Definition \ref{closed-def}), as well as the monotonicity axioms in Definition \ref{multi-def}.

We start by proving a few easy consequences of the definition of a multidimensional independence relation. First, $\Ps (1)$-systems are trivial:

\begin{lem}\LABEL{lea-lem}
  $M \lea N$ both be in $\K$. If $\Ps (1) \in \Iis$, then $\seq{M, N}$ is an independent $I$-system.
\end{lem}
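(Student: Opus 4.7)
The plan is to derive the conclusion from nontriviality together with monotonicity~2 in Definition \ref{multi-def}. By nontriviality applied to $I = \Ps(1) \in \Iis$ and the model $M$, there exists an independent $\Ps(1)$-system containing $M$; after arranging the data so that $M$ sits at the bottom position, this gives an independent system of the form $\seq{M, M^*}$ for some $M^* \gea M$ in $\K$.

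Next, I would invoke monotonicity~2, which is tailor-made for this setup: since $\{0\}$ is the unique maximal element of $\Ps(1)$ and $M^* \lea N'$ for any extension $N'$ of $M^*$, replacing $M^*$ by $N'$ preserves independence. This immediately shows that $\seq{M, N'}$ is independent for every $N' \gea M^*$, and in particular yields the conclusion whenever the given $N$ happens to extend $M^*$.

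The main obstacle is that the statement asks for independence of $\seq{M, N}$ for an \emph{arbitrary} $N \gea M$, not only for $N$ above the auxiliary $M^*$ produced by nontriviality. To close this gap I would either (i) apply nontriviality a second time to place $M$ at the top of an independent system, obtaining $\seq{M^-, M}$ independent, promote the top via monotonicity~2 to get $\seq{M^-, N}$ independent, and then combine with the first step using the ``iff'' direction of monotonicity~2 to shrink $N'$ back down; or (ii) first establish separately that the trivial system $\seq{M, M}$ is independent (for instance by using nontriviality and the iff direction of monotonicity~2 applied to $\seq{M, M^*}$ with $M \lea M \lea M^*$), after which a single further application of monotonicity~2 promotes the top model from $M$ to $N$ and concludes the proof. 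The crux is to exploit the biconditional nature of monotonicity~2 tightly enough to cover every pair $M \lea N$ rather than only those pairs directly handed to us by nontriviality.
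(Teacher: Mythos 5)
Your route (ii) is exactly the paper's proof: nontriviality gives an independent $\Ps(1)$-system $\seq{M, M^*}$, the biconditional in monotonicity~2 (applied to the system $\seq{M,M}$, whose top element satisfies $M \lea M^*$) shows that $\seq{M,M}$ is independent, and one further application of monotonicity~2 promotes the top model from $M$ to the given $N$. The more elaborate route (i) is unnecessary, so the proposal is correct and essentially identical to the paper's argument.
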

\begin{proof}
  By nontriviality, there is an independent $\Ps (1)$-system $\m$ containing $M$. Write $\m = \seq{M, N'}$. By monotonicity 2, $\seq{M, M}$ is independent. By monotonicity 2 again, $\seq{M, N}$ is independent.
\end{proof}

It is also easy to see that existence is weaker than extension:

\begin{lem}\LABEL{ext-implies-exi}
  If $\is$ has extension, then $\is$ has existence.
\end{lem}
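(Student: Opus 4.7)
The plan is to reduce existence to extension by starting from the smallest possible initial segment of $I$ and observing that the minimum element is trivially proper. Let $I \in \Iis$ be given. Since $I$ is a finite semilattice, it has a bottom element $\bot := \land I$, and the singleton $I_0 := \{\bot\}$ is an initial segment of $I$. Because $\Iis$ is a closed class of semilattices (Definition \ref{closed-def}), it is closed under initial segments, so $I_0 \in \Iis$.

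Next, I would use nontriviality to produce an independent $I_0$-system: the nontriviality axiom guarantees that $\NF \neq \emptyset$, so $\K$ is nonempty; picking any $M_\bot \in \K$, nontriviality furnishes an independent $I_0$-system $\m_0 = \langle M_\bot \rangle$ containing $M_\bot$. Now apply the extension property to the pair $I_0 \subseteq I$ and the system $\m_0$: this yields an $(I \setminus I_0)$-proper independent $I$-system $\m'$ with $\m' \rest I_0 \cong \m_0$.

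It remains to verify that $\m'$ is (fully) proper, i.e.\ that every element of $I$, and in particular $\bot$, is $\K$-proper in $\m'$. Unpacking Definition \ref{proper-def} for $u = \bot$, the conditions (a) and (b) are universally quantified implications with hypothesis $v < \bot$, which is never satisfied. Hence both conditions are vacuously true, and $\bot$ is $\K$-proper in $\m'$ as soon as $M_\bot \in \K$ (which holds) and some $N \in \K$ exists (which also holds). Combined with $(I \setminus I_0)$-properness from extension, this shows $\m'$ is a proper independent $I$-system, establishing existence.

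There is essentially no obstacle here: the only conceptual point is recognizing that the bottom element of any system is automatically proper, so extension starting from a one-element initial segment already delivers a fully proper system; the closure of $\Iis$ under initial segments is exactly what lets us invoke extension at the singleton $\{\bot\}$.
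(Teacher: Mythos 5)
Your proof is correct and takes essentially the same route as the paper, which simply starts from the empty system (an initial segment of $I$ lying in $\Iis$ by closure, and independent since $\NF \neq \emptyset$ plus monotonicity 1) and applies extension to get an $I$-proper, hence proper, independent $I$-system in one step. Your variant starting from the singleton $\{\bot\}$ also works — the bottom element is indeed vacuously $\K$-proper, a fact the paper itself uses elsewhere — it just adds the unnecessary detour of invoking nontriviality to pick $M_\bot \in \K$.
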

\begin{proof}
  Let $I \in \Iis$. Start with the empty system and extend it to a proper independent $I$-system.
\end{proof}

Uniqueness is also weaker than strong uniqueness:

\begin{lem}\LABEL{n3}
  If $\is$ has strong uniqueness and $\K$ has no maximal models, then $\is$ has uniqueness.
\end{lem}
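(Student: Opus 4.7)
The plan is to reduce uniqueness to strong uniqueness by first enlarging the top model $M_v$ in each of $\m_1$ and $\m_2$ just enough to make the new top entry proper; this enlargement is available precisely because $\K$ is assumed to have no maximal models.

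More precisely, I would note first that $J$ is finite (since $\Iis_{\is}$ consists of finite semilattices) and that $\K$ is trivially a skeleton of itself. Writing $\m_\ell = \seq{M_u^\ell : u \in J}$, I would apply Lemma \ref{skel-sys-lem} (with $\K^\ast = \K$) to each $\m_\ell$ at the maximal element $v$, obtaining $N_\ell \in \K$ with $M_v^\ell \lea N_\ell$ such that $v$ is $\K$-proper in the modified system $\m_\ell' := (\m_\ell \rest I) \smallfrown \seq{N_\ell}$. Monotonicity 2 (applied at the maximal element $v$) keeps the new system independent, and it is $(J \setminus I)$-proper by construction. Strong uniqueness then applies to $\m_1'$, $\m_2'$, and $f : \m_1' \rest I = \m_1 \rest I \cong \m_2 \rest I = \m_2' \rest I$, producing a system isomorphism $g : \m_1' \cong \m_2'$ extending $f$.

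The desired common amalgam will then be $\m := \m_2'$, with $g_2 : \m_2 \to \m$ the termwise inclusion (identity on $\m_2 \rest I$ and the inclusion $M_v^2 \hookrightarrow N_2$ at $v$), and $g_1 : \m_1 \to \m$ defined by $(g_1)_u := g_u$ for $u \in I$ and $(g_1)_v := g_v \rest M_v^1$, which is well-defined since $M_v^1 \lea N_1$. Both $g_1$ and $g_2$ then extend $f$ in the intended sense that $(g_1)_u = (g_2)_u \circ f_u$ for every $u \in I$, since both sides reduce to $f_u$.

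The only thing to check is that $g_1$ and $g_2$ are genuine system embeddings, i.e.\ compatibility with the $J$-ordering, which is immediate from the fact that $g$ is a system isomorphism and the inclusions $M_v^\ell \lea N_\ell$ respect the ambient ordering. There is no real obstacle here; the substantive observation is the one-line reduction that absence of maximal models lets us absorb the properness requirement of strong uniqueness for free at the top coordinate.
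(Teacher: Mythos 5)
Your proof is correct and takes essentially the same route as the paper's: use the absence of maximal models to replace each top model by a strict (hence proper) extension, apply strong uniqueness to the resulting $(J\setminus I)$-proper systems, and restrict the isomorphism to get the required embeddings into a common independent system. Your citations of Lemma \ref{skel-sys-lem} and monotonicity 2 simply make explicit the steps the paper leaves implicit in its one-line argument.
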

\begin{proof}
  Given two models $M_1, M_2$ to amalgamate over an independent system $\m$, take a strict extension $M_\ell'$ of $M_\ell$ and use strong uniqueness to see that $M_1'$ is isomorphic to $M_2'$ over $\m$, hence $M_1$ and $M_2$ amalgamate over $\m$.
\end{proof}


\subsection{Skeletons of independence relations}

It is natural to consider what happens to $\is$ when restricting it to a skeleton of $\K$. We overload terminology and also call such restrictions \emph{skeletons} of $\is$:

\begin{defin}\LABEL{n6}\myindex{skeleton of a multidimensional independence relation}
  Let $\is$ and $\is^\ast$ be independence relations. We say that $\is^\ast$ is a \emph{skeleton} of $\is$ if:

  \begin{enumerate}
  \item[(A)] $\K_{\is^\ast}$ is a skeleton of $\K_{\is}$.
  
  \item[(B)] $\is^\ast = \is \, {\rest} \, \K_{\is^\ast}$.
  \end{enumerate}
\end{defin}

\begin{remark}
  If $\K^\ast$ is a skeleton of $\K$, then it is straightforward to check that $\is \, {\rest} \, \K^\ast$ will be a multidimensional independence relation.
\end{remark}

We will consider the following localized versions of extension and uniqueness:

\begin{defin}\LABEL{n9}
  Let $\m$ be an independent $I$-system and let $\K^\ast$ be a skeleton of $\K$.

  \begin{enumerate}
  \item We call $\m$ a \emph{$\K^\ast$-extension base in $\is$} if $J \in \Iis, J = I \cup \{v\}$ and $I < v$, there exists an independent $J$-system $\m'$ such that $v$ is $\K^\ast$-proper in $\m'$ and $\m \cong \m' \, {\rest} \, I$. When $\K^\ast = \K$ and $\is$ is clear from context, we omit them and call $\m$ an \emph{extension base}.\myindex{extension base}
  
  \item We call $\m$ a \emph{$\K^\ast$-strong uniqueness base in $\is$} if $J \in \Iis, J = I \cup \{v\}$ and $I < v$ and any two independent $J$-systems $\m_1, \m_2$ such that $\m_1 \, {\rest} \, I = \m_2 \, {\rest} \, I = \m$ and $v$ is $\K^\ast$-proper in both $\m_1$ and $\m_2$, we have that $\m_1 \cong_{\m} \m_2$. As before, when $\K^\ast = \K$ and $\is$ is clear from context, we omit may them.\myindex{strong uniqueness base}
  
  \item We call $\m$ a \emph{uniqueness base in $\is$} if for any $J \in \Iis$ such that $J = I \cup \{v\}$ and $I < v$ and any two independent $J$-systems $\m_1, \m_2$ such that $\m_1 \, {\rest} \, I = \m_2 \, {\rest} \, I = \m$, we have that there is an $\is$-independent $J$-system $\m^\ast$ and $f_\ell : \m_\ell \xrightarrow[\m]{} \m^\ast$ systems embeddings for $\ell = 1,2$. When $\is$ is clear from context, we may omit it.\myindex{uniqueness base}
  \end{enumerate}
\end{defin}

It turns out that the parameter $\K^\ast$ in the definition of a $\K^\ast$-extension base can safely be omitted. This will be used without comments:

\begin{lem}\LABEL{n13}
  Let $\m$ be an $\is$-independent system and let $\K^\ast$ be a skeleton of $\K$. Let $\is^\ast \coloneqq  \is \, {\rest} \, \K^\ast$.

  \begin{enumerate}
  \item $\m$ is a $\K$-extension base in $\is$ if and only if $\m$ is a $\K^\ast$-extension base in $\is$.
  
  \item If $\m$ is $\is^\ast$-independent, then $\m$ is a $\K^\ast$-extension base in $\is^\ast$ if and only if $\m$ is a $\K^\ast$-extension base in $\is$.
  \end{enumerate}
\end{lem}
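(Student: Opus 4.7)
Both parts turn on unpacking Definition \ref{proper-def}: ``$v$ is $\K^\ast$-proper in $\m$'' requires $M_v \in \K^\ast$ together with a witness $N \in \K^\ast$ satisfying $M_u \lea N \ltap{\K^\ast} M_v$ for each $u < v$, plus the extra clause $M_u \leap{\K^\ast} N$ whenever $M_u \in \K^\ast$. I would keep these three ingredients separated throughout the argument.

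Part 2 is a direct definition-chase. When every $M_u$ in $\m$ already lies in $\K^\ast$, the extra clause forces $M_u \leap{\K^\ast} N$ for every $u < v$, which (together with $N \ltap{\K^\ast} M_v$) is exactly what $\K^\ast$-properness demands when computed inside the ambient class $\K^\ast$, i.e.\ inside $\is^\ast$. Conversely, $\K^\ast$-properness in $\is^\ast$ supplies $M_u \leap{\K^\ast} N \ltap{\K^\ast} M_v$, which implies $M_u \lea N \ltap{\K^\ast} M_v$ since $\K^\ast$ is a sub-abstract class of $\K$, while the extra clause becomes vacuous since all $M_u \in \K^\ast$. Since $\NF_{\is^\ast}$ consists precisely of those systems in $\NF_\is$ whose models all lie in $\K^\ast$, independence matches as well, so the two notions of $\K^\ast$-extension base coincide for $\is^\ast$-independent $\m$.

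For Part 1, the ``$\Leftarrow$'' direction is immediate: $\K^\ast$-properness implies $\K$-properness, because $\leap{\K^\ast} \subseteq \lea$ and the extra clause is automatic when the skeleton is $\K$ itself. For ``$\Rightarrow$'', start with a $\K$-proper extension $\m^+$ of $\m$ at $v$, with new model $M_v^+$ and witness $N_v \in \K$ satisfying $M_u \lea N_v \lta M_v^+$ for $u < v$. Use clause (\ref{skel-1}) of the skeleton definition to pick $\hat{M}_v \in \K^\ast$ with $M_v^+ \lea \hat{M}_v$; monotonicity 2 of $\is$ lets us replace $M_v^+$ by $\hat{M}_v$ while preserving independence. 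Apply Lemma \ref{skel-order-lem} with $M = \hat{M}_v$ and the finitely many $M_u$ ($u<v$) that happen to lie in $\K^\ast$ (all of which satisfy $M_u \lea N_v \lta M_v^+ \lea \hat{M}_v$) to produce $\tilde{N}_v \in \K^\ast$ with $\hat{M}_v \lea \tilde{N}_v$ and $M_u \leap{\K^\ast} \tilde{N}_v$ for each such $u$. Then apply clause (\ref{skel-2}) of the skeleton definition to the pair $\hat{M}_v \lea \tilde{N}_v$ inside $\K^\ast$ to obtain $M_v^\ast \in \K^\ast$ with $\hat{M}_v \leap{\K^\ast} M_v^\ast$ and $\tilde{N}_v \leap{\K^\ast} M_v^\ast$, and once more invoke monotonicity 2 to replace $\hat{M}_v$ by $M_v^\ast$ in the system. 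Then $\tilde{N}_v$ is a $\K^\ast$-properness witness for $v$ in the resulting independent system.

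The main (minor) obstacle is ensuring the strict inequality $\tilde{N}_v \ltap{\K^\ast} M_v^\ast$. This is pure bookkeeping: one may apply clause (\ref{skel-2}) one further time to $\tilde{N}_v \lea M_v^\ast$ to strictly enlarge $M_v^\ast$, or observe that because $N_v \lta M_v^+ \lea \hat{M}_v \leap{\K^\ast} M_v^\ast$, elements witnessing $N_v \neq M_v^+$ already lie in $M_v^\ast$, and a small adjustment to $\tilde{N}_v$ (constructed so as not to absorb those elements) ensures $\tilde{N}_v$ and $M_v^\ast$ are distinct structures. Apart from carefully tracking the distinction between $\lea$ and $\leap{\K^\ast}$ throughout, the argument is a routine iteration of the two skeleton axioms together with monotonicity 2 of $\is$.
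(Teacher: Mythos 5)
Your overall strategy is the paper's own: the intended argument is exactly ``monotonicity~2 plus the proof of Lemma~\ref{skel-sys-lem}'', i.e.\ push the top of the given $\K$-proper extension into $\K^\ast$ via Lemma~\ref{skel-order-lem} and the skeleton axioms, and your Part~2 definition-chase is essentially right (one small point: $\is^\ast$-independence also requires the witnessing system to be ordered by $\leap{\K^\ast}$, not merely to consist of models of $\K^\ast$; in the nontrivial direction this follows from clause (b) of Definition~\ref{proper-def} together with $N \ltap{\K^\ast} M'_v$ and transitivity of $\leap{\K^\ast}$).

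The genuine gap is in how you secure the strict inequality $\tilde{N}_v \ltap{\K^\ast} M_v^\ast$ demanded by Definition~\ref{proper-def}; neither of your two patches works. Applying clause~(\ref{skel-2}) of Definition~\ref{skel-def} ``one further time'' does not strictly enlarge anything: that clause only produces \emph{some} common $\leap{\K^\ast}$-extension and may perfectly well return $\tilde{N}_v$ itself -- no skeleton axiom ever yields a strict extension. Your second patch is also unavailable: since Lemma~\ref{skel-order-lem} is applied with $M = \hat{M}_v$, the witness $\tilde{N}_v$ necessarily contains $\hat{M}_v \gea M_v^+$, so it \emph{does} absorb the elements of $M_v^+ \setminus N_v$; and you cannot instead build the witness over $N_v$ alone, since then the witness and $M_v^+$ are two incompatible extensions of $N_v$ and no amalgamation is available to place them under a common top (monotonicity~2 forces the new top to sit $\lea$-above $M_v^+$). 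The strictness cannot be conjured from the skeleton axioms at all: if the relevant models of $\K^\ast$ are $\lea$-maximal in $\K$, there simply are no strict $\leap{\K^\ast}$-extensions, and the implication can fail -- so some form of ``no maximal models'' is genuinely being used (it is an implicit standing assumption here; the paper's proof cites the proof of Lemma~\ref{skel-sys-lem}, which assumes it explicitly).

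The correct repair, following Lemma~\ref{skel-sys-lem}: after obtaining $\tilde{N}_v \in \K^\ast$ with $\hat{M}_v \lea \tilde{N}_v$ and $M_u \leap{\K^\ast} \tilde{N}_v$ for the relevant $u$, first take a \emph{strict} $\lea$-extension $M \gta \tilde{N}_v$ in $\K$ (no maximal models), then $N' \in \K^\ast$ with $M \lea N'$ by clause~(\ref{skel-1}), and finally apply clause~(\ref{skel-2}) to the pair $\tilde{N}_v \lea N'$ to get $M_v^\ast \in \K^\ast$ with $\tilde{N}_v \leap{\K^\ast} M_v^\ast$ and $N' \leap{\K^\ast} M_v^\ast$. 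If $\tilde{N}_v = M_v^\ast$ then $M \lea N' \lea \tilde{N}_v \lta M$, contradicting antisymmetry of $\lea$; hence $\tilde{N}_v \ltap{\K^\ast} M_v^\ast$. Since $M_v^+ \lea \hat{M}_v \lea \tilde{N}_v \lea M_v^\ast$, monotonicity~2 still transfers independence to the system with top $M_v^\ast$, and $\tilde{N}_v$ is then a legitimate $\K^\ast$-properness witness. With this replacement your argument goes through and coincides with the paper's.
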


\begin{proof}
  Use monotonicity 2 and play with the definition of a skeleton (as in the proof of Lemma \ref{skel-sys-lem}). 
\end{proof}

Of course, having extension is the same as all independent systems being extension bases, and similarly for the other properties:

\begin{lem}\LABEL{base-equiv}
$\is$ has extension [uniqueness] [strong uniqueness] if and only if every independent system is an extension [uniqueness] [strong uniqueness] base.
\end{lem}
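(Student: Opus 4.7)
The forward direction in all three cases is immediate: specialize each property to $J = I \cup \{v\}$ with $I < v$ to see that any independent $I$-system is a base of the corresponding type. The backward direction for uniqueness is also short: given an isomorphism $f : \m_1 \rest I \cong \m_2 \rest I$, I would use invariance to rename $\m_2$ so that $f$ becomes the identity on $I$, apply the uniqueness base property to $\m_1 \rest I$, and then transfer the amalgam back.

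For extension, I plan to induct on $|J \setminus I|$, with $I$ an initial segment of $J$, both in $\Iis$. Enumerate $J \setminus I$ as $v_0, \ldots, v_{n-1}$ by a linear extension of $\le$, and set $I_k := I \cup \{v_0, \ldots, v_{k-1}\}$. Each $I_k$ is an initial segment of $J$ (using that $I$ is one and the ordering respects $\le$), hence lies in $\Iis$ by closedness. I would inductively construct independent $I_k$-systems $\m_k$, each $((J \setminus I) \cap I_k)$-proper, with $\m_0 \cong \m$. For the step from $\m_k$ to $\m_{k+1}$, let $I_k^\downarrow := \{u \in I_k : u \le v_k\}$, an initial segment of $I_k$ (hence in $\Iis$) satisfying $I_k^\downarrow < v_k$. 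By monotonicity~1, $\m_k \rest I_k^\downarrow$ is independent, so applying the extension base property to it (together with invariance to align identities) yields an independent $(I_k^\downarrow \cup \{v_k\})$-system that extends $\m_k \rest I_k^\downarrow$ and in which $v_k$ is proper. Define $\m_{k+1}$ to agree with $\m_k$ on $I_k$ and with this auxiliary system at $v_k$. To see that $\m_{k+1}$ is independent, I would invoke monotonicity~3: the maximal elements of $I_{k+1}$ are $v_k$ together with those maximal elements $w$ of $I_k$ with $w \not< v_k$; the downward closure of $v_k$ in $I_{k+1}$ is $I_k^\downarrow \cup \{v_k\}$, which is independent by construction; and for any other maximal $w$, the fact that $I_k$ is downward closed in $J$ and $v_k \notin I_k$ forces $v_k \not\le w$, so the downward closure of $w$ in $I_{k+1}$ coincides with that in $I_k$, where independence holds by monotonicity~1. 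Properness of $v_k$ in $\m_{k+1}$ reduces to its properness in the auxiliary system, since both depend only on the indices in $I_k^\downarrow$.

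For strong uniqueness I would reuse the same enumeration and inductively extend the given $f : \m_1 \rest I \cong \m_2 \rest I$ to isomorphisms $f_k : \m_1 \rest I_k \cong \m_2 \rest I_k$. At step $k$, invariance reduces matters to the case in which $f_k$ is the identity on $I_k^\downarrow$; then the restrictions $\m_1 \rest (I_k^\downarrow \cup \{v_k\})$ and $\m_2 \rest (I_k^\downarrow \cup \{v_k\})$ are independent (monotonicity~1), agree on $I_k^\downarrow$, and have $v_k$ proper in each (since properness depends only on strict predecessors). Applying the strong uniqueness base property to $\m_1 \rest I_k^\downarrow$ yields an isomorphism whose $v_k$-component, combined with $f_k$, produces $f_{k+1}$; after $n$ steps, $f_n$ is the desired isomorphism.

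The main obstacle is the inductive step in the extension proof: the extension base property only permits adding a new maximum, whereas to prove extension one must add elements $v_k$ of $J \setminus I$ that need not be above the entire current index $I_k$. The resolution is to localize to the principal initial segment $I_k^\downarrow$, where $v_k$ really is a maximum, apply extension base there, and then use monotonicity~3 to lift independence back to $I_{k+1}$. This is exactly the role that monotonicity~3 plays in the axiomatics: it reduces independence of a system to independence of the downward closures of its maximal elements, so local independence at $v_k$ combines painlessly with the already-established independence of $\m_k$.
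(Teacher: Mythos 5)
Your argument is correct; the paper itself disposes of this lemma with ``Straightforward'', and your route --- induction along a linear extension of $J \setminus I$, localizing at each new $v_k$ to the principal initial segment $I_k^\downarrow = \{u : u < v_k\}$ (which lies in $\Iis$ by closure under initial segments), applying the base property there, and reassembling independence and properness via monotonicity 1 and 3 --- is exactly the intended routine verification, including the renaming reductions for the (strong) uniqueness cases. The one implicit step worth spelling out is that ``aligning identities by invariance'' requires extending the union of the component maps $f_u$, $u \in I_k^\downarrow$, to a single renaming of the model indexed by $v_k$, and the fact that this union is a well-defined injection uses the disjointness axiom for independent systems (intersections of models with a common upper bound lie in the model at the meet).
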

\begin{proof}
  Straightforward.
\end{proof}

Note also that the uniqueness properties transfers down from $\is$ to $\is^\ast$:

\begin{lem}\LABEL{n17}
  Let $\is^\ast$ be a skeleton of $\is$.  If $\is$ has [strong] uniqueness, then $\is^\ast$ has [strong] uniqueness. 
\end{lem}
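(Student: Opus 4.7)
The strategy is to feed the hypotheses of $\is^\ast$-(strong) uniqueness into the corresponding property of $\is$ and then transport the output back into $\K^\ast$ using Lemma \ref{skel-order-lem}. By definition every $\is^\ast$-independent system is also $\is$-independent, and $\K^\ast$-propriety implies $\K$-propriety at each node: the same witness $N \in \K^\ast$ works, since $\ltap{\K^\ast}$ refines $\lta$ and every $\K^\ast$-submodel of $M_u$ is in particular a $\K$-submodel. Hence any $(J \setminus I, \K^\ast)$-proper system is also $(J \setminus I, \K)$-proper, and the hypotheses of $\is$-(strong) uniqueness hold whenever those of $\is^\ast$-(strong) uniqueness do.

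For strong uniqueness the argument is immediate: the $\is$ version produces an isomorphism $g : \m_1 \cong \m_2$ extending $f$, and since each $g_u$ is a bijection between two $\K^\ast$-models preserving the vocabulary $\tau(\K)$ and $\leap{\K^\ast}$ is the restriction of $\lea$ to $\K^\ast$, $g$ is already a $\K^\ast$-isomorphism of systems.

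For uniqueness, let $\m_1, \m_2$ be $\is^\ast$-independent $J$-systems with $f : \m_1 \rest I \cong \m_2 \rest I$; after renaming via $f$ we may assume $\m_1 \rest I = \m_2 \rest I = \m$ and $f = \id$. The $\is$-uniqueness property supplies an $\is$-independent amalgam $\m^\ast$ with system embeddings $g_\ell : \m_\ell \to \m^\ast$ agreeing on $\m$; by post-composing $\m^\ast$ with a suitable isomorphism (legitimate by the invariance axiom of Definition \ref{multi-def}), I may further assume $\m^\ast \rest I = \m$ and $g_\ell \rest I = \id$. The only node of $\m^\ast$ that may lie outside $\K^\ast$ is then the top model $M_v^\ast$. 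Apply Lemma \ref{skel-order-lem} to $M_v^\ast \in \K$ together with the finitely many $\K^\ast$-submodels $\{M_u : u \in I\} \cup \{g_\ell[M_v^\ell] : \ell = 1,2\}$ to obtain $N \in \K^\ast$ with $M_v^\ast \lea N$ and each listed submodel $\leap{\K^\ast} N$. Since $v$ is maximal in $J$, monotonicity 2 of $\is$ preserves independence when $M_v^\ast$ is replaced by $N$; call the resulting system $\m^{\ast\ast}$. All nodes of $\m^{\ast\ast}$ now lie in $\K^\ast$ with edges witnessed in $\K^\ast$, so $\m^{\ast\ast}$ is $\is^\ast$-independent, and the same maps $g_\ell$ are $\K^\ast$-system embeddings $\m_\ell \to \m^{\ast\ast}$ because $g_\ell[M_v^\ell] \leap{\K^\ast} N$. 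The only mildly delicate point is the renaming of $\m^\ast$ to make the $\K^\ast$ data literal; everything else is immediate from Lemma \ref{skel-order-lem}, monotonicity 2, and the skeleton axioms.
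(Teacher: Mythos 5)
The strong uniqueness half of your argument is fine and is surely the ``straightforward'' argument the paper has in mind: an $\is^\ast$-independent system is $\is$-independent, properness in the sense of $\K^\ast$ passes to properness in $\K$ via the same witness, and the isomorphism of systems produced by $\is$ is already the required $\K^\ast$-isomorphism. The gap is in the uniqueness half, at exactly the step you flag as ``mildly delicate'': the claim that, by post-composing $\m^\ast$ with an isomorphism, you may assume $\m^\ast \rest I = \m$. Invariance only lets you replace $\m^\ast$ by an isomorphic copy; the uniqueness axiom for $\is$ imposes no surjectivity on the embeddings $g_\ell$, so for $u \in I$ the model $M_u^\ast$ may be a proper $\lea$-extension of $M_u$ (even of larger cardinality), and then no renaming of $\m^\ast$ can make $M_u^\ast$ equal to $M_u$ while keeping $g_\ell \rest I = \id$. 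Since your final system $\m^{\ast\ast} = (\m^\ast \rest I) \smallfrown \seq{N}$ is asserted to lie in $\K^\ast$ precisely because of this assumption, the proof as written does not go through.

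The repair uses only the tools you already invoke, so the approach is salvageable: leave $\m^\ast$ alone and take as the final amalgam $\m' := \m \smallfrown \seq{N}$, with $N \in \K^\ast$ given by Lemma \ref{skel-order-lem} applied to $M_v^\ast$ and the $\K^\ast$-models $\{M_u : u \in I\} \cup \{g_{1,v}[M_v^1], g_{2,v}[M_v^2]\}$ (all $\lea$-submodels of $M_v^\ast$, since $g_{\ell,v}$ fixes each $M_u$ pointwise). The system $\m \smallfrown \seq{g_{1,v}[M_v^1]}$ is independent by invariance, being the image of $\m_1$ under a system isomorphism which is the identity on the $I$-part; since $g_{1,v}[M_v^1] \lea N$, monotonicity 2 gives that $\m \smallfrown \seq{N}$ is independent. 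All its models are in $\K^\ast$ and its edges are $\leap{\K^\ast}$-edges ($\m$ comes from the $\K^\ast$-system $\m_1$, and $M_u \leap{\K^\ast} N$ by the choice of $N$), so $\m'$ is $\is^\ast$-independent, and the maps given by the identity on $\m$ together with $g_{\ell,v}$ at the top are $\K^\ast$-system embeddings extending $f$, since $g_{\ell,v}[M_v^\ell] \leap{\K^\ast} N$. This keeps the base of the amalgam equal to $\m$ itself, which is what makes the $\K^\ast$-requirements on the interior nodes come for free, rather than trying to force the amalgam produced by $\is$ to have that base.
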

\begin{proof} 
  Straightforward using the basic properties of a skeleton.
\end{proof}

The extension property also transfers. In fact, more can be said:

\begin{lem}\LABEL{skel-ext}
  Let $\is^\ast$ be a skeleton of $\is$. Write $\K \coloneqq  \K_{\is}$, $\K^\ast \coloneqq  \K_{\is^\ast}$. Assume that $\is$ has extension. Let $I, J \in \Iis$ with $I$ an initial segment of $J$. Let $\m$ be an $\is$-independent $I$-system. There exists an $\is$-independent $J$-system $\m'$ such that:

  \begin{enumerate}
  \item $\m \cong \m' \, {\rest} \, J$.
  \item $\m'$ is $(J \backslash I)$-$\K^\ast$-proper (see Definition \ref{proper-def}).
  \end{enumerate}

  In particular, $\is^\ast$ has extension.
\end{lem}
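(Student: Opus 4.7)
My plan is to proceed by induction on $n := |J \setminus I|$. The base case $n = 0$ is trivial with $\m' := \m$. For the inductive step, I would pick $v$ maximal in $J \setminus I$; since $I$ is an initial segment of $J$ (i.e., downward closed), any $u \in J$ strictly above $v$ would have to lie in $I$, and then $v < u \in I$ would force $v \in I$ by downward-closure, a contradiction. So $v$ is actually maximal in the whole of $J$. Setting $J_1 := J \setminus \{v\}$, we get that $J_1$ is an initial segment of $J$ (hence $J_1 \in \Iis$ by closure of $\Iis$ under initial segments), that $I$ is an initial segment of $J_1$, and that $|J_1 \setminus I| < n$.

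Applying the induction hypothesis to $I \subseteq J_1$ produces an $\is$-independent $J_1$-system $\m_1$ with $\m_1 \rest I \cong \m$ that is $(J_1 \setminus I)$-$\K^\ast$-proper. I then invoke the assumed extension property of $\is$ along $J_1 \subseteq J$ to extend $\m_1$ to an $\is$-independent $J$-system $\m^\ast$ (so $\m^\ast \rest J_1 \cong \m_1$) in which $v$ is $\K$-proper. The remaining task is to upgrade $v$ from $\K$-proper to $\K^\ast$-proper. For this I apply Lemma \ref{skel-sys-lem} to $\m^\ast$ at its maximal vertex $v$: it supplies $\tilde M_v \in \K^\ast$ with $M_v^{(\m^\ast)} \lea \tilde M_v$ such that $v$ becomes $\K^\ast$-proper once $M_v^{(\m^\ast)}$ is replaced by $\tilde M_v$, yielding $\m'$. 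Because $v$ is maximal in $J$, monotonicity 2 ensures this replacement preserves $\is$-independence, and the $\K^\ast$-properness of elements in $J_1 \setminus I$ is untouched since the set of predecessors of any such element in $J$ equals the corresponding set in $J_1$ (as $v$ is above none of them).

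The one small obstacle is that Lemma \ref{skel-sys-lem} assumes $\K$ has no maximal models, so I will need to extract this from the standing hypotheses. Since $\bot_J \in I$ and $v \in J \setminus I$ we have $v \neq \bot$, so closure of $\Iis$ gives $[v, \infty)_J \times \{0, 1\} \in \Iis$; taking an initial segment and using closure under isomorphisms yields $\Ps(1) \in \Iis$, and then extension applied to any one-element system produces a strict $\K$-extension of any prescribed model, establishing that $\K$ has no maximal models. Finally, the ``in particular'' clause is immediate: given an $\is^\ast$-independent $I$-system $\m$ (which is $\is$-independent because $\is^\ast = \is \rest \K^\ast$), the output system $\m'$ has its $I$-part in $\K^\ast$ (since $\m' \rest I \cong \m$) and its $(J \setminus I)$-part in $\K^\ast$ (by $\K^\ast$-properness), so $\m'$ is wholly a $\K^\ast$-system, hence $\is^\ast$-independent, and its $(J \setminus I)$-$\K^\ast$-properness is exactly $(J \setminus I)$-properness inside $\is^\ast$.
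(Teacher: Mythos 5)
Your proof is correct and is essentially the paper's own argument: the paper proves this lemma by simply saying ``as in the proof of Lemma \ref{skel-sys-lem}'', i.e., adjoin the missing vertices one maximal element at a time using the extension property and then upgrade the new top model to a $\K^\ast$-proper one via Lemma \ref{skel-sys-lem} (really Lemma \ref{skel-order-lem}) together with monotonicity 2, which is exactly your induction, with the added (welcome) care of deriving ``no maximal models'' from extension. The only point your write-up glosses over is the degenerate case $I = \emptyset$ when the vertex being added is $v = \bot$, where your derivation of $\Ps(1) \in \Iis$ (hence of no maximal models) is unavailable; but there $\K^\ast$-properness of $\bot$ only requires the model to lie in $\K^\ast$, which nontriviality supplies directly, so nothing is lost.
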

\begin{proof}
  Play with skeletons as in the proof of Lemma \ref{skel-sys-lem}.
\end{proof}

We now turn to the harder problem of getting the properties in $\is$ if we know them in the skeleton. For extension, this can be done provided that the underlying class is the same (the ordering may be different). The example to keep in mind is $\K$ being the class of saturated models of cardinality $\lambda$ in a given AEC, and $\K^\ast$ consisting of the same class, but ordered by ``being limit over''. Note however that the theorem below also has content when $\is = \is^\ast$: it says that it is enough to prove that \emph{proper} systems are extension bases.

\begin{thm}\LABEL{brim-ext-thm}
  Let $\is^\ast$ be a skeleton of $\is$ such that $|\K_{\is^\ast}| = |\K_{\is}|$. If every proper $\is^\ast$-independent system is an extension base, then $\is$ has extension.
\end{thm}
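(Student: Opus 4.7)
The plan is to reduce the theorem to the hypothesis by constructing, for a given $\is$-independent $I$-system $\m$ and vertex $v$ with $J = I \cup \{v\} \in \Iis$, a larger $\is$-independent system $\widetilde{\m}$ that contains both $\m$ and a parallel \emph{proper} $\is^\ast$-independent copy of it, and then applying the hypothesis to the proper copy before transferring the extension back to $\m$ via the monotonicity axioms.

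More precisely, the first step (and the main obstacle) is to build $\widetilde{\m}$ indexed by a semilattice $\widetilde{I} \in \Iis$ (obtained from $I$ via the closure assumptions on $\Iis$, essentially a suitable subset of $J \times \{0, 1\}$) such that $\widetilde{\m}$ is $\is$-independent and has two distinguished sub-systems: one sub-system, at $I \times \{1\}$, equal to $\m$ (possibly after applying invariance to rename); and one sub-system, at $I \times \{0\}$, a \emph{proper} $\is^\ast$-independent $I$-system $\m^\dagger$. The construction goes by induction on $|I|$, processing vertices in a linear extension of $I$. At each step we have a partial $\is$-independent system and must adjoin a new vertex in both layers; to do so we use the hypothesis applied to the proper $\is^\ast$-layer to produce a $\is^\ast$-proper extension at that vertex, then use invariance and disjointness (ensured by the disjointness axiom) to place the new model, and finally monotonicity axioms (especially monotonicity 2 and 5) to verify that the resulting enlarged system remains $\is$-independent. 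The hypothesis $|\K_{\is^\ast}| = |\K_{\is}|$ is essential here, since it guarantees that the models of $\m$ automatically belong to the underlying class of $\is^\ast$, so only the ordering (and not class membership) needs to be arranged to make the parallel copy proper.

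In the second step, since $\m^\dagger = \widetilde{\m} \rest (I \times \{0\})$ is a proper $\is^\ast$-independent system, the hypothesis applies: $\m^\dagger$ is an extension base, so there exists an $\is$-independent extension $\m^\dagger \smallfrown \seq{N^\dagger}$ by $v$ in which $v$ is $\is^\ast$-proper. Adjoining $N^\dagger$ (and a suitable $N$ at $(v, 1)$ obtained similarly, if needed) to $\widetilde{\m}$ yields an $\is$-independent system on an index semilattice in $\Iis$. A final application of monotonicity 1 (restricting to the sub-semilattice $(I \times \{1\}) \cup \{(v,1)\}$, which lies in $\Iis$ by closure) and monotonicity 2 extracts from this an $\is$-independent $J$-system extending $\m$ at $v$.

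To conclude, we verify that $v$ is $\is^\ast$-proper in the extracted extension of $\m$: the witness $N_0$ is chosen to be the model at the corresponding vertex in the proper $\is^\ast$-layer, which by construction sits strictly $\is^\ast$-below the model at $v$ in the extension and $\is^\ast$-above every $M_w$ for $w \in I$; together with the fact that all models lie in $\K_{\is^\ast}$, this is exactly the definition of $v$ being $\is^\ast$-proper. The delicate point throughout is managing the interaction between the two layers so that independence propagates correctly; the axioms of monotonicity 4 and 5 (transferring independence between overlapping sub-systems and between the cone above a vertex and an enlargement) are what make the layer-by-layer construction of Step 1 go through without circularity.
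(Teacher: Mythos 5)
Your plan hinges on producing, for the given $\is$-independent $I$-system $\m$, a parallel \emph{proper} $\is^\ast$-independent copy $\m^\dagger$ placed at the layer $I \times \{0\}$ \emph{below} $\m$ (placed at $I \times \{1\}$), with the combined two-layer system $\is$-independent. That step is where the argument breaks. The only tool the hypothesis gives is that a proper $\is^\ast$-independent system can be extended \emph{upward} by a new top vertex, and only up to isomorphism over the system being extended; nothing lets you manufacture a proper system underneath a prescribed one, and nothing lets you reconcile an abstractly produced extension with the prescribed models $M_w$ of $\m$ --- that would require amalgamating over systems, i.e.\ a uniqueness property, which is precisely not among the hypotheses of this theorem. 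Your vertex-by-vertex construction of the two layers hits this at every step: the extension-base property applied to the partial proper layer knows nothing about $M_w$ or the unprocessed part of $\m$, and ``invariance and disjointness'' cannot bridge that, since invariance only transports whole systems by isomorphisms. Moreover, even granting the two-layer system, your final properness check is backwards for the orientation you chose: the proposed witness (the new model $N^\dagger$ over the proper \emph{bottom} layer) contains the models $M^\dagger_u$ but not the models $M_u$ of $\m$, so it cannot witness that $v$ is $\K_{\is^\ast}$-proper in the extracted extension of $\m$, which needs a member of $\K_{\is^\ast}$ lying above every $M_u$ and strictly $\ltap{\K_{\is^\ast}}$-below the new top model.

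The paper's proof avoids any global parallel copy: it inducts on the number $k(\m)$ of vertices of $I$ that are \emph{not} $\K_{\is^\ast}$-proper in $\m$ (your hypothesis is exactly the case $k(\m) = 0$; the assumption $|\K_{\is^\ast}| = |\K_{\is}|$ is used so that minimal vertices are automatically proper). Given a non-proper vertex $u \neq \bot$, one restricts $\m$ to the cone $I_0 = [u,\infty)_I$, where $u$ becomes minimal and hence proper, so $k$ drops; the induction hypothesis, applied through (the proof of) Lemma \ref{skel-ext} to the relation restricted to systems of smaller $k$, yields a proper $\is^\ast$-independent extension of that cone, and monotonicity 5 lets one replace the cone of $\m$ by this extension, producing $\m'$ with $k(\m') < k(\m)$; after extending $\m'$ at $v$ by the induction hypothesis, monotonicity 5 is used once more to pull the new top model back over the original $\m$. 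If you wanted to salvage your two-layer picture you would at least have to put the proper copy \emph{above} $\m$, so that its models (and the properness witness) contain the $M_u$'s; but constructing such a copy above an arbitrary $\m$ is itself an instance of the extension property being proved, so some induction of the paper's kind, with the cone replacement via monotonicity 5, appears unavoidable.
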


\begin{proof}
  Write $\K \coloneqq  \K_{\is}$, $\K^\ast \coloneqq  \K_{\is^\ast}$. For $\m$ an $\is$-independent $I$-system, let $k (\m)$ be the number of $u \in I$ which are \emph{not} $\K^\ast$-proper in $\m$. We prove that every $\is$-independent system $\m$ is an extension base by induction on $k (\m)$.

  Let $\m$ be an independent $I$-system and let $J = I \cup \{v\} \in \Iis$, with $I < v$. If $k (\m) = 0$, then $\m$ is an extension base by assumption. Assume now that $k (\m) > 0$. Let $u \in I$ be such that $u$ is \emph{not} $\K^\ast$-proper in $\m$. Note that $u \neq \bot$ (minimal elements are proper). Let $I_0 \coloneqq  [u, \infty)_{I}$ and let $\m_0 \coloneqq  \m \, {\rest} \, I_0$. Note that:

    \begin{enumerate}
    \item[(A)] $I_0 \subseteq I$ and $I_0 \times \{0,1\} \in \Iis$ (by (\ref{multi-1}) in the definition of a multidimensional independence relation), hence also $I_0 \in \Iis$ (see \ref{closed-def}, it is isomorphic to an initial segment of $I_0 \times \{0,1\}$).
    
    \item[(B)] $I \backslash I_0$ is an initial segment of $I$, hence is in $\Iis$.
    
    \item[(C)] $k (\m_0) < k (\m)$, since $u$ is minimal in $I_0$, hence, since $|\K^\ast| = |\K|$, $\K^\ast$-proper in $\m_0$ (we are also using that properness is preserved when passing to subsystems, i.e.\ Lemma \ref{proper-subsys}).
    \end{enumerate}

    Let $\m_0'$ be an $\is^\ast$-independent proper $I_0$-system such that $\m_0 \leap{\is} \m_0'$. 
    
    [Why? Let $\langle v_{\ell}: \ell < \vert I \vert \rangle$ lists $I_{0}$ with no repetition such that $v_{\ell} <_{I} v_{k} \Rightarrow \ell < k$ and for $k \leq \vert I \vert,$ let $I_{k} \coloneqq  I \, {\rest} \, \{ v_{\ell}: \ell < k \}.$ Now by induction on $k \leq \vert I \vert$ we choose $\mathbf{b}_{k}'$ such that: 
    
    \begin{enumerate}
        \item[(a)] $\mathfrak{b}_{k}'$ is a $(J_{k} \times \{0, 1\}, \iota)$-system, 
        
        \item[(b)] $\mathfrak{b}_{k}' \, {\rest} \, J_{k} \times \{ u \}$ is isomorphic to $\mathfrak{b}_{k} \, {\rest} \, J_{k}$ over $\pi: J_{k} \times \{ 0 \} \cong J_{k},$
        
        \item[(c)] $\mathbf{m}_{k}' \, {\rest} \, (J_{k} \times \{ 1 \})$ is $\iota^{\ast}$-independent from $I_{k} \times \{ 1 \}$-system.
    \end{enumerate}
    
    We can carry the induction the induction hypothesis and for $k = \vert I \vert$ we fulfill \ the promise above.] 

    Write $\m_0' = \seq{N_u : u \in I_2}$. For $u \in I$, let $M_u' \coloneqq  N_u$ if $u \notin I_0$ and $M_u' \coloneqq  N_{u}$ otherwise. Let $\m' \coloneqq  \seq{M_u' : u \in I}$. By monotonicity 5, $\m'$ is $\is$-independent. Moreover, $k(\m') < k (\m)$ because $u$ is proper in $\m'$. By the induction hypothesis, $\m'$ must be an extension base. Let $\m''$ be an independent $J$-system such that $v$ is proper in $\m''$ and there is  $f: \m' \cong \m'' \, {\rest} \, I$. Let $\m'''$ be the unique $J$-system so that $\m''' \, {\rest} \, \{v\} = \m'' \, {\rest} \, \{v\}$  and $f \, {\rest} \, \m \cong \m''' \, {\rest} \, I$. By monotonicity 5, $\m'''$ is independent, and hence witnesses that $\m$ is an extension base.
\end{proof}

This implies a way to obtain extension from existence, provided that the skeleton satisfies strong uniqueness (in case $\K_{\is^\ast}$ consists of limit models, ordered by being limit over, it turns out that in reasonable cases strong uniqueness will hold in $\is^\ast$, though there is no hope of it holding in $\is$).

\begin{cor}\LABEL{strong-uq-ext}
  Let $\is^\ast$ be a skeleton of $\is$ with $|\K_{\is^\ast}| = |\K_{\is}|$. If $\is^\ast$ has strong uniqueness and existence, then $\is$ has extension.
\end{cor}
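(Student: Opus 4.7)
The strategy is to invoke Theorem \ref{brim-ext-thm}, which reduces the task to showing that every proper $\is^\ast$-independent system is an extension base in $\is$. So fix such a system $\m$, say $\m$ is an $I$-system, and let $J = I \cup \{v\} \in \Iis$ with $I < v$; we must produce an $\is$-independent $J$-system $\m'$ such that $v$ is $\K^\ast$-proper in $\m'$ and $\m \cong \m' \rest I$.

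First I would use existence for $\is^\ast$ to choose a proper $\is^\ast$-independent $J$-system $\m_0$. By Lemma \ref{proper-subsys}, $\m_0 \rest I$ is then a proper $\is^\ast$-independent $I$-system. Note that $\emptyset \in \Iis$, since $\Iis$ is nonempty and closed under initial segments. The key step is to apply strong uniqueness for $\is^\ast$ with the ``ambient'' indexing order there taken to be our $I$ and with the initial segment there taken to be $\emptyset$. The two systems $\m$ and $\m_0 \rest I$ are both $\is^\ast$-independent $I$-systems that are fully ($I$-)proper, and the empty map is trivially an isomorphism of their restrictions to $\emptyset$, so strong uniqueness delivers an isomorphism $g: \m \cong \m_0 \rest I$.

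To finish, I simply take $\m' := \m_0$. Then $v$ is $\K^\ast$-proper in $\m'$ by construction; the isomorphism $g$ witnesses $\m \cong \m' \rest I$; and $\m'$ is $\is$-independent because $\NF_{\is^\ast} \subseteq \NF_{\is}$ by the definition $\is^\ast = \is \rest \K_{\is^\ast}$. This shows that $\m$ is an extension base in $\is$, and Theorem \ref{brim-ext-thm} then gives extension for $\is$.

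The main subtlety is the application of strong uniqueness over the empty initial segment; this depends on $\emptyset \in \Iis$ (which is free from the closure hypothesis on $\Iis$ applied to any element) and on both $I$-systems being fully proper (which holds by assumption on $\m$ and by Lemma \ref{proper-subsys} for $\m_0 \rest I$). Once strong uniqueness is in hand, the remainder of the argument is essentially bookkeeping, and no further induction or cardinal-arithmetic input is required.
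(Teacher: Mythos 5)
Your proposal is correct and follows essentially the same route as the paper: reduce via Theorem \ref{brim-ext-thm} to showing proper $\is^\ast$-independent systems are extension bases, then use existence to produce a proper $J$-system and strong uniqueness (over the empty initial segment) to identify its restriction to $I$ with the given system. Your extra remarks on $\emptyset \in \Iis$ and Lemma \ref{proper-subsys} just spell out what the paper's shorter proof leaves implicit.
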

\begin{proof}
  By Theorem \ref{brim-ext-thm}, it suffices to show that every proper $\is^\ast$-independent $I$-system $\m$ is an extension base. So let $J = I \cup \{v\} \in \Iis$ with $I < v$. By existence, there is a proper $\is^\ast$-independent $J$-system $\m^\ast$. By strong uniqueness, $m^\ast \, {\rest} \, I$ is isomorphic to $\m$, and clearly $\m^\ast \, {\rest} \, I$ is an extension base, so $\m$ must be one as well.
\end{proof}

We now turn to studying under what conditions uniqueness in a skeleton implies uniqueness in the original independence relation. We are unable to prove the full analog of Theorem \ref{brim-ext-thm}, so we make the additional assumption that $\Iis$ is closed under certain products. The following is the key lemma:

\begin{lem}\LABEL{brim-uq-lem}
  Let $\is^\ast$ be a skeleton of $\is$. Assume that $\is$ has extension. Let $J \in \Iis_{\is}$ be such that $J = I \cup \{v\}$, for $I < v$. If $J \times \{0,1\} \in \Iis_{\is}$ and any proper $\is^\ast$-independent $I$-system is a uniqueness base, then any $\is$-independent $I$-system is a uniqueness base.
\end{lem}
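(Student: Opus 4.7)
The plan is to mimic the induction on $k(\m) := |\{u \in I : u \text{ is not } \K^\ast\text{-proper in } \m\}|$ used in the proof of Theorem \ref{brim-ext-thm}, now for the uniqueness-base property. For the base case $k(\m) = 0$, $\m$ is proper and all its models lie in $\K^\ast$, so $\m$ is $\is^\ast$-independent. Given $\m_\ell = \m \smallfrown \seq{M_v^{\ell}}$ ($\ell = 1, 2$) two $\is$-independent $J$-extensions of $\m$, I would apply Lemma \ref{skel-sys-lem} to find $N_v^\ell \in \K^\ast$ with $M_v^{\ell} \lea N_v^\ell$ such that $v$ is $\K^\ast$-proper in $\m \smallfrown \seq{N_v^\ell}$. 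Monotonicity 2 of Definition \ref{multi-def} preserves $\is$-independence under this top-extension, so $\m \smallfrown \seq{N_v^\ell}$ is a proper $\is^\ast$-independent $J$-extension of $\m$. The hypothesis that $\m$ is a uniqueness base then produces an amalgam of these over $\m$, through which $\m_1, \m_2$ factor via the inclusions $M_v^{\ell} \lea N_v^\ell$.

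For the inductive step $k(\m) > 0$, pick $u \in I \setminus \{\bot\}$ not $\K^\ast$-proper in $\m$, set $I_0 := [u, \infty)_I$ and $\m_0 := \m \rest I_0$. By closedness of $\Iis_{\is}$, $I_0 \in \Iis_{\is}$, so Lemma \ref{skel-ext} yields a proper $\is^\ast$-independent $I_0$-system $\m_0'$ with $\m_0 \leap{\is} \m_0'$. Replacing the $I_0$-part of $\m$ by $\m_0'$ yields an $I$-system $\m'$ which is $\is$-independent by monotonicity 5 and satisfies $k(\m') < k(\m)$, so $\m'$ is a uniqueness base in $\is$ by induction. It remains to lift back: for each $\ell = 1, 2$, the goal is to construct an $\is$-independent $J$-system $\m_\ell''$ with $\m_\ell'' \rest I = \m'$ together with a system embedding $\m_\ell \to \m_\ell''$ extending the inclusion $\m \hookrightarrow \m'$ (the identity off $I_0$ and $\m_0 \lea \m_0'$ on $I_0$). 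Applying the uniqueness-base property of $\m'$ to $\m_1''$ and $\m_2''$ and composing with the embeddings $\m_\ell \to \m_\ell''$ then yields the desired amalgamation of $\m_1, \m_2$ over $\m$.

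For the construction of $\m_\ell''$, the hypothesis $J \times \{0, 1\} \in \Iis_{\is}$ is essential: apply Lemma \ref{skel-ext} to $\m_\ell$ (on the initial segment $J \times \{0\}$ of $J \times \{0, 1\}$) to produce an $\is$-independent $J \times \{0, 1\}$-system $\tilde{\m}_\ell$ whose $J \times \{1\}$-part is $\K^\ast$-proper. Monotonicity 5, applied with the end segment $I_0^J \times \{1\}$ of $J \times \{0, 1\}$ (where $I_0^J := I_0 \cup \{v\}$), then swaps the top-level $I_0$-part of $\tilde{\m}_\ell$ into alignment with $\m_0'$; combined with an adjustment outside $I_0$, this produces a $J$-extension $\m_\ell''$ of $\m'$, and the desired embedding $\m_\ell \to \m_\ell''$ over $\m$ comes from the level-$0$-to-level-$1$ independence inside $\tilde{\m}_\ell$.

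The main obstacle lies precisely in the last swap: the top $I_0$-part $\tilde{\m}_\ell \rest I_0 \times \{1\}$ arising from Lemma \ref{skel-ext} is only guaranteed to be \emph{some} proper $\is^\ast$-independent $I_0$-extension of $\m_0$, and need not coincide with the fixed $\m_0'$. Reconciling the two up to isomorphism over $\m_0$, uniformly in $\ell$, will require a form of strong uniqueness for proper $\is^\ast$-independent $I_0$-extensions; I expect this to follow either from the rigidity of $\K^\ast$-properness via a back-and-forth isomorphism between the two proper extensions above $\m_0$, or by a bootstrap use of the uniqueness-base hypothesis on a sub-induction, analogous to the final monotonicity-5 identification in the proof of Theorem \ref{brim-ext-thm}.
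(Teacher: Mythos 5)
There is a genuine gap, and it is exactly the one you flag at the end: your construction produces, for each $\ell=1,2$, a level-one $I_0$-part coming out of Lemma \ref{skel-ext} applied to $\m_\ell$, and you then need to identify these (uniformly in $\ell$) with the single fixed proper extension $\m_0'$ over $\m_0$. Nothing in the hypotheses of the lemma lets you do this: the lemma assumes only extension for $\is$ and that proper $\is^\ast$-independent $I$-systems are uniqueness bases; it does \emph{not} assume strong uniqueness for $\is^\ast$. ``Rigidity of $\K^\ast$-properness via back-and-forth'' is precisely strong uniqueness, which is a substantive extra property (it is assumed separately in Corollary \ref{strong-uq-ext}, Theorem \ref{brim-uq-thm} and Theorem \ref{kprop-ap}, and the whole point of Lemma \ref{brim-uq-lem} is to work without it); and the uniqueness-base hypothesis only amalgamates two single-vertex $J$-extensions of a \emph{proper} $I$-system, so it cannot be bootstrapped into an isomorphism of two proper $I_0$-extensions of $\m_0$ (note $\m_0 = \m \rest I_0$ is just the restriction of an arbitrary $\is$-independent system and need not itself be proper or even $\is^\ast$-independent). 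Also, the induction on $k(\m)$ is not needed, and in your base case the detour through Lemma \ref{skel-sys-lem} is superfluous: if $\m$ is a proper $\is^\ast$-independent $I$-system it is a uniqueness base by hypothesis, full stop.

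The paper avoids the matching problem by a different ordering of the same ingredients, with no induction. Given $\m$ and the two extensions $\m_\ell = \m \smallfrown \seq{M_v^\ell}$, one first applies Lemma \ref{skel-ext} \emph{once}, to $\m$ viewed inside $I' := I \times \{0,1\}$, obtaining a single $(I'\setminus I)$-$\K^\ast$-proper independent $I'$-system $\m'$ with $\m' \rest I = \m$. Only afterwards does one attach $M_v^\ell$ at the node $(v,0)$ (monotonicity 2 and 3 keep this independent), and then use extension, with renaming fixing what is already there, to fill in the last corner $(v,1)$ by a $\K^\ast$-proper model, yielding $\m_\ell''$ indexed by $J \times \{0,1\}$. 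Because the \emph{same} $\m'$ sits at level one for both $\ell$, the restrictions $\m_1'' \rest I\times\{1\}$ and $\m_2'' \rest I\times\{1\}$ are literally equal, and this common system is a proper $\is^\ast$-independent $I$-system; the hypothesis amalgamates the two $(v,1)$-models over it, and since every model of $\m$ is contained in the corresponding model of $\m'$, the resulting maps fix $\m$ and amalgamate $M_v^1, M_v^2$ over $\m$. In short: decide the level-one copy over $I$ \emph{before} extending over $v$, rather than extending each $\m_\ell$ over all of $J\times\{0,1\}$ at once; that is what removes the need for any strong-uniqueness-type identification.
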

\begin{proof}
  Let $\m = \seq{M_u : u \in I}$ be an $\is$-independent $I$-system. Let $\m_1, \m_2$ both be $\is$-independent $J$-systems, $\m_\ell = \seq{M_u^\ell : u \in J}$, $\ell = 1,2$ and without loss of generality $\m_1 \, {\rest} \, I = \m_2 \, {\rest} \, I = \m$. We want to find an amalgam of $M_v^1$ and $M_v^2$ fixing $\m$.
  
  Let $J' \coloneqq  J \times \{0,1\}$, $I' \coloneqq  I \times \{0, 1\}$. Identify $J$ with $J \times \{0\}$. By Lemma \ref{skel-ext}, we can find an $I'$-independent system $\m'$ such that $\m' \, {\rest} \, I = \m$ which is $(I' \backslash I)$-$\K^\ast$-proper. For $\ell = 1,2$, let $\m_\ell'$ be the $I' \cup \{(v, 0)\}$-system obtained by adding $M_v^\ell$ to $\m'$. By monotonicity 2 and 3, $\m_\ell'$ is still independent. By extension and some renaming, one can find a $J'$-independent system $\m_\ell''$ such that $\m_\ell'' \, {\rest} \, (I' \cup \{(v, 0)\}) = \m_\ell'$. Moreover, we can arrange that $\m_{\ell}''$ is $(J' \backslash J)$-$\K^\ast$-proper. Now let $J^\ast \coloneqq  J \times \{1\}$, $I^\ast \coloneqq  I \times \{1\}$. We have that $\m^\ast \coloneqq  \m_1'' \, {\rest} \, I^\ast = \m_2'' \, {\rest} \, I^\ast$. Moreover for $\ell = 1,2$, $\m_\ell^\ast \coloneqq  \m_\ell'' \, {\rest} \, J^\ast$ is a proper $\is^\ast$-independent system by construction. Thus it is a uniqueness base, so $\m_1^\ast$ and $\m_2^\ast$ amalgamate over $\m^\ast$. This implies that $M_v^1$ and $M_v^2$ amalgamate over $\m$, as desired.  
\end{proof}

\begin{thm}\LABEL{brim-uq-thm}
  Assume that $\Iis_{\is}$ is closed under products. Let $\is^\ast$ be a skeleton of $\is$ with $|\K_{\is}| = |\K_{\is^\ast}|$. If $\is^\ast$ has strong uniqueness and existence, then $\is$ has uniqueness and extension.
\end{thm}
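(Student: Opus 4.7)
The plan is as follows. First, extension for $\is$ is immediate from Corollary \ref{strong-uq-ext}. A second application of the same corollary to $\is^\ast$ viewed as its own trivial skeleton yields that $\is^\ast$ itself has extension, which will be used below when invoking Lemma \ref{skel-ext}.

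For uniqueness, by Lemma \ref{base-equiv} combined with Lemma \ref{brim-uq-lem} (whose hypothesis $J \times \Ps(1) \in \Iis_{\is}$ is ensured by the closure of $\Iis_{\is}$ under products), it suffices to show that every \emph{proper $\is^\ast$-independent} $I$-system is a uniqueness base in $\is$. So fix such an $\m$ together with two $\is$-independent $J$-systems $\m_\ell = \m \smallfrown \seq{M_v^\ell}$ ($\ell = 1, 2$), where $J = I \cup \{v\}$, $I < v$, $J \in \Iis_{\is}$. The strategy is to enlarge each $\m_\ell$ to a proper $\is^\ast$-independent $J$-system $\bar \m_\ell$ still having $\m$ as its $I$-restriction; strong uniqueness of $\is^\ast$ will then deliver the desired amalgamation of $\m_1$ and $\m_2$ over $\m$.

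To construct $\bar \m_\ell$, set $J' := J \times \Ps(1) \in \Iis_{\is}$ and identify $J$ with the initial segment $J \times \{\bot\}$ of $J'$. By Lemma \ref{skel-ext} applied to the extension property of $\is$, each $\m_\ell$ extends to an $\is$-independent $J'$-system $\m_\ell'$ that is $(J' \backslash J)$-$\K^\ast$-proper. Now let $I' := (I \times \{\bot\}) \cup \{(v, \top)\}$; this is a sub-semilattice of $J'$, isomorphic to $J$ via $(u, \bot) \mapsto u$ and $(v, \top) \mapsto v$, so $I' \in \Iis_{\is}$. By monotonicity 1, $\bar \m_\ell := \m_\ell' \rest I'$ is $\is$-independent. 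Viewed as a $J$-system, its $I$-restriction is $\m$, and its $v$-th model $\bar M_v^\ell$ is the model of $\m_\ell'$ at $(v, \top)$: since $(v, \top) \in J' \backslash J$ is $\K^\ast$-proper in $\m_\ell'$, we have $\bar M_v^\ell \in \K^\ast$, and since $(v, \bot) \le (v, \top)$ in $J'$, also $M_v^\ell \lea \bar M_v^\ell$. Moreover, a witness $N \in \K^\ast$ for the $\K^\ast$-properness of $(v, \top)$ in $\m_\ell'$ equally witnesses the $\K^\ast$-properness of $v$ in $\bar \m_\ell$, because the elements of $I'$ strictly below $(v, \top)$ are exactly those of $I \times \{\bot\}$ (whose models already lie in $\K^\ast$ by the properness of $\m$); hence $\bar \m_\ell$ is proper $\is^\ast$-independent.

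Finally, strong uniqueness in $\is^\ast$, applied to $\bar \m_1$ and $\bar \m_2$ with their common $I$-restriction $\m$, yields an isomorphism $f : \bar \m_1 \cong_{\m} \bar \m_2$, and in particular $f_v : \bar M_v^1 \cong \bar M_v^2$ fixes $\m$. Setting $\m^\ast := \bar \m_2$ (which is $\is$-independent since it is $\is^\ast$-independent), the system embeddings $g_1 : \m_1 \rightarrow \m^\ast$ (the identity on $\m$ and $f_v \rest M_v^1$ at position $v$) and $g_2 : \m_2 \rightarrow \m^\ast$ (the identity on $\m$ and the inclusion $M_v^2 \rightarrow \bar M_v^2$ at $v$) supply the required amalgamation of $\m_1$ and $\m_2$ over $\m$. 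The principal technical subtlety is the inheritance of $\K^\ast$-properness at $v$ by the subsystem $\bar \m_\ell$ from the larger $\m_\ell'$; this relies on the specific shape of $I'$, which keeps only the all-$\K^\ast$ lower stratum $I \times \{\bot\}$ below the top position.
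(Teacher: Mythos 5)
Your proof is correct and follows essentially the same route as the paper: extension via Corollary \ref{strong-uq-ext}, and uniqueness by reducing through Lemma \ref{base-equiv} and Lemma \ref{brim-uq-lem} to the claim that every proper $\is^\ast$-independent system is a uniqueness base, which you then verify (the step the paper treats as immediate from strong uniqueness) by producing $\K^\ast$-proper extensions of the two competing top models over the same base via Lemma \ref{skel-ext} and the restriction to $I' = (I \times \{\bot\}) \cup \{(v,\top)\}$, and applying strong uniqueness of $\is^\ast$. The only superfluous step is deriving extension for $\is^\ast$ at the start, since Lemma \ref{skel-ext} only requires extension for $\is$, which you already have.
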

\begin{proof}
  By Corollary \ref{strong-uq-ext}, $\is$ has extension. We have to see that for every $I \in \Iis$, every $\is$-independent $I$-system is a uniqueness base but this is immediate from Lemma \ref{brim-uq-lem}.
\end{proof}

The following more local result will be used in the proof of Theorem \ref{kprop-ap}. It says roughly that if a uniqueness base is changed by making one of the models on the boundary smaller, then it is still a uniqueness base. A version of it appears in \cite[III.12.26]{shelahaecbook}.

\begin{lem}\LABEL{uq-monot-lem}
  Let $\m = \seq{M_u : u \in I}$ be an independent $I$-system. Let $u^\ast \in I$ be maximal. Let $N_{u^\ast} \in \K$ be such that $M_{u^\ast} \lea N_{u^\ast}$. Let $\m^\ast \coloneqq  \m \, {\rest} \, (I \backslash \{u^\ast\}) \smallfrown N_{u^\ast}$. If $\nf (\is)$ has extension and $\m^\ast$ is a uniqueness base, then $\m$ is a uniqueness base.
\end{lem}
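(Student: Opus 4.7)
My plan is to enlarge each $\m_\ell$ to a $J$-system whose restriction to $I$ equals $\m^\ast$, and then amalgamate these enlargements via the uniqueness-base hypothesis on $\m^\ast$. Fix $J = I \cup \{v\}$ in $\Iis_{\is}$ with $I < v$ and $\is$-independent $J$-systems $\m_1, \m_2$ with $\m_\ell \rest I = \m$; write $\m_\ell = \seq{M_u^\ell : u \in J}$, so $M_u^\ell = M_u$ for $u \in I$. First observe that $\m^\ast$ is itself $\is$-independent by monotonicity 2 applied to $\m$ at the maximal index $u^\ast$.

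For each $\ell$, apply the extension property of $\nf(\is)$ to the span $(M_{u^\ast}, N_{u^\ast}, M_v^\ell)$; after renaming, this produces $\tilde{M}_v^\ell \in \K$ with $M_v^\ell \lea \tilde{M}_v^\ell$, $N_{u^\ast} \lea \tilde{M}_v^\ell$, and the $\Ps(2)$-square $\seq{M_{u^\ast}, N_{u^\ast}, M_v^\ell, \tilde{M}_v^\ell}$ is $\is$-independent. With further renaming so that the two copies of $N_{u^\ast}$ inside $\tilde{M}_v^1$ and $\tilde{M}_v^2$ coincide set-theoretically with $N_{u^\ast}$, let $\tilde{\m}_\ell$ be the $J$-system obtained from $\m^\ast$ by adjoining $\tilde{M}_v^\ell$ at position $v$.

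The crucial step is verifying that $\tilde{\m}_\ell$ is $\is$-independent. When $u^\ast$ is not the minimum of $J$, set $I_0 := [u^\ast, \infty)_J = \{u^\ast, v\}$; by closedness of $\Iis_{\is}$, both $I_0$ and $I_0 \times \{0,1\} \cong \Ps(2)$ belong to $\Iis_{\is}$. I then invoke monotonicity 5 with $\m := \m_\ell$ and $\m' := \tilde{\m}_\ell$: the restrictions to $I_0$ are both $\is$-independent $\Ps(1)$-systems by Lemma \ref{lea-lem}; their concatenation, indexed by $I_0 \times \{0,1\}$, corresponds under the isomorphism $I_0 \times \{0,1\} \cong \Ps(2)$ exactly to the $\is$-independent $\Ps(2)$-square from the previous paragraph; and the two systems agree on $J \backslash I_0$. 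Monotonicity 5 then yields $\tilde{\m}_\ell$ independent. In the edge case $u^\ast = \bot$ we necessarily have $I = \{u^\ast\}$, so $\tilde{\m}_\ell$ is already a $\Ps(1)$-system and Lemma \ref{lea-lem} applies directly.

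Finally, since $\m^\ast$ is a uniqueness base and $\tilde{\m}_1, \tilde{\m}_2$ are $\is$-independent $J$-systems with $\tilde{\m}_\ell \rest I = \m^\ast$, we obtain an $\is$-independent $J$-system $\m^\star$ and system embeddings $f_\ell : \tilde{\m}_\ell \to \m^\star$ over $\m^\ast$. Restricting the top component $f_{\ell,v}$ to $M_v^\ell \lea \tilde{M}_v^\ell$ yields $\K$-embeddings of $M_v^1, M_v^2$ into the top model of $\m^\star$ fixing $\m$ pointwise, which provides the amalgam witnessing that $\m$ is a uniqueness base. The main obstacle is the monotonicity 5 application in the third paragraph, specifically the identification $I_0 \times \{0,1\} \cong \Ps(2)$ that matches the associated concatenation with the $\nf(\is)$-independent square; the boundary case $u^\ast = \bot$ must also be treated separately, since monotonicity 5 requires $u \neq \bot$.
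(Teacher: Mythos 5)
Your overall strategy is the same as the paper's: use the extension property of $\nf(\is)$ to place (a copy of) $N_{u^\ast}$ independently from $M_v^\ell$ over $M_{u^\ast}$ inside a larger model, swap the entries at $u^\ast$ and $v$ to get an independent $J$-system restricting on $I$ to (a copy of) $\m^\ast$, apply the uniqueness-base hypothesis, and restrict back; your monotonicity~5 verification (including the $u^\ast = \bot$ edge case) and the final restriction step are fine. The genuine gap is the renaming step: you cannot in general arrange that the \emph{literal} $N_{u^\ast}$ and the \emph{literal} $M_v^\ell$ both sit inside $\tilde M_v^\ell$ with $\seq{M_{u^\ast}, N_{u^\ast}, M_v^\ell, \tilde M_v^\ell}$ independent. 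Nothing in the hypotheses controls $N_{u^\ast} \cap M_v^\ell$: the model $N_{u^\ast}$ is an arbitrary $\lea$-extension of $M_{u^\ast}$, unrelated to $\m_\ell$, and may meet $M_v^\ell$ outside $M_{u^\ast}$ (it could even be contained in $M_v^\ell$; for instance, if $u \in I$ is incomparable with $u^\ast$, then $N_{u^\ast}$ may contain points of $M_u \setminus M_{u^\ast}$, all of which lie in $M_v^\ell$, while $\m^\ast$ can still be independent since disjointness only constrains models with a common upper bound in the index set). Since independent systems are disjoint, independence of that square would force $N_{u^\ast} \cap M_v^\ell = M_{u^\ast}$, so in such a situation the configuration you want simply does not exist; and even set-theoretically, extension only provides embeddings of $N_{u^\ast}$ and $M_v^\ell$ over $M_{u^\ast}$ into a common model, and a renaming making both of them inclusions would have to send the image of a point $a \in (N_{u^\ast} \cap M_v^\ell) \setminus M_{u^\ast}$ back to $a$ while fixing $M_v^\ell$ pointwise, which no injection can do (the image of $a$ is forced off $M_v^\ell$ by disjointness).

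The repair is exactly the paper's bookkeeping: do not rename. Keep $M_v^\ell$ (hence $\m$) literally fixed and use the embedded copy $f_\ell[N_{u^\ast}]$ as the new entry at $u^\ast$; the resulting independent $J$-system then restricts on $I$ only to a system \emph{isomorphic} to $\m^\ast$, via a system isomorphism which is the identity off $u^\ast$. Invariance shows isomorphic copies of uniqueness bases are uniqueness bases, and amalgamating the two sides over the isomorphism $f_2 f_1^{-1}$ (matching the two copies of $N_{u^\ast}$) still produces embeddings of $M_v^1, M_v^2$ into an independent $J$-system fixing $\m$ pointwise, which is what the definition of uniqueness base requires. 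With that change your argument goes through; the independence of the modified system is indeed the replacement-on-$[u^\ast,\infty)$ step you describe via monotonicity~5.
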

\begin{proof}
  Let $J = I \cup \{v\} \in \Iis$, $I < v$. Let $\m_\ell = \seq{M_u^\ell : u \in J}$ for $\ell = 1,2$ be independent systems with $\m_\ell \, {\rest} \, I = \m$ for $\ell = 1,2$. We want to amalgamate $M_v^1$ and $M_v^2$ over $\m$. Let $\nf \coloneqq  \nf (\is)$.

  Using the extension property for $\nf$, find $f_\ell, N_v^\ell$, $\ell = 1,2$ such that $f_\ell : N_{u^\ast} \xrightarrow[M_{u^\ast}]{} N_v^\ell$ and $\nfs{M_{u^\ast}}{M_v^\ell}{f_\ell[N_{u^\ast}]}{N_v^\ell}$. Let $\m_\ell' \coloneqq  \seq{(M_u^\ell)' : u \in J}$ be defined by $(M_u^\ell)' \coloneqq  M_u$ if $u \in I \backslash \{u^\ast\}$, $(M_{u^\ast}^\ell)' \coloneqq  f_\ell[N_{u^\ast}]$, and $(M_v^\ell)' \coloneqq  N_v^\ell$. By monotonicity 4, $\m_\ell'$ is an independent system.
  
  Let $f \coloneqq  f_2 f_1^{-1} \, {\rest} \, (M_{u^\ast}^1)'$. Then $f: \m_1' \, {\rest} \, I \cong \m_2' I$ and since $\m_1' \cong \m^\ast$ and $\m^\ast$ is a uniqueness base, we know that there exists $g$ extending $f$ amalgamating $N_v^1$ and $N_v^2$. Such an $f$ also amalgamates $M_v^1$ and $M_v^2$ over $\m$, as desired.
\end{proof}

\newpage 

\section{The multidimensional amalgamation properties}\LABEL{multi-ap-sec}

Throughout this section, we continue to assume:

\begin{hypothesis}\LABEL{t4}
  $\is = (\K, \Iis, \NF)$ is a multidimensional independence relation.
\end{hypothesis}

We study the existence and uniqueness properties restricted to systems indexed by $\Ps (n)$, for some $n < \omega$. We show (Lemma \ref{ps-ext-lem}, Theorem \ref{ext-base-thm}) that once we have those, we can (under reasonable conditions), obtain the properties for systems indexed by other sets. We also give a relationship between extension and existence using strong uniqueness, akin to Corollary \ref{strong-uq-ext} but stronger: we only use strong uniqueness for smaller systems. This is Theorem \ref{n-dim-ext-uq}. Finally, we give sufficient conditions for the $n$-dimensional properties in terms of amalgamation properties inside classes of $n$-dimensional systems. The most important of these result is Theorem \ref{kprop-ap}.

It will be convenient to restrict oneself to systems indexed by finite initial segments of $\Ps (\omega)$. Independence relations considering only such systems will be called \emph{okay}. We then define the $n$-dimensional properties by saying that they must hold for systems indexed by initial segments of $\Ps (n)$.

\begin{defin}\LABEL{okay-def} \
    \begin{enumerate}
    \item For $n < \omega$, let $\Iis_n$ be the class of all partial orders isomorphic to an initial segment of $\Ps (n)$. Let $\Iis_{<\omega} \coloneqq  \bigcup_{n < \omega} \Iis_n$.\myindex{$\Iis_n$}\myindex{$\Iis_{<\omega}$}
    \item We say that $\is$ is \emph{okay} if $\Iis \subseteq \Iis_{<\omega}$. We say that $\is$ is \emph{$n$-okay} if it is okay and $\Ps (n) \in \Iis_{\is}$.\myindex{okay}\index{$n$-okay|see {okay}}
  \item For $P$ a property from Definition \ref{multidim-props} and $n < \omega$, we say that $\is$ has \emph{$n$-$P$} if $\is$ is $n$-okay and $\is \, {\rest} \, \Iis_n$ has $P$. We say that $\is$ has \emph{$(<n)$-$P$} if it has $m$-$P$ for all $m < n$.\myindex{$n$-$P$} 
  \item For $P$ a property, $\lambda$ an infinite cardinal and $n < \omega$, we say that $\is$ has \emph{$(\lambda, n)$-$P$} if $\is \, {\rest} \, \K_\lambda$ has $n$-$P$. We say that $\is$ has \emph{$(\lambda, <n)$-$P$} if it has $(\lambda, m)$-$P$ for all $m < n$. Similarly define variations like the $(\Theta, n)$-$P$, for $\Theta$ an interval of cardinals.\myindex{$(\lambda,n)$-$P$}
  \end{enumerate}
\end{defin}

\begin{remark}\LABEL{t6}
  Since $\is$ is closed under initial segments, if $\is$ is $n$-okay then $\Iis_n \subseteq \Iis$, so $\is$ is $m$-okay for any $m < n$.
\end{remark}

Note that $\is \, {\rest} \, \Iis_{<\omega}$ is always an okay independence relation, so replacing $\is$ by $\is \, {\rest} \, \Iis_{<\omega}$, we assume for the rest of this section that $\is$ is okay:

\begin{hypothesis}
  $\is$ is \emph{okay}.
\end{hypothesis}

It is easy to characterize the low-dimensional amalgamation properties in terms of familiar properties: 

\begin{lem}\LABEL{basic-small-dim} \
  \begin{enumerate}
  \item $\is$ has $0$-existence if and only if $\is$ has $0$-extension if and only if $\K \neq \emptyset$.
  \item $\is$ has strong $0$-uniqueness if and only if $\K$ has at most one model up to isomorphism.
  \item $\is$ has $0$-uniqueness if and only if $\K$ has joint embedding.
  \item $\is$ has $1$-extension if and only if $\is$ is $1$-okay, $\K$ is not empty, and $\K$ has no maximal models.
  \item $\is$ has $1$-uniqueness if and only if $\is$ is $1$-okay and $\K$ has amalgamation and joint embedding.
  \item If $\is$ has $2$-extension, then $\K$ has disjoint amalgamation.
  \end{enumerate}
\end{lem}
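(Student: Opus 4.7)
The plan is to unpack each part by translating the $\is$-axioms restricted to $\Iis_n$ into familiar AEC-theoretic terminology. For orientation I would first list $\Iis_n$ explicitly: $\Iis_0$ consists, up to isomorphism, of $\emptyset$ and the one-element poset $\Ps(0)$; $\Iis_1$ adds the two-element chain $\Ps(1)$; and $\Iis_2$ adds the V-shape $\{\emptyset, \{0\}, \{1\}\}$ and the diamond $\Ps(2)$. In each case the meet operation is forced, and every non-bottom element has a strict predecessor, so properness of a non-bottom element $u$ reduces to the existence of an intermediate strict $\K$-extension beneath $M_u$.

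For parts (1)--(3), a $\Ps(0)$-system is a single model and properness at $\bot$ is vacuous. Hence both $0$-existence and $0$-extension amount to $\K \neq \emptyset$. Strong $0$-uniqueness applied with $I = \emptyset \subsetneq J = \Ps(0)$ says any two one-element independent systems are isomorphic over the empty map, i.e.\ $\K$ has at most one isomorphism class. And $0$-uniqueness with the same $I, J$ asks that two single-model systems embed into a common $\is$-independent $\Ps(0)$-system, i.e.\ joint embedding.

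For parts (4)--(5), Lemma \ref{lea-lem} gives that every pair $M \lea N$ is already an independent $\Ps(1)$-system, so the $1$-dimensional content is purely classical. For $1$-extension, the properness clause for $\{0\}$ in $(M_\emptyset, M_{\{0\}})$ demands $M_\emptyset \lea N \lta M_{\{0\}}$, i.e.\ a strict extension, which combined with $0$-existence is equivalent to $\K$ being nonempty with no maximal models; the converse constructs such a chain directly. For $1$-uniqueness, the relevant cases are $I = \emptyset \subsetneq J = \{\bot\}$ (reproducing joint embedding) and $I = \{\bot\} \subsetneq J$ the two-element chain (giving ordinary amalgamation); the converse builds the needed independent $\Ps(1)$-systems via Lemma \ref{lea-lem} and amalgamates using the hypothesis.

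For part (6), let $(M_0, M_1, M_2)$ be a span in $\K$. By Lemma \ref{lea-lem}, $(M_0, M_1)$ is an independent system indexed by $\{\emptyset, \{0\}\} \subseteq \Ps(2)$. Applying $2$-extension with $I = \{\emptyset, \{0\}\}$ and $J = \Ps(2)$ produces an independent $\Ps(2)$-system $(M_0, M_1, N_2, N_3)$; replacing it by an isomorphic fully disjoint representative (available by the remark following Definition \ref{sys-ext-def}) and invoking the disjointness axiom of $\is$, we may assume $|M_1| \cap |N_2| = |M_0|$, so that $N_3$ is a disjoint amalgam of $(M_0, M_1, N_2)$. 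To accommodate the prescribed $M_2$, the plan is to derive ordinary amalgamation in $\K$ from $2$-extension (analogously to (5), by running a parallel $2$-extension starting from $(M_0, M_2)$), then amalgamate $M_2$ with $N_2$ over $M_0$ inside an extension of $N_3$, thereby embedding $M_2$ disjointly from $M_1$. The main obstacle in (6) is this last step: extracting $1$-uniqueness from $2$-extension and coordinating the two $\Ps(2)$-systems so that the combined amalgam realizes the prescribed $M_2$ disjointly from $M_1$ inside a single extension of $N_3$.
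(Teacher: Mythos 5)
Your unpacking of parts (1)--(5) is correct and is essentially the routine translation the paper intends (its own proof is just ``Immediate''): properness at the bottom element is vacuous, properness at a non-bottom element amounts to a strict extension, Lemma \ref{lea-lem} supplies the independent $\Ps(1)$-systems needed in the converse directions, and the $J = I \cup \{v\}$ instances of (strong) uniqueness for $J \in \Iis_0 \cup \Iis_1$ are literally the categoricity-type, joint embedding, and amalgamation statements you name.

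Part (6), however, has a genuine gap, and you flag it yourself. Your route extends only the chain $(M_0, M_1)$, indexed by $\{\emptyset,\{0\}\}$, to a $\Ps(2)$-system, which produces some $N_2$ unrelated to the prescribed $M_2$; to bring $M_2$ back in you then need ordinary amalgamation over $M_0$, and you propose to extract $1$-uniqueness from $2$-extension. No such implication is available from the axioms --- the only way to get (disjoint, hence ordinary) amalgamation out of $2$-extension is precisely statement (6) itself, so the plan is circular. The fix is not to discard $M_2$ at the start: the V-shaped $\Psm(2)$-system $(M_0, M_1, M_2)$ is already independent, because by Monotonicity 3 of Definition \ref{multi-def} independence need only be checked on the downward closures of the maximal elements $\{0\}$ and $\{1\}$, and each of these is a two-element chain, which is independent by Lemma \ref{lea-lem} (using that $\Ps(1) \in \Iis$ since $\is$ is $2$-okay and $\Iis$ is closed under initial segments, together with the invariance and symmetry axioms). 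As $\Psm(2)$ is an initial segment of $\Ps(2)$, $2$-extension applies directly with $I = \Psm(2)$ and $J = \Ps(2)$, giving an independent $\Ps(2)$-system $\m'$ and an isomorphism $\m \cong \m' \rest \Psm(2)$; the disjointness axiom for independent systems (with $u = \emptyset$, $v = \{0\}$, $w = \{1\}$, $u^\ast = \{0,1\}$) then says the images of $M_1$ and $M_2$ in the top model meet exactly in the image of $M_0$, which is disjoint amalgamation of the original span. With this replacement your argument for (6) closes, and no auxiliary amalgamation hypothesis is needed.
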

\begin{proof}
Immediate.
\end{proof}

Interestingly, strong $1$-uniqueness together with uniqueness already implies strong uniqueness. In case we are considering classes of limit models ordered by being limit over, strong $1$-uniqueness is exactly the uniqueness of limit models, a key property studied in many papers \cite{shvi635, vandierennomax, nomaxerrata, gvv-mlq, vandieren-symmetry-apal}.

\begin{lem}\LABEL{strong-uq-equiv}
  If $\is$ has strong $1$-uniqueness and uniqueness, then $\is$ has strong uniqueness.
\end{lem}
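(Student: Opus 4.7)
The plan is to proceed by induction on $n := |J \setminus I|$, with the base case $n = 0$ being trivial since $f$ itself is then the sought-after isomorphism. For the inductive step, I would pick some $v$ maximal in $J$ with $v \notin I$; such a $v$ exists because $I$ is an initial segment of $J$ (any maximal element of $J \setminus I$ is forced to be maximal in $J$, since anything strictly above it would also have to lie outside $I$). Set $J' := J \setminus \{v\}$, which is an initial segment of $J$ and hence lies in $\Iis$. The systems $\m_\ell \rest J'$ are independent (by monotonicity 1) and $(J' \setminus I)$-proper, so the inductive hypothesis applied to them yields an isomorphism $g' : \m_1 \rest J' \cong \m_2 \rest J'$ extending $f$. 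After renaming $\m_1$ via $g'$, one may assume $\m_1 \rest J' = \m_2 \rest J'$ and $g' = \operatorname{id}$.

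The remaining task is to produce an isomorphism $g_v : M^1_v \cong M^2_v$ restricting to the identity on each $M^1_u = M^2_u$ for $u < v$. For this, I would apply the uniqueness property of $\is$ to the two independent $[\bot,v]$-systems $\m_1 \rest [\bot, v]$ and $\m_2 \rest [\bot, v]$, along the identity on the initial segment $I_v := \{u \in J : u < v\}$ (which is an initial segment of $[\bot,v]$, hence in $\Iis$). Uniqueness then produces an independent $[\bot,v]$-system $\m^\ast$ with system embeddings $h_1, h_2$ extending the identity on $I_v$; after a further relabeling one may take $h_1$ to be the inclusion $\m_1 \rest [\bot,v] \lea \m^\ast$, so that $M^1_v \lea M^\ast_v$ and $h_2(M^2_v) \lea M^\ast_v$, with both containing each $M^1_u$ for $u < v$.

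The crux, and main obstacle, is to use strong $1$-uniqueness to upgrade this common amalgam into an actual isomorphism $M^1_v \cong h_2(M^2_v)$ over $\m_0 := \m_1 \rest I_v$; composing with $h_2^{-1}$ will then deliver the required $g_v$. The natural $\Ps(1)$-systems on which to invoke strong $1$-uniqueness are $(N^1_v, M^1_v)$ and $(h_2(N^2_v), h_2(M^2_v))$, where $N^1_v$ and $N^2_v$ are the properness witnesses for $v$ in $\m_1$ and $\m_2$; each is an independent $\Ps(1)$-system by Lemma \ref{lea-lem}, and both bottoms contain $M^1_u$ for every $u<v$. The delicate point is to align the two bottoms by an isomorphism over $\m_0$ and to verify the properness of each $\Ps(1)$-system at its top, which requires a further internal use of uniqueness inside $\m^\ast$ together with the monotonicity axioms to produce a joint witness. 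Once these hypotheses are in place, strong $1$-uniqueness directly produces the desired isomorphism of tops extending the alignment of bottoms, completing the induction.
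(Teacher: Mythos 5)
Your reduction to the case of a single new top element $v$ (peeling off maximal elements of $J \setminus I$, using Lemma \ref{proper-subsys} and monotonicity) is sound, and parallels what the paper gets from Lemma \ref{base-equiv}. The gap is exactly at the crux you flag but do not carry out. Your plan needs an isomorphism $\sigma : N^1_v \cong h_2(N^2_v)$ over $\m_0$ (or else a ``joint witness'', i.e.\ a single model properly contained in both $M^1_v$ and $h_2(M^2_v)$ and containing every $M_u$, $u < v$) before strong $1$-uniqueness can fire, and neither can be produced by ``a further internal use of uniqueness inside $\m^\ast$'': the uniqueness property only yields a common amalgam, that is, embeddings of the two witnesses into one model, never an isomorphism between them, and in that amalgam there is no reason for $M^1_v$ and $h_2(M^2_v)$ to share a common submodel of the required kind. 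Producing such a $\sigma$ over $\m_0$ is essentially the statement being proved, so as written the argument is circular precisely where the work happens. (By contrast, the properness of your two $\Ps(1)$-systems at their tops, which you single out as delicate, is automatic, since the properness witnesses sit strictly below the tops.)

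The paper's proof avoids aligning the two bottoms altogether. It uses uniqueness to amalgamate the witnesses $N^1_v, N^2_v$ over $\m$ into a single model $M$, via maps $f_\ell : N^\ell_v \rightarrow M$ fixing $\m$; then, using strong $0$-uniqueness (categoricity) together with the strict extension $N^1_v \lta M^1_v$ to conclude that $\K$ has no maximal models, it picks $N$ with $M \lta N$; finally it applies strong $1$-uniqueness \emph{twice}, to the proper $\Ps(1)$-systems $(N^\ell_v, M^\ell_v)$ and $(f_\ell[N^\ell_v], N)$ with bottom isomorphism $f_\ell$ (properness on the right is witnessed by $M \lta N$, which is why the strict extension of $M$ is needed), obtaining $g_\ell : M^\ell_v \cong N$ extending $f_\ell$; then $g := g_2^{-1} g_1$ is the desired isomorphism over $\m$. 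The idea your proposal is missing is this two-sided use of strong $1$-uniqueness through a third model: each application only needs the isomorphism from $N^\ell_v$ onto its image $f_\ell[N^\ell_v]$ inside $M$, which comes for free from the amalgamation, so no isomorphism between $N^1_v$ and $N^2_v$ is ever required.
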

\begin{proof}
  We use Lemma \ref{base-equiv}. Let $\m_\ell = \seq{M_u^\ell : u \in J}$, $\ell = 1,2$ be independent $J$-systems. Let $v$ be a top element of $J$ and let $I \coloneqq  J \backslash \{v\}$. Assume that $\m \coloneqq  \m_1 \, {\rest} \, I = \m_2 \, {\rest} \, I$ and $v$ is proper in both $\m_1$ and $\m_2$. We show that $M_v^1$ and $M_v^2$ are isomorphic over $\m$. By definition of being proper, we can pick $N_v^\ell \lta M_v^\ell$ so that $M_u^\ell \lea N_v^\ell$, $\ell = 1,2$.
  By uniqueness, there exists $M \in \K$ and $f_\ell : N_v^\ell \rightarrow M$ with $f_\ell$ fixing $\m$, $\ell = 1,2$. If $|J| \le 2$, there is nothing to prove so assume that $|J| > 2$. Then $M_v^1$ must be a proper extension of $M_u^1$ for each $u < v$. By strong $0$-uniqueness, $\K$ must have no maximal models. Thus we can pick $N \in \K$ with $M \lta N$. By strong $1$-uniqueness and some renaming there exists $g_\ell : M_v^\ell \cong N$, with $g_\ell$ extending $f_\ell$, $\ell = 1,2$. In the end, $g \coloneqq  g_2^{-1} g_1$ is the desired isomorphism.
\end{proof}

We now want to show that it is enough to prove that $\Psm (n)$-systems are extension bases to get $n$-extension. To this end, the following invariance of a semilattice will be useful: 

\begin{defin}\LABEL{t15}\myindex{$n (I)$}\myindex{$n (I, u)$}
For $I \in \Iis_{<\omega}$ and $u \in I$, let $n (I, u)$ be the cardinality of $f(u)$, where $f : I \rightarrow \Ps (\omega)$ is any semilattice embedding such that $f[I]$ is an initial segment of $\Ps (\omega)$. Let $n (I)$ be the maximum of $n (I, u)$ for all $u \in I$. When $I$ is empty, we specify $n (\emptyset) = -1$.
\end{defin}

\begin{lem}\LABEL{t18}
  Let $n < \omega$ and $I, J \in \Iis_{<\omega}$ be non-empty.
  
  \begin{enumerate}
  \item If $I \subseteq J$, then $n (I) \le n (J)$.
  \item $n (\Ps (n)) = n$.
  \item $n (\Psm (n)) = n - 1$.
  \item $n (I \times \{0, 1\}) = n (I) + 1$.
  \item If $I \in \Iis_n$, then $n (I) \le n$.
  \end{enumerate}
\end{lem}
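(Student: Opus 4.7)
The plan is to derive all five items from an intrinsic characterization of $n(I, u)$ that does not depend on the embedding $f$ chosen in the definition. First I would verify that for every $u \in I$ and every semilattice embedding $f: I \to \Ps(\omega)$ with $f[I]$ an initial segment, $f$ sends $\bot_I := \land I$ to $\emptyset$, and more generally $f$ restricts to a bijection from the downward-closed interval $[\bot_I, u]_I$ onto $\Ps(f(u))$: order preservation yields $f([\bot_I, u]_I) \subseteq \Ps(f(u))$, and the assumption that $f[I]$ is an initial segment of $\Ps(\omega)$ yields the reverse inclusion. It follows that $n(I, u) = |f(u)|$ is intrinsically determined by the identity $|[\bot_I, u]_I| = 2^{n(I, u)}$; in particular $n(I, u)$ does not depend on $f$.

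With this formula in hand, item (1) is immediate: if $I \subseteq J$ then $\bot_J \le \bot_I$ in $J$, so $[\bot_I, u]_I \subseteq [\bot_J, u]_J$ for every $u \in I$, and hence $n(I, u) \le n(J, u) \le n(J)$; maximizing over $u \in I$ gives $n(I) \le n(J)$. Item (2) follows by evaluating at the top element $\{0, \dots, n-1\}$ of $\Ps(n)$, whose principal ideal has size $2^n$; item (3) is analogous, noting that the $\subseteq$-maximal elements of $\Psm(n)$ are exactly the $(n-1)$-subsets of $n$, and that their principal ideals have size $2^{n-1}$. Item (5) is then a direct corollary of (1) and (2), since any $I \in \Iis_n$ is isomorphic to a subsemilattice of $\Ps(n)$.

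For item (4), using the isomorphism $\Ps(n) \times \Ps(1) \cong \Ps(n+1)$ recorded in the preliminaries (which shows $I \times \{0,1\} \in \Iis_{<\omega}$ whenever $I$ is), a direct count gives that the principal ideal of $(u, 1)$ in $I \times \{0, 1\}$ has size $2 \cdot |[\bot_I, u]_I|$, so $n(I \times \{0, 1\}, (u, 1)) = n(I, u) + 1$, while $n(I \times \{0, 1\}, (u, 0)) = n(I, u)$. Taking the maximum over $u \in I$ yields $n(I \times \{0, 1\}) = n(I) + 1$. The only real obstacle is the (mild) bookkeeping needed to justify the well-definedness of $n(I, u)$, which the statement of the lemma implicitly relies upon; the rest reduces to elementary counting in finite Boolean lattices.
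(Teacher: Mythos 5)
Your proof is correct, and since the paper disposes of this lemma with ``straightforward,'' your argument is exactly the kind of elementary counting the authors had in mind. The one genuinely useful addition you make is the intrinsic formula $|[\bot_I,u]_I| = 2^{\,n(I,u)}$, obtained from the bijection between $[\bot_I,u]_I$ and $\Ps(f(u))$, which simultaneously establishes the well-definedness of $n(I,u)$ (tacitly assumed in the paper's definition) and reduces items (1)--(5) to counting principal ideals in finite Boolean lattices; all five verifications, including the observation that $I\times\{0,1\}$ lies in $\Iis_{<\omega}$ via $\Ps(n)\times\Ps(1)\cong\Ps(n+1)$, check out.
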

\begin{proof}
  Straightforward.
\end{proof}

\begin{lem}\LABEL{ps-ext-lem}
  Let $I, J \in \Iis_{<\omega}$ and let $I$ be an initial segment of $J$. Let $n \coloneqq  n (J)$.

  \begin{enumerate}
  \item If for every $m \le n$, every proper $\Psm (m)$-system is an extension base, then every proper independent $I$-system can be extended to a proper independent $J$-system.
  \item If for every $m \le n$, every proper $\Psm (m)$-system is a strong uniqueness base, then for any proper independent $J$-systems $\m_1, \m_2$ and any isomorphism $f: \m_1 \, {\rest} \, I \cong \m_2 \, {\rest} \, I$, there is an isomorphism $g : \m_1 \cong \m_2$ extending $f$.
  \end{enumerate}
\end{lem}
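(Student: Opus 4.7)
My plan is to prove both statements simultaneously by induction on $|J \setminus I|$. The base case $|J \setminus I| = 0$ is trivial in each part (take the given system, or the given $f$). For the inductive step I will add just one element at a time.

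Specifically, pick $v$ minimal in $J \setminus I$ (with respect to the order induced from $J$), and set $I' := I \cup \{v\}$. Since $v$ is minimal in $J \setminus I$, any $w \in J$ with $w < v$ must lie in $I$; in particular $I'$ is still an initial segment of $J$, and $v$ is maximal in $I'$. Let $k := n(J,v)$; then $k \le n(J) = n$, and under any semilattice embedding $J \hookrightarrow \Ps(\omega)$ whose image is an initial segment of $\Ps(\omega)$ and which sends $v$ to a $k$-element set, the down-set $\{u \in J : u \le v\}$ becomes $\Ps(k)$ and $\{u \in J : u < v\} = \{u \in I : u < v\}$ becomes $\Psm(k)$.

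For (1), given a proper independent $I$-system $\m$, I will set $\m_0 := \m \rest \{u < v\}$; by Lemma \ref{proper-subsys} and monotonicity 1, $\m_0$ is a proper independent $\Psm(k)$-system, so by hypothesis it is an extension base and yields a proper independent $\Ps(k)$-system extending $\m_0$ by a top model $M_v$ in which $v$ is $\K^\ast$-proper. I then let $\m'$ be the $I'$-system obtained from $\m$ by adjoining $M_v$ at $v$. To see $\m'$ is independent I use monotonicity 3: the maximal elements of $I'$ are $v$ (where $\m' \rest \{u \le v\}$ is precisely the just-built $\Ps(k)$-system) and the maximal elements $w$ of $I$ with $v \not\le w$ (where $\m' \rest \{u \le w\} = \m \rest \{u \le w\}$, independent by monotonicity 3 applied to $\m$). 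Properness is inherited from $\m$ at each $u \in I$ (because $v$ is not below any $u \in I$, so $\{w < u\}$ is unchanged) and holds at $v$ by construction. Since $|J \setminus I'| < |J \setminus I|$, the induction hypothesis finishes the job.

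For (2), given proper independent $J$-systems $\m_1,\m_2$ and $f : \m_1 \rest I \cong \m_2 \rest I$, I will first extend $f$ to an isomorphism $f' : \m_1 \rest I' \cong \m_2 \rest I'$ and then invoke the induction hypothesis. The restrictions $\m_\ell^0 := \m_\ell \rest \{u \le v\}$ are proper independent $\Ps(k)$-systems (by Lemma \ref{proper-subsys} and monotonicity 3 applied to $\m_\ell$), and their restrictions to $\{u < v\}$ are matched by $f \rest \{u < v\}$. After renaming $\m_2^0 \rest \{u<v\}$ via $f^{-1}$ to glue the two systems over a common base, the hypothesis that $\m_1^0 \rest \{u<v\}$ is a strong uniqueness base supplies an isomorphism $h : \m_1^0 \cong \m_2^0$ extending $f \rest \{u<v\}$. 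I then set $f' := f$ on $I$ and $f'_v := h_v$; the compatibility $f'_{u} \subseteq f'_v$ for $u < v$ holds because $h$ extends $f \rest \{u<v\}$, and there are no other new comparabilities to check since $v$ is maximal in $I'$. Finally I apply the induction hypothesis (with $I'$ in place of $I$) to extend $f'$ to $g : \m_1 \cong \m_2$.

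The main obstacle is the bookkeeping needed to verify that the added piece at $v$ glues correctly with the rest of the system: the two crucial observations that make everything go through are that $v$ minimal in $J \setminus I$ forces $\{w \in J : w < v\} \subseteq I$ (so $\Psm(k)$ lives entirely inside $I$), and that the maximal-element version of monotonicity (monotonicity 3) lets us verify independence of $\m'$ by checking down-sets independently. Beyond this, the induction essentially writes itself.
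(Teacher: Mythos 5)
Your proof is correct and follows essentially the same route as the paper: add one element $v$ of $J \setminus I$ at a time (equivalently, chosen so that $I \cup \{v\}$ stays an initial segment of $J$), observe that the down-set of $v$ in $J$ is isomorphic to $\Ps(k)$ with $k = n(J,v) \le n$, apply the $\Psm(k)$-hypothesis there, and glue back using the monotonicity axioms. (One trivial slip: in part (2) the independence of $\m_\ell \rest \{u \le v\}$ should be cited to monotonicity 1 --- the down-set is an initial segment of $J$, hence in $\Iis$ --- rather than monotonicity 3, since $v$ need not be maximal in $J$.)
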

\begin{proof}
  Identify $J$ with an initial segment of $\Ps (\omega)$.
  
  \begin{enumerate}
  \item Let $\m$ be a proper independent $I$-system. Let $u \in J \backslash I$ be such that $J_0 \coloneqq  I \cup \{u\}$ is an initial segment of $J$. It is enough to show that $\m$ can be extended to a proper independent $J_0$-system. Let $m \coloneqq  |u|$. Note that $m \le n$. Let $\pi : u \rightarrow m$ be a bijection. Since $J_0$ is an initial segment of $\Ps (\omega)$, We have that $\pi$ induces an isomorphism from $(-\infty, u] = \Ps (u)$ onto $\Ps (m)$. In particular, $\m \, {\rest} \, \Psm (u)$ is an extension base. Now extend it to $\Ps (u)$ and use monotonicity to argue that this induces an extension of $\m$ to $J_0$.
\item Similar.
  \end{enumerate}
\end{proof}

\begin{thm}\LABEL{ext-base-thm}
  Let $n < \omega$ and assume that $\is$ is $n$-okay. If for every $m \le n$, every proper $\Psm (m)$-system is an extension base, then $\is$ has $n$-extension.
\end{thm}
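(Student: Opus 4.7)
The plan is to combine Lemma \ref{ps-ext-lem}(1) with Theorem \ref{brim-ext-thm}, applied to the restriction $\js := \is \rest \Iis_n$. The assumption that proper $\Psm(m)$-systems are extension bases is tailor-made for Lemma \ref{ps-ext-lem}(1), which upgrades this input to extendability of \emph{every} proper independent system (within $\Iis_n$) by one more element. Once this is in hand, Theorem \ref{brim-ext-thm} (taking the skeleton to be the whole class) converts extendability of proper systems into the extension property for all independent systems.

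First I would verify that $\js$ is genuinely a multidimensional independence relation, which reduces to showing that $\Iis_n$ is a closed class of semilattices in the sense of Definition \ref{closed-def}. Closure under isomorphism and under initial segments is immediate, and the third clause is exactly what was computed in the proof of Lemma \ref{closed-lem}: for $I \in \Iis_n$ identified with an initial segment of $\Ps(n)$ and $u \in I\setminus\{\bot\}$, one has $[u,\infty)_I \times \{0,1\} \cong \Ps(n - |u| + 1)$, which lies in $\Iis_n$ since $|u| \ge 1$. Note also that for $m \le n$ the notion of ``$\Psm(m)$-system being an extension base'' is the same in $\is$ and $\js$, since the only extension by one maximal element of a $\Psm(m)$-system inside $\Iis_{<\omega}$ is (up to isomorphism) to $\Ps(m)$, which is already in $\Iis_n$.

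Next I would apply Lemma \ref{ps-ext-lem}(1): for any $I, J \in \Iis_n$ with $I$ an initial segment of $J$ (so $n(J) \le n$), every proper $\js$-independent $I$-system extends to a proper $\js$-independent $J$-system. Specializing to $J = I \cup \{v\}$ with $I < v$ and $J \in \Iis_n$ exhibits every proper $\js$-independent system as an extension base in $\js$.

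Finally, taking $\is^\ast = \js$ (which is trivially a skeleton of $\js$ with the same underlying class) in Theorem \ref{brim-ext-thm} yields that $\js$ has extension, which is by definition exactly $n$-extension for $\is$. There is no real obstacle here beyond checking that $\Iis_n$ is closed and that the extension-base assumption transfers cleanly between $\is$ and $\js$; the substance of the result is packaged into the two cited lemmas.
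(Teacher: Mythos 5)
Your proposal is correct and follows the same route as the paper, whose proof is exactly the one-line combination of Lemma \ref{ps-ext-lem}(1) (to show every proper system is an extension base) with Theorem \ref{brim-ext-thm} applied with $\is^\ast$ the relation itself restricted to $\Iis_n$. The extra checks you include (closure of $\Iis_n$, via the computation in Lemma \ref{closed-lem}, and that the extension-base notion agrees for $\is$ and its restriction) are sound and simply make explicit what the paper leaves implicit.
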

\begin{proof}
  Directly from Theorem \ref{brim-ext-thm} and Lemma \ref{ps-ext-lem}.
\end{proof}

The following result will be useful to obtain extension from existence and strong uniqueness for lower-dimensional systems:

\begin{thm}\LABEL{n-dim-ext-uq}
  Let $n < \omega$ and assume that $\is$ is $n$-okay. If $\is$ has strong $(<n)$-uniqueness, then $\is$ has $n$-existence if and only if $\is$ has $n$-extension.
\end{thm}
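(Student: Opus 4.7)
The plan is to handle the two directions separately. For $n$-extension $\Rightarrow$ $n$-existence, I would apply Lemma \ref{ext-implies-exi} to $\is \rest \Iis_n$, starting from the empty system (which lies in $\Iis_n$ as an initial segment of $\Ps (n)$). For the converse, assuming $n$-existence and strong $(<n)$-uniqueness, my plan is to apply Theorem \ref{ext-base-thm}: it suffices to show that for every $m \le n$, every proper independent $\Psm (m)$-system $\m$ is an extension base within $\is \rest \Iis_n$. I would fix such $m$ and $\m$, and use $n$-existence (with $\Ps (m) \in \Iis_n$ since $m \le n$) to produce a proper independent $\Ps (m)$-system $\m^\ast$. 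The key reduction is to build an isomorphism $\m \cong \m^\ast \rest \Psm (m)$: once this is in hand, transporting the top coordinate of $\m^\ast$ along it by a standard relabeling yields a proper $\Ps (m)$-extension of $\m$, showing that $\m$ is an extension base.

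To construct the isomorphism, I would enumerate $\Psm (m) = \{u_1, \ldots, u_K\}$ in an order extending $\subsetneq$, set $I_k := \{u_1, \ldots, u_k\}$, and build system isomorphisms $f_k : \m \rest I_k \cong \m^\ast \rest I_k$ by induction on $k$, starting with $f_0 := \emptyset$. At the successor step, let $u := u_{k+1}$; by the choice of enumeration, $\Psm (u) \subseteq I_k$, and both $\m \rest \Ps (u)$ and $\m^\ast \rest \Ps (u)$ are proper $\Ps (u)$-systems, since the downward cone of any $v \le u$ in $\Ps (u)$ agrees with its downward cone in the ambient system, so properness transfers. Since $\Ps (u) \cong \Ps (|u|)$ with $|u| \le m - 1 < n$, strong $|u|$-uniqueness applies to the initial segment inclusion $\Psm (u) \subsetneq \Ps (u)$ and extends the restriction $f_k \rest \Psm (u) : \m \rest \Psm (u) \cong \m^\ast \rest \Psm (u)$ to an isomorphism $g : \m \rest \Ps (u) \cong \m^\ast \rest \Ps (u)$. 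Its $u$-component supplies the new coordinate needed to define $f_{k+1}$ as a system isomorphism on $I_{k+1}$, as a short verification using that $g$ extends $f_k \rest \Psm (u)$ confirms.

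The main technical care goes into setting up each application of strong uniqueness correctly: one must check that the two subsystems being compared really are proper $\Ps (u)$-systems and that $|u| < n$. The bound $m \le n$ combined with $u \in \Psm (m)$ forces $|u| \le m - 1 \le n - 1$, which is exactly why strong $(<n)$-uniqueness (rather than strong $n$-uniqueness) suffices. This also serves as a sanity check on the statement: the hypothesis gives strong uniqueness one level below $n$, which is the minimal input needed for extension at level $n$.
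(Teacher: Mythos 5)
Your proposal is correct and follows essentially the same route as the paper: the forward direction via Lemma \ref{ext-implies-exi}, and the converse by reducing through Theorem \ref{ext-base-thm} to showing proper $\Psm(m)$-systems are extension bases, producing a proper $\Ps(m)$-system from $n$-existence, and using strong $(<n)$-uniqueness to identify its restriction to $\Psm(m)$ with the given system. The only difference is that you re-derive that isomorphism by an explicit induction along an enumeration of $\Psm(m)$, which is just an unwinding of Lemma \ref{ps-ext-lem}(2), the lemma the paper cites directly at that step.
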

\begin{proof}
  If $\is$ has $n$-extension, then by Lemma \ref{ext-implies-exi} it always has $n$-existence (this does not use strong uniqueness). Assume now that $\is$ has $n$-existence. By Theorem \ref{ext-base-thm}, it suffices to show that for every $m \le n$, every proper $\Psm (m)$-system is an extension base. Let $m \le n$ and let $\m$ be a proper $\Psm (m)$-system. By $n$-existence, there is a proper $\Ps (m)$-system $\m'$. By strong $(<n)$-uniqueness and Lemma \ref{ps-ext-lem}, $\m' \, {\rest} \, \Psm (m) \cong \m$. Since $\m' \, {\rest} \, \Psm (m)$ is an extension base, so is $\m$.
\end{proof}

We would like to establish a criteria to obtain uniqueness from a kind of amalgamation in the class $\Kprop_{\is, \Ps (n)}$ (Definition \ref{proper-class}). It turns out that a slightly finer version of $\Kprop_{\is, \Ps (n)}$ is useful for this purpose: the idea is that we require the interior of the system to be limit but allow the boundary to not extend the rest in limit way (of course this is formulated abstractly using skeletons).

\begin{defin}\LABEL{kprop2-def}\myindex{$\Kprop_{\is, \is^\ast, I}$}\ 

  (1) Let $n < \omega$ and assume that $\is$ is $n$-okay. Let $\is^\ast$ be a skeleton of $\is$. Let $K^\ast$ be the class of proper $\is$-independent $\Ps (n)$-systems $\m$ such that $\m \, {\rest} \, \Psm (n)$ is $\is^\ast$-independent. Let $\Kprop_{\is, \is^\ast, \Ps (n)}$ be the abstract class whose underlying class is $K^\ast$ and whose ordering is $\m_1 \leap{\Kprop_{\is, \is^\ast, \Ps (n)}} \m_2$ if and only if $\m_1 = \m_2$ or whenever $\m_\ell = \m_\ell^- \smallfrown M_\ell$ with $\m_\ell^- \coloneqq  \m_\ell \, {\rest} \, \Psm (n)$, $\ell = 1,2$, we have that $\m_1 \leap{\is} \m_2$, $M_1$ and $M_2$ are in $\K_{\is^\ast}$, $M_1 \ltap{\K_{\is^\ast}} M_2$, and $\m_1 \, {\rest} \, \Psm (n) \ltap{\Kprop_{\is^\ast, \Psm (n)}} \m_2 \, {\rest} \, \Psm (n)$. 
  
  (2) Assume $I \subsetneq J$ are finite initial segments of $\mathcal{P}(\omega), n \coloneqq  n(J)$ and $I = \{ u \in J: \vert u \vert < n \}.$ We repeat the definition in part (1) with $I, J$ here playing the roles of $\mathcal{P}^{-}(n), \mathcal{P}(n).$
\end{defin}

\begin{remark}\LABEL{t21}
  We have that $\Kprop_{\is^\ast, \Ps (n)} \subseteq \Kprop_{\is, \is^\ast, \Ps (n)} \subseteq \Kprop_{\is, \Ps (n)}$. In particular, $\Kprop_{\is, \is, \Ps (n)} = \Kprop_{\is, \Ps (n)}$.
\end{remark}

\begin{defin}\LABEL{t25}\myindex{$\phi_n$}
  For $n < \omega$, we let $\phi_n (x)$ be the formula in the vocabulary of systems (Definition \ref{sys-voc-def}) which holds inside a $\Ps (n)$-system $\seq{M_u : u \in \Ps (n)}$ if and only if $a \in \bigcup_{u \subsetneq n} M_u$.  That is, $\phi_{n}(x)$ is $\bigvee_{u \subsetneq n}  P_{u}(x),$ where $P_{u}$ is the predicate corresponding to $M_{u}.$ 
\end{defin}

The following easy relationships hold between the $n$-dimensional properties and classes of proper $n$-dimensional systems. See Definition \ref{phi-props-def} for the definitions of $\phi$-categoricity, $\phi$-amalgamation, $\phi$-uniqueness, etc.    

\begin{lem}\LABEL{kprop-skel}
  Let $n < \omega$. Assume that $\is$ is $(n + 1)$-okay and let $\is^\ast$ be a skeleton of $\is$.
  
  \begin{enumerate}
  \item For any $\m_1 \in \Kprop_{\is, \is^\ast, \Ps (n)}$, there exists $\m_2 \in \Kprop_{\is^\ast, \Ps (n)}$ such that $\m_1 \leap{\K_{\is}} \m_2$ and $\m_1 \, {\rest} \, \Psm (n) = \m_2 \, {\rest} \, \Psm (n)$.

  \item\label{kprop-skel-categ} If $\is^\ast$ has strong $(<n)$-uniqueness, then $\Kprop_{\is, \is^\ast, \Ps (n)}$ is $\phi_n$-categorical.
  \item\label{kprop-skel-univ} If $\is^\ast$ has strong $n$-uniqueness, then any $\m \in \Kprop_{\is^\ast, \Ps (n)}$ is $\phi_n$-universal in $(|\Kprop_{\is, \is^\ast, \Ps (n)}|, \leap{\K_{\is}}^d)$.
  \item\label{kprop-skel-uq} If $\is^\ast$ has strong $n$-uniqueness, then $\Kprop_{\is, \is^\ast, \Ps (n)}$ has $\phi_n$-uniqueness.
  
  \item\label{kprop-skel-ap} Let $\bar{\m} = (\m_0, \m_1, \m_2)$ be a $\phi_{n}$-span in $\Kprop_{\is, \is^\ast, \Ps (n)}$, and write $\m_\ell = \seq{M_u^\ell : u \in \Ps (n)}$, $\ell = 0,1,2$. Then $\bar{\m}$ has a $\phi_{n}$-amalgam in $\Kprop_{\is, \is^\ast, \Ps (n)}$ if and only if there exists $N \in \K_{\is}$ and $f_\ell : M_{\Ps (n)}^\ell \xrightarrow[\m_0]{} N$, $\ell = 1,2$, such that $f_1 \, {\rest} \, M_u^1 = f_2 \, {\rest} \, M_u^2$ for all $u \in \Psm (n)$.

  \item\label{kprop-skel-5} If $\m$ is a $\phi_n$-amalgamation base in $\Kprop_{\is, \is^\ast, \Ps (n)}$, then for any $\m' \in \Kprop_{\is, \is^\ast, \Ps (n)}$ with $\m \leap{\Kprop_{\is, \is^\ast, \Ps (n)}} \m'$, we have that $(\m \ast \m') \, {\rest} \, \Psm (n + 1)$ is a uniqueness base in $\is$.
  \end{enumerate}
\end{lem}
\begin{proof} \
  \begin{enumerate}
  \item By the basic properties of a skeleton.
  \item Since $\is^\ast$ has strong $(<n)$-uniqueness, any two $\is^\ast$-independent $\Psm (n)$-systems are isomorphic.
  \item By the first part and strong $n$-uniqueness.
  \item Expanding the definitions, it suffices to show that if $\m_1$, $\m_2$ are two $\Psm (n + 1)$-systems such that $\m_1 \, {\rest} \, \Psm (n) = \m_2 \, {\rest} \, \Psm (n)$, then there is an isomorphism $f: \m_1 \cong \m_2$ fixing $\m_1 \, {\rest} \, \Psm (n)$. This is implied by strong $n$-uniqueness, because $n (\Psm (n + 1)) = n$.
  \item By monotonicity.
  \item Expand the definitions.
  \end{enumerate}
\end{proof}

The following result (used in the proof of Lemma \ref{uq-lambda-step}) will be crucial: under the assumption of density of certain amalgamation bases in the class of $(n + 1)$-dimensional systems, it implies that the \emph{whole} independence relation has $(n + 1)$-uniqueness. More precisely, we also assume that $n \ge 2$, work inside a skeleton, and assume that the skeleton has strong $n$-uniqueness and $(n + 1)$-extension (in the case of interest, these will hold).

\begin{thm}\LABEL{kprop-ap}
  Let $n \in [2, \omega)$. Let $\is$ be a multidimensional independence notion and let $\is^\ast$ be a skeleton of $\is$. If:
  
    \begin{enumerate}
    \item[(A)] $\is^\ast$ has $(n + 1)$-extension and strong $n$-uniqueness.
    
    \item[(B)] For any $\m_0 \in \Kprop_{\is^\ast, \Ps (n)}$, there exists a $\phi_n$-amalgamation base $\m_1 \in \Kprop_{\is, \is^\ast, \Ps (n)}$ such that $\m_0 \leap{\Kprop_{\is, \is^\ast, \Ps (n)}} \m_1$.
  \end{enumerate}

    Then $\is^\ast$ has $(n + 1)$-uniqueness.
\end{thm}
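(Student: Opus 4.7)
My plan is to prove directly that any two $\is^\ast$-independent $J$-systems $\m_1,\m_2$ sharing an initial segment admit an $\is^\ast$-independent amalgam over that common part. Using monotonicity 1 and the strong $n$-uniqueness of $\is^\ast$, I first reduce to the case $J=\Ps(n+1)$, $I=\Psm(n+1)$, with the given isomorphism being the identity, so that $\m := \m_1\rest I = \m_2\rest I$. Applying Lemma \ref{skel-sys-lem} together with monotonicity 2, I may further assume that the top vertex $n+1$ is $\K_{\is^\ast}$-proper in both $\m_\ell$. Identifying $\Ps(n+1)\cong \Ps(n)\times\Ps(1)$ via $u\mapsto (u\cap n,u\cap\{n\})$, I decompose $\m$ as the concatenation of a ``bottom'' $\Ps(n)$-system $\m^{(0)}$ with a ``top'' $\Psm(n)$-system $\m^{(1)}$, and each $\m_\ell$ as $\m^{(0)}\ast \m_\ell^{(1)}$, where $\m_\ell^{(1)}$ is a $\Ps(n)$-system with $\Psm(n)$-restriction $\m^{(1)}$ and top vertex $M_\ell^\top$.

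The heart of the argument is to show that $\m$ is a uniqueness base in $\is$. First, I apply the $(n+1)$-extension property of $\is^\ast$ to $\m^{(0)}$ (viewed as the $\Ps(n)\times\{\emptyset\}$-initial segment of $\Ps(n+1)$) to produce a proper $\is^\ast$-independent $\Ps(n+1)$-system; reading off its upper slice yields a proper $\is^\ast$-independent $\Ps(n)$-system $\bar\m^{(0)}$ with $\m^{(0)}\leap{\is^\ast}\bar\m^{(0)}$. The hypothesis then furnishes a $\phi_n$-amalgamation base $\m^\star\in \Kprop_{\is,\is^\ast,\Ps(n)}$ with $\bar\m^{(0)}\leap{\Kprop_{\is,\is^\ast,\Ps(n)}}\m^\star$. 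A second application of $(n+1)$-extension of $\is^\ast$, combined with strong $n$-uniqueness to rename the interior vertices so as to match those of $\m^{(1)}$, produces $\m^{\star\star}\in\Kprop_{\is,\is^\ast,\Ps(n)}$ with $\m^\star\leap{\Kprop_{\is,\is^\ast,\Ps(n)}}\m^{\star\star}$ such that the $\Psm(n+1)$-system $(\m^\star\ast \m^{\star\star})\rest \Psm(n+1)$ contains $\m$ vertex-wise, with equality on all non-maximal vertices. Lemma \ref{kprop-skel}(\ref{kprop-skel-5}) then tells me that $(\m^\star\ast \m^{\star\star})\rest \Psm(n+1)$ is a uniqueness base in $\is$, and iterating Lemma \ref{uq-monot-lem} over the $n+1$ maximal vertices of $\Psm(n+1)$ (shrinking one maximal vertex at a time back to $M_u^\m$) lets me deduce that $\m$ itself is a uniqueness base in $\is$.

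Once this is in hand, I obtain an $\is$-independent $\Ps(n+1)$-system $\m^\circ$ together with system embeddings $g_\ell:\m_\ell\to\m^\circ$ fixing $\m$. To upgrade to an $\is^\ast$-independent amalgam, I note that after composing with the $g_\ell$ the interior vertices of $\m^\circ$ may be identified with those of $\m$ (which already lie in $\K_{\is^\ast}$); only the top vertex of $\m^\circ$ might lie outside $\K_{\is^\ast}$, and Lemma \ref{skel-order-lem} supplies a $\K_{\is^\ast}$-model containing both $g_1[M_1^\top]$, $g_2[M_2^\top]$ and the old top, above all interior vertices. Monotonicity 2 then guarantees that the replacement is still $\is$-independent, and since all its vertices now lie in $\K_{\is^\ast}$ with the correct order relations, it is the desired $\is^\ast$-independent amalgam.

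The main obstacle is the synchronization in the central paragraph: the $\phi_n$-amalgamation base $\m^\star$ supplied by the hypothesis carries no a priori relation to $\m^{(1)}$, so arranging that the combined $\Ps(n+1)$-system $\m^\star\ast\m^{\star\star}$ ``sits above'' $\m$ with equality on the interior vertices requires a careful application of $(n+1)$-extension in $\is^\ast$ interleaved with strong $n$-uniqueness of $\is^\ast$ to substitute the freely chosen interior models by the designated ones coming from $\m$. The hypothesis $n\ge 2$ enters here (as well as in Lemma \ref{kprop-skel}(\ref{kprop-skel-5})) to leave enough room for strong $n$-uniqueness to operate on the non-top layers of the $(n+1)$-dimensional systems being manipulated.
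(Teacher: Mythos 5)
Your overall scaffolding (reduce to showing the common $\Psm(n+1)$-part is a uniqueness base, use the $\phi_n$-amalgamation base via Lemma \ref{kprop-skel}(\ref{kprop-skel-5}), shrink maximal vertices via Lemma \ref{uq-monot-lem}) matches the paper's, but the central ``synchronization'' step is a genuine gap, not just a delicate point. After your reductions, $\m = \m_1 \rest \Psm(n+1)$ is an \emph{arbitrary} $\is^\ast$-independent system: its models are fixed structures with no set-theoretic relation to anything hypothesis (2) produces, and, crucially, neither $\m$ nor its slices $\m^{(0)}, \m^{(1)}$ need be \emph{proper}. Strong $n$-uniqueness only identifies \emph{proper} $\is^\ast$-independent systems (over a common restriction), so it cannot be used to ``substitute the freely chosen interior models by the designated ones coming from $\m$''. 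Worse, the equality you demand at the non-maximal vertices is outright impossible: every vertex of $\m^\star$ strictly extends the corresponding vertex of $\m^{(0)}$ (it sits above $\bar{\m}^{(0)}$, whose vertices are proper over $\m^{(0)}$), and the bottom-layer interior vertices of $\Psm(n+1)$ are non-maximal, so $(\m^\star \ast \m^{\star\star}) \rest \Psm(n+1)$ can never agree with $\m$ there; even mere containment at the maximal vertices fails, since e.g.\ the top of $\m^\star$ has no reason to $\gea$-extend $M^{\m}_{n}$.

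The paper's proof avoids this matching problem entirely, and that is the missing idea: since $n(\Psm(n+1)) = n$, strong $n$-uniqueness (via Lemma \ref{ps-ext-lem}) makes any two \emph{proper} $\is^\ast$-independent $\Psm(n+1)$-systems isomorphic, so it suffices to exhibit \emph{one} such system that is a uniqueness base; the passage from proper $\is^\ast$-systems to the arbitrary given $\m$ is then handled by Lemma \ref{brim-uq-lem}, which relates $\m$ to a proper system by \emph{extension} (embedding the amalgamation problem into an upper copy), not by forcing equality of models. Concretely, the paper takes any $\m_0 \in \Kprop_{\is^\ast, \Ps(n)}$, extends it to a $\phi_n$-amalgamation base $\m_1$ and further to $\m_2$, gets that $(\m_1 \ast \m_2)\rest \Psm(n+1)$ is a uniqueness base by Lemma \ref{kprop-skel}(\ref{kprop-skel-5}), then swaps the bottom interior back to that of $\m_0$ and applies Lemma \ref{uq-monot-lem} at the single vertex $n$ to shrink its model from the top of $\m_1$ to the top of $\m_0$, producing a proper $\is^\ast$-independent $\Psm(n+1)$-system (with $\Ps(n)$-restriction $\m_0$) that is a uniqueness base. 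If you replace your synchronization paragraph by this ``one witness suffices'' argument together with the reduction through Lemma \ref{brim-uq-lem}, the rest of your outline goes through.
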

\begin{proof}
  By Theorem \ref{brim-ext-thm}, $\is$ has $(n + 1)$-extension. By Lemma \ref{brim-uq-lem}, the proof of Lemma \ref{strong-uq-equiv}, and Lemma \ref{ps-ext-lem}, it suffices to show that every proper $\is^\ast$-independent $\Psm (n + 1)$-system is a uniqueness base(essentially we are reducing uniqueness to uniqueness for proper systems). Note that $n (\Psm (n + 1)) = n$, so by Lemma \ref{ps-ext-lem} again, any two proper $\is^\ast$-independent $\Psm (n + 1)$-systems are isomorphic. Thus it suffices to show that \emph{some} proper $\is^\ast$-independent $\Psm (n + 1)$-system is a uniqueness base.

  Pick any $\m_0 \in \Kprop_{\is^\ast, \Ps (n)}$ (exists by extension in $\is^\ast$). By assumption, there is a $\phi_n$-amalgamation base $\m_1$ in $\Kprop_{\is, \is^\ast, \Ps (n)}$ which extends $\m_0$. By $(n + 1)$-extension, there exists $\m_2 \in \Kprop_{\is, \is^\ast, \Ps (n)}$ such that $\m_1 \ltap{\Kprop_{\is, \is^\ast, \Ps (n)}} \m_2$. By Lemma \ref{kprop-skel}(\ref{kprop-skel-5}), $\m \coloneqq  (\m_1 \ast \m_2) \, {\rest} \, \Psm (n + 1)$ is a uniqueness base (buy may not be proper, so we have to work more). Write $\m = \seq{M_u : u \in \Psm (n + 1)}$ and let $\m' = \seq{M_u' : u \in \Psm (n + 1)}$ be defined as follows: let $I_0 \coloneqq  \Psm (n + 1) \backslash \{n\}$, $\m' \, {\rest} \, I_0 \coloneqq  (\m_0 \ast \m_2) \, {\rest} \, I_0$, $M_n' = M_n$. It is easy to check that $\m'$ is an independent system and a uniqueness base as well: anytime a model $N$ extends all the elements of $\m'$, it extends all the elements of $\m$. 

  Now let $\m^\ast$ be a $\Psm (n + 1)$-system defined as follows: $\m^\ast \, {\rest} \, I_0 = \m' \, {\rest} \, I_0$ and $\m^\ast \, {\rest} \, \{n\} = \m_0 \, {\rest} \, \{n\}$. Note that then $\m^\ast \, {\rest} \, \Ps (n) = \m_0$, and it is easy to check that $\m^\ast$ is a proper $\is^\ast$-independent system. Moreover, $\m^\ast \, {\rest} \, \{n\} \lea \m' \, {\rest} \, \{n\}$ by definition of $\m'$, so by Lemma \ref{uq-monot-lem}, $\m^\ast$ is a uniqueness base, as desired.
\end{proof}

\newpage 

\section{Continuous independence relations}\LABEL{multi-cont-sec}

This section is central. We define here the conditions a multidimensional independence relation should satisfy to be well-behaved with respect to increasing chains. In particular, it should be closed under union of those. We call such multidimensional independence relations \emph{very good}. They are the ones we really want to study. We prove the \emph{construction theorem} (Theorem \ref{constr-thm}), which says that from a very good two-dimensional independence relation, we can build a very good multidimensional independence relation.

We study an important subclass of independent systems: the \emph{limit} ones (Definition \ref{brim-sys-def}). We show that they are quite well-behaved under unions and resolvability (Lemma \ref{isbrim-lem}). There are some technical issues here, since we do not know that the class of $n$-dimensional limit systems is closed under unions until we have proven $(n + 1)$-dimensional uniqueness, but we want to use the former in the proof of the latter. We use the class $\Kprop_{\is, \is^\ast, \Ps (n)}$ (Definition \ref{kprop2-def}) to get around this.

We prove the $n$-dimensional properties by studying the class of limit systems and using the weak diamond (or a strongly compact cardinal). Crucial is the \emph{stepping up lemma} (Lemma \ref{uq-lambda-step}) which shows how to go from the $n$-dimensional properties to the $(n + 1)$-dimensional properties (for limit systems). It leads directly to the \emph{limit excellence theorem} (Theorem \ref{excellence-lem}) giving sufficient conditions for the $n$-dimensional properties. Of course, once we have the properties for limit systems we will be able to get them for regular systems using the results of Section \ref{multi-finite-sec}.

The first definition is the multidimensional analog of Definition \ref{very-good-twodim-def}.

\begin{defin}\LABEL{o3}\myindex{strong continuity (for a multidimensional independence relation)}
  We say that a multidimensional independence relation $\is$ has \emph{strong continuity} if whenever $\seq{\m_i : i \le \delta}$ is a $\lea$-increasing continuous chain of $(I, \K_{\is})$-systems and $\m_i$ is independent for all $i < \delta$, then $\m_\delta$ is independent.
\end{defin}

\begin{defin}\LABEL{very-good-def}\myindex{very good multidimensional independence relation}
  Let $\is$ be a multidimensional independence relation. We say that $\is$ is \emph{very good} if:

        \begin{enumerate}
        \item[(A)] $\is$ is $n$-okay for all $n < \omega$ (so $\Iis_{\is} = \Iis_{<\omega}$), see Definition \ref{okay-def}.
        
        \item[(B)] $\nf (\is)$ is very good.
        
        \item[(C)] $\is$ has strong continuity.
        
        \item[(D)] For any $I \in \Iis$, both $\K_{\is, I}$ and the class $\K^\ast$ of independent $I$-system ordered by $\lea^d$ (see Definition \ref{sys-ext-def}(\ref{sys-ext-disj})) are fragmented AECs. Moreover the AECs they generate have the same models\footnote{This is a multidimensional analog of reflecting down (Definition \ref{reflects-down-def}).}.
        \end{enumerate}
\end{defin}

Very good multidimensional independence relations are obtained by taking a very good two-dimensional independence notion and applying the construction described in Definition \ref{is-nf-def} (and proven to have some good properties in Theorem \ref{constr-thm-0}).

\begin{thm}[The construction theorem]\LABEL{constr-thm}
  If $\nf$ is a very good two-dimensional independence notion on $\K$, then\footnote{Note that $\mathcal{I}_{\is(\nf)}$ is a class of latices, whereas $\mathcal{I}$ is a countable set of latices included in it.} $\is (\nf) \, {\rest} \, \Iis_{<\omega}$ (see Definition \ref{okay-def}) is a very good multidimensional independence notion.
\end{thm}

\begin{proof}
  Let $\is \coloneqq  \is (\nf) \, {\rest} \, \Iis_{<\omega}$. By Theorem \ref{constr-thm-0}, $\is$ is a multidimensional independence notion. By definition, $\is$ is $n$-okay for all $n < \omega$ and $\nf({\is}) = \nf$ (see the statement of Theorem \ref{constr-thm-0}), which is very good by assumption. It also follows from the definition of $\is$ that it has strong continuity, since $\nf$ has it. It remains to see that for any $I \in \Iis$, both $\K_{\is, I}$ and $\K^\ast$ (the class of independent $I$-systems ordered by $\lea^d$) are fragmented AECs that generate the same models. Now $\K$ is a fragmented AEC because $\nf$ is very good. Thus by strong continuity it follows immediately that $\K^\ast$ is a fragmented AEC, and that $\K_{\is, I}$ is a fragmented AEC similarly follows from the definition of $\nf$. It remains to check that those two classes generate the same class of models. Let $\lambda \coloneqq  \min (\Theta)$, let $\K^1$ be the AEC generated by $\K_{\is, I}$, and let $\K^2$ be the AEC generated by $\K^\ast$. Note that $|\K_\lambda^1| = |\K_\lambda^2|$, and $\leap{\K_\lambda^1} = \leap{(\K_{\is, I})_\lambda}$, $\leap{\K^2} = \lea^d$. Since $\leap{\K^\ast}$ extends $\leap{\K_{\is, I}}$, we have that $|\K^1| \subseteq |\K^2|$. To see the other direction, let $\m \in \K^2$. We claim that there exists $\m_0$ in $\K_\lambda^1$ such that for any subset $A$ of $\m$ of size $\lambda$ there is $\m_1 \in \K_\lambda^1$ with $\m_0 \leap{\K^1} \m_1$, $\m_1 \leap{\K^2} \m$, and $\m_1$ containing $A$. This will suffice by the usual directed system argument. Suppose that the claim fails. We build $\seq{\m_i : i < \lambda^+}$ $\leap{\K^2}$-increasing continuous in $\K_\lambda^1$ such that for all $i < \lambda$, $\m_i \not \leap{\K^1} \m_{i + 1}$. This is possible by failure of the claim. This is enough: we know that $\nf$ is very good, hence reflects down. From the definition of $\is$, we get that there must be a club of $i < \lambda^+$ such that $\m_i \leap{\is, I} \m_{i + 1}$, a contradiction.
\end{proof}

Note that a very good multidimensional independence relation has underlying class a fragmented AEC, which may possibly ``shrink'' (e.g.\ it could be a class of saturated models). The next lemma shows that the AEC generated by this fragmented AEC also carries a multidimensional independence relation (which is quite nice, but may fail to be very good because of the requirement that $\nf (\is)$ be very good, hence has extension and uniqueness):

\begin{defin}\LABEL{almost-vg-def}\myindex{almost very good multidimensional independence relation}
  We call a multidimensional independence relation $\is$ \emph{almost very good} if:

  \begin{enumerate}
  \item[(A)] $\is_{\min (\dom (\is))}$ is very good.
  
  \item[(B)] $\is$ satisfies all the properties in the definition of very good, except that $\nf (\is)$ may not be very good.
  \end{enumerate}
\end{defin}

\begin{lem}\LABEL{upward-ext}
  Let $\is$ be a very good multidimensional independence notion. Let $\lambda \coloneqq  \min (\dom (\K_{\is}))$. Then there exists a (unique) multidimensional independence notion $\is'$ such that:

  \begin{enumerate}
  
  \item[(A)] $\is_{\min(\dom(\is))}$ is very good.
  
  \item[(B)] $\lambda = \min (\dom (\K_{\is'}))$ and $\is_\lambda = \is_\lambda'$.
  
  \item[(C)] $\is'$ is almost very good.
  \end{enumerate}
\end{lem}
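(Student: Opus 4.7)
The plan is to mimic, in the multidimensional setting, the construction of $\s^{up}$ from a good frame (Definition \ref{up-def}) together with the passage from a two-dimensional to a multidimensional relation (Definition \ref{is-nf-def} and Theorem \ref{constr-thm}). First I would exploit the $\lambda$-level data: since $\is$ is very good, $\nf_\lambda := \nf(\is) \rest \K_\lambda$ is a very good two-dimensional independence notion on $\K_{\is, \lambda}$, and $\s(\nf_\lambda)$ is an extendible categorical good $\lambda$-frame with the existence property for uniqueness triples. Let $\K'$ be the fragmented AEC consisting of $\K_{\is, \lambda}$ together with the $\lambda$-saturated models of cardinality $>\lambda$ in the AEC generated by $\K_{\is, \lambda}$ (the fragmented-AEC structure here accommodates the possibly shrinking class). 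By iterated application of Fact \ref{splus-fact} and Fact \ref{very-good-facts}(\ref{very-good-4}), I obtain a two-dimensional independence notion $\nf'$ on $\K'$ extending $\nf_\lambda$, defined via an $\s^{up}$-style formula: $\nfs{M_0}{M_1}{M_2}{M_3}$ holds in $\nf'$ iff there is $N_0 \in \K_\lambda$ with $N_0 \lea M_0$ such that $\nf(N_0, N_1, N_2, N_3)$ for every choice of $\lambda$-sized approximations $N_1, N_2, N_3$ inside $M_1, M_2, M_3$ above $N_0$. By construction $\nf'$ inherits monotonicity, symmetry, transitivity, local character, strong continuity, and reflecting-down from $\nf_\lambda$; it may lose extension or uniqueness across cardinalities, which is precisely why the resulting $\is'$ is only \emph{almost} very good.

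Next I would define $\is'$ by applying Definition \ref{is-nf-def} (restricted to $\Iis_{<\omega}$) to $\nf'$: declare an $(I, \K')$-system $\m = \seq{M_u : u \in I}$ to be in $\NF_{\is'}$ iff $\nfs{M_{u_1 \land u_2}}{M_{u_1}}{M_{u_2}}{M_{v}}$ (via $\nf'$) for all $u_1, u_2, v \in I$ with $u_1, u_2 \le v$. Theorem \ref{constr-thm-0} then directly gives that $\is'$ is a multidimensional independence relation with $\nf(\is') = \nf'$, and $\is'_\lambda = \is_\lambda$ by construction. The verification that $\is'$ is almost very good breaks into three pieces: $n$-okayness for all $n$ (clear from the definition of $\Iis_{\is'}$); strong continuity (following from strong continuity of $\nf'$ and local character, exactly as in the proof of Theorem \ref{constr-thm}); and that $\K_{\is', I}$ and its $\lea^d$-variant are fragmented AECs generating the same AEC (the same reflecting-down argument at the end of the proof of Theorem \ref{constr-thm} applies, now using reflecting-down of $\nf'$).

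For uniqueness, suppose $\is''$ is another almost very good multidimensional independence relation with $\min (\dom (\K_{\is''})) = \lambda$ and $\is''_\lambda = \is_\lambda$. Then $\K_{\is''}$ is a fragmented AEC agreeing with $\K'$ in cardinality $\lambda$, so by canonicity of categorical fragmented AECs (Theorem \ref{fragment-canon}) one gets $\K_{\is''} = \K'$. Both $\nf(\is'')$ and $\nf'$ are two-dimensional independence relations on $\K'$ agreeing on $\K_\lambda$, so a canonicity argument in the spirit of Lemma \ref{canon-lem} — using local character plus the fact that $\nf'$ is generated from $\nf_\lambda$ — gives $\nf(\is'') = \nf'$. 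Then strong continuity and an induction on $\|\m\|$ as in Lemma \ref{twodim-canon-lem} (using monotonicity and symmetry to force multidimensional independence back from its two-dimensional reduct) yield $\is'' = \is'$.

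The hard part will be establishing strong continuity for $\nf'$ across cardinalities: this requires carefully using local character of $\s(\nf_\lambda)$ to show that unions of $\nf'$-independent chains in $\K'$ remain $\nf'$-independent, even though unions of $\lambda$-saturated models may fail to be $\lambda$-saturated at cofinalities $\le \lambda$. Threading together the restricted chain axioms of the fragmented AEC $\K'$ with the $\s^{up}$-style definition of $\nf'$, and keeping careful track of which $\lambda$-sized witnesses survive to the limit, is the main technical obstacle; everything else in the argument is bookkeeping reducible to Theorem \ref{constr-thm-0} and to the two-dimensional extension facts already in the excerpt.
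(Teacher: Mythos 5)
There is a genuine gap at the heart of your construction. You define $\is'$-independence of a system by requiring all of its two-dimensional faces to satisfy your lifted relation $\nf'$ (i.e.\ you apply Definition \ref{is-nf-def} to $\nf'$), and then claim ``$\is'_\lambda = \is_\lambda$ by construction''. But nothing in the definition of a very good multidimensional independence relation says that membership in $\NF_{\is}$ is determined by the two-dimensional reduct $\nf (\is)$: the hypothesis is only that $\is$ is very good, not that $\is = \is (\nf (\is))$ (that equality holds for relations produced by Theorem \ref{constr-thm}, but the lemma is stated in general, and the higher-dimensional data at $\lambda$ is exactly what must be carried upward). So your $\is'$ may disagree with $\is$ already in cardinality $\lambda$, violating clause (1). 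The paper's proof avoids this entirely by lifting the multidimensional relation directly: it takes $\K_{\is'}$ to be the AEC generated by $(\K_{\is})_\lambda$, and for each $I \in \Iis_{\is}$ declares an $I$-system $\is'$-independent exactly when it lies in the AEC (in the vocabulary $\tau^I$ of Definition \ref{sys-voc-def}) generated by the $\lambda$-sized $\is$-independent $I$-systems, which exists because $\K_{\is, I}$ is a fragmented AEC; the verification of the axioms is then routine (if long). Relatedly, your choice of underlying class --- $\K_{\is,\lambda}$ together with the $\lambda$-saturated models of the generated AEC --- is not the intended one: since ``almost very good'' precisely waives the requirement that $\nf(\is')$ be very good (hence have extension and uniqueness), there is no need to restrict to saturated models, and the later application (Lemma \ref{excellence-up}) needs $\is'$ to live on the full AEC generated by $(\K_{\is})_\lambda$; with two candidate classes the uniqueness clause could not even hold.

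Your uniqueness argument also does not go through as written. Theorem \ref{fragment-canon} requires amalgamation, stability and categoricity in every cardinal of a common domain for both classes; the present lemma has no categoricity hypothesis at all, and an almost very good $\is''$ need not have amalgamation above $\lambda$ (that is exactly what ``almost'' concedes). Likewise Lemma \ref{canon-lem} is a statement about good frames, i.e.\ presupposes extension and uniqueness above $\lambda$, which may fail here; the uniqueness should instead track canonicity of generated AECs (Fact \ref{canon-aec}, via Fact \ref{aec-generated}) applied to the system classes. Finally, several intermediate claims are asserted without justification: that $\s(\nf_\lambda)$ is \emph{categorical} (very good does not imply categoricity --- that is the separate notion ``nice'', Definition \ref{nice-def}), that Facts \ref{splus-fact} and \ref{very-good-facts}(\ref{very-good-4}) apply, and that your $\s^{up}$-style $\nf'$ inherits symmetry, transitivity, strong continuity and reflecting down. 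You correctly flag the strong-continuity/limit analysis as the hard part, but this whole layer of work is an artifact of the two-dimensional detour; the paper's generated-AEC definition makes the chain conditions automatic and reduces the proof to checking the finitary axioms of Definition \ref{multi-def} from the corresponding properties of $\is_\lambda$.
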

\begin{proof}
  Let $\K$ be the AEC generated by $\left(\K_{\is}\right)_\lambda$. More generally, for each $I \in \Iis$, let $\K_I$ be the AEC generated by $\left(\K_{\is, I}\right)_\lambda$. We let $\is'$ be defined as follows: $\K_{\is'} = \K$, $\Iis_{\is'} = \Iis_{\is}$, and for any $(\K, I)$-system $\m$, $\m$ is $\is'$-independent if and only if $\m \in \K_I$. It is a straightforward (but long) to check that $\is'$ is indeed the desired multidimensional independence relation.
\end{proof}

Note that if we have all the $n$-dimensional properties at a fixed cardinal $\lambda$, we can transfer them up. More precisely:

\begin{lem}\LABEL{going-up-excellence}
  Let $\is$ be an almost very good multidimensional independence relation with domain $[\lambda_1, \lambda_2]$ (for $\lambda_1 < \lambda_2$) and let $n < \omega$.

  \begin{enumerate}
  \item If $\is$ has $([\lambda_1, \lambda_2), n + 1)$-extension, then $\is$ has $([\lambda_1, \lambda_2], n)$-extension.
  \item If $\is$ has $([\lambda_1, \lambda_2), n + 1)$-uniqueness and $([\lambda_1, \lambda_2), n + 1)$-extension, then $\is$ has $([\lambda_1, \lambda_2], n)$-uniqueness.
  \end{enumerate}

  In particular, if $\is$ has $(\lambda_1, <\omega)$-extension and $(\lambda_1, <\omega)$-uniqueness, then $\is$ has extension and uniqueness.
\end{lem}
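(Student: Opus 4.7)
The plan is to handle the case $\lambda \in [\lambda_1, \lambda_2)$ trivially, using the inclusion $\Iis_n \subseteq \Iis_{n+1}$, and to focus on $\lambda = \lambda_2$ for both parts. Given an $\is$-independent $I$-system $\m$ of size $\lambda_2$ with $I \in \Iis_n$, I will resolve it as a $\leap{\is, I}$-increasing continuous chain $\seq{\m_i : i < \delta}$ of independent $I$-systems of size strictly less than $\lambda_2$; this is possible because $\K_{\is, I}$ is a fragmented AEC and $\lambda_2 \in \dom(\K_{\is, I})$. The construction then proceeds inductively and the result is obtained at the top by invoking strong continuity.

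For part (1), I build $\seq{\m_i^* : i < \delta}$, a $\leap{\is, J}$-increasing continuous chain of independent $J$-systems of size $<\lambda_2$, together with coherent isomorphisms $\m_i \cong \m_i^* \rest I$. The base case uses $(\mu_0, n)$-extension (implied by $(\mu_0, n+1)$-extension). Limit stages are handled by strong continuity. For the successor stage, view $\m_i^*$ as the $J \times \{0\}$-layer of a $J \times \{0, 1\}$-system, noting $J \times \{0, 1\} \in \Iis_{n+1}$; applying $(\mu_{i+1}, n+1)$-extension produces an independent $J \times \{0, 1\}$-system whose $J \times \{1\}$-layer is a $\leap{\is,J}$-extension of $\m_i^*$. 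By a universality argument using properness of the extension and resolvability in the fragmented AEC $\K_{\is, I}$, arrange for the $I \times \{1\}$-restriction to absorb an isomorphic copy of $\m_{i+1}$ over $\m_i$, and take $\m_{i+1}^*$ to be the resulting $J \times \{1\}$-layer composed with the absorbing isomorphism. The union $\m^* := \bigcup_{i<\delta} \m_i^*$ is then an independent $J$-system with $\m^* \rest I \cong \m$ by strong continuity.

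For part (2), given two independent $J$-systems $\m^1, \m^2$ of size $\lambda_2$ with an isomorphism between their $I$-restrictions, resolve both in parallel and amalgamate stage by stage. At each successor, use $(\mu_{i+1}, n+1)$-uniqueness on an auxiliary system indexed by a suitable order in $\Iis_{n+1}$ (e.g.\ $J \times \{0, 1\}$ carrying both extensions over the common $I$-part) to obtain the next-stage amalgam, and $(\mu_{i+1}, n+1)$-extension to keep the chain extending. Strong continuity at the limit delivers the common amalgam at $\lambda_2$.

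The main obstacle is the successor step of part (1): without uniqueness, matching the $I$-restriction to $\m_{i+1}$ exactly (up to isomorphism) requires a careful bootstrap combining successive $(n+1)$-extensions into $J \times \{0, 1\}$ with the fragmented-AEC structure of $\K_{\is, I}$ to realize any prescribed $I$-extension as an initial segment of the $I \times \{1\}$-layer. For the ``in particular'' claim, proceed by transfinite induction on $\lambda \in \dom(\is)$: the $(\mu, <\omega)$-extension and $(\mu, <\omega)$-uniqueness at all $\mu \in [\lambda_1, \lambda) \cap \dom(\is)$ feed into (1) and (2) applied to the restriction $\is \rest \K_{[\lambda_1, \lambda]}$, yielding $(\lambda, n)$-extension and $(\lambda, n)$-uniqueness for every $n < \omega$. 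Since we start with all finite dimensions available at $\lambda_1$, the trade of one dimension per cardinal step remains sustainable at every stage, giving extension and uniqueness for $\is$.
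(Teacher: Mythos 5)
Your overall scheme---resolve the size-$\lambda_2$ system inside the fragmented AEC $\K_{\is, I}$, build the extension/amalgam stage by stage at sizes $<\lambda_2$, and conclude by strong continuity---is exactly the standard argument the paper is invoking (it gives no details and simply points to \cite[\S5]{sh87b}). However, the successor step of your part (1) has a genuine gap. You first apply $(n+1)$-extension to $\m_i^{*}$, viewed as the $J\times\{0\}$-layer of $J\times\{0,1\}$, obtaining some arbitrary independent extension, and then claim that ``properness of the extension and resolvability'' allow the $I\times\{1\}$-layer to absorb an isomorphic copy of the prescribed $\m_{i+1}$ over $\m_i$. Properness (Definition \ref{proper-def} with $\K^\ast=\K$) is far weaker than universality: it only says each new model properly extends some model containing its predecessors. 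In an almost very good $\is$ the models need not be brimmed or saturated, so there is no reason an arbitrary proper extension should embed the prescribed $\m_{i+1}$ over $\m_i$; that kind of universality is precisely what is available only in the skeleton $\isbrim$, and smuggling it in here would make part (1) depend on uniqueness-type hypotheses it does not have.

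The repair is to incorporate $\m_{i+1}$ \emph{before} extending. After renaming the new elements of $\m_{i+1}$ to be disjoint from $\m_i^{*}$, the family consisting of $\m_i^{*}$ on $J\times\{0\}$ and $\m_{i+1}$ on $I\times\{1\}$ is a system indexed by $L:=(J\times\{0\})\cup(I\times\{1\})$, an initial segment of $J\times\{0,1\}$, hence $L\in\Iis_{n+1}$; and it is independent, because every principal downset of $L$ lies either inside $J\times\{0\}$ or inside $I\times\{0,1\}$, where independence is known from $\m_i^{*}$ and from $\m_i\leap{\is}\m_{i+1}$ respectively, so monotonicity 1 and monotonicity 3 apply. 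A single application of $([\lambda_1,\lambda_2),n+1)$-extension to $L\subseteq J\times\{0,1\}$ (plus renaming) then produces the $J\times\{1\}$-layer $\m_{i+1}^{*}$ with $\m_{i+1}^{*}\rest I=\m_{i+1}$ literally and $\m_i^{*}\leap{\is}\m_{i+1}^{*}$; no absorption is needed. A smaller point: in the ``in particular'' clause your induction carries only the single-cardinal hypotheses $(\mu,n+1)$-$P$ for $\mu<\lambda$, but parts (1) and (2) require the interval versions $([\lambda_1,\lambda),n+1)$-$P$, since the resolutions produce systems whose models have different cardinalities; the induction should be set up to prove $([\lambda_1,\lambda],n)$-$P$ for all $n$ as $\lambda$ increases.
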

\begin{proof}
  This is quite standard, and the definitions are tailored to make this work, so we do not replay the arguments here. See \cite[\S5]{sh87b} for example. As a warmup, the reader may want to prove that no maximal model and disjoint amalgamation in $\lambda$ imply no maximal models in $\lambda^{+}$ (this is a combinatorial version of $(\lambda, 2)$-extension implying $(\lambda^{+}, 1)$-extension - disjoint amalgamation plays the role of tow-dimensional independence). 
\end{proof}

Limit systems are candidates for strong uniqueness, so are an important object of study. Really they are our main example of proper system and we are now finally shifting to specializing our results to them. 

\begin{defin}\LABEL{brim-sys-def}\myindex{limit system}\myindex{$\isbrim$}
  Let $\is$ be a very good multidimensional independence relation. We define a multidimensional independence relation $\is^{\mathrm{lim}}$ as follows:

  \begin{enumerate}
  \item[(A)] $\K_{\is^{\mathrm{lim}}}$ consist of $\K_{\is}^{\text{lim}}$, the class of limit models in $\K_{\is}$, ordered by being limit over (see Definition \ref{sat-class-def}).
  
  \item[(B)] $\Iis_{\is^{\mathrm{lim}}} = \Iis_{\is}$.
  
  \item[(C)] $\NF_{\is^{\mathrm{lim}}} = \NF_{\is} \, {\rest} \, \K_{\is^{\mathrm{lim}}}$.
  \end{enumerate}

  We call a proper $\isbrim$-independent system a \emph{limit} $\is$-independent system.
\end{defin}

\begin{defin}\LABEL{kprop2a-def}\myindex{$\Kpropa_{\is, \is^\ast, I}$}
  Let $\is$ be a very good multidimensional independence relation and let $\is^\ast$ be a skeleton of $\is$. For $I \in \Iis_{\is}$, we let $\Kpropa_{\is, \is^\ast, I}$ be the sub-abstract class of $\Kprop_{\is, \is^\ast, I}$ (as in Definition \ref{kprop2-def}(2)) with the same ordering but consisting only of the systems where all models have the same cardinality. Similarly define $\Kpropa_{\is^\ast, I}$ to be the sub-abstract class of $\Kprop_{\is^\ast, I}$ (Definition \ref{proper-class}) consisting of systems with all models of the same cardinality.
\end{defin}

The class of limit systems satisfies the following properties:

\begin{lem}\LABEL{isbrim-lem}
  Let $\is$ be a very good multidimensional independence relation with domain $\Theta$.
  
  \begin{enumerate}
  \item $\isbrim$ is a skeleton of $\is$.
  \item\label{isbrim-twodim} For any $\lambda \in \Theta$, $\isbrim$ has strong $(\lambda, 2)$-uniqueness and $(\lambda, 2)$-existence.
  
  \item For any subinterval $\Theta_0$ of $\Theta$, $\left(\is_{\Theta_0}\right)^{\mathrm{lim}} = \left(\is^{\mathrm{lim}} \right)_{\Theta_0}$.
  
  \item\label{isbrim-resolv} For any $I \in \Iis_{\is}$, $\Kpropa_{\isbrim, I}$ is a resolvable abstract class with LST number $\min (\Theta)$.
  \item\label{isbrim-cont} Let $\Theta^- \coloneqq  \{\lambda \in \Theta \mid \lambda^+ \in \Theta\}$. 
  
    \begin{enumerate}
    \item Let $I \in \Iis_{\is}$ and let $n \coloneqq  n (I)$. If for any $\lambda \in \Theta^-$, we have that $\isbrim$ has strong $(\lambda, n + 1)$-uniqueness and $(\lambda, n + 1)$-extension, then $\Kpropa_{\is^\ast, I}$ is $\Theta^-$-continuous.
    
    \item Let $n < \omega$. If for any $\lambda \in \Theta^-$, we have that $\isbrim$ has strong $(\lambda, n)$-uniqueness and $(\lambda, n)$-extension, then $\Kpropa_{\is, \is^\ast, \Ps (n)}$ is $\Theta^-$-continuous.
    \end{enumerate}
  \end{enumerate}
\end{lem}

\vspace{-0.6cm}

\begin{proof}

  (1) Directly from the definitions and the (easy to check) fact that $\K_{\is}^{\mathrm{lim}}$ is a skeleton of $\K_{\is}$.
  
  (2) Because $\is$ has it by definition of very good. That is, any relevant property for $\isbrim$ is such property for $\is;$ and applying the property for $\is$ we get there a solution and it is easy to ``convert'' it to a solution in $\isbrim.$ 
  
  (3) Immediate from the definitions.
  
  (4) We leave it to the reader to verify that the LST number is $\min (\Theta)$. We prove that $\Kpropa_{\isbrim, I}$ is resolvable. We check that the conditions of Lemma \ref{resolv-technical} are satisfied. All the conditions are easy, except for resolvability for successors. So let $\m \in \Kpropa_{\isbrim, I}$ have cardinality $\lambda^+$ and let $\m_0 \leap{\Kpropa_{\isbrim, I}} \m_1$ have cardinality $\lambda$, with $\lambda, \lambda^+ \in \Theta$. Since $\K_{\is, I}$ is a fragmented AEC, it is resolvable, hence there exists $\seq{\m_i : i < \lambda^+}$ a resolution of $\m$ over $\m_0$ in $\K_{\is, I}$. Write $\m_i = \seq{M_u^i : u \in I}$. What are we missing to make this a resolution in $\Kpropa_{\isbrim, I}$? We want the $\m_i$'s to be limit, and in fact we also want $\m_i \ast \m_{i + 1}$ to be limit. From the properties of being limit, it is in fact enough to show that there is a club $C$ of $\lambda^+$ such that $\seq{\m_i : i \in C}$ is an increasing chain in $\Kpropa_{\isbrim, I}$. To get this club, first get a club $C_1$ (using Lemma \ref{club-struct}) such that for $i < j$ both in $C_1$, $M_u^i$ is limit and $M_u^j$ is limit over $M_u^i$ (this is possible since $M_u^{\lambda^+}$ is limit, hence saturated). Next, use Fact \ref{brimmed-reflects} on each pair of models in $\m$ and intersect clubs to get that on a club $C$, for all $i \in C$, $M_v^i$ is limit over $M_u^i$ whenever $u < v$. This is not enough to make $\m_i$ limit, as we need for each $u \in I$ an intermediate $M$ such that $M_u^i$ is limit over $M$ and $M$ is limit over each $M_v^i$, $v < u$. Now by assumption $\m_{\lambda^+}$ is limit, so for each $u \in I$ there exists $M_u^\ast$ with $M_u^\ast$ limit over $M_v^{\lambda^+}$ and $M_u^{\lambda^+}$ limit over $M_u^\ast$. Resolve $\m_{\lambda^+} \smallfrown M_u^\ast$ as before to obtain a club $C_u$ and a resolution $\seq{M_u^{\ast, i} : i \in C_u}$ of $M_u^\ast$ such that $M_u^{\ast, i}$ is limit over $M_v^i$ for each $v < u$ and $M_u^i$ is limit over $M_u^{\ast, i}$. Note that we can further require that for $i < j$ both in $C_u$, $M_u^{\ast, j}$ is limit over $M_u^{\ast, i}$, hence the corresponding part of the system $\m_i \ast m_j$ is also limit. Now intersect all the $C_u$'s and intersect further with $C_1$ to get the desired club.

  (5) \underline{Clause (a)}: let $\seq{\m_i : i < \delta}$ be an increasing chain in $\left(\Kpropa_{\is^\ast, I}\right)_{\Theta^-}$. Let $\m_\delta \coloneqq  \bigcup_{i < \delta} \m_i$. We have that $\m_\delta$ is an independent system, the only issue is to prove that it is limit (that is, that the extensions between the models are limit). Once this is done, it will immediately follow that $\m_\delta$ extends $\m_0$. Write $\m_i = \seq{M_u^i : u \in I}$. By Fact \ref{brimmed-union-fact}, we directly have that $M_v^\delta$ is limit over $M_u^\delta$ for each $u < v$. However, we need more: for a fixed $u \in I$, we want $M_u^\delta$ to be limit over $\bigcup_{v < u} M_v^\delta$. Recall (Definition \ref{sat-defs}(\ref{brimmed-set-def})) that this means that there is $M$ such that $M$ is limit over each $M_v^\delta$, $v < u$, and $M_u^\delta$ is limit over $M$. In fact, it suffices to find $M^\ast$ such that $M$ extends each $M_v^\delta$, $v < u$ and $M_u^\delta$ is limit over $M^\ast$ (then pick $M$ limit over $M^\ast$ such that $M_u^\delta$ is limit over $M$).

  We proceed by induction on $\delta$. So we can assume that $\delta = \cf{\delta}$ and the chain $\seq{\m_i : i < \delta}$ is continuous. Fix $u \in I$ and write $A_i \coloneqq  \bigcup_{v < u} M_v^i$. We build $\seq{M_i : i \le \delta}$ an increasing continuous chain of limit models such that for all $i < \delta$:

  \begin{enumerate}
      \item $M_{i + 1}$ is limit over $M_i$.
      
      \item $A_i \subseteq |M_i|$.
      
      \item $M_u^i$ is limit over $M_i$.
      
      \item $\nfs{M_i}{M_u^i}{M_{i + 1}}{M_u^{i + 1}}$ (where $\nf = \nf (\is)$).
  \end{enumerate}

      This is enough: by Fact \ref{brimmed-union-fact}, $M_\delta$ is as desired. This is possible: the base case uses that $\m_0$ is itself a limit system. At limits, take unions (and use Fact \ref{brimmed-union-fact}). For the successor step, assume we are given $i < \delta$. Let $I_0 \coloneqq  \{v \in I \mid v \le u\}$, and let $J \coloneqq  I_0 \cup \{v^\ast\}$, where $v^\ast$ is a new element such that $v^\ast < u$ and $v < v^\ast$ for all $v \in I$ with $v < u$. 

      Let $\m_i^\ast = \seq{M_v^\ast : v \in J}$ be the $J$-system such that $\m_i^\ast \, {\rest} \, I_0 = \m_i \, {\rest} \, I_0$ and $M_{v^\ast}^\ast = M_i$. It is easy to check that this is a limit independent system. We now use $(n + 1)$-extension to build a limit independent $J$-system $\m^{\ast \ast}$ with $\m^\ast \lea{\Kpropa_{\isbrim, J}} \m^\ast$. We do this in two steps: first we build an $I$-system extending $\m^{\ast} \, {\rest} \, I$ (using extension on $I \times \{0, 1\}$, note that $n (I \times \{0,1\}) = n(I) + 1 = n + 1$), then using two-dimensional extension to complete this to an extension $\m^{\ast \ast}$. Now by strong $(n + 1)$-uniqueness (again applied in two steps), we must have that there is $f:  \m^{\ast \ast} \cong_{\m_i} \m_{i + 1}$. Let $M_{i + 1} \coloneqq  f[M_{v^\ast}^{\ast\ast}]$, where $M_{v^\ast}^{\ast \ast}$ is the $v^\ast$-indexed element of $\m^{\ast \ast}$. 
      
  \underline{Clause (b)}: immediate from the previous part applied to $I = \Psm (n)$.
\end{proof}

As hinted at before, limit models are well-behaved with respect to the $n$-dimensional properties: strong uniqueness is equivalent to uniqueness, and existence is equivalent to extension (assuming lower-dimensional uniqueness).

\begin{lem}\LABEL{strong-uq-brimmed-equiv}
  Let $\is$ be a very good independence relation. Let $\lambda \in \dom (\is)$ and let $n < \omega$. Then:

  (1) $\isbrim$ has strong $(\lambda, n)$-uniqueness if and only if $\isbrim$ has $(\lambda, n)$-uniqueness.
  
  (2) If $\isbrim$ has $(\lambda, <n)$-uniqueness, then $\isbrim$ has $(\lambda, n)$-existence if and only if $\isbrim$ has $(\lambda, n)$-extension.

\end{lem}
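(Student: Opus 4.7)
The plan is to prove the two parts in turn, using part (1) as input to part (2).

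For part (1), the ``only if'' direction is immediate from the definitions, since strong uniqueness formally implies uniqueness. For the converse, I would adapt the proof of Lemma \ref{strong-uq-equiv} one dimension at a time. The crucial observation is that $\isbrim$ \emph{automatically} has strong $(\lambda,1)$-uniqueness: by Lemma \ref{isbrim-lem}(\ref{isbrim-twodim}), $\isbrim$ has strong $(\lambda,2)$-uniqueness, and since $\Iis_1 \subseteq \Iis_2$ this restricts to strong $(\lambda,1)$-uniqueness. Given two proper $\isbrim$-independent $J$-systems $\m_1,\m_2$ with $J \in \Iis_n$ agreeing on $I := J \setminus \{v\}$ for a maximal $v \in J$, properness provides $N_v^\ell \ltap{\isbrim} M_v^\ell$ with $M_u^\ell \leap{\isbrim} N_v^\ell$ for all $u < v$ in $I$. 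Applying $(\lambda,n)$-uniqueness to amalgamate the two $J$-systems $\m \cup \{N_v^\ell\}$ yields some $M \in \K_{\is,\lambda}$ with embeddings $f_\ell : N_v^\ell \to M$ fixing $\m := \m_1 \rest I$. Passing to a brimmed extension $N \in \K_{\isbrim,\lambda}$ of $M$ (possible since $\isbrim$ has no maximal models), strong $(\lambda,1)$-uniqueness lifts each $f_\ell$ to an isomorphism $g_\ell : M_v^\ell \cong N$, and $g_2^{-1} g_1$ is the sought isomorphism $M_v^1 \cong M_v^2$ over $\m$.

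For part (2), the direction ``extension $\Rightarrow$ existence'' is immediate from Lemma \ref{ext-implies-exi} applied to $\isbrim_\lambda$. For the converse, use part (1) to upgrade the hypothesis $(\lambda,<n)$-uniqueness to strong $(\lambda,<n)$-uniqueness in $\isbrim$, and then invoke Theorem \ref{n-dim-ext-uq} for the restriction $\isbrim_\lambda$ (which is a multidimensional independence relation and is $n$-okay since $\is$ is very good). The theorem then delivers the desired equivalence between $(\lambda,n)$-existence and $(\lambda,n)$-extension.

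The main technical point in part (1) is that the amalgam $M$ lives a priori in $\K_{\is,\lambda}$ rather than in $\K_{\isbrim,\lambda}$, so one must take a brimmed extension of $M$ before invoking strong $(\lambda,1)$-uniqueness, and one must check that $M_v^\ell$, being brimmed over $N_v^\ell$, is indeed mapped by strong $1$-uniqueness onto a brimmed extension of $f_\ell[N_v^\ell]$ inside $N$. These verifications are routine given the structure of brimmed extensions and of the ordering $\leap{\isbrim}$ encoded in Definition \ref{sat-class-def} and Lemma \ref{isbrim-lem}; no further machinery beyond Lemma \ref{strong-uq-equiv} and Theorem \ref{n-dim-ext-uq} is needed.
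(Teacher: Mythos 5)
Your proposal is correct and follows essentially the same route as the paper: part (1) is Lemma \ref{strong-uq-equiv} applied to $\isbrim_\lambda$, with strong $(\lambda,1)$-uniqueness supplied by Lemma \ref{isbrim-lem}(\ref{isbrim-twodim}), and part (2) is Theorem \ref{n-dim-ext-uq} combined with part (1). The only difference is that you re-run the argument of Lemma \ref{strong-uq-equiv} inside the brimmed class rather than citing it directly, which is harmless.
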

\begin{proof} \

  \item[(1)] By Lemma \ref{strong-uq-equiv} and Lemma \ref{isbrim-lem}(\ref{isbrim-twodim}).
  
  \item[(2)] By Theorem \ref{n-dim-ext-uq} and the previous part.
\end{proof}

In compact classes, one can take an ultraproduct of limit systems and it will still be limit. In fact, we will see that the existence of such a uniform extension is very powerful, so we call independence relations with this property \emph{nice}:

\begin{defin}\LABEL{o8}\myindex{limit over (for two systems)}
  Let $\is$ be a very good multidimensional independence relation. For $\m_1, \m_2 \in \K_{\is, I}$, say with $\m_\ell = \seq{M_u^\ell : u \in I}$ we say that $\m_2$ is \emph{limit over $\m_1$} if $\m_1 \leap{\is} \m_2$ and $M_u^2$ is limit over $M_u^1$ for all $u \in I$. 
\end{defin}

\begin{defin}\LABEL{nice-def}\myindex{nice very good multidimensional independence relation}
  A very good multidimensional independence relation $\is$ is \emph{nice} if:

  \begin{enumerate}
  \item[(A)] $\K_{\is}$ is categorical in every $\lambda \in \dom (\is)$.
  
  \item[(B)] For any $n < \omega$ and any two $\m_1$, $\m_2 \in \K_{\is, \Ps (n)}$ with $\m_1 \, {\rest} \, \Psm (n) = \m_2 \, {\rest} \, \Psm (n)$, there exists independent $\m_1', \m_2'$ such that:

    \begin{enumerate}
    \item $\m_\ell'$ is limit over $\m_\ell$.
    \item $\m_1' \, {\rest} \, \Psm (n) = \m_2' \, {\rest} \, \Psm (n)$.
    
    \item If $\m_\ell = \seq{M_u^\ell : u \in \Ps (n)}$, $\m_\ell' = \seq{N_u^\ell : u \in \Ps (n)}$, then $\|M_u^\ell\| = \|N_u^\ell\|$ for all $u \in \Ps (n)$ and $\ell = 1,2$.
    \end{enumerate}
  \end{enumerate}
\end{defin}

Compact AECs have a nice independence notion:

\begin{thm}\LABEL{multidim-compact}
  Let $\K$ be a compact AEC. If $\K$ is categorical in some $\mu > \LS (\K)$, then there exists a \emph{nice} very good multidimensional independence notion $\is$ with $\K_{\is} = \Ksat_{\ge \LS (\K)^{+6}}$.
\end{thm}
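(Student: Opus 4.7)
The plan is to combine Fact~\ref{compact-very-good}, which produces a very good two-dimensional independence notion on $\Ksat_{\ge \LS(\K)^{+6}}$, with the construction theorem (Theorem~\ref{constr-thm}) to obtain a very good multidimensional independence relation $\is$, and then verify niceness via a $\kappa$-complete ultraproduct argument. Concretely, let $\kappa$ witness that $\K$ is $\kappa$-compact. Fact~\ref{compact-very-good} yields a categorical very good frame on $\Ksat_{\ge \LS(\K)^{+6}}$; by Definition~\ref{very-good-twodim-def} this frame carries a very good two-dimensional independence notion $\nf$. Applying Theorem~\ref{constr-thm} then produces the sought $\is := \is(\nf) \rest \Iis_{<\omega}$ with $\K_{\is} = \Ksat_{\ge \LS(\K)^{+6}}$.

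For niceness (Definition~\ref{nice-def}), the first clause (categoricity of $\K_{\is}$ in every $\lambda \in \dom(\is)$) is inherited from the categoricity of the underlying frame. The second clause is the real content: given $\is$-independent $\Ps(n)$-systems $\m_1, \m_2$ of common cardinality $\lambda$ with $\m_1 \rest \Psm(n) = \m_2 \rest \Psm(n)$, I need brimmed extensions $\m_\ell'$ of $\m_\ell$ that still agree on $\Psm(n)$ and satisfy the cardinality matching condition at each node. My plan is to take simultaneous ultrapowers by a single $\kappa$-complete fine ultrafilter $U$ on a suitable index set (existing by strong compactness of $\kappa$; when $\kappa = \aleph_0$ any non-principal ultrafilter on $\omega$ plays the analogous role, combined with an iterated back-and-forth to extract brimmed substructures). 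Because $U$ does not depend on $\ell$, the ultrapowers $\widehat{\m}_1$ and $\widehat{\m}_2$ literally coincide on $\Psm(n)$; $\kappa$-compactness ensures $\widehat{\m}_\ell \in \K_{\is, \Ps(n)}$ and that $\nf$, hence $\is$-independence, is preserved by the ultrapower. The standard fact that ultrapowers of saturated models by $\kappa$-complete fine ultrafilters are again saturated (of the expected cardinality) then gives that each $\widehat{M}_u^\ell$ is brimmed over $M_u^\ell$.

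The main obstacle, which dominates the argument, is enforcing the cardinality clause $\|M_u^\ell\| = \|N_u^\ell\|$: the ultrapowers are a priori too large. The plan is a coordinated downward Löwenheim–Skolem reduction inside the ultrapowers. First, for each $u \in \Psm(n)$, choose a single $N_u \lea \widehat{M}_u^1 = \widehat{M}_u^2$ of cardinality $\lambda$ which is brimmed over $M_u^1 = M_u^2$ and so that $\seq{N_u : u \in \Psm(n)}$ is $\is$-independent; this is possible by resolving $\widehat{\m}_\ell \rest \Psm(n)$ along a chain of brimmed independent subsystems inside the ultrapower, using the strong $(\lambda, n)$-uniqueness and continuity properties already available for brimmed systems in $\is$ (Lemma~\ref{isbrim-lem}). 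Then, independently on each side $\ell = 1,2$, choose $N_n^\ell$ of cardinality $\lambda$ with $M_n^\ell \lea N_n^\ell \lea \widehat{M}_n^\ell$, brimmed over $M_n^\ell$, containing every $N_u$ for $u \in \Psm(n)$, and such that $\m_\ell' = \seq{N_u : u \in \Psm(n)} \smallfrown N_n^\ell$ remains $\is$-independent. This last step relies on the extension property of $\nf$, the monotonicity axioms of Definition~\ref{multi-def} to transfer independence from the ultrapower along the chosen embeddings, and a coherence argument along a resolution, in direct analogy with the ultraproduct construction used in the proof of \cite[A.16]{ap-universal-apal} to which Fact~\ref{compact-very-good} appeals.
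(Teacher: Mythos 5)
Your skeleton is the paper's: Fact \ref{compact-very-good} plus Theorem \ref{constr-thm} give the very good $\is$, categoricity of the underlying frame gives the first clause of niceness, and the second clause is attacked by a $\kappa$-complete ultrapower followed by a downward L\"owenheim--Skolem reduction. The genuine gap is in the step you dispose of with ``$\kappa$-compactness ensures \dots that $\nf$, hence $\is$-independence, is preserved by the ultrapower.'' What niceness requires is not that the ultrapower system $\widehat{\m}_\ell$ is independent, but that $\m_\ell'$ is \emph{brimmed over} $\m_\ell$, which by definition includes $\m_\ell \leap{\is} \m_\ell'$, i.e.\ independence of the mixed system $\m_\ell \ast \widehat{\m}_\ell$ indexed by $\Ps(n) \times 2$. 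The problematic new instances are of the form $\nfs{M_u^\ell}{M_v^\ell}{\widehat{M}_u^\ell}{\widehat{M}_v^\ell}$ for $u \le v$: the original model $M_v^\ell$ must be independent from the ultrapower of the smaller model over that smaller model, inside the ultrapower of the larger one. Nothing in the abstract axioms of a (very good) two-dimensional independence notion relates $\nf$ to ultrapowers, so this cannot follow from ``$\kappa$-compactness'' in the abstract, and even a {\L}o\'s-type preservation statement would only give independence of the system of ultrapowers, not of the mixed system. The paper closes exactly this hole by first identifying $\nf$ with $\kappa$-coheir (via the canonicity argument in the proof of \cite[6.8]{bgkv-apal}) and then quoting the ultrapower property of coheir (the claim before the proof of \cite[8.2]{bg-apal}) together with some forking calculus; some such identification, or another concrete argument, is indispensable and is the real content of the verification.

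A secondary imprecision: you appeal to ``the standard fact that ultrapowers of saturated models by $\kappa$-complete fine ultrafilters are again saturated (of the expected cardinality).'' This is not true of an arbitrary fine $\kappa$-complete ultrafilter; the ultrafilter has to be built, depending on the cardinalities of the $M_u^\ell$, so that the ultrapower is brimmed (not merely saturated) over each $M_u^\ell$ simultaneously --- this is Boney's construction \cite[4.11]{tamelc-jsl}, which is what the paper cites, and brimmedness over $M_u^\ell$ is exactly what the definition of niceness needs. Your concluding downward L\"owenheim--Skolem reduction is in the spirit of the paper (which leaves it as ``not difficult''), so that part is acceptable once the two points above are repaired.
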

\begin{proof}
  By Fact \ref{compact-very-good}, there is a categorical very good frame $\s$ on $\Ksat_{\ge \LS (\K)^{+6}}$. Let $\nf$ be the very good two-dimensional independence notion associated with $\s$. Let $\is \coloneqq  \is (\nf)$. By Theorem \ref{constr-thm}, $\is$ is very good, we only have to check that it is nice. From the definition, $\K_{\is}$ is indeed categorical in all cardinals, so we have to check the second clause in the definition of being nice. Let $\m_1, \m_2$ be as there. Write $\m_\ell = \seq{M_u^\ell : u \in \Ps (n)}$. Using \cite[4.11]{tamelc-jsl}, we can build a $\kappa$-complete ultrafilter $U$ such that for each $u \in \Ps (n)$ and $\ell = 1,2$, the $U$-ultrapower $N_u^\ell$ of $M_u^\ell$ (seen as an extension of $M_u^\ell$) is limit over $M_u^\ell$. Let $\m_\ell' \coloneqq  \seq{N_u^\ell : u \in \Ps (n)}$. We show that $\m_\ell \leap{\is} \m_\ell'$. It is then not difficult to show, using a downward L{\"o}wenheim-Skolem-like argument, that one can take appropriate submodels of the $N_u^\ell$'s to be of the same size as the $M_u^\ell$'s and satisfy the requirements.

    Essentially by \cite{bg-apal, bgkv-apal} (and more precisely by the proof of \cite[6.8]{bgkv-apal}), the two-dimensional independence relation on $\is$ is given by $\kappa$-coheir (see \cite[3.2]{bg-apal}). Thus it follows from the claim before the proof of \cite[8.2]{bg-apal} and some forking calculus that $\m_\ell \leap{\is} \m_\ell'$.
\end{proof}

We now start working toward the key \emph{stepping up lemma}, allowing us to move from a $(\lambda, n+1)$-property to a $(\lambda^{+}, n)$-property. Existence is the easiest: to find a $(\lambda, n + 1)$-system, one can simply resolve a $(\lambda^+, n)$-system. This is formalized by the following lemma:

\begin{lem}\LABEL{ext-lambda-step}
  Let $\is$ be a very good multidimensional independence notion with domain $\{\lambda, \lambda^+\}$. If $\isbrim$ has $(\lambda^{+}, n)$-existence, then $\isbrim$ has $(\lambda, n + 1)$-existence.
\end{lem}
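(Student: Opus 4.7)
The plan is to take the proper brimmed $\Ps(n)$-system of size $\lambda^+$ supplied by the hypothesis, resolve it as a continuous chain of $\lambda$-sized proper brimmed systems, and then place two consecutive members of this chain in the $\ast$-relation; via the identification $\Ps(n) \times \Ps(1) \cong \Ps(n+1)$ this yields a proper brimmed $\Ps(n+1)$-system of size $\lambda$.

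In detail, I would first invoke $(\lambda^+, n)$-existence of $\isbrim$ to produce some $\m^+ \in \Kpropa_{\isbrim, \Ps(n)}$ with $\|\m^+\| = \lambda^+$. By Lemma \ref{isbrim-lem}(\ref{isbrim-resolv}), the abstract class $\Kpropa_{\isbrim, \Ps(n)}$ is resolvable with L\"owenheim--Skolem--Tarski number $\lambda$. Because the models of $\Kpropa_{\isbrim, \Ps(n)}$ are required to all have the same cardinality and $\dom(\is) = \{\lambda, \lambda^+\}$, the L\"owenheim--Skolem--Tarski axiom applied to $\m^+$ with $A = \emptyset$ furnishes some $\m_0 \in \Kpropa_{\isbrim, \Ps(n)}$ of size exactly $\lambda$ with $\m_0 \leap{\Kpropa_{\isbrim, \Ps(n)}} \m^+$. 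Resolvability then delivers a strictly increasing continuous chain $\seq{\m_i : i \le \lambda^+}$ in $\Kpropa_{\isbrim, \Ps(n)}$ with $\m_{\lambda^+} = \m^+$ and $\|\m_i\| = \lambda$ for all $i < \lambda^+$.

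Fix any $i < \lambda^+$. The strict inequality $\m_i \ltap{\Kpropa_{\isbrim, \Ps(n)}} \m_{i+1}$ unfolds, by the definition of the ordering on $\Kpropa$, to the assertion that $\m_i \ast \m_{i+1}$ is a proper $\isbrim$-independent $(\Ps(n) \times \Ps(1))$-system of size $\lambda$. Using the semilattice isomorphism $\Ps(n) \times \Ps(1) \cong \Ps(n+1)$ together with the symmetry and invariance axioms of multidimensional independence, we transfer this to a proper brimmed $\isbrim$-independent $\Ps(n+1)$-system of size $\lambda$; Lemma \ref{proper-subsys} combined with monotonicity 1 then handles arbitrary initial segments of $\Ps(n+1)$, so $\isbrim$ has $(\lambda, n+1)$-existence. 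The only subtle point is extracting the initial $\m_0$ of size exactly $\lambda$ in $\Kpropa_{\isbrim, \Ps(n)}$, so that resolvability can be applied non-vacuously; this is precisely what the same-cardinality clause in the definition of $\Kpropa$, combined with $\LS(\Kpropa_{\isbrim, \Ps(n)}) = \lambda$, guarantees.
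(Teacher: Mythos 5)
Your proposal is correct and follows essentially the same route as the paper: invoke $(\lambda^+,n)$-existence, resolve the resulting $\lambda^+$-sized system inside $\Kpropa_{\isbrim,\Ps(n)}$ (via Lemma \ref{isbrim-lem}(\ref{isbrim-resolv})) into a strictly increasing continuous chain of $\lambda$-sized brimmed systems, and read off from the definition of $\leap{\Kprop_{\isbrim,\Ps(n)}}$ that two consecutive members yield, under $\Ps(n)\times\Ps(1)\cong\Ps(n+1)$, a proper $\isbrim$-independent $\Ps(n+1)$-system of size $\lambda$. Your extra care about producing the initial $\lambda$-sized system via the Löwenheim--Skolem--Tarski number before resolving is a detail the paper leaves implicit, but it is the same argument.
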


\begin{proof}
  By $(\lambda^+, n)$-existence, pick $\m$ a limit independent $(\lambda^+, \Ps (n))$-system. By Lemma \ref{isbrim-lem}(\ref{isbrim-resolv}), $\Kpropa_{\isbrim, \Ps (n)}$ is resolvable, so there exists a strictly increasing continuous chain $\seq{\m_i : i < \lambda^+}$ in $\Kprop_{\isbrim, \Ps (n)}$ where for each $i < \lambda^+$, $\m_i$ is a limit independent $(\lambda, \Ps (n))$-system, and $\m = \bigcup_{i < \lambda^+} \m_i$. Now by definition of $\leap{\Kprop_{\isbrim, \Ps (n)}}$, we must have that $\m_0 \ast \m_1$ is a limit independent $(\lambda, \Ps (n + 1))$-system, as desired.
\end{proof}

Thus we obtain the existence properties for free if the interval of cardinals we are working in is closed under the successor operation.

\begin{lem}\LABEL{ext-lambda-full}
  Let $\is$ be a very good independence notion with domain $\Theta \coloneqq  [\lambda_1, \lambda_2)$. If $\lambda_2$ is a limit cardinal, then for any $\lambda \in \Theta$, $\isbrim$ has $(\lambda, <\omega)$-existence.
\end{lem}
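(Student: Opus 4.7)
The plan is a short induction on $n < \omega$, using Lemma \ref{ext-lambda-step} as the engine and the hypothesis that $\lambda_2$ is a limit cardinal to guarantee the successor step stays inside $\Theta$. I will prove: for every $n < \omega$ and every $\lambda \in \Theta$, $\isbrim$ has $(\lambda, n)$-existence.

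For the base case $n = 0$, the partial order $\Ps(0) = \{\emptyset\}$ has a single element $\bot$, whose properness condition (Definition \ref{proper-def}) is vacuous apart from requiring $M_\bot \in \K_{\isbrim}$. Thus a proper $\isbrim$-independent $\Ps(0)$-system is simply a brimmed model, and such a model exists at each $\lambda \in \Theta$ because the underlying fragmented AEC of a very good independence notion has amalgamation, no maximal models, and a universal extension of every model of the same size (see the properties of $\K_\s$ in a good frame, together with the identification $\Kbrim$ given in Fact \ref{tameness-ap} and Definition \ref{brim-sys-def}).

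For the inductive step, suppose $\isbrim$ has $(\mu, n)$-existence for every $\mu \in \Theta$, and fix $\lambda \in \Theta$. Since $\Theta = [\lambda_1, \lambda_2)$ with $\lambda_2$ a limit cardinal, we have $\lambda^+ < \lambda_2$, so $\lambda^+ \in \Theta$ as well. Let $\is' := \is \rest \K_{[\lambda, \lambda^+]}$; since the defining properties of ``very good'' (being $n$-okay for all $n$, strong continuity, the two-dimensional relation $\nf(\is)$ being very good, and the fragmented-AEC requirement on the system classes) are preserved when restricting to an interval of the domain, $\is'$ is a very good multidimensional independence notion with domain $\{\lambda, \lambda^+\}$, and $(\is')^{\text{brim}} = \isbrim \rest \K_{[\lambda, \lambda^+]}$ by Lemma \ref{isbrim-lem}. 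By the induction hypothesis, $(\is')^{\text{brim}}$ has $(\lambda^+, n)$-existence. Applying Lemma \ref{ext-lambda-step} to $\is'$ gives $(\lambda, n+1)$-existence for $(\is')^{\text{brim}}$, hence for $\isbrim$.

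The only point requiring any care is the trivial verification that restricting a very good notion to a subinterval of its domain remains very good (and that $\isbrim$ and $\left(\is \rest \K_{[\lambda, \lambda^+]}\right)^{\text{brim}}$ agree on $[\lambda, \lambda^+]$); once that is observed, the whole argument is just Lemma \ref{ext-lambda-step} iterated, with the limit-cardinal hypothesis on $\lambda_2$ ensuring that every $\lambda \in \Theta$ has its successor available to feed into the previous step of the induction.
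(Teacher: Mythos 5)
Your proposal is correct and is essentially the paper's own proof: an induction on $n$ whose base case follows from the existence of brimmed models (the paper simply cites Lemma \ref{isbrim-lem}), and whose successor step applies Lemma \ref{ext-lambda-step} to $\is \rest \{\lambda, \lambda^+\}$, with the limit-cardinal hypothesis on $\lambda_2$ ensuring $\lambda^+ \in \Theta$. Your extra verification that the restriction remains very good and that the brimmed skeletons agree (Lemma \ref{isbrim-lem}) is exactly the point the paper leaves implicit, so there is no substantive difference.
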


\begin{proof}
  We prove by induction on $n < \omega$ that for any $\lambda \in \Theta$, $\isbrim$ has $(\lambda, n)$-existence. When $n = 0$, this is part of Lemma \ref{isbrim-lem}. Assume now inductively that $\isbrim$ has $(\lambda, n)$-existence for all $\lambda \in \Theta$. Let $\lambda \in \Theta$. Since $\lambda_2$ is limit, $\lambda^+ \in \Theta$. By the induction hypothesis, $\isbrim$ has $(\lambda^+, n)$-existence. By Lemma \ref{ext-lambda-step} (applied to $\is \, {\rest} \, \{\lambda, \lambda^+\}$), $\isbrim$ has $(\lambda, n + 1)$-existence, as desired.
\end{proof}

We now attack the harder question of obtain $(n + 1)$-dimensional uniqueness from the $n$-dimensional properties. This is done using the weak diamond:

\begin{lem}[The stepping up lemma]\LABEL{uq-lambda-step}
  Let $\lambda_1$ and $\lambda_2$ be infinite cardinals such that $2^{\lambda_1} = 2^{<\lambda_2} < 2^{\lambda_2}$. Let $\is$ be a very good independence relation with domain $\Theta \coloneqq  [\lambda_1, \lambda_2]$. Assume that for all $\lambda \in \Theta$, $\isbrim$ has $(\lambda, n)$-existence and $(\lambda, n)$-uniqueness. Then:

  (1) For all $\lambda \in [\lambda_1, \lambda_2)$, $\isbrim$ has $(\lambda, n + 1)$-existence.
  
  (2) There exists $\lambda \in [\lambda_1, \lambda_2)$ such that $\isbrim$ has $(\lambda, n +  1)$-uniqueness.
  
  (3) If $\is$ is nice and $\isbrim$ has $(\lambda_2, n + 1)$-existence, then $\isbrim$ has $(\lambda, n + 1)$-uniqueness for \emph{all} $\lambda \in [\lambda_1, \lambda_2)$. 
\end{lem}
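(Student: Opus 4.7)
For part (1), I would argue cardinal by cardinal. Fix $\lambda \in [\lambda_1, \lambda_2)$; then $\lambda^+ \in \Theta$, and by hypothesis $\isbrim$ has $(\lambda^+, n)$-existence. The restriction $\is \rest [\lambda, \lambda^+]$ inherits very-goodness, so Lemma \ref{ext-lambda-step} applies and yields $(\lambda, n+1)$-existence for $\isbrim$.

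For part (2), the heart of the argument is the weak diamond. The plan is to apply Theorem \ref{ap-diamond-thm} to the abstract class $\K := \Kpropa_{\is, \isbrim, \Ps(n)}$ and $\K^\ast$ its underlying class ordered by disjoint extension $\lea^d$, with formula $\phi := \phi_n$ and interval $[\lambda_1, \lambda_2]$. Using Lemma \ref{strong-uq-brimmed-equiv} to promote $(\lambda, n)$-uniqueness in $\isbrim$ to strong $(\lambda, n)$-uniqueness at every $\lambda \in \Theta$, the hypotheses of Theorem \ref{ap-diamond-thm} follow from Lemma \ref{kprop-skel}: part (\ref{kprop-skel-categ}) gives $\phi_n$-categoricity at $\lambda_2$ using strong $(<n)$-uniqueness, part (\ref{kprop-skel-univ}) gives a $\phi_n$-universal model in $\K^\ast_{\lambda_2}$ using strong $n$-uniqueness, and part (\ref{kprop-skel-uq}) gives $\phi_n$-uniqueness at every $\lambda \in [\lambda_1, \lambda_2)$. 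Continuity of $\K$ in $[\lambda_1, \lambda_2)$ comes from Lemma \ref{isbrim-lem}(\ref{isbrim-cont}), which applies precisely because we are assuming $(\lambda, n)$-extension and strong $(\lambda, n)$-uniqueness in $\isbrim$ across the interval. The descent clause, that a $\phi_n$-span in $\K_\lambda$ with a $\phi_n$-amalgam in $\K^\ast$ has one in $\K_\lambda$, is reformulated via Lemma \ref{kprop-skel}(\ref{kprop-skel-ap}) and then handled by appealing to the fragmented-AEC structure of $\is$ (which gives a Löwenheim--Skolem-like reflection inside an amalgam). Theorem \ref{ap-diamond-thm} then produces some $\lambda \in [\lambda_1, \lambda_2)$ such that every system in $\K_\lambda$ has an extension that is a $\phi_n$-amalgamation base in $\K_\lambda$. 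With $(\lambda, n+1)$-extension from part (1) in hand, and strong $(\lambda, n)$-uniqueness already available, Theorem \ref{kprop-ap} (applied with $\is^\ast = \isbrim$) upgrades this cofinal density of $\phi_n$-amalgamation bases to $(\lambda, n+1)$-uniqueness in $\isbrim$, which is what we want.

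For part (3), niceness is used as a ZFC replacement for the diamond. The plan is to show that under niceness, $\phi_n$-amalgamation bases are cofinal in $\K_\lambda$ for \emph{every} $\lambda \in [\lambda_1, \lambda_2)$, not just one. Given a proper brimmed independent $\Ps(n)$-system $\m_0$ of size $\lambda$, niceness (iteratively, via the key clause of Definition \ref{nice-def}) lets us extend it to a proper brimmed independent $\Ps(n)$-system $\tilde\m_0$ of size $\lambda_2$ with the same ``shape'', together with a compatibility between their boundaries. Apply $(\lambda_2, n+1)$-existence to obtain a proper brimmed independent $\Ps(n+1)$-system of size $\lambda_2$ extending $\tilde\m_0$; its top face is automatically a $\phi_n$-amalgamation base at $\lambda_2$ by $(\lambda_2, n)$-uniqueness. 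One then reflects the amalgam down to size $\lambda$ inside the top face using the fragmented-AEC / Löwenheim--Skolem structure, producing a $\phi_n$-amalgamation base in $\K_\lambda$ extending $\m_0$. Feeding this density back into Theorem \ref{kprop-ap} at each $\lambda \in [\lambda_1, \lambda_2)$ yields $(\lambda, n+1)$-uniqueness everywhere.

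The main obstacle I expect is the descent condition in part (2), namely hypothesis (\ref{phi-ap-assumption}) of Theorem \ref{ap-diamond-thm}: converting a $\phi_n$-amalgam witnessed in the larger class $\K^\ast$ (ordered only by $\lea^d$) into one witnessed by a proper independent $(n+1)$-dimensional extension at the original cardinal. This requires carefully combining the monotonicity axioms of $\is$ (especially monotonicity 5, to swap out non-proper boundary pieces) with the fragmented-AEC resolvability of $\Kpropa_{\is, \isbrim, \Ps(n)}$ from Lemma \ref{isbrim-lem}(\ref{isbrim-resolv}). The analogous obstacle in part (3) is the reflection step, where the saturation properties of brimmed models of size $\lambda_2$ over the smaller witness of $\m_0$ must be invoked to extract an honest $(\lambda, n+1)$-dimensional amalgamation base.
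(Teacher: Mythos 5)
Parts (1) and (2) of your plan follow the paper's proof essentially verbatim: part (1) is Lemma \ref{ext-lambda-step} applied to $\is \rest \{\lambda,\lambda^+\}$, and part (2) is exactly the paper's weak-diamond argument, applying Theorem \ref{ap-diamond-thm} to $\Kpropa_{\is, \isbrim, \Ps(n)}$ versus the same class ordered by $\lea^d$, with the hypotheses checked through Lemma \ref{strong-uq-brimmed-equiv}, Lemma \ref{isbrim-lem}(\ref{isbrim-cont}) and Lemma \ref{kprop-skel}, and the output fed into Theorem \ref{kprop-ap}. (Two small inaccuracies there: Theorem \ref{ap-diamond-thm} gives, for each system, an extension that is a $\phi_n$-amalgamation base in its own cardinality, not a single $\lambda$ with density inside $\K_\lambda$ --- one then sets $\lambda := \|N\|$ for one such base, which suffices for Theorem \ref{kprop-ap} since by strong $(\lambda,n)$-uniqueness all proper brimmed $\Ps(n)$-systems at $\lambda$ are isomorphic; and part (1) gives $(\lambda,n+1)$-\emph{existence}, which must be upgraded to $(\lambda,n+1)$-\emph{extension} via Lemma \ref{strong-uq-brimmed-equiv} before quoting Theorem \ref{kprop-ap}.)

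Part (3), however, has a genuine gap. Your key step asserts that the top face of a proper brimmed independent $\Ps(n+1)$-system of size $\lambda_2$ is ``automatically a $\phi_n$-amalgamation base at $\lambda_2$ by $(\lambda_2,n)$-uniqueness.'' Being a $\phi_n$-amalgamation base for a $\Ps(n)$-system is an $(n+1)$-dimensional uniqueness-type property (it amounts to amalgamating two $(n+1)$-st extensions over a common boundary; compare Lemma \ref{kprop-skel}(\ref{kprop-skel-5})), and $(\lambda_2,n)$-uniqueness, even strong, gives no such thing. Indeed, if it did, the weak diamond in part (2) would be superfluous: one could always lift to $\lambda_2$ and amalgamate there. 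The hypotheses of part (3) give only $(\lambda_2,n+1)$-\emph{existence}, never $(\lambda_2,n+1)$-uniqueness, and your argument does not use part (2) at all, which is a symptom of the problem. The paper's proof of (3) works differently: it fixes the single $\lambda \in [\lambda_1,\lambda_2)$ from part (2) where $(\lambda,n+1)$-uniqueness holds, takes two brimmed $\Ps(n+1)$-systems at $\lambda_1$ with the same $\Psm(n+1)$-part, and uses niceness to build $\lambda$-length increasing chains of brimmed extensions of both systems whose $\Psm(n+1)$-parts stay \emph{equal} at every stage; the resulting $\Psm(n+1)$-system at cardinality $\lambda$ is then shown to be a uniqueness base via Lemma \ref{brim-uq-lem}, whose extension hypothesis ($(\lambda,n+2)$-extension) is where the $(\lambda_2,n+1)$-existence assumption actually enters (via Lemma \ref{ext-lambda-step} and Lemma \ref{strong-uq-brimmed-equiv}), and finally one reruns the argument with $\Theta = [\lambda,\lambda_2]$ to cover every $\lambda \in [\lambda_1,\lambda_2)$. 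Your downward ``Löwenheim--Skolem reflection'' of an amalgamation base from $\lambda_2$ to $\lambda$ is also not a routine LS argument; the real direction of difficulty is lifting spans upward while keeping boundaries equal, which is precisely the role niceness plays in the paper.
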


\begin{proof}\

  (1) By Lemma \ref{ext-lambda-step} applied to $\is \, {\rest} \, \{\lambda, \lambda^+\}$.
  
  (2) By Lemma \ref{strong-uq-brimmed-equiv}, for every $\lambda \in [\lambda_1, \lambda_2]$, $\isbrim$ has strong $(\lambda, n)$-uniqueness and $(\lambda, n)$-extension, and for every $\lambda \in [\lambda_1, \lambda_2)$, $\isbrim$ also has $(\lambda, n + 1)$-extension. Write $\K^\ast \coloneqq  \Kpropa_{\is, \isbrim, \Ps (n)}$ (Definition \ref{kprop2a-def}).

    \underline{Claim}: There is $N \in \K_{[\lambda_1, \lambda_2)}^\ast$ which is a $\phi_n$-amalgamation base in $\K_{\|N\|}^\ast$.

    \underline{Proof of Claim}: Let $\K^{\ast \ast}$ be $|\K^{\ast}|$ ordered by $\m \leap{\K^{\ast \ast}} \m'$ if and only if $\m \lea^d \m'$ (Definition \ref{sys-ext-def}(\ref{sys-ext-disj})). We apply Theorem \ref{ap-diamond-thm}, where $\K$, $\K^\ast, \phi$ there stand for $\K^\ast$, $\K^{\ast \ast}, \phi_n$ here (recalling Definition \ref{t25}). Note first that $\K_{[\lambda_1, \lambda_2)}^\ast$ is not empty since $\isbrim$ has $(\lambda, n + 1)$-existence for each $\lambda \in [\lambda_1, \lambda_2)$. Thus if we can check that the hypotheses of Theorem \ref{ap-diamond-thm}, the $\iota = 2$ version, we will obtain the conclusion of the claim. First observe that $\K^\ast$ is $[\lambda_1, \lambda_2)$-continuous by Lemma \ref{isbrim-lem}(\ref{isbrim-cont}). Next, there is a $\phi_n$-universal model in $\K_{\lambda_2}^{\ast \ast}$ by Lemma \ref{kprop-skel}(\ref{kprop-skel-univ}). Further, $\K_{\lambda_2}^{\ast \ast}$ is $\phi_n$-categorical by Lemma \ref{kprop-skel}(\ref{kprop-skel-categ}). Moreover, for any $\lambda \in [\lambda_1, \lambda_2)$, $\K_{\lambda}^\ast$ has $\phi_{n}$-uniqueness (Lemma \ref{kprop-skel}(\ref{kprop-skel-uq})). Finally, for any $\lambda \in [\lambda_1, \lambda_2)$, if we can $\phi_n$-amalgamate in $\K^{\ast \ast}$, then we can $\phi_n$-amalgamate in $\K^\ast$ by Lemma \ref{kprop-skel}(\ref{kprop-skel-ap}). $\dagger_{\text{Claim}}$

   Now let $\lambda \coloneqq  \|N\|$, and apply Theorem \ref{kprop-ap}, where $\is, \is^\ast$ there stand for $\is_\lambda, (\isbrim)_\lambda$ here. We get that $\isbrim$ has $(\lambda, n + 1)$-uniqueness, as desired.

 (3) Assume that $\is$ is nice and $\isbrim$ has $(\lambda_2, n + 1)$-existence. It is enough to show that $\isbrim$ has $(\lambda_1, n + 1)$-uniqueness. Then given an arbitrary $\lambda \in [\lambda_1, \lambda_2)$, we can run the same argument with $[\lambda, \lambda_2]$ playing the role of $\Theta.$ By the previous part, there is $\lambda \in [\lambda_1, \lambda_2)$ such that $\isbrim$ has $(\lambda, n + 1)$-uniqueness. Let $\m_0$ be a limit independent $\Psm (n + 1)$ system in $\lambda_1$ and let $\m_1, \m_2$ be limit independent $\Ps (n + 1)$-systems in $\lambda_1$ such that $\m_1 \, {\rest} \, \Psm (n + 1) = \m_2 \, {\rest} \, \Psm (n + 1) = \m_0$. We build $\seq{\m_\ell^i : i \le \lambda}$ strictly increasing continuous in $\K_{\is, \Ps (n + 1)}$, such that for all $i < \lambda$:
       
       \begin{enumerate}
       \item[(a)] $\m_\ell^0 = \m_\ell$, $\ell = 1,2$.
       
       \item[(b)] All the models in $\m_\ell^i$ have cardinality $\lambda_1 + |i|$.
       
       \item[(c)] $\m_\ell^{i + 1}$ is limit over $\m_\ell^i$, for $\ell = 1,2$.
       
       \item[(d)] $\m_1^i \, {\rest} \, \Psm (n + 1) = \m_2^i \, {\rest} \, \Psm (n + 1)$.
       \end{enumerate}

       This is possible by definition of niceness (using that the extensions are limit, we can take unions at limits). Now let $\m^\ast \coloneqq  \m_1^\lambda \, {\rest} \, \Psm (n + 1) = \m_2^\lambda \, {\rest} \, \Psm (n + 1)$. We claim that $\m^\ast$ is a uniqueness base. From that it will directly follow that $\m_0$ is also a uniqueness base, as desired. We would like to use that $\isbrim$ has $(\lambda, n + 1)$-uniqueness, but we can't do so directly since $\m^\ast$ may not be limit (all the models in the system are limit, but the extensions between the models may not be). We want to apply Lemma \ref{brim-uq-lem}, where $\is, \is^\ast, J$ there stand for $\is_\lambda \, {\rest} \, \Iis_{n + 2}$, $(\isbrim)_\lambda \, {\rest} \, \Iis_{n + 2}, \Ps (n + 1)$ here. To apply it, we need to check that $\is$ has $(\lambda, n + 2)$-extension.

       By Lemma \ref{brim-ext-thm} and categoricity, it is enough to check that $\isbrim$ has $(\lambda, n + 2)$-extension. We know already that $\isbrim$ has $(\lambda', n + 1)$-existence for all $\lambda' \in [\lambda_1, \lambda_2]$ (when $\lambda' = \lambda_2$, this is a hypothesis, and when $\lambda' \in [\lambda_1, \lambda_2)$, this was derived in the first part of this lemma). In particular, $\isbrim$ has $(\lambda^+, n + 1)$-existence. By Lemma \ref{ext-lambda-step}, $\isbrim$ has $(\lambda, n + 2)$-existence. Since $\isbrim$ has $(\lambda, n + 1)$-uniqueness, we get by Lemma \ref{strong-uq-brimmed-equiv} that $\isbrim$ has $(\lambda, n + 2)$-extension, as desired.
\end{proof}

We obtain two sufficient conditions to have all the $n$-dimensional properties: weak GCH or niceness.

\begin{thm}[The brimmed excellence theorem]\LABEL{excellence-lem}
  Let $\seq{\lambda_n : n < \omega}$ be an increasing sequence of infinite cardinals such that $2^{\lambda_n} = 2^{<\lambda_{n + 1}} < 2^{\lambda_{n + 1}}$ for all $n < \omega$. Let $\lambda_\omega \coloneqq  \sup_{n < \omega} \lambda_n$ and let $\Theta \coloneqq  [\lambda_0, \lambda_\omega)$. Let $\is$ be a very good independence notion with domain $\Theta$. Assume that at least one of the following conditions holds:

  \begin{enumerate}
  \item $\is$ is nice.
  \item $\lambda_{n + 1} = \lambda_n^+$ for all $n < \omega$.
  \end{enumerate}

  Then for all $\lambda \in \Theta$, $\isbrim$ has $(\lambda, <\omega)$-existence and $(\lambda, <\omega)$-uniqueness.
\end{thm}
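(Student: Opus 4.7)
My plan is to prove by induction on $k<\omega$ the statement ($\ast_k$): for every $\lambda\in\Theta$, $\isbrim$ has $(\lambda,k)$-existence and $(\lambda,k)$-uniqueness. Since $\lambda_\omega$ is a limit cardinal, Lemma \ref{ext-lambda-full} (applied to the very good independence relation $\is$, whose domain is $[\lambda_0,\lambda_\omega)$) immediately gives $(\lambda,<\omega)$-existence for every $\lambda\in\Theta$, so the existence part of ($\ast_k$) is free and does not even require induction. Consequently, the real content is the uniqueness clause, and it is here that the two hypotheses on $\seq{\lambda_n : n < \omega}$ (niceness versus $\lambda_{n+1}=\lambda_n^+$) enter.

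For the base case, $(\lambda,0)$- and $(\lambda,1)$-existence and -uniqueness are immediate from the definition of a very good independence relation (see Lemma \ref{basic-small-dim} together with the fact that $\K_{\is}$ has amalgamation, joint embedding, and no maximal models), while $(\lambda,2)$-existence and strong $(\lambda,2)$-uniqueness are given by Lemma \ref{isbrim-lem}(\ref{isbrim-twodim}); strong $(\lambda,2)$-uniqueness trivially implies $(\lambda,2)$-uniqueness. For the inductive step, fix $k$ with ($\ast_k$) already established, and fix an arbitrary $m<\omega$. Consider the restriction $\is\rest[\lambda_m,\lambda_{m+1}]$; this remains very good on the sub-interval $[\lambda_m,\lambda_{m+1}]$, and the cardinal-arithmetic hypothesis $2^{\lambda_m}=2^{<\lambda_{m+1}}<2^{\lambda_{m+1}}$ is given. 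By ($\ast_k$) at level $k$, for every $\lambda\in[\lambda_m,\lambda_{m+1}]\subseteq\Theta$ the relation $\isbrim$ has $(\lambda,k)$-existence and $(\lambda,k)$-uniqueness. Thus all hypotheses of the stepping up lemma (Lemma \ref{uq-lambda-step}) are satisfied.

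The two cases of the theorem correspond to two conclusions of that lemma. In the GCH case, $[\lambda_m,\lambda_{m+1})=\{\lambda_m\}$, so conclusion (2) of Lemma \ref{uq-lambda-step} yields $(\lambda_m,k+1)$-uniqueness directly; since $\Theta=\{\lambda_m:m<\omega\}$ under this hypothesis, ranging over $m$ yields $(\lambda,k+1)$-uniqueness for all $\lambda\in\Theta$. In the niceness case, the restriction $\is\rest[\lambda_m,\lambda_{m+1}]$ is still nice (niceness survives restriction since categoricity and the ultrapower-style condition in Definition \ref{nice-def} are inherited), and $\isbrim$ has $(\lambda_{m+1},k+1)$-existence by the preceding application of Lemma \ref{ext-lambda-full}. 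Hence conclusion (3) of Lemma \ref{uq-lambda-step} yields $(\lambda,k+1)$-uniqueness for every $\lambda\in[\lambda_m,\lambda_{m+1})$. Taking the union over $m<\omega$ covers $\Theta=\bigcup_{m<\omega}[\lambda_m,\lambda_{m+1})$, completing ($\ast_{k+1}$).

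The hard work has really been done in the stepping up lemma and in Lemma \ref{isbrim-lem}, so the remaining obstacle is simply bookkeeping: verifying that at each inductive stage $\is\rest[\lambda_m,\lambda_{m+1}]$ inherits all the structural hypotheses required to invoke Lemma \ref{uq-lambda-step} (very goodness, and, in Case 2, niceness). Once this is dispatched, the theorem follows by combining the two inductions (on $k$ and, inside each step, on $m$) cleanly. There is no delicate cardinal arithmetic beyond the given weak-GCH-style assumption, and no chain-continuity argument beyond what is already packaged into the stepping up lemma.
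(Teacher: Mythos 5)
Your proposal is correct and follows essentially the same route as the paper: existence for all dimensions comes immediately from Lemma \ref{ext-lambda-full} since $\lambda_\omega$ is a limit, and uniqueness is proved by induction on the dimension, applying the stepping up lemma (Lemma \ref{uq-lambda-step}) on each interval $[\lambda_m,\lambda_{m+1}]$. Your explicit split between conclusion (2) of that lemma (GCH case, where $[\lambda_m,\lambda_{m+1})$ is a singleton) and conclusion (3) (nice case, feeding in $(\lambda_{m+1},k+1)$-existence) is exactly the case distinction the paper's terser invocation implicitly relies on, so nothing is missing.
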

\begin{proof}
  By Lemma \ref{ext-lambda-full} (where $\lambda_1, \lambda_2$ there stand for $\lambda_0, \lambda_\omega$ here), for any $\lambda \in \Theta$, $\isbrim$ has $(\lambda, <\omega)$-existence. Next, we prove by induction on $n < \omega$ that for all $\lambda \in \Theta$, $\isbrim$ has $(\lambda, n)$-uniqueness. When $n = 0$, this is part of Lemma \ref{isbrim-lem}. Assume now that for all $\lambda \in \Theta$, $\is$ has $(\lambda, n)$-uniqueness. Fix $k < \omega$. By Lemma \ref{uq-lambda-step} (where $\lambda_1, \lambda_2$ there stand for $\lambda_k, \lambda_{k +1}$ here), $\isbrim$ has $(\lambda, n + 1)$-uniqueness for all $\lambda \in [\lambda_k, \lambda_{k + 1})$. Since $k$ was arbitrary, this shows that $\isbrim$ has $(\lambda, n + 1)$-uniqueness for all $\lambda \in \Theta$.
\end{proof}

\newpage

\section{Building primes}\LABEL{primes-sec}

In this section, we show that prime models over independent systems can be built, provided that we have the $n$-dimensional extension and uniqueness properties. This is the \emph{prime extension theorem}, Theorem \ref{prime-ext-thm}. Recall that a model is prime over a base if it is ``minimal'' over that base in some sense (see Definition \ref{prime-def}). It is well known that prime models are important to categoricity proof. The intuition is that they allow us to describe a model as being built ``point by point'', analogous to how a vector space is spanned by a basis. 

To build primes, we need the following technical concept, which says that a system can in a sense only be extended in an independent way. This is similar to the definition of a \emph{reduced tower} \cite[3.1.11]{shvi635}. The differences are that we work with systems and are slightly more local (we work inside a given system).

\begin{defin}\LABEL{x5}\myindex{reduced inside}
  Let $\is = (\K, \Iis, \NF)$ be a multidimensional independence relation and let $\m$ be an independent $I$-system, with $I = J \cup \{v\}$, $J < v$. Let $\m_\ast$ be an independent $J$-system. We say that $\m$ is \emph{reduced inside $\m_\ast$} if:

  \begin{enumerate}
  \item $\m \, {\rest} \, J \leap{\is} \m_\ast$.
  \item Whenever $\m'$ is an independent $I$-system and $f \colon \m \rightarrow \m'$ is a systems embedding, if:
    \begin{enumerate}
    \item $f$ is the identity on $J$.
    \item $\m \, {\rest} \, J \leap{\is} \m' \, {\rest} \, J$.
    \item $\m' \, {\rest} \, J \leap{\is} \m_\ast$.
    \end{enumerate}

    Then $f (\m) \leap{\is} \m'$. 
  \end{enumerate}
\end{defin}

Reduced systems exist:

\begin{lem}[Existence of reduced systems]\LABEL{reduced-ext-lem}
  Let $\is$ be a very good multidimensional independence notion. Let $\Theta \coloneqq  \dom (\K_{\is})$. Let $I = J \cup \{v\}$ be in $\Iis_{\is}$, $J < v$. Assume that $I$ is an initial segment of $\Ps (n)$. Let $\lambda \in \Theta$ be such that $\lambda^+ \in \Theta$ and $\is$ has $(\lambda, n + 1)$-extension. 

  For any independent $J$-system $\m$ in $\is_{\lambda^+}$, there exists a $\leap{\is}$-increasing chain of $I$-systems $\seq{\m_i : i < \lambda^+}$ in $\is_{\lambda}$ such that each $\m_i$ is reduced in $\m$ and $\bigcup_{i < \lambda^+} \m_i \, {\rest} \, J = \m$.
\end{lem}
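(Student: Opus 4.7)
The plan is to construct $\seq{\m_i : i < \lambda^+}$ by an outer transfinite recursion, using an inner iteration to secure reducedness at each successor step. Since $I$ is an initial segment of $\Ps(n)$ with maximum $v$, we in fact have $I \cong \Ps(k)$ for some $k \le n$ and $J \cong \Psm(k)$. First, using resolvability of $\K_{\is,J}$ (by Lemma \ref{isbrim-lem}(\ref{isbrim-resolv}) applied to $\isbrim$ and transferred to $\is$ via monotonicity~2, together with the fragmented-AEC structure on $\K_{\is,J}$), fix a strictly $\leap{\is}$-increasing continuous resolution $\seq{\tilde{\m}_i : i < \lambda^+}$ of $\m$ in $\is_\lambda$. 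The target chain $\seq{\m_i : i < \lambda^+}$ will satisfy $\tilde{\m}_i \leap{\is} \m_i \rest J$, each $\m_i \in \is_\lambda$, and each $\m_i$ reduced in $\m$; cofinality in $\m$ then follows automatically.

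Limit stages $i$ are handled by setting $\m_i := \bigcup_{j<i} \m_j$: strong continuity of the very good independence relation $\is$ keeps $\m_i$ independent, the fragmented-AEC structure of $\K_{\is,I}$ ensures $\m_i \in \is_\lambda$ (the chain has length at most $\lambda$), and reducedness at $\m_i$ is preserved by contradiction: any witness $(f, \m')$ of non-reducedness involves data of size $\lambda$, so $f$ and its target factor through some $\m_j$ with $j < i$, and monotonicity~5 then converts a failure of reducedness at $\m_i$ into a failure at $\m_j$, contradicting the induction hypothesis.

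Successor stages are the crux. Given $\m_i$ reduced in $\m$, first invoke $(\lambda, n+1)$-extension to build a size-$\lambda$ $I$-system $\m^{(0)}$ with $\m_i \leap{\is} \m^{(0)}$ and $\tilde{\m}_{i+1} \leap{\is} \m^{(0)} \rest J$. This $\m^{(0)}$ need not be reduced. One then builds a $\leap{\is}$-increasing continuous chain $\seq{\m^{(\xi)} : \xi < \lambda}$ in $\is_\lambda$ whose successor substeps absorb previously unresolved witnesses of non-reducedness. Concretely, if at substage $\xi$ the pair $(f, \m')$ witnesses failure of reducedness of $\m^{(\xi)}$, then using monotonicity~5 one may adjust the top of $\m'$ so that its $J$-part agrees with $\m^{(\xi)} \rest J$ and its top extends both $\m^{(\xi)}_v$ and $f(\m^{(\xi)}_v)$; $(\lambda, n+1)$-extension then produces $\m^{(\xi+1)} \geap{\is} \m^{(\xi)}$ in $\is_\lambda$ which absorbs that witness. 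A bookkeeping argument enumerating the $\lambda$-many potential witnesses in $\lambda$ substeps, combined with strong continuity, makes $\m_{i+1} := \bigcup_{\xi<\lambda} \m^{(\xi)}$ reduced in $\m$ and of size $\lambda$.

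The main obstacle is the interplay between the absorption step and the bookkeeping. One must organize the enumeration so that witnesses introduced by the very act of absorbing an earlier witness are themselves scheduled for later absorption, and one must verify that the union $\m_{i+1}$ is genuinely reduced: by strong continuity any witness of failure at the union would be detected at some bounded substage, but this requires monotonicity~5 to transport failures between chain members and a careful count to see that $\lambda$ substeps suffice. This is precisely where the assumption of $(\lambda, n+1)$-extension, rather than merely $(\lambda, n)$-extension, is essential, as it provides the room to perform each absorption without inflating cardinality beyond $\lambda$.
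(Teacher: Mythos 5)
Your outer recursion has a fatal problem at the successor step. You claim that, above a given independent $I$-system of size $\lambda$, one can produce a \emph{reduced} one by an inner iteration of length $\lambda$ with bookkeeping over ``the $\lambda$-many potential witnesses''. There is no such bound: a witness to non-reducedness is a pair $(f, \m')$ where $\m'$ is an arbitrary independent $I$-system whose top model is not contained in any fixed structure, so even up to isomorphism over the current approximation there can be $2^{\lambda}$-many relevant configurations, and the witnesses against the eventual union $\m_{i+1}$ (embeddings of an object that does not yet exist) cannot be enumerated at intermediate substages. Moreover, ``absorbing'' a witness is not a well-defined neutralizing operation: the failure $f(\m^{(\xi)}) \not\leap{\is} \m'$ is a permanent fact about $\m^{(\xi)}$ and $\m'$, and passing to a larger $\m^{(\xi+1)}$ does nothing to guarantee that the union has no witnesses of its own. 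Reducedness is a universally quantified property, and a closing-off of length $\lambda$ gives no reflection mechanism to verify it; this is exactly why, here and in the analogous literature on reduced towers, one does not construct each reduced object on the spot. (Your limit-stage argument for the outer recursion is essentially fine --- strong continuity in one argument plus transitivity of $\leap{\is}$ shows a failure at the union restricts to a failure at some earlier stage --- but it is moot since the successor step cannot be carried out. The appeal to $\isbrim$-resolvability at the start is also unnecessary; the paper just enumerates $\m$ and requires $a_i$ to appear at stage $i+1$.)

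The paper's proof avoids this entirely: it builds a \emph{single} chain of length $\lambda^+$, namely $J$-systems $\m_i^0$ together with tops $N_i$ and a continuous directed system of maps $f_{i,j}$ fixing the $J$-parts, using $(\lambda, n+1)$-extension at each step, and with the rule that \emph{if} $\m_i^0 \smallfrown N_i$ is not reduced in $\m$, then stage $i+1$ realizes one chosen witness of that failure (so $\m_i^0 \smallfrown f_{i,i+1}[N_i] \not\leap{\is} \m_{i+1}^0 \smallfrown N_{i+1}$). No stage is forced to be reduced during the construction. Instead, one takes the direct limit (independent by strong continuity), notes via Lemma \ref{upward-ext} that it lives in an AEC of $I$-systems and hence is resolvable, and applies Lemma \ref{club-struct} to obtain a club $C$ of stages $i$ with $\m_i^0 \smallfrown f_{i,\lambda^+}[N_i] \leap{\is}$ the limit; if such a stage were not reduced, pushing the chosen witness forward into the limit and using monotonicity 2 contradicts this club property. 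Re-indexing along $C$ gives the desired $\leap{\is}$-increasing chain of reduced systems whose $J$-parts union to $\m$. If you want to salvage your write-up, replace the inner $\lambda$-length closing-off by this $\lambda^+$-length witness-killing chain followed by the club reflection argument.
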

\begin{proof}
  Without loss of generality, $\lambda = \min (\Theta)$. Let $\is'$ be as given by Lemma \ref{upward-ext}. Let $\seq{a_i : i < \lambda^+}$ be an enumeration of $\m$. We build $\seq{\m_i^0  \colon i < \lambda^+}$ a $\leap{\is}$-increasing continuous chain of independent $J$-systems in $\is_\lambda$, $\seq{N_i \colon i < \lambda^+}$ in $\K_{\is_\lambda}$, and $\seq{f_{i,j} : i \le j < \lambda^+}$ such that for all $i < \lambda^+$:

  \begin{enumerate}
  \item $f_{i, j}: N_i \rightarrow N_{j}$ form a continuous directed system.
  \item $\m_i^0 \smallfrown N_i$ is an independent $I$-system.
  \item $f_{i , j}$ fixes $\m_i^0$.
  \item $\m_{i + 1}^0$ contains $a_i$.
  \item\label{reduced-constr} If $\m_i^0 \smallfrown N_i$ is \emph{not} reduced in $\m$, then $\m_{i + 1}^0$, $N_{i + 1}$, and $f_{i, i +1}$ witness it. That is, $\m_i^0 \smallfrown f_{i, i + 1}[N_i] \not \leap{\is} \m_{i + 1}^0 \smallfrown N_{i + 1}$.
  \end{enumerate}

  This is possible using the extension property and taking direct limits at limits (we use strong continuity to see that the direct limit is still an independent system). This is enough: let $\m_{\lambda^+}^0$, $f_{i,\lambda^+}$, $N_{\lambda^+}$ be the direct limit of the system. Note that $\m_{\lambda^+}^0 \smallfrown N_{\lambda^+}$ is an independent $I$-system in $\is'$, and since $\K_{\is', I}$ is an AEC, this system can be $\leap{\is}$-resolved into independent systems of cardinality $\lambda$. By Lemma \ref{club-struct}, this implies that there is a club $C \subseteq \lambda^+$ such that for any $i \in C$, $\m_i^0 \smallfrown f_{i, \lambda^+}[N_i] \leap{\is} \m_{\lambda^+}^0 \smallfrown N_{\lambda^+}$. Now if $i \in C$ and $\m_i^0 \smallfrown N_i$ is \emph{not} reduced in $\m$, then by  property (\ref{reduced-constr}), $\m_i^0 \smallfrown f_{i, i+1}[N_i] \not \leap{\is} \m_{i + 1}^0 \smallfrown N_{i + 1}$. Taking the image of this statement under $f_{i + 1, \lambda^+}$, we get that $\m_i^0 \smallfrown f_{i, \lambda^+} [N_i] \not \leap{\is} \m_{i + 1}^0 \smallfrown f_{i + 1, \lambda^+}[N_{i + 1}]$. By monotonicity 2, this means that $\m_i^0 \smallfrown f_{i, \lambda^+} [N_i] \not \leap{\is} \m_{i + 1}^0 \smallfrown N_{\lambda^+}$, a contradiction. Thus for any $i \in C$, $\m_i^0 \smallfrown N_i$ \emph{is} reduced in $\m$. Let $\seq{\alpha_i : i < \lambda^+}$ be a normal enumeration of $C$ and let $\m_i \coloneqq  \m_{\alpha_i}^0 \smallfrown N_{\alpha_i}$. 
\end{proof}

We use reduced systems to build prime ones:

\begin{defin}\LABEL{x8}
  Let $\is$ be a multidimensional independence relation.

  \begin{enumerate}
  \item\myindex{prime system} Let $I = J \cup \{v\}$ be in $\Iis_{\is}$ with $J < v$, and let $\m$ be an independent $I$-system. We say that $\m$ is \emph{prime} if whenever $\m'$ is an independent $J$-system with $\m \, {\rest} \, J = \m' \, {\rest} \, J$, there exists $f: \m \rightarrow \m'$ a systems embedding fixing $\m \, {\rest} \, J$.
  \item\myindex{prime extension} We say that $\is$ has \emph{prime extension} if for any $I = J \cup \{v\} \in \Iis_{\is}$ with $J < v$ and any independent $J$-system $\m$, there is a \emph{prime} $I$-system $\m'$ such that $\m \cong \m' \, {\rest} \, J$. We define variations such as $(\lambda, n)$-prime extension as in Definition \ref{okay-def}.
  \end{enumerate}
\end{defin}

\begin{lem}\LABEL{prime-ext-lem}
  Let $\is$ be a very good multidimensional independence notion, let $\Theta \coloneqq  \dom (\K_{\is})$. Let $I = J \cup \{v\}$ be in $\Iis_{\is}$, $J < v$. Assume that $I$ is an initial segment of $\Ps (n)$. Let $\lambda \in \Theta$ be such that $\lambda^+ \in \Theta$, $\is$ has $(\lambda, n + 1)$-extension and $(\lambda, n + 1)$-uniqueness. If $\K_{\is}$ is categorical in $\lambda^+$, then any independent $J$-system $\m$ in $\is_{\lambda^+}$ can be extended to a \emph{prime} independent $I$-system in $\is_{\lambda^+}$.
\end{lem}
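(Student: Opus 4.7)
The strategy is to apply Lemma \ref{reduced-ext-lem} to obtain a chain of reduced $I$-systems, take its union to get $\m_{\lambda^+}$, and then use reducedness together with $(\lambda, n+1)$-uniqueness to verify primality.

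Concretely, I would invoke Lemma \ref{reduced-ext-lem} to produce a $\leap{\is}$-increasing continuous chain $\seq{\m_i : i < \lambda^+}$ of independent $I$-systems in $\is_\lambda$, each reduced in $\m$, with $\bigcup_{i < \lambda^+} \m_i \rest J = \m$. Set $\m_{\lambda^+} := \bigcup_{i < \lambda^+} \m_i$. Strong continuity (part of being a very good multidimensional independence relation) makes $\m_{\lambda^+}$ an independent $I$-system, and the fragmented AEC chain axioms for $\K_{\is}$, combined with the fact that $\m_{\lambda^+} \rest J = \m$ has cardinality $\lambda^+$, force $\m_{\lambda^+}$ to lie in $\is_{\lambda^+}$ and extend $\m$ by construction.

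To show $\m_{\lambda^+}$ is prime, I would fix a competing independent $I$-system $\m'$ in $\is_{\lambda^+}$ with $\m' \rest J = \m$ and build a systems embedding $f = \bigcup_{i < \lambda^+} f_i : \m_{\lambda^+} \to \m'$ fixing $\m$ by transfinite recursion, with each $f_i : \m_i \to \m'$ fixing $\m_i \rest J$ and $f_j \subseteq f_i$ for $j < i$. Limit stages are automatic from continuity. For the successor step, starting from $f_j$, I would use the fragmented AEC structure of $\K_{\is, I}$ with Löwenheim-Skolem to find a sub-system $\m'_0 \leap{\is} \m'$ in $\is_\lambda$ with $\m'_0 \rest J = \m_{j+1} \rest J$ and $f_j(N_j) \subseteq N'_0$ (writing $N_i, N'_0$ for the $v$-components). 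Reducedness of $\m_j$ in $\m$, applied to $f_j : \m_j \to \m'_0$ (noting $\m'_0 \rest J \leap{\is} \m$), yields $f_j[\m_j] \leap{\is} \m'_0$. Now $\m_j \ast \m_{j+1}$ and $f_j[\m_j] \ast \m'_0$ are two independent $(I \times \{0,1\})$-systems in $\is_\lambda$ (with index set in $\Iis_{n+1}$, since $I$ is an initial segment of $\Ps(n)$) sharing all their data except the top slot $(v, 1)$, so $(\lambda, n+1)$-uniqueness supplies an amalgamation from which, after extracting the component sitting inside $\m'_0$, one reads off the required $f_{j+1} : \m_{j+1} \to \m'_0 \subseteq \m'$ extending $f_j$.

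The main obstacle is converting the abstract ``common extension'' provided by $(\lambda, n+1)$-uniqueness into a genuine embedding of $\m_{j+1}$ into $\m'_0$ rather than into some ambient amalgam external to $\m'$. I expect this to rely on categoricity of $\K_{\is}$ in $\lambda^+$ combined with the fact that $\m'_0$ and $\m_{j+1}$ have matching cardinality and isomorphic shared data, which forces the two $v$-slots to coincide up to isomorphism over the common part; alternatively, one may need to absorb the amalgam back into $\m'$ via a further invocation of reducedness. Coordinating these successor-step choices coherently across all $i < \lambda^+$ is the most delicate bookkeeping in the construction, but strong continuity of $\is$ ensures that continuity at limits causes no additional difficulty.
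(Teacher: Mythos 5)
Your overall skeleton---resolve $\m$ via Lemma \ref{reduced-ext-lem} into a chain of systems reduced in $\m$, take the union, and prove primality by a transfinite construction of embeddings into the competitor, invoking $(\lambda, n+1)$-uniqueness at successor steps---is the same as the paper's, whose proof is a pointer to the analogous arguments of \cite[III.4.9]{shelahaecbook} and \cite[3.6]{prime-categ-mlq} with Lemma \ref{reduced-ext-lem} supplying the reduced resolution. However, the step you flag as the ``main obstacle'' is not residual bookkeeping: it is the entire content of the lemma, and it is left open. The uniqueness property (Definition \ref{multidim-props}) only produces a common extension $\m^{**}$ into which both $\m_j \ast \m_{j+1}$ and $f_j[\m_j] \ast \m'_0$ embed over the shared data; there is no ``component sitting inside $\m'_0$'' from which to read off $f_{j+1}$, because the image of $N_{j+1}$ in the amalgam need not lie inside $N'_0$, nor inside the $v$-component of $\m'$. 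Note that up to this point your argument uses reducedness only to get $f_j[\m_j] \leap{\is} \m'_0$, i.e.\ to set up an application of uniqueness; if the construction could be completed from there, essentially the same recursion would show that the union of \emph{any} $\leap{\is}$-increasing resolution is prime, and avoiding exactly that is why reduced systems are introduced.

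Neither of your proposed repairs closes the gap. Categoricity of $\K_{\is}$ in $\lambda^+$ gives isomorphism of models of size $\lambda^+$, not an isomorphism of the two size-$\lambda$ configurations over their common part; what you would need is strong $(\lambda, n+1)$-uniqueness for arbitrary (non-brimmed) systems, which is not among the hypotheses and is precisely the property the paper obtains only for brimmed systems (cf.\ Lemma \ref{strong-uq-brimmed-equiv} and the surrounding discussion). As for ``absorbing the amalgam back via reducedness'': reducedness, as defined, applies to an embedding of $\m_j$ (or $\m_{j+1}$) into an $I$-system whose $J$-part lies $\leap{\is}$-below $\m$, and its conclusion is only that such an embedding is strong; the amalgam $\m^{**}$ produced by uniqueness has no reason to have its $J$-part sitting inside $\m$ at all, so reducedness cannot be invoked on it, and it provides no mechanism for pulling the overflowing copy of $N_{j+1}$ back into $\m'$. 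In the cited arguments it is exactly at this successor step that the reduced (there: reduced/uniqueness/domination-type) resolution does its real work, converting the abstract amalgam into an embedding that lands inside the competitor; that conversion---together with the club coordination needed to have $\m'_0 \rest J = \m_{j+1} \rest J$ with $\m'_0 \rest J \leap{\is} \m$ simultaneously---is what your proposal would still have to supply.
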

\begin{proof}
  Similar to \cite[III.4.9]{shelahaecbook} (or \cite[3.6]{prime-categ-mlq}), using Lemma \ref{reduced-ext-lem} to get the resolution into reduced triples.
\end{proof}

\begin{thm}[The prime extension theorem]\LABEL{prime-ext-thm}
  Let $\is$ be a very good multidimensional independence notion, let $\Theta \coloneqq  \dom (\K_{\is})$. Let $\lambda \in \Theta$ be such that $\lambda^+ \in \Theta$ and $\K_{\is}$ is categorical in $\lambda^+$. If $\is$ has $(\lambda, n)$-extension and $(\lambda, n)$-uniqueness, then $\is$ has $(\lambda^+, <n)$-prime extension.
\end{thm}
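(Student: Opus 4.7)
The statement of Theorem \ref{prime-ext-thm} is really just a clean restatement of Lemma \ref{prime-ext-lem} with parameters shifted, so the plan is simply to reduce one to the other.

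First I would unpack $(\lambda^+, <n)$-prime extension according to Definition \ref{okay-def}: it means that for every $m < n$, $\is$ has $(\lambda^+, m)$-prime extension. So fix an arbitrary $m < n$, fix $I = J \cup \{v\} \in \Iis_m$ with $J < v$, and fix an $\is$-independent $J$-system $\m$ with all models in $\K_{\lambda^+}$. I want to produce a prime $\is$-independent $I$-system extending $\m$. Since $I \in \Iis_m$, $I$ is isomorphic to an initial segment of $\Ps(m)$; pulling back the isomorphism and using invariance/symmetry of $\is$, there is no loss of generality in assuming $I$ is literally an initial segment of $\Ps(m)$, with $J$ the initial segment of $I$ consisting of the elements strictly below $v$.

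The plan is then to apply Lemma \ref{prime-ext-lem} with the role of $n$ in that lemma played by $m$ here. The hypotheses of Lemma \ref{prime-ext-lem} require (a) $I$ is an initial segment of $\Ps(m)$ (granted by the previous step), (b) $\lambda, \lambda^+ \in \Theta$ (granted in the theorem's statement), (c) $\K_{\is}$ is categorical in $\lambda^+$ (granted), and (d) $\is$ has $(\lambda, m+1)$-extension and $(\lambda, m+1)$-uniqueness. The only nontrivial point is (d), which I would deduce from the assumed $(\lambda, n)$-extension and $(\lambda, n)$-uniqueness: since $m < n$ we have $m+1 \le n$, so $\Iis_{m+1} \subseteq \Iis_n$, and the restriction of $\is$ to $\Iis_{m+1}$ inherits extension and uniqueness from the restriction to $\Iis_n$ directly from the definitions (Definition \ref{multidim-props} only requires the relevant property for systems indexed by elements of the relevant class).

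Having verified the hypotheses, Lemma \ref{prime-ext-lem} produces a prime $\is$-independent $I$-system in $\is_{\lambda^+}$ extending $\m$, which is exactly what $(\lambda^+, m)$-prime extension demands. Since $m < n$ was arbitrary, this establishes $(\lambda^+, <n)$-prime extension.

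There is no real obstacle here: all of the substantive work — construction of reduced systems via Lemma \ref{reduced-ext-lem}, and the Shelah-style back-and-forth construction turning a resolution into reduced triples into a prime object — is packaged into Lemma \ref{prime-ext-lem}. The only thing to be careful about is the bookkeeping of the dimension parameters ($n$ in the theorem vs.\ the dummy $n$ of Lemma \ref{prime-ext-lem}, which should be read as $m+1$) and the distinction between ``$I$ is an initial segment of $\Ps(m)$'' and ``$I \in \Iis_m$'', which is resolved by invariance.
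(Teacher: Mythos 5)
Your proposal is correct and is exactly the paper's argument: the paper proves Theorem \ref{prime-ext-thm} simply by declaring it immediate from Lemma \ref{prime-ext-lem}, and your write-up just makes explicit the parameter shift (the lemma's $n$ read as your $m \le n-1$, so that $(\lambda, m+1)$-extension and uniqueness follow from the assumed $(\lambda, n)$-properties) together with the harmless reduction, via invariance, from $I \in \Iis_m$ to a literal initial segment of $\Ps(m)$. No gaps; all substantive work is indeed packaged in Lemma \ref{prime-ext-lem}.
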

\begin{proof}
  Immediate from Lemma \ref{prime-ext-lem}.
\end{proof}

\newpage

\section{Excellent classes}\LABEL{excellent-sec}

In this section, we introduce the definition of an \emph{excellent} AEC (this generalizes the definition in \cite{sh87a, sh87b}). We show how to obtain excellent AECs from the setups considered earlier (Lemma \ref{excellence-up}) and explain why excellent classes are tame and have primes (Theorem \ref{excellent-struct}), hence (as we will see in the next section) admit categoricity transfers. We also show how to get excellence in compact AECs (Theorem \ref{compact-excellent-thm}) and in $(<\omega)$-extendible good frames, using the weak diamond (Theorem \ref{excellence-wgch}).

\begin{defin}\LABEL{excellent-def}\myindex{excellent}
  Let $\is$ be a multidimensional independence relation. We call $\is$ \emph{excellent} if:

  \begin{enumerate}
  \item $\K_{\is}$ is an AEC.
  \item $\is$ is very good.
  \item $\is$ has extension and uniqueness.
  \end{enumerate}

  We say that an AEC $\K$ is \emph{excellent} if there exists an excellent multidimensional independence relation $\is$ such that $\K = \K_{\is}$.
\end{defin}

Reasonable multidimensional independence relations will be tame:

\begin{lem}\LABEL{excellence-local-lem}
  Let $\is$ be a multidimensional independence relation. If $\K_{\is}$ is an AEC, $\is$ has extension and uniqueness, and $\is$ satisfies the definition of being very good, except possibly for $\s (\nf (\is))$ being a good frame, then $\K_{\is}$ is $\LS (\K_{\is})$-tame.
\end{lem}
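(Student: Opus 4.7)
The plan is to adapt the argument from Remark \ref{good-frame-local-rmk} (that good frames are $\min (\dom)$-local) to the induced two-dimensional independence notion $\nf := \nf (\is)$ (Definition \ref{nf-is-def}). The chief difficulty is that since $\s (\nf (\is))$ is not assumed to be a good frame, local character for $\nf$ must be proven from the remaining hypotheses rather than taken for granted.

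First, I would observe that from the hypotheses, $\nf$ satisfies extension and uniqueness (since $\is$ has $2$-extension and $2$-uniqueness, consequences of $\is$ having extension and uniqueness), strong continuity (inherited from $\is$'s strong continuity specialized to $\Ps (2)$-systems), and the usual symmetry, transitivity, monotonicity, and disjointness. Moreover, the class $\K_{\is, \Ps (2)}$ of $\nf$-independent squares is a fragmented AEC, so increasing chains of independent squares admit well-behaved unions in the sense of Fact \ref{nfm-props}.

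Next I would establish local character for $\nf$: for any continuous increasing chain $\seq{M_i : i \le \delta}$ in $\K := \K_{\is}$ with $\cf (\delta)$ sufficiently large and any $p \in \gS (M_\delta)$, there exists $i < \delta$ such that $p$ does not $\nf$-fork over $M_i$. Supposing for contradiction that $p$ forks over every $M_i$, use extension to obtain for each $i$ the unique nonforking extension $p_i \in \gS (M_\delta)$ of $p \rest M_i$; by failure of local character combined with uniqueness of nonforking extensions, $p_i \neq p$. The key step is to realize these $p_i$'s in an increasing chain of independent squares inside $\K_{\is, \Ps (2)}$, and apply strong continuity to take the union, producing an independent square witnessing that $p$ itself does not fork over some $M_{i^\ast}$, the desired contradiction. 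By resolving any $M \in \K$ of size $> \LS (\K)$ as a continuous chain of submodels of size $\le \LS (\K) + |i|$ with cofinality at least $\LS (\K)^+$ (possible since $\K$ is an AEC with Löwenheim-Skolem-Tarski number $\LS (\K)$), this yields local character with bound $\LS (\K)$: every $p \in \gS (M)$ is nonforking over some $M_0 \lea M$ with $\|M_0\| \le \LS (\K)$.

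To conclude, given $p, q \in \gS (M)$ agreeing on every $M_0 \lea M$ with $\|M_0\| \le \LS (\K)$, apply local character separately to $p$ and $q$ to find such an $M_0$ over which both do not fork (taking a common enlargement of the two witnesses inside $M$, still of size $\le \LS (\K)$ by the Löwenheim-Skolem-Tarski axiom). Since $p \rest M_0 = q \rest M_0$ by hypothesis, uniqueness of $\nf$-nonforking extensions (Fact \ref{nfm-props}) yields $p = q$, as desired. The main obstacle will be the local character argument: the classical version in good frames invokes superstability-like properties not directly stated here, so one must leverage the AEC structure of $\K_{\is, \Ps (2)}$ together with strong continuity to supply the required combinatorial control on chains of independent squares.
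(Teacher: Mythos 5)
Your concluding step (a common small base plus uniqueness of $\nf$-nonforking extensions from Fact \ref{nfm-props}) is fine, but the central step of your plan has a genuine gap: local character for $\nf := \nf(\is)$ cannot be extracted from extension, uniqueness and strong continuity. Those axioms are ``stability-like'', whereas local character is the superstability-like ingredient; this is precisely why the paper lists it as a separate clause both in the definition of a good frame and in the definition of a good two-dimensional independence notion. Concretely, your contradiction argument does not close: if $p = \gtp(a/M_\delta; N)$ forks over every $M_i$, the squares you build to realize the nonforking extensions $p_i$ of $p \rest M_i$ involve realizations \emph{other} than $a$, and the union produced by strong continuity is an independent square about those realizations. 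Nothing places $a$ (or the type $p$) inside the limit square, so no contradiction with ``$p$ forks over every $M_i$'' is obtained. Strong continuity only says that the union of a coherent increasing chain of independent squares is independent; to use it against $p$ you would already need such a chain containing $a$, which is exactly what the forking assumption forbids.

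The gap is, however, avoidable, because the local character you set out to prove is in fact part of the hypotheses: the lemma removes only the clause ``$\s(\nf(\is))$ is a good frame'' from the definition of very good, and the remaining clauses of ``$\nf(\is)$ is good'' include local character: for $M \lea N$ and $A \subseteq |N|$ there are $M_0 \lea N_0 \lea N$ with $\nfs{M_0}{N_0}{M}{N}$, $A \subseteq |N_0|$ and $\|N_0\| \le |A| + \LS(\K_{\is})$. Applying this with $A = \{a\}$ to $p = \gtp(a/M; N)$ gives $M_0 \lea M$ of cardinality at most $\LS(\K_{\is})$ with $\nfcl{M_0}{a}{M}{N}$, i.e.\ $p$ does not fork over a small submodel; together with base monotonicity (to pass to a common small base for $p$ and $q$, using coherence of the AEC) and your uniqueness step, this yields $\LS(\K_{\is})$-locality. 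For comparison, the paper proves the lemma by running the argument of \cite[III.1.10]{shelahaecbook}, which works along resolutions using the uniqueness and continuity of the two-dimensional relation (there the relation lives only in one cardinal, so the model-based local character over the big model is not directly available); your repaired argument is a legitimate shortcut here because $\nf(\is)$, with its local character clause, is assumed on the whole AEC.
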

\begin{proof}
  As in \cite[III.1.10]{shelahaecbook} (see also \cite{superior-aec}).
\end{proof}

The next two lemmas tell us how to derive excellence from the conclusion of the limit excellence theorem (Theorem \ref{excellence-lem}).

\begin{lem}\LABEL{excellence-up}
  Let $\is$ be a very good multidimensional independence relation. Let $\lambda \coloneqq  \min (\dom (\is))$. If $\is$ has $(\lambda, <\omega)$-extension and $(\lambda, <\omega)$-uniqueness, then there exists a unique excellent multidimensional independence relation $\is'$ such that $(\is')_\lambda = \is_\lambda$. In particular, the AEC generated by $(\K_{\is})_\lambda$ is excellent.
\end{lem}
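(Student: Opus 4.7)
The plan is to construct $\is'$ by first invoking Lemma \ref{upward-ext}, then promoting the ``almost very good'' extension obtained there to a genuinely excellent one. First I would apply Lemma \ref{upward-ext} to $\is$ to get the unique almost very good multidimensional independence relation $\is'$ with $(\is')_\lambda = \is_\lambda$, whose underlying abstract class $\K_{\is'}$ is the AEC generated by $(\K_{\is})_\lambda$. This already takes care of the ``AEC'' and ``strong continuity, $n$-okay for all $n$, $\K_{\is',I}$ is a fragmented AEC'' clauses of the definition of excellence.

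Next I would derive global extension and uniqueness. Since $(\is')_\lambda = \is_\lambda$, the hypotheses of $(\lambda,<\omega)$-extension and $(\lambda,<\omega)$-uniqueness transfer to $\is'$. Applying the ``in particular'' clause of Lemma \ref{going-up-excellence} to $\is'$, we conclude that $\is'$ has both (full) extension and uniqueness. In particular $\is'$ has $2$-extension and $2$-uniqueness, so $\K_{\is'}$ has amalgamation and no maximal models.

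The main obstacle is verifying that $\is'$ is in fact \emph{very good}, i.e.\ that $\nf(\is')$ is very good (strong continuity, long transitivity, and reflecting down for $\nf(\is')$ will follow from the corresponding properties of $\is'$ by restriction to $\Iis_2$; the real content is that $\s(\nf(\is'))$ is a good frame on $\K_{\is'}$). Here I would argue as follows: $\is'$ satisfies the hypotheses of Lemma \ref{excellence-local-lem} (it is almost very good and has extension and uniqueness), so $\K_{\is'}$ is $\LS(\K_{\is'})$-local, and one checks from the construction in Lemma \ref{upward-ext} that $\LS(\K_{\is'}) = \lambda$. Since $\is_\lambda$ is very good, $\s := \s(\nf(\is_\lambda))$ is a good $\lambda$-frame, and $\K_{\is'}$ is exactly the AEC generated by $\K_\s$. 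Combining $\lambda$-locality with amalgamation (from $2$-extension), Fact \ref{frame-ext}(2) yields that $\s^{up}$ is a good frame on $\K_{\is'}$. A direct unpacking of the definitions of $\s(\nf(\is'))$ and $\s^{up}$, together with uniqueness and monotonicity of nonforking in $\is'$, shows these two relations coincide, so $\s(\nf(\is'))$ is a good frame. The remaining clauses needed to upgrade $\nf(\is')$ from ``good'' to ``very good'' are strong continuity (inherited from $\is'$) and reflecting down (which follows from the already established fragmented AEC structure of $\K_{\is',\Ps(2)}$ and the matching of generated AECs built into the definition of a very good multidimensional relation).

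Finally, uniqueness of $\is'$ is automatic: any excellent $\is''$ with $(\is'')_\lambda = \is_\lambda$ is in particular almost very good, and hence by the uniqueness clause in Lemma \ref{upward-ext} we must have $\is'' = \is'$. The ``in particular'' statement is then just a rephrasing, since $\K_{\is'}$ was constructed as the AEC generated by $(\K_{\is})_\lambda$.
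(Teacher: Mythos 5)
Your proposal is correct and takes essentially the same route as the paper: apply Lemma \ref{upward-ext}, use Lemma \ref{going-up-excellence} to get global extension and uniqueness, use Lemma \ref{excellence-local-lem} for locality and extension for amalgamation, invoke Fact \ref{frame-ext} to produce a good frame on the generated AEC, and then identify $\s(\nf(\is'))$ with that frame. The only cosmetic difference is that where you ``directly unpack the definitions'' to see that $\s(\nf(\is'))$ and $\s^{up}$ coincide, the paper carries out exactly this step via the canonicity Lemma \ref{canon-lem} together with Remark \ref{canon-lem-rmk}, using local character of $\nf(\is')$ to ensure every type does not fork over a model of size $\lambda$.
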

\begin{proof}
  Let $\K$ be the AEC generated by $(\K_{\is})_\lambda$. Let $\is'$ be as described by Lemma \ref{upward-ext}. It satisfies all the conditions in the definition of being very good, except perhaps for existence, uniqueness, and $\s (\nf (\is'))$ being a good frame. By Lemma \ref{going-up-excellence}, $\is'$ has extension and uniqueness. By Lemma \ref{excellence-local-lem}, $\K$ is $\LS (\K)$-tame. Since $\is'$ has extension, $\K$ has in particular amalgamation. By Fact \ref{frame-ext}, there is a good frame $\s$ with underlying class $\K$. Let $\ts \coloneqq  \s (\nf (\is'))$. It is clear from the definition that $\s_\lambda = \ts_\lambda$. Moreover, $\ts$ satisfies most of the axioms of a good frame (Fact \ref{nfm-props}). The only axiom that can fail is local character. However since $\nf (\is')$ is very good, it has local character, so every type does not $\ts$-fork over a model of size $\lambda$. By Lemma \ref{canon-lem} and Remark \ref{canon-lem-rmk}, this implies that $\s = \ts$, so $\ts$ is a good frame. Thus $\is'$ is very good, and hence excellent.
\end{proof}

\begin{lem}\LABEL{excellence-general}
  Let $\is$ be a very good multidimensional independence notion and let $\Theta \coloneqq  \dom (\is)$. Let $\lambda \coloneqq  \min (\Theta)$ and let $\K$ be the AEC generated by $\left(\K_{\is}\right)_\lambda$. If:

  \begin{enumerate}
  \item[(A)] $\K$ is categorical in $\lambda$.
  
  \item[(B)] $\isbrim$ has $(\lambda, <\omega)$-existence and $(\lambda, <\omega)$-uniqueness.
  \end{enumerate}

  Then $\K$ is excellent.
\end{lem}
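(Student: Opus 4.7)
The plan is to reduce the statement to Lemma \ref{excellence-up} by transferring the $(\lambda, <\omega)$-existence and uniqueness properties from $\isbrim$ to the full relation $\is$ at the cardinal $\lambda$, then invoking Lemma \ref{excellence-up} to conclude excellence.

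The first step is to exploit categoricity to identify the underlying classes: since $\is$ is very good, brimmed models exist in $\left(\K_{\is}\right)_\lambda$ (universality over of size $\lambda$ is built into the good frame structure), so categoricity in $\lambda$ forces every $M \in \left(\K_{\is}\right)_\lambda$ to be brimmed. Hence $|\K_{(\isbrim)_\lambda}| = |\K_{\is_\lambda}|$, and by Lemma \ref{isbrim-lem}, $(\isbrim)_\lambda$ is a skeleton of $\is_\lambda$.

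Next I will upgrade the hypotheses on $\isbrim$. The assumption gives $(\lambda, <\omega)$-existence and $(\lambda, <\omega)$-uniqueness for $\isbrim$; by Lemma \ref{strong-uq-brimmed-equiv}, $(\lambda, <\omega)$-uniqueness is equivalent to strong $(\lambda, <\omega)$-uniqueness in the brimmed setting. Thus $(\isbrim)_\lambda$ has strong uniqueness and existence (in the unindexed sense, i.e.\ for every $n < \omega$). Since $\is$ is very good, $\Iis_{\is} = \Iis_{<\omega}$ is closed under products (because $\Ps(n) \times \Ps(m) \cong \Ps(n+m)$, and initial segments of a product sit inside initial segments of $\Ps(n+m)$), so I can now apply Theorem \ref{brim-uq-thm} with $\is, \is^\ast$ there being $\is_\lambda, (\isbrim)_\lambda$ here, yielding that $\is_\lambda$ has uniqueness and extension. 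By Definition \ref{okay-def}, this is precisely the statement that $\is$ has $(\lambda, <\omega)$-extension and $(\lambda, <\omega)$-uniqueness.

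Finally, Lemma \ref{excellence-up} applies directly: there exists a unique excellent multidimensional independence relation $\is'$ with $(\is')_\lambda = \is_\lambda$, and the AEC generated by $\left(\K_{\is}\right)_\lambda$ — which is exactly $\K$ by construction — is excellent. The only real content of the argument is the transfer from $\isbrim$ to $\is$ at the base cardinal, and the main obstacle is simply the verification that the hypotheses of Theorem \ref{brim-uq-thm} are met; this is where categoricity is essential, since without it we would not have $|\K_{\isbrim}|_\lambda = |\K_{\is}|_\lambda$ and the transfer would break down.
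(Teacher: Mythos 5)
Your proposal is correct and follows essentially the same route as the paper: upgrade $(\lambda,<\omega)$-uniqueness of $\isbrim$ to strong uniqueness via Lemma \ref{strong-uq-brimmed-equiv}, transfer extension and uniqueness to $\is_\lambda$ via Theorem \ref{brim-uq-thm} (the paper also cites Theorem \ref{brim-ext-thm}), and conclude with Lemma \ref{excellence-up}. Your explicit verification that categoricity in $\lambda$ forces $|\K_{(\isbrim)_\lambda}| = |(\K_{\is})_\lambda|$, which is needed for the hypotheses of Theorem \ref{brim-uq-thm}, is exactly the point the paper's terse proof leaves implicit.
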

\begin{proof}
  By Lemma \ref{strong-uq-brimmed-equiv}, $\is^\ast$ also has strong $(\lambda, <\omega)$-uniqueness. By Theorems \ref{brim-ext-thm} and \ref{brim-uq-thm}, we obtain that $\is$ also has $(\lambda, <\omega)$-extension and $(\lambda, <\omega)$-uniqueness. Now apply Lemma \ref{excellence-up}.
\end{proof}

Assuming a weak version of the generalized continuum hypothesis, we obtain excellence in $(<\omega)$-extendible very good frames. We will use the following notation:

\begin{defin}\LABEL{wgch-def}\myindex{$\WGCH (S)$}\index{$\WGCH$|see {$\WGCH (S)$}}
  For $S$ a class of cardinals, we write $\WGCH (S)$ for the statement ``$2^\lambda < 2^{\lambda^+}$ for all $\lambda \in S$''. We write $\WGCH$ instead of $\WGCH (\operatorname{Card})$, where $\text{Card}$ is the class of all cardinals.
\end{defin}

\begin{thm}\LABEL{excellence-wgch}
  Let $\s$ be a $(<\omega)$-extendible categorical very good $\lambda$-frame. Let $\K$ be the AEC generated by $\K_{\s}$. If $\WGCH ([\lambda, \lambda^{+\omega}))$ holds, then $\K$ is excellent.
\end{thm}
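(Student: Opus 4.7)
The plan is to assemble the machinery already developed and reduce the statement to the brimmed excellence theorem (Theorem \ref{excellence-lem}) followed by Lemma \ref{excellence-general}. First I would invoke Fact \ref{very-good-facts}(\ref{very-good-4}): since $\s$ is a $(<\omega)$-extendible very good categorical good $\lambda$-frame, there is a unique very good categorical frame $\ts$ with $\dom(\ts) = [\lambda, \lambda^{+\omega})$ and $\ts_{\lambda^{+n}} = \s^{+n}$ for every $n < \omega$. Being very good, $\ts$ carries an associated very good two-dimensional independence notion $\nf$ on $\K_{\ts}$ (Definition \ref{extendible-def}). The construction theorem (Theorem \ref{constr-thm}) then produces a very good multidimensional independence notion $\is := \is(\nf) \rest \Iis_{<\omega}$ whose underlying class is $\K_{\ts}$ and whose domain is $[\lambda, \lambda^{+\omega})$.

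Next I would apply the brimmed excellence theorem (Theorem \ref{excellence-lem}) to $\is$ with the sequence $\lambda_n := \lambda^{+n}$, so that $\lambda_\omega = \lambda^{+\omega}$ and $[\lambda_0, \lambda_\omega) = \dom(\is)$. The hypothesis $\WGCH([\lambda, \lambda^{+\omega}))$ gives $2^{\lambda_n} < 2^{\lambda_{n+1}}$ for each $n < \omega$, and the identity $2^{<\lambda_{n+1}} = 2^{\lambda_n}$ is automatic since $\lambda_{n+1}$ is a successor of $\lambda_n$. The second alternative offered by Theorem \ref{excellence-lem} (namely $\lambda_{n+1} = \lambda_n^+$) holds trivially, so the conclusion yields that $\isbrim$ has $(\lambda', <\omega)$-existence and $(\lambda', <\omega)$-uniqueness for every $\lambda' \in [\lambda, \lambda^{+\omega})$, in particular at $\lambda$ itself.

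Finally I would invoke Lemma \ref{excellence-general} at the minimum $\lambda$ of $\dom(\is)$. The AEC generated by $(\K_{\is})_\lambda = \K_{\ts_\lambda} = \K_{\s}$ is precisely $\K$, and categoricity of $\s$ in $\lambda$ (combined with the observation that passing to the generated AEC does not change the collection of models of size $\lambda$, cf.\ Lemma \ref{generated-fragmented}) ensures that $\K$ is categorical in $\lambda$. The brimmed properties at $\lambda$ obtained in the previous step are exactly the remaining hypothesis of Lemma \ref{excellence-general}, so $\K$ is excellent. The real work is absorbed into Theorem \ref{excellence-lem}, where the weak-diamond argument of Section \ref{combinatorics-sec} powers the stepping up from $n$-dimensional to $(n+1)$-dimensional uniqueness; the present theorem is essentially a packaging of those results together with the $(<\omega)$-extendibility hypothesis, which is what allows $\s$ to be replaced by a very good frame whose domain stretches all the way up to $\lambda^{+\omega}$.
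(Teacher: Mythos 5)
Your proposal is correct and follows essentially the same route as the paper's proof: apply Fact \ref{very-good-facts}(\ref{very-good-4}) to get the very good categorical frame $\ts$ on $[\lambda,\lambda^{+\omega})$, pass through the construction theorem (Theorem \ref{constr-thm}) to a very good multidimensional relation, use the brimmed excellence theorem (Theorem \ref{excellence-lem}) with $\lambda_n = \lambda^{+n}$ under $\WGCH$, and finish with Lemma \ref{excellence-general}. The extra remarks you add (verifying the cardinal-arithmetic hypothesis and the identification of the generated AEC at $\lambda$ with $\K$) are just details the paper leaves implicit.
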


\begin{proof}
  Let $\Theta \coloneqq  [\lambda, \lambda^{+\omega})$. By Fact \ref{very-good-facts}(\ref{very-good-4}), there is a very good categorical frame $\ts$ such that $\dom (\ts) = \Theta$ and $\ts_{\lambda^{+n}} = \s^{+n}$ for all $n < \omega$. By definition of being very good, there is a very good two-dimensional independence notion $\nf$ on $\K_{\ts}$.
    
    Fix $\is$ and $\is^\ast$ satisfying the conclusion of Theorem \ref{constr-thm}. By Lemma \ref{excellence-lem}, we have that $\is^\ast$ has $(\lambda, <\omega)$-existence and $(\lambda, <\omega)$-uniqueness. Now apply Lemma \ref{excellence-general}.
\end{proof}

In a compact AEC, excellence follows from categoricity (without any cardinal arithmetic hypothesis)

\begin{thm}\LABEL{compact-excellent-thm}
  Let $\K$ be a compact AEC. If $\K$ is categorical in some $\mu > \LS (\K)$, then $\Ksatp{\LS (\K)^{+6}}$ is an excellent AEC.
\end{thm}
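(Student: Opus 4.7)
The plan is to assemble machinery already developed in the paper. Set $\lambda_0 := \LS(\K)^{+6}$. Theorem~\ref{multidim-compact} provides, from the categoricity hypothesis, a nice very good multidimensional independence notion $\is$ with $\K_{\is} = \Ksat_{\ge \lambda_0}$. I would then construct an increasing sequence $\seq{\lambda_n : n < \omega}$ by letting $\lambda_{n+1}$ be the least cardinal strictly above $\lambda_n$ with $2^{\lambda_n} < 2^{\lambda_{n+1}}$; by minimality $2^{<\lambda_{n+1}} = 2^{\lambda_n}$. Set $\lambda_\omega := \sup_{n < \omega} \lambda_n$ and $\Theta := [\lambda_0, \lambda_\omega)$. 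The restriction $\is \rest \Theta$ remains a nice very good multidimensional independence notion: restriction to an interval of cardinals preserves all of the structural axioms, while niceness survives because the brimmed extensions produced in Definition~\ref{nice-def} have the same cardinality as the input systems and therefore stay inside $\Theta$.

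Next, I apply the brimmed excellence theorem (Theorem~\ref{excellence-lem}) under its niceness branch to $\is \rest \Theta$. This yields, for every $\lambda \in \Theta$ and in particular at $\lambda = \lambda_0 = \min(\Theta)$, that the class of brimmed systems $(\is \rest \Theta)^{\mathrm{brim}}$ has $(\lambda, <\omega)$-existence and $(\lambda, <\omega)$-uniqueness. I then invoke Lemma~\ref{excellence-general} at $\lambda_0$; niceness of $\is$ supplies the required categoricity of $\K_{\is}$ at $\lambda_0$, so the AEC $\K'$ generated by $(\K_{\is})_{\lambda_0}$ is excellent.

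To identify $\K'$ with $\Ksatp{\LS(\K)^{+6}}$, note that by Corollary~\ref{compact-unbounded}, $\K$ has amalgamation, no maximal models, and is categorical in a proper class of cardinals, so Fact~\ref{tameness-ap}(\ref{tameness-ap-3}) ensures that $\Ksatp{\lambda_0}$ is an AEC with Löwenheim-Skolem-Tarski number $\lambda_0$. Both $\K'$ and $\Ksatp{\lambda_0}$ are AECs with LST number $\lambda_0$, and their classes of models of cardinality $\lambda_0$ both coincide with the saturated models of $\K$ of that size; Fact~\ref{canon-aec} forces $\K' = \Ksatp{\lambda_0}$, as required.

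The essential conceptual step --- rather than a technical obstacle --- is the use of niceness inside part~(3) of the stepping up lemma (Lemma~\ref{uq-lambda-step}), which propagates $(n+1)$-uniqueness from a single cardinal in $[\lambda_1, \lambda_2)$ to the entire interval and thereby drives the induction in Theorem~\ref{excellence-lem} without invoking the weak diamond. This is the only place where the nice ultraproduct construction from Theorem~\ref{multidim-compact} is genuinely used; all remaining steps are a routine assembly of the generic very-good-independence-relation machinery.
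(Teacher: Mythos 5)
Your proposal is correct and follows essentially the same route as the paper: obtain the nice very good multidimensional independence relation from Theorem \ref{multidim-compact}, apply the brimmed excellence theorem (Theorem \ref{excellence-lem}) via its niceness branch on an interval $[\lambda_0,\lambda_\omega)$ determined by the least-increase-of-the-power-set sequence, and conclude with Lemma \ref{excellence-general}. You are merely more explicit than the paper about constructing the sequence $\seq{\lambda_n : n<\omega}$ (the paper just takes $\Theta=[\LS(\K)^{+6},\beth_\omega(\LS(\K)))$, leaving the sequence implicit) and about identifying the generated AEC with $\Ksatp{\LS(\K)^{+6}}$ via Facts \ref{tameness-ap} and \ref{canon-aec}.
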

\begin{proof}
  By Corollary \ref{compact-unbounded}, $\K$ is categorical in a proper class of cardinals, so we might as well assume that $\mu \ge \beth_{\omega} (\LS (\K))$. Fix $\is$ satisfying the conclusion of Theorem \ref{multidim-compact}. Let $\Theta \coloneqq  [\LS (\K)^{+6}, \beth_\omega (\LS (\K)))$. By Lemma \ref{excellence-lem}, we have that $\isbrim$ has $(\LS (\K)^{+6}, <\omega)$-existence and $(\LS (\K)^{+6}, <\omega)$-uniqueness. The result now follows from Lemma \ref{excellence-general}.
\end{proof}

An appropriate subclass of saturated models of an excellent class will always have primes in the following sense:

\begin{defin}[{\cite[III.3.2]{shelahaecbook}}]\LABEL{prime-def}\myindex{prime triple}\myindex{has primes}
  Let $\K$ be an abstract class.

  \begin{enumerate}
  \item A \emph{prime triple} is a triple $(a, M, N)$ such that $M \lea N$, $a \in |N|$, and whenever $\gtp (b / M; N') = \gtp (a / M; N)$, there exists a $\K$-embedding $f: N \xrightarrow[M]{} N'$ such that $f (b) = a$.
  \item $\K$ \emph{has primes} if for any $p \in \gS (M)$ there exists a prime triple $(a, M, N)$ such that $p = \gtp (a / M; N)$.
  \end{enumerate}
\end{defin}

\begin{thm}[Structure of excellent AECs]\LABEL{excellent-struct}
  If $\K$ is an excellent AEC, then $\K$ is not empty, has amalgamation, joint embedding, no maximal models and is $\LS(\K)$-tame. Moreover, $\Ksatp{\LS (\K)^+}$ has primes.
\end{thm}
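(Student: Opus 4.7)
Fix an excellent multidimensional independence relation $\is$ with $\K_{\is}=\K$, as supplied by the definition. The first four properties are immediate from low-dimensional consequences of excellence combined with Lemma \ref{basic-small-dim}. Indeed, $\is$ has $n$-extension and $n$-uniqueness for every $n<\omega$: the case $n=0$ of existence yields $\K\neq\emptyset$, $0$-uniqueness gives joint embedding, $1$-extension gives no maximal models, and $1$-uniqueness gives amalgamation. The $\LS(\K)$-locality assertion is a direct application of Lemma \ref{excellence-local-lem}, whose hypotheses are all subsumed under excellence (we have $\K_{\is}$ is an AEC, $\is$ has extension and uniqueness, and $\is$ is very good, so in particular satisfies every clause of that definition).

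For the ``moreover'' statement, the strategy is to invoke the prime extension theorem (Theorem \ref{prime-ext-thm}) and transfer the resulting primality from a low-cardinality system up to $\Ksatp{\LS(\K)^+}$ using the full strength of two- and three-dimensional uniqueness for $\is$. Let $\s:=\s(\nf(\is))$, which by the definition of very good is a good frame on $\K$. Given $M\in\Ksatp{\LS(\K)^+}$ and $p\in\gS(M)$, use local character of $\s$ to fix $M_0\lea M$ of size $\LS(\K)$ over which $p$ does not fork, and then choose $M_0\lea M_0'\lea M$ with $M_0'\in\Ksatp{\LS(\K)^+}$ of size $\LS(\K)^+$. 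Inside $\Ksatp{\LS(\K)^+}$ at cardinality $\LS(\K)^+$ the relevant models are unique (they are $\LS(\K)^+$-saturated of minimal size), which supplies the categoricity hypothesis required by Theorem \ref{prime-ext-thm}. Apply that theorem at $n=1$ to obtain a prime $\Ps(1)$-system $\langle M_0',N_0\rangle$ with a distinguished $a\in N_0$ realizing the $\s$-nonforking extension of $p\rest M_0$ to $M_0'$.

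Next, use $2$-uniqueness of $\is$ (equivalently, uniqueness of the good two-dimensional independence notion $\nf(\is)$) to independently amalgamate $N_0$ with $M$ over $M_0'$, producing $N\gea M$ that contains $a$. By construction $\gtp(a/M;N)$ does not $\s$-fork over $M_0$ and restricts to $p\rest M_0$, so $\gtp(a/M;N)=p$ by $\s$-uniqueness. To verify that $(a,M,N)$ is a prime triple in $\Ksatp{\LS(\K)^+}$, take $b\in N'$ with $\gtp(b/M;N')=p$, pull the data back to size $\LS(\K)^+$ using local character plus saturation of $M$, and apply primality of $\langle M_0',N_0\rangle$ together with the uniqueness of independent amalgamation to produce the required $\K$-embedding $N\to N'$ over $M$ sending $a$ to $b$.

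The main obstacle is this last step: cleanly transferring primality from the small chain $\langle M_0',N_0\rangle$ to the large triple $(a,M,N)$. Concretely, one must cover $M$ by an increasing chain of saturated extensions of $M_0'$ and, at each stage, use $2$-uniqueness (and at one stage, strong continuity, to pass to the limit) to stretch the given embedding $M_0'\to M$ and the competing embedding $M_0'a\to N'$ into a common embedding $N\to N'$ respecting $a\mapsto b$. The whole construction is essentially a three-dimensional amalgamation argument carried out inside $\Ksatp{\LS(\K)^+}$, which is why excellence (as opposed to a merely two-dimensional hypothesis) is needed.
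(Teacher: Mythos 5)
Your treatment of the first part is correct and matches the paper: non-emptiness, no maximal models, joint embedding and amalgamation come from $1$-extension, $0$-uniqueness and $1$-uniqueness via Lemma \ref{basic-small-dim}, and locality is Lemma \ref{excellence-local-lem}. The gap is in the ``moreover'' clause, and it is twofold. First, Theorem \ref{prime-ext-thm} does not deliver what you ask of it at $n=1$: a prime $I$-system is prime \emph{over a system of models} --- it embeds over $\m\rest J$ into any other independent completion --- and there is no mechanism in that theorem to prescribe that the prime system contain a distinguished element $a$ realizing a given type $p\rest M_0$ (indeed, a prime extension of $M_0'$ can only realize types realized in \emph{every} extension). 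Producing prime \emph{triples} $(a,M_0,N_0)$ in the sense of Definition \ref{prime-def} from prime extension for systems is a separate, nontrivial step; the paper gets ``$\K^\ast_{\lambda^+}$ has primes'' by quoting \cite[III.4.9]{shelahaecbook} (see also \cite[3.6]{prime-categ-mlq}), a density-of-reduced-pairs argument, after first restricting $\is$ to the fragmented AEC $\K^\ast$ with $\K^\ast_{\le\lambda}=\K_\lambda$ and $\K^\ast_{>\lambda}=\Ksatp{\lambda^+}$ (this restriction is also what makes the categoricity-in-$\lambda^+$ hypothesis of Theorem \ref{prime-ext-thm} literally available; your appeal to uniqueness of saturated models gestures at this but you never set up a relation whose domain contains both $\lambda$ and $\lambda^+$).

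Second, your transfer to an arbitrary $M\in\Ksatp{\LS(\K)^+}$ does not go through as written. You take \emph{some} independent amalgam $N$ of $N_0$ and $M$ over $M_0'$ and hope to prove $(a,M,N)$ prime using $2$-uniqueness. But uniqueness of independent amalgamation only says that two amalgams of the same span embed into a \emph{common} extension over the system; it does not produce an embedding of your chosen $N$ into the competitor $N'$ over $M$ sending $a$ to $b$, and an arbitrary independent amalgam is in general not prime. What is actually needed is a model that is \emph{prime over the independent system} consisting of $M_0$, $M$ and $N_0$ (with $(a,M_0,N_0)$ a prime triple at the base); this is how the paper concludes, and it requires prime extension for systems not only at $\lambda^+$ but in all relevant cardinalities, obtained by lifting the $(\lambda^+,<\omega)$-prime extension given by Theorem \ref{prime-ext-thm} via the standard arguments of \cite[\S5]{sh87b}, together with local character of $\nf(\is^\ast)$ to locate $M_0,N_0$ with $\nfs{M_0}{M}{N_0}{N}$. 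Your closing ``three-dimensional amalgamation'' sketch is pointing in the right direction, but without the prime-over-the-triangle-system step it does not close the argument.
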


\begin{proof}
  Let $\lambda \coloneqq  \LS (\K)$. Let $\is$ be an excellent multidimensional independence relation with $\K_{\is} = \K$. $\K$ is not empty and has no maximal models because $\is$ has $1$-extension, and $\K$ has amalgamation and joint embedding because $\is$ has $1$-uniqueness (see Lemma \ref{basic-small-dim}). By Lemma \ref{excellence-local-lem}, $\K$ is $\LS (\K)$-tame. Let $\is^\ast$ be the restriction of $\is$ to the fragmented AEC $\K^\ast$ such that $\K^{\ast}_{\le \lambda} = \K_\lambda$ and $\K^{\ast}_{>\lambda} = \Ksatp{\lambda^+}$. It is easy to check that $\is^\ast$ is still a very good multidimensional independence relation with extension and uniqueness. By Theorem \ref{prime-ext-thm}, $\is^\ast$ has $(\lambda^+, <\omega)$-prime extension. By standard arguments similar to those in \cite[\S5]{sh87b}, one gets that $\is^\ast_{\ge \lambda^+}$ has prime extension.

  Now by \cite[III.4.9]{shelahaecbook} (see also \cite[3.6]{prime-categ-mlq}), $\K^\ast_{\lambda^+}$ has primes. It remains to check that $\K^\ast_{>\lambda^+}$ has primes. Let $M \in \K^\ast_{>\lambda^+}$ and let $p \in \gS (M)$. Let $N \in \K^\ast$ be such that $M \lea N$ and $N$ realizes $p$, say with $a$. By local character for $\nf \coloneqq  \nf (\is^\ast)$, there exists $M_0, N_0 \in \K_\lambda^\ast$ such that $a \in |N_0|$ and $\nfs{M_0}{M}{N_0}{N}$. Since $\K_{\lambda^+}^\ast$ has primes, we may pick $N_0^\ast \lea N_0$ such that $(a, M_0, N_0)$ is a prime triple in $\K^\ast$. By monotonicity, we still have $\nfs{M_0}{M}{N_0^\ast}{N}$, so by prime extension (applied to the independent system $\m$ consisting of $M_0$, $M$, and $N_0^\ast$), there exists a prime model $N^\ast$ over $\m$, which can be taken to be contained inside $N$. Now check that $(a, M, N^\ast)$ is a prime triple.
\end{proof}

\newpage

\section{Excellence and categoricity: the main theorems}\LABEL{main-sec}

In this section, we combine the results derived so far about excellence with known categoricity transfers to obtain the main theorems of this paper. In addition to Facts \ref{morley-omitting}, \ref{categ-unbounded-fact}, the facts we will use about categoricity are:

\begin{fact}\LABEL{categ-old-facts}
  Let $\K$ be an $\LS (\K)$-tame AEC with amalgamation, and arbitrarily large models. Assume that $\K$ is categorical in \emph{some} $\mu > \LS (\K)$. Then:

  \begin{enumerate}
  \item \cite[10.9]{Vas17}\footnote{The main ideas of the proof appear in \cite{ap-universal-apal, categ-primes-mlq}.} If $\K$ has primes then $\K$ is categorical in all $\mu' \ge \mu$.
  \item \cite[10.3,10.4]{Vas17}\footnote{The upward part of the transfer (i.e.\ getting categoricity in $\mu' \ge \mu$) is due to Grossberg and VanDieren \cite{tamenesstwo, tamenessthree}.} If $\K$ is categorical in $\LS (\K)$ and $\mu$ is a successor, then $\K$ is categorical in \emph{all} $\mu' \ge \LS (\K)$.
  \end{enumerate}
\end{fact}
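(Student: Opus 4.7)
The plan is to reduce both statements to the good-frame machinery developed earlier in the paper. By standard arguments (see \cite[5.7]{categ-saturated-afml}), the hypotheses of amalgamation, arbitrarily large models, and categoricity in some $\mu > \LS(\K)$ combined with $\LS(\K)$-locality give $\LS(\K)$-superstability and $\LS(\K)$-symmetry; applying Fact \ref{tame-frame-constr} then yields a good $\LS(\K)$-frame on $\Ksat_{(\LS(\K), \mu]}$. Moreover, by Fact \ref{tameness-ap}(\ref{tameness-ap-3}), $\Ksatp{\LS(\K)^+}$ is itself an AEC with L\"owenheim-Skolem-Tarski number $\LS(\K)^+$, inheriting amalgamation and locality from $\K$.

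For part (1), set $\K^\ast := \Ksatp{\LS(\K)^+}$. Using Fact \ref{morley-omitting} in the manner of \cite[10.9]{downward-categ-tame-apal}, one first shows that $\K^\ast$ is categorical in $\mu$ (and hence, by Fact \ref{categ-unbounded-fact}, in a proper class of cardinals). Because primes in $\K$ relativize to $\K^\ast$, one can now transfer categoricity upward as follows: given $M, N \in \K^\ast_{\mu'}$ with $\mu' \ge \mu$, build parallel filtrations $\seq{M_i : i < \mu'}$, $\seq{N_i : i < \mu'}$ by extending at successor stages through prime triples $(a_i, M_i, M_{i+1})$ and $(b_i, N_i, N_{i+1})$ whose types are matched via uniqueness of nonforking extensions, and at limits by taking unions. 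A back-and-forth over these filtrations yields $M \cong N$, establishing categoricity in $\mu'$.

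For part (2), the additional hypothesis of categoricity in $\LS(\K)$ together with $\mu$ being a successor allows one to invoke the Grossberg--VanDieren upward transfer \cite{tamenesstwo, tamenessthree}. The successor assumption ensures that the unique model of size $\mu$ is saturated and hence model-homogeneous, while categoricity in $\LS(\K)$ combined with $\LS(\K)$-locality lets one propagate model-homogeneity upward: any two models of size $\mu' \ge \mu$ realize the same types over all $\LS(\K)$-small submodels, and locality upgrades this to full isomorphism. The downward half (categoricity on $[\LS(\K), \mu]$) uses the good frame above together with the absence of Vaughtian pairs coming from successor categoricity to show that every model in the interval is saturated; see \cite[10.3,10.4]{downward-categ-tame-apal} for the full execution.

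The main obstacle in both parts is not any single hard step but rather the careful coordination of the local (good-frame) notion of forking with the global categoricity arguments: one must verify that nonforking extensions constructed over models of cardinality $\LS(\K)$ lift coherently to arbitrary cardinalities, which is exactly what $\LS(\K)$-locality supplies via Remark \ref{good-frame-local-rmk}. Once this coherence is secured, the prime-model back-and-forth of part (1) and the Grossberg--VanDieren argument of part (2) proceed by fairly standard, if intricate, filtration and amalgamation manipulations.
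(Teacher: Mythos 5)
This statement is quoted in the paper as a \emph{Fact}, with no internal proof: the paper simply cites \cite[10.9]{downward-categ-tame-apal} and \cite[10.3,10.4]{downward-categ-tame-apal} (with the original ideas credited to \cite{ap-universal-apal, categ-primes-mlq} and \cite{tamenesstwo, tamenessthree}). So your sketch has to stand on its own, and as written it has genuine gaps in both parts. In part (1), the heart of the matter is showing that \emph{every} model of cardinality $\mu' \ge \mu$ is saturated (equivalently, that the relevant good frame is unidimensional / has no Vaughtian pairs). Your parallel-filtration back-and-forth through prime triples does not address this: with only $\LS(\K)^+$-saturation of $M$ and $N$ available, nothing guarantees that a type realized at a successor stage of the filtration of $M$ is realized in $N$ at all, so the back-and-forth can simply get stuck. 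In the cited argument, primes are used precisely to derive unidimensionality from categoricity in $\mu$ (roughly: a prime model over $Ma$ realizing only the prescribed type lets one build a model omitting a type and contradict categoricity/saturation), and it is unidimensionality that then pushes categoricity upward; that mechanism is entirely absent from your sketch, and "matching types via uniqueness of nonforking extensions" is not a substitute for it.

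In part (2), the step "any two models of size $\mu' \ge \mu$ realize the same types over all $\LS(\K)$-small submodels, and locality upgrades this to full isomorphism" is not a valid inference. Locality ($\LS(\K)$-tameness) says that two \emph{types over the same model} agreeing on all small restrictions are equal; it does not convert type-equivalence of two structures into an isomorphism, and no back-and-forth is available without first knowing the models are saturated. The actual Grossberg--VanDieren upward transfer is an induction on cardinals above $\mu$, showing saturation is preserved by using tameness to transfer the absence of Vaughtian pairs (obtained from categoricity in the successor $\mu$) from $\mu$ upward, together with a separate downward argument for the interval $[\LS(\K), \mu]$. Your opening paragraph (superstability, the good frame from Fact \ref{tame-frame-constr}, and $\Ksatp{\LS(\K)^+}$ being an AEC) is a correct starting point, but the two transfer arguments themselves, which are the entire content of the cited results, are not supplied by the sketch.
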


We obtain the following categoricity transfer for excellent classes (recall Definition \ref{hanf-def}):

\begin{thm}\LABEL{categ-excellence-thm}
  Let $\K$ be an excellent AEC. If $\K$ is categorical in \emph{some} $\mu > \LS (\K)$, then there exists $\chi < \hanf{\LS (\K)}$ such that $\K$ is categorical in \emph{all} $\mu' \ge \min (\mu, \chi)$. If in addition $\K$ is categorical in $\LS (\K)$, then $\K$ is categorical in all $\mu' > \LS (\K)$.
\end{thm}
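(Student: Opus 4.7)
Set $\lambda := \LS(\K)$. The strategy is to reduce to the AEC $\K^\dagger := \Ksatp{\lambda^+}$ of $\lambda^+$-saturated models, apply the upward categoricity transfer from primes there, and pull the result back to $\K$ via Morley's omitting type theorem.

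By Theorem \ref{excellent-struct}, $\K$ has amalgamation, joint embedding, no maximal models, is $\lambda$-local, and $\K^\dagger$ has primes; moreover by Fact \ref{tameness-ap}(\ref{tameness-ap-3}), $\K^\dagger$ is an AEC with $\LS(\K^\dagger) = \lambda^+$, inheriting amalgamation and locality from $\K$. By Fact \ref{categ-unbounded-fact}, $\K$ is categorical in a proper class of cardinals, hence has arbitrarily large models. A standard argument using amalgamation and categoricity in $\mu > \lambda$ shows the unique model in $\K_\mu$ is $\mu$-saturated (hence $\lambda^+$-saturated), so $\K^\dagger$ is also categorical in $\mu$, and more generally in every categoricity cardinal of $\K$ strictly above $\lambda^+$. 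I would then apply Fact \ref{morley-omitting} to obtain $\chi < \hanf{\lambda}$ such that $\K_{\ge \chi} = \K^\dagger_{\ge \chi}$.

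Next, apply Fact \ref{categ-old-facts}(1) to $\K^\dagger$, using a categoricity cardinal $\nu > \lambda^+$ (which exists by the unbounded categoricity; this handles the edge case $\mu = \lambda^+$ by choosing $\nu > \mu$), to conclude $\K^\dagger$ categorical in all $\mu' \ge \nu$. The key resolution argument transfers this back to $\K$: for any $M \in \K_{\mu'}$ with $\mu' > \mu$, I would write $M$ as the union of an increasing continuous $\K$-chain of submodels of size $\mu$ (using resolvability of $\K$ together with $\LS(\K) = \lambda < \mu$); by categoricity in $\mu$, each submodel is isomorphic to the unique saturated model of size $\mu$, hence $\lambda^+$-saturated; since $\K^\dagger$ is an AEC closed under unions of chains, $M \in \K^\dagger$. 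Thus $\K_{\mu'} = \K^\dagger_{\mu'}$ for all $\mu' \ge \mu$. Combined with $\K_{\ge \chi} = \K^\dagger_{\ge \chi}$ and categoricity of $\K^\dagger$ in the full interval $[\min(\mu, \chi), \infty)$ (the latter obtained from $\nu$-categoricity of $\K^\dagger$ via the same resolution argument inside $\K^\dagger$), we conclude $\K$ is categorical in all $\mu' \ge \min(\mu, \chi)$.

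For the second part, assume $\K$ is additionally categorical in $\lambda$. By the first part together with Fact \ref{categ-unbounded-fact}, $\K$ is categorical in some successor cardinal $\nu > \lambda$. Applying Fact \ref{categ-old-facts}(2) to $\K$ (which has amalgamation, $\lambda$-locality, and arbitrarily large models) then gives categoricity of $\K$ in all $\mu' \ge \lambda$.

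The main obstacle is the resolution argument: for singular $\mu'$ with small cofinality (specifically $\cf(\mu') \le \mu$), writing $M \in \K_{\mu'}$ as a union of chain of size-$\mu$ submodels requires extra care, and the argument must thread the $\chi$ bound through $\hanf{\lambda}$ rather than the a priori larger $\hanf{\LS(\K^\dagger)} = \hanf{\lambda^+}$. Excellence, and in particular that $\K^\dagger$ is a genuine AEC (not merely an abstract class with primes), is what makes the union closure available and ultimately delivers $\chi < \hanf{\lambda}$.
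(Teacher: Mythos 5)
Your route is the paper's (reduce to $\Ksatp{\LS (\K)^+}$ via Theorem \ref{excellent-struct}, transfer categoricity there using primes and Fact \ref{categ-old-facts}, return to $\K$ via Fact \ref{morley-omitting}), but there is a genuine gap at the parenthetical claim that $\Ksatp{\LS (\K)^+}$ is categorical on all of $[\min (\mu, \chi), \infty)$, ``obtained from $\nu$-categoricity \dots via the same resolution argument''. A resolution argument can only show that a model is a union of saturated pieces and hence is $\LS (\K)^+$-saturated; it identifies $\K_{\mu'}$ with $\Ksatp{\LS (\K)^+}_{\mu'}$, but it produces no isomorphism between two $\LS (\K)^+$-saturated models of the same cardinality $\mu' < \nu$. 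Fact \ref{categ-old-facts}(1) only gives categoricity of $\Ksatp{\LS (\K)^+}$ in $\mu' \ge \nu$, so in the case $\mu \ge \hanf{\LS (\K)}$ (which forces $\chi < \mu$, hence $\min (\mu, \chi) = \chi$) you have established nothing about the cardinals in $[\chi, \nu)$, and this downward portion is exactly the nontrivial content of the first statement. The missing step is to apply Fact \ref{categ-old-facts}(2) to $\Ksatp{\LS (\K)^+}$ itself: it is categorical in its own Löwenheim-Skolem-Tarski number $\LS (\K)^+$ (existence and uniqueness of the saturated model of cardinality $\LS (\K)^+$, available from amalgamation, joint embedding and the stability supplied by excellence or by categoricity in $\mu$), and by part (1) it is categorical in some successor cardinal; hence it is categorical in \emph{all} $\mu' \ge \LS (\K)^+$, which covers $[\min (\mu, \chi), \nu)$. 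This is precisely how the corresponding step is carried out in the proof of Theorem \ref{compact-main-thm}.

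A secondary problem: a model of cardinality $\mu' > \mu^+$ can never be written as the union of a chain of $\K$-submodels of cardinality exactly $\mu$ (a strictly increasing chain of sets of size $\mu$ has length at most $\mu^+$, so such a union has size at most $\mu^+$); the issue is not only small cofinality. The easy repair avoids chains: given $M \in \K_{\ge \mu}$ and $M_0 \lea M$ of cardinality at most $\LS (\K)$, use the Löwenheim-Skolem axiom to find $N \lea M$ of cardinality $\mu$ containing $M_0$; coherence gives $M_0 \lea N$, and $N$ is saturated by $\mu$-categoricity, so every $p \in \gS (M_0)$ is realized in $N$, hence in $M$. Thus every model of cardinality at least $\mu$ is $\LS (\K)^+$-saturated, with no cofinality considerations. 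Your treatment of the second statement (Fact \ref{categ-old-facts}(2) applied to $\K$, using categoricity in $\LS (\K)$ and in some successor) is correct and matches the paper.
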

\begin{proof}
  By Fact \ref{categ-unbounded-fact}, we may assume that $\mu > \LS (\K)^+$. By Fact \ref{categ-old-facts} and Theorem \ref{excellent-struct}, $\Ksatp{\LS (\K)^+}$ is categorical on a tail of cardinals. By Fact \ref{morley-omitting}, $\K$ is categorical in all $\mu' \ge \min (\mu, \chi)$. If $\K$ is categorical in $\LS (\K)$, then it will be categorical in a successor, hence by Fact \ref{categ-old-facts} again, $\K$ will be categorical in all cardinals above $\LS (\K)$.
\end{proof}

When working inside a $(<\omega)$-extendible categorical good $\lambda$-frame, it is known how to transfer categoricity across finite successors of $\lambda$, so we can be more precise:

\begin{lem}\LABEL{categ-excellence-lem}
  Let $\s$ be a $(<\omega)$-extendible categorical good $\lambda$-frame. Let $\K$ be the AEC generated by $\K_{\s}$. If there exists $n < \omega$ such that the AEC generated by $\K_{\s^{+n}}$ is excellent, then:
  
    \begin{enumerate}
        \item $\K$ has arbitrarily large models.
    
        \item If in addition $\K$ is categorical in \emph{some} $\mu > \lambda$, then $\K$ is categorical in \emph{all} $\mu' > \lambda$, $\K_{\ge \mu}$ has amalgamation, no maximal models, and is $\mu$-tame.
  \end{enumerate}
\end{lem}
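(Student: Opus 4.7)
Plan:

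For (1): Theorem \ref{excellent-struct} tells us that the excellent AEC $\K^\ast$ has no maximal models, hence arbitrarily large models, with $\LS (\K^\ast) = \lambda^{+n}$. Since $\K_{\s^{+n}}$ is a sub-abstract class of $\K$, minimality of the AEC generated by $\K_{\s^{+n}}$ (Fact \ref{aec-generated}) makes $\K^\ast$ itself a sub-abstract class of $\K$. Thus $|\K^\ast| \subseteq |\K|$, and $\K$ inherits arbitrarily large models.

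For (2): Assume $\K$ is categorical in some $\mu > \lambda$. I first reduce to the case $\mu > \lambda^{+n}$: if $\mu \le \lambda^{+n}$, I form the combined good frame $\ts$ of domain $[\lambda, \lambda^{+\omega})$ from the chain $\seq{\s^{+m} : m < \omega}$ (using Fact \ref{very-good-facts}(\ref{very-good-2},\ref{very-good-4}), after passing to a sufficient tail to ensure that $\s^{+k}$ is very good) and apply Fact \ref{frame-categ-succ} to propagate $\K$-categoricity from $\mu$ upward throughout $[\lambda, \lambda^{+\omega})$, obtaining categoricity in some $\mu' > \lambda^{+n}$. Thus I may assume $\mu > \lambda^{+n}$. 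By Part (1), $\K^\ast$ has a model $N^\ast$ of size $\mu$; since $N^\ast \in \K_\mu$, categoricity of $\K$ in $\mu$ forces $N^\ast$ to be the unique $\K^\ast$-model of size $\mu$ up to isomorphism, so $\K^\ast$ is categorical in $\mu$. As $\mu > \LS (\K^\ast)$, Theorem \ref{categ-excellence-thm} now yields $\K^\ast$ categorical in a tail of cardinals.

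The final step is to transfer categoricity and the structural properties back to $\K$. The key claim is that for all sufficiently large $\mu'$, every $\K$-model of size $\mu'$ lies in $\K^\ast$; given $N \in \K_{\mu'}$, one resolves $N$ as a $\lea$-tower of submodels of size $\lambda^{+n}$ and uses the $\lambda^{+n}$-locality of $\K^\ast$ (Theorem \ref{excellent-struct}) together with $\K^\ast$-categoricity at $\mu'$ to show $N$ is isomorphic to the unique $\K^\ast$-model of that size. This gives $\K$-categoricity in a tail, and combined with $\K$-categoricity in $\lambda$ (from $\s$ being categorical) plus Fact \ref{morley-omitting} and the frame-based propagation used in the initial reduction, extends to all $\mu' > \lambda$. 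Amalgamation, no maximal models, and $\mu$-locality in $\K_{\ge \mu}$ then descend directly from the corresponding properties of $\K^\ast$ (Theorem \ref{excellent-struct}), since $\K^\ast$-embeddings are $\K$-embeddings and orbital type calculus in $\K^\ast$ agrees with that of $\K$ on $\K^\ast$-members.

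The main obstacle is the identification of large $\K$-models with $\K^\ast$-models: $\K^\ast$ need not include every $\K$-model of its cardinality a priori, and the resolution-plus-locality argument sketched above is the technical heart, leveraging that the unique $\K^\ast$-model of size $\mu'$ is maximally $\lambda^{+n}$-saturated within $\K$. A secondary obstacle is the reduction step when $\mu \le \lambda^{+n}$, which relies on piecing together the chain of extended frames into a single good frame with long enough domain to feed Fact \ref{frame-categ-succ}.
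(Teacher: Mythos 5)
Your part (1), your reduction to the case $\mu > \lambda^{+n}$, and your deduction that $\K^\ast$ is categorical in $\mu$ and hence (via Theorem \ref{categ-excellence-thm}) in a tail of cardinals all match the paper's argument. The genuine gap is in your final transfer step. Your ``key claim'' --- that every $\K$-model $N$ of sufficiently large size lies in $\K^\ast$, proved by resolving $N$ into a tower of submodels of size $\lambda^{+n}$ and invoking $\lambda^{+n}$-locality of $\K^\ast$ together with $\K^\ast$-categoricity --- does not work as stated. The models appearing in such a resolution are arbitrary members of $\K_{\lambda^{+n}}$, and there is no reason for them to belong to $\K_{\s^{+n}}$, the class of special (saturated-type) models from which $\K^\ast$ is generated. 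So the resolution does not exhibit $N$ as a directed union of $\K^\ast$-models, locality of $\K^\ast$ (a statement about types over models \emph{of} $\K^\ast$) has nothing to act on, and categoricity of $\K^\ast$ in $\mu'$ says nothing about $\K$-models outside $\K^\ast$; ruling out non-saturated large models of $\K$ is precisely the nontrivial content at this point. Your fallback appeal to Fact \ref{morley-omitting} is also unavailable: it requires amalgamation for $\K$, which is not among the hypotheses of the lemma (amalgamation of $\K_{\ge \mu}$ is part of the \emph{conclusion}).

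The paper closes exactly this gap by quoting Fact \ref{frame-categ-succ}: once $\K^\ast$, the AEC generated by the $\lambda^{+n}$-level of the frame's class, is categorical in $\lambda^{+(n+1)}$ (a successor of a cardinal in the frame's domain), that fact delivers categoricity of $\K$ in \emph{all} $\mu' > \lambda$, and it is only after this identification that amalgamation, no maximal models, and $\mu$-locality of $\K_{\ge \mu}$ descend from $\K^\ast$ via Theorem \ref{excellent-struct}. You invoke Fact \ref{frame-categ-succ} only for the initial reduction when $\mu \le \lambda^{+n}$ and then attempt to reprove its content by hand in the main step; as written that attempt fails, so the proof must call on Fact \ref{frame-categ-succ} (or an argument of comparable strength) there as well.
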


\begin{proof}
  Fix $n < \omega$ such that the AEC $\K^{*}$ generated by $\K_{\s^{+n}}$ is excellent. 

  Let 

  \begin{enumerate}
      \item[$(\ast)_{1}$] $\K_{\ell}$ be the AEC generated by $\s^{+\ell}$, so $\K_{\ell}$ is categorical in $\lambda^{+ \ell}.$ 
  \end{enumerate}

  Now, 

  \begin{enumerate}
      \item[$(\ast)_{2}$] 

      \begin{enumerate}
          \item[$\boxplus$]

          \begin{enumerate}
              \item[(a)] if $\ell < \omega,$ then $\K_{\ell}$ is categorical in $\lambda^{\ell +1}$ iff $\K_{\ell}$ is categorical in $\lambda^{\ell +  2},$ 

              \item[(b)] if $0 < \ell < n$ then $\K_{\ell -1}$ is categorical in $\lambda^{+ \ell +1}$ iff $\K_{\ell}$ is categorical in $\lambda^{+ \ell + 1}$ iff $M \in \K_{\lambda^{+ \ell + 1}} \Rightarrow M \in \K_{\ell - 1} \cong M \in \K_{\ell}.$
          \end{enumerate}
      \end{enumerate}
  \end{enumerate}

  [Why? By \cite[Theorem A.9]{Vas17}, here the doubts about its use in \ref{frame-categ-succ} do not arise.]

  Hence, 

  \begin{enumerate}
      \item[$(\ast)$]  either for all $\ell < k < \omega,$ $\K_{\ell}$ is categorical in $\lambda^{+1}$ \underline{or} for all $\ell < k < \omega,$ $K_{\ell}$ is not categorical in $\lambda^{+ k}.$
  \end{enumerate}

  So if the first possibility in $(\ast)$ holds, by\ref{categ-excellence-thm} we are done. So we can assume the second one in $(\ast)$ holds, hence necessarily $\mu \geq \lambda^{ + \omega}.$ Still $\K_{\ast} = \K_{n}$ is categorical in $\lambda^{+ n},$ hence by \ref{categ-excellence-thm} it is categorical in $\lambda^{n+ n +1}$ but this contradicts ``the second case in $(\ast)$ holds'', so we are done. 
\end{proof}

Thus assuming some instances of the weak diamond, we obtain the following eventual categoricity transfer for $(<\omega)$-extendible frames:

\begin{cor}\LABEL{categ-extendible-cor}
  Let $\s$ be a $(<\omega)$-extendible categorical good $\lambda$-frame. Let $\K$ be the AEC generated by $\K_{\s}$. Assume $\WGCH ([\lambda^{+n}, \lambda^{+\omega}))$ holds for some $n < \omega$. Then $\K$ has arbitrarily large models and if $\K$ is categorical in \emph{some} $\mu > \lambda$, then $\K$ is categorical in \emph{all} $\mu' > \lambda$ and moreover $\K_{\ge \mu}$ has amalgamation, no maximal models, and is $\mu$-tame.
\end{cor}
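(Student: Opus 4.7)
The plan is to reduce the statement to Theorem \ref{excellence-wgch} and Lemma \ref{categ-excellence-lem}, by first replacing $\s$ by a sufficiently high finite successor frame so that it becomes \emph{very good}. Set $m := \max(n, 3)$. Since $\s$ is $(<\omega)$-extendible, in particular $\s$ is $(m+1)$-extendible, and hence $\s^{+(m-3)}$ is $4$-extendible. By Fact \ref{very-good-facts}(\ref{very-good-2}), $\s^{+m} = \left(\s^{+(m-3)}\right)^{+3}$ is very good. Unwinding the definition of $(<\omega)$-extendibility, for each $k < \omega$ the frame $(\s^{+m})^{+k} = \s^{+(m+k)}$ is well-defined and $(\s^{+m})^{+(k-1)}$ is extendible, so $\s^{+m}$ is itself $(<\omega)$-extendible. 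Categoricity is preserved by the successor construction (built into Facts \ref{splus-fact} and \ref{very-good-facts}), so $\s^{+m}$ is a categorical very good $\lambda^{+m}$-frame.

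Next, since $m \ge n$ and $m$ is finite, the hypothesis $\WGCH([\lambda^{+n}, \lambda^{+\omega}))$ gives $\WGCH([\lambda^{+m}, \lambda^{+\omega}))$, and $\lambda^{+m+\omega} = \lambda^{+\omega}$. Thus Theorem \ref{excellence-wgch} applied to $\s^{+m}$ shows that the AEC $\K^\ast$ generated by $\K_{\s^{+m}}$ is excellent.

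Finally, we apply Lemma \ref{categ-excellence-lem} with the ``$n$'' there taken to be our $m$: we have a $(<\omega)$-extendible categorical good $\lambda$-frame $\s$, and the AEC $\K^\ast$ generated by $\K_{\s^{+m}}$ is excellent. The lemma then yields exactly the stated conclusion: $\K$ has arbitrarily large models, and if $\K$ is categorical in some $\mu > \lambda$ then it is categorical in all $\mu' > \lambda$, with $\K_{\ge \mu}$ having amalgamation, no maximal models, and being $\mu$-local. There is no real obstacle beyond this bookkeeping: the heavy lifting is done by Theorem \ref{excellence-wgch} (manufacturing excellence from weak diamond plus $(<\omega)$-extendibility of a \emph{very good} frame) and by Lemma \ref{categ-excellence-lem} (transferring categoricity once excellence is available). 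The only subtlety is the initial shift by $m$ to ensure that the frame on which we invoke the excellence theorem is very good, which is precisely why we choose $m = \max(n, 3)$.
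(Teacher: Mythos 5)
Your proof is correct and follows essentially the same route as the paper, which simply sets $\ts := \s^{+(n+3)}$, cites Fact \ref{very-good-facts}(\ref{very-good-2}) for very goodness, applies Theorem \ref{excellence-wgch} to get excellence, and then invokes Lemma \ref{categ-excellence-lem}. Your choice of $m = \max(n,3)$ instead of $n+3$ is an inessential variation in the bookkeeping.
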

 
\begin{proof}
  Let $\ts \coloneqq  \s^{+(n + 3)}$. By Fact \ref{very-good-facts}(\ref{very-good-2}), $\ts$ is very good. By Theorem \ref{excellence-wgch}, the AEC $\K^\ast$ generated by $\K_{\ts}$ is excellent. Now apply Lemma \ref{categ-excellence-lem}.
\end{proof}

Specializing to compact AECs, we get:

\begin{thm}\LABEL{compact-main-thm}
  Let $\K$ be a compact AEC. Let $\mu > \LS (\K)$. If $\K$ is categorical in $\mu$, then there exists $\chi < \hanf{\LS (\K)}$ such that $\K$ is categorical in all $\mu' \ge \min (\mu, \chi)$.
\end{thm}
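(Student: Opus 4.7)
The strategy is to combine the excellence theorem for compact AECs (Theorem \ref{compact-excellent-thm}) with the categoricity transfer for excellent AECs (Theorem \ref{categ-excellence-thm}) and Morley's omitting type theorem (Fact \ref{morley-omitting}). The core idea is to pass to the class of sufficiently saturated models, where all the heavy excellence machinery applies, derive eventual categoricity there, and then use omitting-type arguments to transfer it back to $\K$.

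First, invoke Corollary \ref{compact-unbounded} to obtain that $\K$ has amalgamation, no maximal models, is $\LS(\K)$-local, and is categorical in a proper class of cardinals. Next, apply Theorem \ref{compact-excellent-thm}: the class $\K^\ast := \Ksatp{\LS(\K)^{+6}}$ is an excellent AEC with $\LS(\K^\ast) = \LS(\K)^{+6}$. Since $\K$ is categorical in unboundedly many cardinals, pick $\mu_1 > \LS(\K)^{+6}$ in which $\K$ is categorical; the unique $\mu_1$-model, being saturated, lies in $\K^\ast$, so $\K^\ast$ is categorical in $\mu_1$ as well.

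Theorem \ref{categ-excellence-thm} applied to $\K^\ast$ then produces some $\chi^\ast < \hanf{\LS(\K^\ast)}$ above which $\K^\ast$ is categorical (taking $\mu_1$ larger than $\chi^\ast$ if needed). To propagate this back to $\K$, I would apply Fact \ref{morley-omitting}, iterated through the finitely many saturation levels between $\LS(\K)^+$ and $\LS(\K)^{+6}$, to secure a threshold $\chi_0 < \hanf{\LS(\K)}$ such that every $\K$-model of size $\ge \chi_0$ is $\LS(\K)^{+6}$-saturated, i.e.\ lies in $\K^\ast$. Setting $\chi := \max(\chi_0, \chi^\ast)$ then gives categoricity of $\K$ in all $\mu' \ge \chi$. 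For the subcase $\mu < \chi$ (where we still need categoricity across the gap $[\mu, \chi)$), one invokes that $\Ksatp{\LS(\K^\ast)^+}$ has primes by Theorem \ref{excellent-struct}, combined with Fact \ref{categ-old-facts}(1), to get upward categoricity transfer from $\mu$ within an appropriate saturated subclass reaching back to $\K$.

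\textbf{Main obstacle.}  The delicate point is ensuring $\chi < \hanf{\LS(\K)}$ rather than $\chi < \hanf{\LS(\K)^{+6}}$: a naive six-fold iteration of Fact \ref{morley-omitting} yields the latter and may exceed $\hanf{\LS(\K)}$ when cardinal arithmetic is wild. Overcoming this hurdle likely exploits the $(<\LS(\K))$-shortness of $\K$ (Fact \ref{compact-short}) in an essential way, so that Galois-type counting is controlled uniformly at the level of $\LS(\K)$ and the Hanf-number computation need only be performed once. A second minor subtlety is bookkeeping the transfer back across saturation levels in the subcase $\mu < \chi$, which must be arranged so that the prime structure of $\Ksatp{\LS(\K^\ast)^+}$ can be leveraged without losing control of the threshold.
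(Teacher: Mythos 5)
Your overall architecture (compactness $\Rightarrow$ excellence of $\Ksatp{\LS(\K)^{+6}}$ $\Rightarrow$ categoricity transfer inside the saturated class $\Rightarrow$ Morley to return to $\K$) is the right one, but the step you yourself flag as the ``main obstacle'' is a genuine gap, and your proposed fix does not close it. Iterating Fact \ref{morley-omitting} through the saturation levels $\LS(\K)^+,\dots,\LS(\K)^{+6}$ only produces a threshold below $\hanfe{\LS(\K)^{+5}}$ (each application is relative to the Löwenheim--Skolem number of the class it is applied to), and there is no argument given for why $(<\LS(\K))$-shortness would collapse this back below $\hanfe{\LS(\K)}$; ``uniform Galois-type counting'' is a hope, not a proof. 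Moreover, your treatment of the subcase $\mu < \chi$ via primes in $\Ksatp{\LS(\K)^{+7}}$ and Fact \ref{categ-old-facts}(1) does not address the actual issue there, which is showing that \emph{arbitrary} models of $\K$ of size $\mu' \in [\mu,\chi)$ are saturated enough to fall into a class already known to be categorical in $\mu'$; in your setup $\K^\ast$ is only known categorical above $\chi^\ast$, which can be enormous.

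The paper resolves both points by moving the categoricity \emph{down} the saturation hierarchy before invoking Morley, rather than trying to force high saturation of arbitrary models. Concretely: since $\K$ is categorical in a proper class of cardinals (Corollary \ref{compact-unbounded}) and $\Ksatp{\LS(\K)^{+6}}$ is eventually categorical by Theorems \ref{compact-excellent-thm} and \ref{categ-excellence-thm}, the class $\Ksatp{\LS(\K)^+}$ is categorical in some high-enough \emph{successor} cardinal; it is also categorical in its own Löwenheim--Skolem number $\LS(\K)^+$ (uniqueness of saturated models), and it is local with amalgamation and no maximal models. Hence Fact \ref{categ-old-facts}(2) yields categoricity of $\Ksatp{\LS(\K)^+}$ in \emph{all} $\mu' \ge \LS(\K)^+$. (You could equally have used that $\K^\ast$ is categorical in $\LS(\K^\ast)$ together with the second clause of Theorem \ref{categ-excellence-thm}, instead of extracting only an eventual bound $\chi^\ast$.) Now a \emph{single} application of Fact \ref{morley-omitting}, at the level $\LS(\K)^+$, gives $\chi < \hanfe{\LS(\K)}$ such that every model of size $\ge \chi$ is $\LS(\K)^+$-saturated; and for $\mu' \ge \mu$ the same saturation holds by a trivial downward Löwenheim--Skolem reflection (an omitted type over a small submodel would reflect to a non-saturated model of size $\mu$, contradicting categoricity in $\mu$). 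Every model of size $\ge \min(\mu,\chi)$ therefore lies in $\Ksatp{\LS(\K)^+}$, which is categorical there, and no iteration of Morley, no shortness-based counting, and no separate prime argument are needed.
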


\begin{proof}
  By Corollary \ref{compact-unbounded}, $\K$ has amalgamation, no maximal models, and is categorical in a proper class of cardinals. We will also use without comments Fact \ref{tameness-ap}, which says in particular that $\Ksatp{\lambda}$ is an AEC for any $\lambda > \LS (\K)$. By Theorem \ref{compact-excellent-thm}, $\Ksatp{\LS (\K)^{+6}}$ is excellent. By Theorem \ref{categ-excellence-thm}, $\Ksatp{\LS (\K)^{+6}}$ is categorical in all high-enough cardinals. Since $\K$ itself is categorical in a proper class of cardinals, this implies that $\Ksatp{\LS (\K)^+}$ is also categorical in all high-enough cardinals, and so in particular in a high-enough successor cardinal. Now $\Ksatp{\LS (\K)^+}$ is categorical in $\LS (\K)^+ = \LS (\Ksatp{\LS (\K)^+})$, and hence by Fact \ref{categ-old-facts} it is categorical in \emph{all} $\mu' \ge \LS (\K)^+$. Now apply Morley's omitting type theorem for AECs (Fact \ref{morley-omitting}) to get the desired conclusion about categoricity in $\K$.
\end{proof}
\begin{cor}\LABEL{compact-main-cor}
  Let $\K$ be a compact AEC. If $\K$ is categorical in \emph{some} $\mu \ge \hanf{\LS (\K)}$, then $\K$ is categorical in \emph{all} $\mu' \ge \hanf{\LS (\K)}$.
\end{cor}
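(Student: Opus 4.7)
The plan is to deduce this corollary as an immediate consequence of the stronger Theorem~\ref{compact-main-thm}, which is stated just above it. Theorem~\ref{compact-main-thm} already provides, from categoricity in any $\mu > \LS(\K)$, a threshold $\chi < \hanf{\LS(\K)}$ such that $\K$ is categorical in \emph{all} $\mu' \ge \min(\mu, \chi)$. So no fresh structural work is needed; the corollary is a matter of unpacking the min.

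Explicitly, I would proceed as follows. Apply Theorem~\ref{compact-main-thm} to the hypothesis that $\K$ is categorical in $\mu$ (noting $\mu \ge \hanf{\LS(\K)} > \LS(\K)$) to obtain $\chi < \hanf{\LS(\K)}$ such that $\K$ is categorical in all $\mu' \ge \min(\mu, \chi)$. Since by assumption $\mu \ge \hanf{\LS(\K)} > \chi$, we have $\min(\mu, \chi) = \chi$. Thus $\K$ is categorical in all $\mu' \ge \chi$, and in particular in all $\mu' \ge \hanf{\LS(\K)}$, which is what we wanted.

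The substance, of course, lies in Theorem~\ref{compact-main-thm}, not in this corollary; the main obstacle (already handled earlier) is establishing excellence of $\Ksatp{\LS(\K)^{+6}}$ via Theorem~\ref{compact-excellent-thm}, transferring categoricity upward along the excellent AEC using Fact~\ref{categ-old-facts}, then pushing back down from the saturated subclass to $\K$ itself via Morley's omitting types theorem (Fact~\ref{morley-omitting}). For the corollary itself, the only small point worth noting is that the phrasing ``in \emph{all} $\mu' \ge \hanf{\LS(\K)}$'' is genuinely weaker than the theorem's conclusion: the theorem gives categoricity already from $\chi$ upward, but in the corollary we only care about $\mu' \ge \hanf{\LS(\K)}$, which is automatic.
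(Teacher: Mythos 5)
Your argument is correct and matches the paper exactly: the paper's proof of Corollary \ref{compact-main-cor} is simply the observation that it is a special case of Theorem \ref{compact-main-thm}, and your unpacking of the $\min(\mu,\chi)$ (using $\mu \ge \hanf{\LS(\K)} > \chi$ and $\hanf{\LS(\K)} > \LS(\K)$) is the right, routine verification.
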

\begin{proof}
  This is a special case of Theorem \ref{compact-main-thm}.
\end{proof}
\begin{cor}\LABEL{compact-main-cor-2} \
  \begin{enumerate}
  \item Let $\K$ be an AEC and let $\kappa > \LS (\K)$ be a strongly compact cardinal. If $\K$ is categorical in \emph{some} $\mu \ge \hanf{\kappa}$, then $\K$ is categorical in \emph{all} $\mu' \ge \hanf{\kappa}$.
  
  \item Let $T$ be a theory in $\Ll_{\kappa, \omega}$, $\kappa$ a strongly compact cardinal. If $T$ is categorical in \emph{some} $\mu \ge \hanf{|T| + |\tau (T)| + \kappa}$, then $T$ is categorical in \emph{all} $\mu' \ge \hanf{|T| + |\tau (T)| + \kappa}$.
  
  \item Let $\psi$ be an $\Ll_{\kappa, \omega}$ sentence $\kappa$-strongly compact. If $\psi$ is categorical in \emph{some} $\mu \ge \hanf{\kappa}$, then $\psi$ is categorical in \emph{all} $\lambda' \ge \hanf{\kappa}$.
  \end{enumerate}
\end{cor}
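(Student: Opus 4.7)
The plan is to reduce each of the three statements to an application of Corollary \ref{compact-main-cor} by producing an appropriate compact AEC from the given data. The main work has already been done; what remains is bookkeeping about Löwenheim--Skolem--Tarski numbers and Hanf thresholds.

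For part (1), let $\K' := \K_{\ge \kappa}$. By Fact \ref{compact-fact}(2), $\K'$ is a $\kappa$-compact AEC, and in particular $\LS (\K') \le \kappa$ (essentially $\LS (\K') = \kappa$ since we have discarded models below $\kappa$). Since $\K$ is categorical in $\mu \ge \hanf{\kappa} \ge \hanf{\LS (\K')}$, so is $\K'$. Apply Corollary \ref{compact-main-cor} to $\K'$: it is categorical in every $\mu' \ge \hanf{\LS (\K')}$, which since $\hanf{\cdot}$ is monotone is implied by $\mu' \ge \hanf{\kappa}$. Categoricity of $\K'$ in $\mu'$ is the same as categoricity of $\K$ in $\mu'$ for $\mu' \ge \kappa$, and $\hanf{\kappa} \ge \kappa$, so this gives what we want.

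For part (2), let $\Phi$ be the smallest fragment of $\Ll_{\kappa, \omega}$ containing $T$, so $|\Phi| \le |T| + |\tau (T)| + \kappa$. By Fact \ref{compact-fact}(1), the class $\K$ of models of $T$ ordered by $\lee_{\Phi}$, restricted to models of size at least $\kappa$, is a $\kappa$-compact AEC with $\LS (\K) \le |T| + |\tau (T)| + \kappa$. Applying Corollary \ref{compact-main-cor} (and again using monotonicity of $\hanf{\cdot}$) yields the desired transfer. Part (3) is the special case of part (2) where $T = \{\psi\}$, so $|T| + |\tau (\psi)| + \kappa = \kappa + |\tau (\psi)|$; after absorbing $|\tau (\psi)|$ into $\kappa$ (which is strongly compact, so in particular uncountable and far larger than any reasonable vocabulary associated to a single $\Ll_{\kappa, \omega}$-formula --- strictly, one should either include the vocabulary in the threshold or observe that by standard presentation one may take $|\tau (\psi)| < \kappa$), the statement follows.

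There is no real obstacle here: all the substantial content --- excellence of $\Ksatp{\LS (\K)^{+6}}$, the categoricity transfer for excellent classes, Morley's omitting type theorem, and the structural consequences of categoricity in a compact AEC --- has been established in the preceding sections and is packaged inside Corollary \ref{compact-main-cor}. The only minor point to watch is that one verifies the Hanf thresholds match up under the reductions, which is immediate from monotonicity of $\hanf{\cdot}$.
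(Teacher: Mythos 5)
Your proposal is correct and follows essentially the same route as the paper: parts (1) and (2) are reductions to Corollary \ref{compact-main-cor} via Fact \ref{compact-fact}, and part (3) reduces to part (2) by noting that a single $\Ll_{\kappa,\omega}$ sentence involves a vocabulary of size strictly less than $\kappa$. The extra bookkeeping you supply about Löwenheim--Skolem--Tarski numbers and monotonicity of $\hanf{\cdot}$ is exactly the content the paper leaves implicit.
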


\begin{proof}
  The first two parts are by Fact \ref{compact-fact} and Corollary \ref{compact-main-cor}. The third part follows from the second because an $\Ll_{\kappa, \omega}$ sentence has a vocabulary of size strictly less than $\kappa$.
\end{proof}

Regarding AECs with amalgamation, the following lemma says that (assuming WGCH) with a little bit of tameness, we can transfer categoricity. Note that the proof only uses amalgamation below the categoricity cardinal, but transfers it above.

\begin{lem}\LABEL{categ-tame-lem}
  Let $\K$ be an AEC with arbitrarily large models. Assume $\WGCH ([\LS (\K), \LS (\K)^{+\omega}))$. Let $\mu > \LS (\K)^+$ and assume that $\K_{<\max (\mu, \LS (\K)^{+\omega})}$ has amalgamation and no maximal models. If $\Ksat_{(\LS (\K), \LS (\K)^{+\omega})}$ is $\LS (\K)$-tame and $\K$ is categorical in $\mu$, then there is $\chi < \hanf{\LS (\K)}$ such that $\K$ is categorical in all $\mu' \ge \min (\mu, \chi)$.
\end{lem}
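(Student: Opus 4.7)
The strategy is to construct a categorical $(<\omega)$-extendible good $\lambda$-frame with $\lambda := \LS(\K)^+$, apply Corollary~\ref{categ-extendible-cor} to derive eventual categoricity in the AEC $\K^\ast := \Ksatp{\lambda}$ generated by this frame, and then transfer categoricity back to $\K$ using Morley's omitting type theorem (Fact~\ref{morley-omitting}).

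For the frame, I would invoke Fact~\ref{tame-frame-constr} with $\theta := \LS(\K)^{+\omega}$. Our amalgamation, no maximal models, and $\LS(\K)$-locality hypotheses match what the fact requires, and categoricity in $\mu$ supplies the needed stability provided $\mu \le \LS(\K)^{+\omega}$ (so that $\theta \ge \mu$). This yields a categorical good frame $\s$ on $\Ksat_{(\LS(\K), \LS(\K)^{+\omega})}$, and we set $\s_0 := \s_\lambda$.

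Next I would show by induction on $n < \omega$ that $\s_0$ is $n$-extendible. In the inductive step, $\s_0^{+n}$ is a categorical good $\lambda^{+n}$-frame which, by canonicity of categorical good frames (Fact~\ref{good-frame-canon}), coincides with $\s_{\lambda^{+n}}$; in particular $\lambda^{+(n+1)} \in \dom(\s_0^{+n})$. Since the WGCH hypothesis yields $2^{\lambda^{+n}} < 2^{\lambda^{+(n+1)}}$, Fact~\ref{extendible-wd} gives that $\s_0^{+n}$ is extendible, completing the induction.

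With $(<\omega)$-extendibility established, Corollary~\ref{categ-extendible-cor} shows that $\K^\ast$ (equal to $\Ksatp{\lambda}$ by Fact~\ref{canon-aec} combined with Fact~\ref{tameness-ap}(\ref{tameness-ap-3})) has arbitrarily large models and, once categorical in some cardinal above $\lambda$, is categorical in all such cardinals. The unique model of $\K_\mu$ is $\lambda$-saturated by standard arguments using amalgamation, stability, and categoricity, so $\K^\ast$ is categorical in $\mu$ and hence in all $\mu' > \lambda$. Applying Fact~\ref{morley-omitting} to $\K$ yields $\chi < \hanf{\LS(\K)}$ such that every model in $\K_{\ge \chi}$ is $\lambda$-saturated, whence $\K_{\mu'} = \K^\ast_{\mu'}$ for $\mu' \ge \chi$, giving categoricity of $\K$ in all such $\mu'$; combining this with categoricity at $\mu$ and upward transfer within $\K^\ast$ for $\mu' \in [\mu, \chi)$ yields the conclusion for all $\mu' \ge \min(\mu, \chi)$. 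The main obstacle is the case $\mu > \LS(\K)^{+\omega}$, where Fact~\ref{tame-frame-constr} does not apply directly since $\theta \ge \mu$ is required but locality only extends to $\LS(\K)^{+\omega}$; this should be surmountable by a preliminary downward reduction (using the amalgamation hypothesis below $\mu$) to obtain an auxiliary categoricity cardinal within $(\LS(\K), \LS(\K)^{+\omega}]$ before building the frame.
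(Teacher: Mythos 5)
Your argument matches the paper's proof up to and including the identification of $\K^\ast$ with $\Ksatp{\LS(\K)^+}$, the extendibility induction via $\WGCH$ and Fact~\ref{extendible-wd}, the appeal to Corollary~\ref{categ-extendible-cor}, and the final use of Fact~\ref{morley-omitting}. The problem is exactly the case you defer to the last sentence, $\mu > \LS(\K)^{+\omega}$, which is the typical case and is not an afterthought. Your proposed patch --- a ``preliminary downward reduction\ldots to obtain an auxiliary categoricity cardinal within $(\LS(\K), \LS(\K)^{+\omega}]$'' --- is not available from the hypotheses. Nothing in the paper (or in the standard toolkit under these assumptions) transfers categoricity \emph{downward} from an arbitrary $\mu$ to a cardinal below $\LS(\K)^{+\omega}$: Fact~\ref{categ-unbounded-fact} needs full $\LS(\K)$-locality of $\K$ and only gives unboundedly many categoricity cardinals, Fact~\ref{categ-old-facts} needs primes or a successor categoricity cardinal together with locality, and a \cite{sh394}-style transfer again needs a successor. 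Such a reduction is essentially a strong downward categoricity transfer, i.e.\ the kind of statement this lemma is a step towards proving, so ``should be surmountable'' hides a genuine gap.

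The paper closes this case differently: it does not need $\theta \ge \mu$ with locality all the way up to $\mu$. The content of Fact~\ref{tame-frame-constr} (via its proof) is that categoricity in $\mu$ together with amalgamation and no maximal models in $\K_{<\mu}$ yields $\LS(\K)$-superstability and symmetry, and from these one builds the categorical good frame on $\Ksat_{(\LS(\K),\LS(\K)^{+\omega})}$ using only $\LS(\K)$-locality on \emph{that} interval; the categoricity cardinal may lie far above the frame's domain, and the lemma's hypothesis of amalgamation in $\K_{<\max(\mu,\LS(\K)^{+\omega})}$ is tailored precisely for this. So the correct move is to run the frame construction directly in both cases, not to relocate the categoricity cardinal. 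A secondary point you gloss over: Fact~\ref{morley-omitting} requires amalgamation in all of $\K$, which is not among the lemma's hypotheses above $\max(\mu,\LS(\K)^{+\omega})$; the paper first extracts from Corollary~\ref{categ-extendible-cor} that $\K_{\ge\mu}$ has amalgamation (and that $\K$ is categorical in all $\mu'\ge\mu$), combines this with the assumed amalgamation below $\mu$, and only then invokes Morley's omitting type theorem.
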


\begin{proof}
  We use without further comments (Fact \ref{tameness-ap}) that for any $\lambda \in (\LS (\K), \mu]$, $\Ksatp{\lambda}$ is an AEC with L{\"o}wenheim-Skolem-Tarski number $\lambda$. In particular, the model of cardinality $\mu$ is saturated. By Fact \ref{tame-frame-constr}, there is a (categorical) good frame $\s$ on $\Ksat_{[\LS (\K)^+, \LS (\K)^{+\omega})}$. By Fact \ref{extendible-wd}, $\s$ is $(<\omega)$-extendible. By Corollary \ref{categ-extendible-cor}, $\Ksatp{\LS (\K)^{+}}$ is categorical in all $\mu' \ge \LS (\K)^+$ and has amalgamation above $\mu$. In particular, $\K$ is categorical in all $\mu' \ge \mu$ and $\K_{\ge \mu}$ has amalgamation. Since we knew that $\K_{<\mu}$ had amalgamation, we have that $\K$ has amalgamation. By Morley's omitting type theorem for AECs (Fact \ref{morley-omitting}), there is $\chi < \hanf{\LS (\K)}$ such that $\K$ is categorical in all $\mu' \ge \min (\mu, \chi)$.
\end{proof}

Since tameness can be derived from high-enough categoricity, we obtain:

\begin{thm}\LABEL{ap-main}
  Let $\K$ be an AEC with arbitrarily large models. Let $\mu \ge \aleph_{\LS (\K)^+}$. Assume there exists unboundedly-many $\lambda < \aleph_{\LS (\K)^+}$ such that $\WGCH ([\lambda, \lambda^{+\omega}))$ holds. If $\K$ is categorical in $\mu$ and $\K_{<\mu}$ has amalgamation and no maximal models, then there exists $\chi < \aleph_{\LS (\K)^+}$ such that $\K$ is categorical in all $\mu' \ge \min (\hanf{\chi}, \mu)$.
\end{thm}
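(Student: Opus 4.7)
I would reduce to Lemma \ref{categ-tame-lem} applied to an appropriate sub-AEC of saturated models. Since $\cf(\aleph_{\LS(\K)^+}) = \LS(\K)^+ > \LS(\K)$, Fact \ref{tameness-ap}(\ref{tameness-ap-2}) with $\lambda = \aleph_{\LS(\K)^+}$ yields $\chi_0 \in (\LS(\K), \aleph_{\LS(\K)^+})$ such that $\Ksat_{(\chi_0, \aleph_{\LS(\K)^+})}$ is $\chi_0$-local. Using the unboundedness of WGCH-cardinals below $\aleph_{\LS(\K)^+}$ together with $\cf(\aleph_{\LS(\K)^+}) > \omega$, I would then pick $\lambda \in (\chi_0, \aleph_{\LS(\K)^+})$ with $\lambda^{+\omega} < \aleph_{\LS(\K)^+}$ and $\WGCH([\lambda, \lambda^{+\omega}))$ holding.

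Next, I would consider the AEC $\K' := \Ksatp{\lambda}$, which by Fact \ref{tameness-ap}(\ref{tameness-ap-3}) has $\LS(\K') = \lambda$, and verify the hypotheses of Lemma \ref{categ-tame-lem} for $\K'$ with the same categoricity cardinal $\mu$. The verifications: the unique saturated model of $\K$ in $\mu$ is $\lambda$-saturated and hence in $\K'$, so $\K'$ is categorical in $\mu$ and has arbitrarily large models; $\K'_{<\mu}$ inherits amalgamation and no maximal models from $\K_{<\mu}$ via standard constructions of saturated extensions inside $\K$; $\Ksat_{(\lambda, \lambda^{+\omega})}$ is $\lambda$-local since $\chi_0 < \lambda$ and $\Ksat_{(\chi_0, \aleph_{\LS(\K)^+})}$ is $\chi_0$-local; and $\WGCH([\lambda, \lambda^{+\omega}))$ holds by our choice of $\lambda$. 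Lemma \ref{categ-tame-lem} then produces $\chi' < \hanf{\lambda}$ such that $\K'$ is categorical in all $\mu' \ge \min(\mu, \chi')$; inspection of its proof (via Corollary \ref{categ-extendible-cor} applied to the internally-built $(<\omega)$-extendible frame) further delivers amalgamation for $\K'_{\ge \mu}$.

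Finally, I would transfer categoricity from $\K'$ back to $\K$. Combining $\K'_{\ge \mu}$'s amalgamation with $\K_{<\mu}$'s gives amalgamation throughout $\K$. For any $\mu' \ge \min(\mu, \chi')$, every $\lambda$-saturated model of $\K$ of size $\mu'$ lies in $\K'$ and is hence unique, so to establish categoricity of $\K$ at such $\mu'$ it suffices to show every model of $\K$ in $\K_{\mu'}$ is $\lambda$-saturated. I would establish this by iterating Fact \ref{morley-omitting} through the saturation hierarchy $\{\Ksatp{\xi} : \LS(\K) < \xi \le \lambda\}$, using at each step that the saturated model of size $\mu$ witnesses the needed saturation at the starting cardinal. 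Each application stays below $\hanf{\xi} \le \hanf{\lambda}$. Setting $\chi := \lambda < \aleph_{\LS(\K)^+}$ then gives the conclusion with threshold $\min(\hanf{\chi}, \mu)$.

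The main obstacle is the last step: climbing from $\LS(\K)^+$-saturation (which is what Fact \ref{morley-omitting} yields in one shot) to full $\lambda$-saturation via iterated applications. Each step is a routine use of Morley's omitting type theorem applied to $\Ksatp{\xi}$, but carefully tracking the cumulative thresholds through the iteration, and handling limit stages of the saturation hierarchy when $\lambda$ itself is a limit cardinal, is where the technical care is concentrated. Everything else is bookkeeping built on top of Lemma \ref{categ-tame-lem} and the locality-from-high-cofinality trick used to choose $\chi_0$ and $\lambda$.
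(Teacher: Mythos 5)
Your first paragraph reproduces the paper's opening move: apply Fact \ref{tameness-ap}(\ref{tameness-ap-2}) at $\aleph_{\LS (\K)^+}$ (legitimate since $\cf{\aleph_{\LS (\K)^+}} = \LS (\K)^+$) to get the locality threshold $\chi_0$, then use the unboundedness hypothesis and the uncountable cofinality of $\aleph_{\LS (\K)^+}$ to pick a WGCH cardinal $\lambda$ above it. But the paper then applies Lemma \ref{categ-tame-lem} to the \emph{tail} $\K_{\ge \chi_0}$ (in effect $\K_{\ge \lambda}$ for such a $\lambda$), whose categoricity in any $\mu' \ge \lambda$ literally \emph{is} categoricity of $\K$ in $\mu'$; there is nothing to transfer back, and the proof ends there. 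By instead applying the lemma to $\K' := \Ksatp{\lambda}$ you create a transfer-back obligation, and that is where your argument has a genuine gap. The claim ``combining $\K'_{\ge \mu}$'s amalgamation with $\K_{<\mu}$'s gives amalgamation throughout $\K$'' is circular: $\K'_{\ge \mu}$ consists only of the $\lambda$-saturated models of size $\ge \mu$, so it yields amalgamation for $\K_{\ge \mu}$ only if you already know that every model of $\K$ of size $\ge \mu$ is $\lambda$-saturated --- which is exactly what the subsequent Morley iteration is supposed to establish. Meanwhile Fact \ref{morley-omitting}, both in its first application to $\K$ and in each application to $\Ksatp{\xi}$ in your iteration, requires amalgamation for the whole class it is applied to, in particular above $\mu$, and the theorem's hypothesis only provides amalgamation below $\mu$. (Even granting amalgamation separately in $\K_{<\mu}$ and $\K_{\ge\mu}$, spans mixing the two ranges would still need an argument.) So the last step is not just delicate bookkeeping; as sketched it does not go through.

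The repair is to drop the detour and follow the paper: check the hypotheses of Lemma \ref{categ-tame-lem} directly for $\K_{\ge \lambda}$ --- arbitrarily large models, $\LS (\K_{\ge \lambda}) = \lambda$, $\WGCH ([\lambda, \lambda^{+\omega}))$ by the choice of $\lambda$, amalgamation and no maximal models in $(\K_{\ge \lambda})_{<\mu} = \K_{[\lambda, \mu)}$ by hypothesis (note $\lambda^{+\omega} < \aleph_{\LS (\K)^+} \le \mu$), and $\lambda$-locality of the relevant saturated models from the $\chi_0$-locality you already extracted, since saturation and orbital types computed in $\K_{\ge \lambda}$ agree with those computed in $\K$ for models of size $> \lambda$. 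The lemma then gives $\chi' < \hanf{\lambda}$ such that $\K_{\ge \lambda}$, hence $\K$, is categorical in all $\mu' \ge \min (\mu, \chi')$, and setting $\chi := \lambda < \aleph_{\LS (\K)^+}$ yields the statement. All the amalgamation-above-$\mu$ and Morley's-omitting-type-theorem work is then carried out inside the proof of Lemma \ref{categ-tame-lem}, so your entire final paragraph becomes unnecessary.
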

\begin{proof}
  By Fact \ref{tameness-ap}(\ref{tameness-ap-2}) (where $\lambda$ there stands for $\aleph_{\LS (\K)^+}$ here), there is $\chi_0 < \aleph_{\LS (\K)^+}$ such that $\Ksat_{(\chi_0, \aleph_{\LS (\K)^+})}$ is $\chi_0$-tame. By Lemma \ref{categ-tame-lem} applied to $\K_{\ge \chi_0}$, we get the result.
\end{proof}

\begin{cor}\LABEL{ap-main-cor}
  Assume $\WGCH$ (see Definition \ref{wgch-def}). Let $\K$ be an AEC with amalgamation. If $\K$ is categorical in \emph{some} $\mu \ge \hanf{\aleph_{\LS (\K)^+}}$, then $\K$ is categorical in \emph{all} $\mu' \ge \hanf{\aleph_{\LS (\K)^+}}$.
\end{cor}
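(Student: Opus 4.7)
The plan is to deduce this corollary directly from Theorem \ref{ap-main}, with the global $\WGCH$ hypothesis trivializing the cardinal arithmetic side and the amalgamation hypothesis supplying the two remaining structural prerequisites (arbitrarily large models; no maximal models below $\mu$). Concretely, I will set $\kappa := \aleph_{\LS (\K)^+}$, observe that $\hanf{\kappa} \le \mu$ by hypothesis, verify the hypotheses of Theorem \ref{ap-main}, extract the threshold $\chi < \kappa$ it produces, and then argue by monotonicity of the Hanf function that $\min (\hanf{\chi}, \mu) < \hanf{\kappa}$, so that the conclusion of Theorem \ref{ap-main} already gives categoricity on the desired tail.

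First, because $\WGCH$ holds globally, we have $\WGCH ([\lambda, \lambda^{+\omega}))$ for \emph{every} cardinal $\lambda$; in particular there are unboundedly-many $\lambda < \kappa$ witnessing the cardinal arithmetic assumption of Theorem \ref{ap-main}. Next I need $\K$ to have arbitrarily large models: since $\mu \ge \hanf{\kappa} > \hanf{\LS (\K)}$ and $\K$ has a model of cardinality $\mu$, Shelah's presentation theorem (the Hanf number for AECs) yields arbitrarily large models. Finally I need $\K_{<\mu}$ to have amalgamation (immediate from the global amalgamation hypothesis on $\K$) and no maximal models. For the latter, given $M \in \K_{<\mu}$, apply amalgamation together with categoricity in $\mu$: using that the unique model $N_\mu$ of cardinality $\mu$ is universal over every smaller model that embeds into it (a standard consequence of amalgamation + joint embedding in the presence of a model of that size), realize $M$ inside $N_\mu$ and take a $\K$-substructure of intermediate cardinality $<\mu$ using the Löwenheim--Skolem axiom, yielding a proper $\K$-extension of $M$ inside $\K_{<\mu}$. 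The only slightly delicate point here is joint embedding with the fixed $\mu$-sized model, which follows from categoricity in $\mu$ plus amalgamation by a standard argument.

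Having verified the hypotheses, Theorem \ref{ap-main} produces $\chi < \aleph_{\LS (\K)^+} = \kappa$ such that $\K$ is categorical in all $\mu' \ge \min (\hanf{\chi}, \mu)$. Since $\chi < \kappa$, monotonicity of the Hanf function gives $\hanf{\chi} < \hanf{\kappa} \le \mu$, hence $\min (\hanf{\chi}, \mu) = \hanf{\chi} < \hanf{\kappa}$. Therefore every $\mu' \ge \hanf{\kappa}$ satisfies $\mu' \ge \min (\hanf{\chi}, \mu)$, so $\K$ is categorical in all such $\mu'$, as required.

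The main (and only) genuine obstacle is the verification that $\K_{<\mu}$ has no maximal models; everything else is essentially bookkeeping with Hanf numbers and unpacking of $\WGCH$. I expect the argument sketched above to go through, but if the joint embedding issue proves subtle, a clean workaround is to pass to the subclass $\K'$ consisting of models that $\K$-embed into $N_\mu$, together with models of size $\ge \mu$; this subclass inherits amalgamation and categoricity, has no maximal models below $\mu$ by construction, and continues to have arbitrarily large models, so Theorem \ref{ap-main} applies to $\K'$ and the conclusion transfers back to $\K$ by categoricity in $\mu$.
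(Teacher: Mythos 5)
Your main argument has a genuine gap at exactly the point you yourself flag as ``slightly delicate'': joint embedding with the categoricity model does \emph{not} follow from amalgamation plus categoricity in $\mu$. Amalgamation only lets you amalgamate spans over a common base, so $\K$ may split into several components under the relation ``$M$ and $N$ embed into a common model''; only one component contains the model $N_\mu$ of cardinality $\mu$, and a component disjoint from it can consist entirely of models of cardinality $<\mu$ and can contain maximal ones. Nothing forces such models to embed into $N_\mu$, so there is no ``standard argument'' giving the joint embedding you need, and your derivation that $\K_{<\mu}$ has no maximal models fails; indeed that hypothesis of Theorem \ref{ap-main} can simply be false for $\K$ itself.

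Your fallback workaround is, in substance, the paper's actual proof: the paper partitions $\K$ by the equivalence relation ``$M$ and $N$ embed into a common model'' (transitivity uses amalgamation), works inside the unique class with arbitrarily large models --- which has joint embedding and hence no maximal models --- applies Theorem \ref{ap-main} to that class, and then observes that categoricity transfers back to $\K$ because every model of $\K$ of sufficiently large cardinality lies in that class; it cites \cite[10.13]{indep-aec-apal} for the details. If you take this route you still owe the verifications you pass over: that the subclass is an AEC (closure under unions of chains, coherence, Löwenheim--Skolem), that it inherits amalgamation, categoricity in $\mu$ and arbitrarily large models, and why its eventual categoricity yields that of $\K$. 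Two minor points: your use of the Hanf number to get arbitrarily large models is exactly what the paper does and is fine; and strictness in $\hanf{\chi} < \hanf{\aleph_{\LS (\K)^+}}$ is unnecessary --- $\hanf{\chi} \le \hanf{\aleph_{\LS (\K)^+}} \le \mu$ already gives $\min (\hanf{\chi}, \mu) \le \hanf{\aleph_{\LS (\K)^+}}$, which is all the conclusion requires.
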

\begin{proof}
  This is a special case of Theorem \ref{ap-main}. It is well-known that AECs with a model in $\hanf{\LS (\K)}$ have arbitrarily large models. Moreover, by considering the equivalence relation ``$M$ and $N$ embed into a common model'', we can partition the AEC into disjoint classes, each of which have joint embedding. One can then work inside the unique class with arbitrarily large models, which will have joint embedding and no maximal models. It is then easy to see that the original class must also be categorical in all high-enough cardinals. See for example \cite[10.13]{indep-aec-apal} for the details of this argument.
\end{proof}

\begin{cor}\LABEL{a78}
    Assume $2^{\theta} < 2^{\theta^{+}}$ for all cardinals $\theta.$ Let $\K$ be an AEC and let $\kappa > \LS(\K)$ be a measureable cardinal. If $\K$ is categorical in some $\mu \geq \beth_{(2^{\aleph_{\kappa^{+}}})^{}+},$ then $\K$ is categorical in \it{all} $\mu' \geq \beth_{(2^{\aleph_{\kappa^{+}}})^{}+}.$
\end{cor}

\begin{proof}
  By the main theorem of \cite{Sh:E102}, $\K_{[\kappa, \mu)}$ has amalgamation and $\K_{\geq \kappa}$ has no maximal models. Apply Theorem \ref{ap-main}.
\end{proof}

We can also obtain results from completely local hypotheses about the number of models in the $\lambda^{+n}$'s. This was already stated in \cite[III.12.43]{shelahaecbook}. Below, $\Ii (\K, \lambda)$ denotes the number of models (up to isomorphism) in $\K_\lambda$. On $\muunif$, see \cite[VII.0.4]{shelahaecbook2} for the definition and \cite[VII.9.4]{shelahaecbook2} on what is known (morally, $\muunif (\lambda^+, 2^\lambda) = 2^{\lambda^+}$ when $2^\lambda < 2^{\lambda^+}$).\myindex{$\Ii (\K, \lambda)$}\myindex{$\muunif$}

\begin{remark}\LABEL{14.38}
    Recall for comparison \cite[7.12]{Sh:734} 
    
    \begin{enumerate}
        \item[$(\boxplus)$] For some cardinal $\lambda_{\ast} < \chi$ and a cardinal $\lambda_{\ast \ast} < \beth_{1, 1}(\lambda_{\ast}^{+\omega})$ above $\lambda_{\ast}, \K$ is categorical in every cardinal $\lambda \geq \lambda_{\ast \ast}$ but in no $\lambda \in (\lambda_{\ast}, \lambda_{\ast \ast})$ provided that: 
        
        \begin{enumerate}
            \item[$\circledast_{\K}^{\mu, \chi}$] 
            
            \begin{enumerate}
                \item[(a)] $K$ is an a.e.c. categorical in $\mu,$
                
                \item[(b)] $\K$ has amalgamation and $\rm{JEP}$ in every $\lambda < \aleph_{\chi}, \lambda \geq \rm{LS}(\K),$
                
                \item[(c)] $\chi$ is a limit cardinal, $\cf \chi > \rm{LS}(\K),$ and for arbitrarily large $\lambda < \chi$ the sequence $\langle 2^{\lambda^{+n}}: n < \omega \rangle$ is increasing, 
                
                \item[(d)] $\mu > \beth_{1, 1}(\lambda)$ for every $\lambda < \chi$ hence $\mu \geq \aleph_{\chi},$
                
                \item[(e)] every $M \in K_{< \aleph_{\chi}}$ has a $\leq_{\K}$-extension in $K_{\mu}.$
            \end{enumerate}
        \end{enumerate}
    \end{enumerate}
\end{remark}

\begin{thm}\LABEL{few-model-thm-1}
  Let $\K$ be an AEC and let $\lambda \ge \LS (\K)$ be such that $\WGCH ([\lambda, \lambda^{+\omega)})$ holds. Assume that $\K$ is categorical in $\lambda$, $\lambda^+$, $\K_{\lambda^{++}} \neq \emptyset$, and $\Ii (\K, \lambda^{+n}) < \muunif (\lambda^{+n}, 2^{\lambda^{+(n - 1)}})$ for all $n \in [2, \omega)$. Then $\K$ is categorical in all $\mu > \lambda$.
\end{thm}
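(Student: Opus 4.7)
The plan is to reduce the theorem to Corollary \ref{categ-extendible-cor} by producing a $(<\omega)$-extendible categorical good $\lambda$-frame whose generated AEC is $\K$. Once we have such a frame, Corollary \ref{categ-extendible-cor} (invoked with $n=0$, using the hypothesis $\WGCH([\lambda, \lambda^{+\omega}))$) tells us that $\K$ has arbitrarily large models and, since $\K$ is already categorical in $\lambda^+ > \lambda$, is categorical in \emph{all} $\mu' > \lambda$, which is the conclusion.

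First I would construct the good $\lambda$-frame. Categoricity in $\lambda$ and $\lambda^+$, together with $2^{\lambda} < 2^{\lambda^+}$ and $\K_{\lambda^{++}} \neq \emptyset$ (which forces a universal model in $\K_{\lambda^+}$), gives amalgamation in $\K_\lambda$ by the weak diamond argument (apply Theorem \ref{ap-diamond-thm} with $\phi(x) := x \neq x$, $\lambda_1 = \lambda$, $\lambda_2 = \lambda^+$, $\K = \K^{\ast}$). Combined with categoricity, no maximal models, and stability in $\lambda$ (which follows from categoricity in $\lambda^+$ plus WGCH by standard arguments), this yields a categorical good $\lambda$-frame $\s$ on $\K_\lambda$, constructed along the lines of Fact \ref{tame-frame-constr} (the construction in \cite[II]{shelahaecbook} from two successive categoricity cardinals).

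The main obstacle, and the technical heart of the proof, is to establish that $\s$ is $\omega$-successful and $\goodp$ in the sense of \cite[III.1]{shelahaecbook}, so that Fact \ref{very-good-facts}(\ref{very-good-1}) applies and yields $(<\omega)$-extendibility. This is done by induction on $n < \omega$: assuming we have already shown that $\s$ is $n$-extendible with $\s^{+n}$ a categorical good $\lambda^{+n}$-frame on a suitable subclass of $\Ksat_{\lambda^{+n}}$, the extension to the next level requires two ingredients. First, $\WGCH([\lambda, \lambda^{+\omega}))$ provides the weak diamond at $\lambda^{+n}$, which by Fact \ref{extendible-wd} upgrades $\s^{+n}$ to an extendible frame (so $\s^{+(n+1)}$ is defined via Fact \ref{splus-fact}). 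Second, and more delicately, the few-model hypothesis $\Ii(\K, \lambda^{+(n+1)}) < \muunif(\lambda^{+(n+1)}, 2^{\lambda^{+n}})$ is precisely what is needed to rule out the non-structure side of Shelah's dichotomy from \cite[III.12]{shelahaecbook}: it guarantees that the relevant uniqueness triples exist densely and that the frame passes the ``$\goodp$'' test at level $n+1$, rather than producing $2^{\lambda^{+(n+1)}}$ many non-isomorphic models. Iterating for all $n < \omega$ produces the desired $\omega$-successful $\goodp$ categorical good $\lambda$-frame.

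Once $(<\omega)$-extendibility is established via Fact \ref{very-good-facts}(\ref{very-good-1}), the final step is routine: apply Corollary \ref{categ-extendible-cor} to $\s$ with $n=0$, noting that the AEC $\K^\ast$ generated by $\K_\s$ coincides with $\K$ in cardinalities $\geq \lambda$ (using Fact \ref{canon-aec} and the categoricity in $\lambda$), so categoricity of $\K^\ast$ in all $\mu' > \lambda$ transfers to $\K$ itself. Together with the hypothesis of categoricity in $\lambda$, this gives categoricity in every $\mu > \lambda$ (in fact, every $\mu \geq \lambda$), completing the proof.
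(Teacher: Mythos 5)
Your overall reduction coincides with the paper's: the paper gets an $\omega$-successful $\goodp$ good $\lambda$-frame on $\K_\lambda$ (by citing the proof of II.9.2 of the first author's book), deduces $(<\omega)$-extendibility from Fact \ref{very-good-facts}(\ref{very-good-1}), and concludes via Lemma \ref{categ-excellence-lem} (in combination with the $\WGCH$ hypothesis, i.e.\ essentially Corollary \ref{categ-extendible-cor}, which is what you invoke). Your last paragraph is therefore fine. The problem is in your inductive construction of the frame, which is exactly the part the paper outsources. Your ``first ingredient'' --- that $\WGCH$ at $\lambda^{+n}$ upgrades $\s^{+n}$ to an extendible frame via Fact \ref{extendible-wd} --- does not work: Fact \ref{extendible-wd} presupposes a good frame whose domain already contains $\lambda^{+(n+1)}$ (and categoricity at $\lambda^{+n}$), i.e.\ precisely the structure at the next cardinal that the inductive step is supposed to produce; moreover, in this few-models setting there is no categoricity above $\lambda^+$, so that fact is simply not applicable here (it is used in the paper only in situations like Lemma \ref{categ-tame-lem}, where a categorical frame on an interval of cardinals is already available). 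In the present theorem, extendibility/successfulness at each level must come entirely from the non-structure side: if density of uniqueness triples fails for $\s^{+n}$, the weak diamond manufactures at least $\muunif(\lambda^{+(n+2)},2^{\lambda^{+(n+1)}})$ models in $\lambda^{+(n+2)}$, and it is the hypothesis $\Ii(\K,\lambda^{+m})<\muunif(\cdots)$ that rules this out (note the counting takes place two cardinals above the frame, so your indexing is off by one). You gesture at this in your ``second ingredient,'' but the division of labor between the two ingredients is wrong, and carrying out this induction (including the $\goodp$ bookkeeping) is exactly the content of II.9.2, which the paper quotes rather than reproves.

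A secondary gap is the base case: producing a good $\lambda$-frame from categoricity in $\lambda$ and $\lambda^+$ is not ``amalgamation in $\lambda$ plus stability along the lines of Fact \ref{tame-frame-constr}'' --- that fact builds a frame on saturated models from a categoricity cardinal strictly above, using locality, and does not apply here. The relevant construction is the first author's II.3.x, and it too consumes the few-models hypothesis at $\lambda^{++}$ (not merely $\K_{\lambda^{++}}\neq\emptyset$), e.g.\ to obtain density of the relevant types; local character/superstability in $\lambda$ also does not come for free from amalgamation and stability alone. So the skeleton of your argument is the right one and matches the paper, but the two steps where you replace the citation of II.9.2 by your own sketch contain genuine errors (the circular appeal to Fact \ref{extendible-wd}) and under-justified claims, and would need to be replaced by the actual non-structure machinery of Chapters II--III of the book.
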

\begin{proof}
  As in the proof of \cite[II.9.2]{shelahaecbook}, we get that there is an $\omega$-successful $\goodp$ $\lambda$-frame $\s$ on $\K_\lambda$. This implies (Fact \ref{very-good-facts}(\ref{very-good-1})) that $\s$ is $(<\omega)$-extendible. Now apply Lemma \ref{categ-excellence-lem}.
\end{proof}

When $\lambda = \aleph_0$, the hypotheses can be weakened and we obtain the following generalization of the main result of \cite{sh87a, sh87b}:

\begin{thm}\LABEL{few-model-thm-2}
  Let $\K$ be a $\PC_{\aleph_0}$ AEC (so $\LS (\K) = \aleph_0$). Assume $\WGCH ([\aleph_0, \aleph_\omega))$. Assume that $\K$ is categorical in $\aleph_0$, $1 \le \Ii (\K, \aleph_1) < 2^{\aleph_1}$, and $\Ii (\K, \aleph_n) < \muunif (\aleph_n, 2^{\aleph_{n - 1}})$ for all $n \in [2, \omega)$. Then $\K$ has arbitrarily large models. Moreover, if $\K$ is categorical in \emph{some} uncountable cardinal, then $\K$ is categorical in \emph{all} uncountable cardinals.
\end{thm}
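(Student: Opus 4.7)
The plan is to reduce to the framework of Theorem \ref{few-model-thm-1} by first using the $\PC_{\aleph_0}$ hypothesis to strengthen the weaker assumptions in $\aleph_0$ and $\aleph_1$. First, I would invoke Shelah's analysis from Chapter II of \cite{shelahaecbook} (specifically the arguments surrounding II.3.5 and the presentation theorem for $\PC_{\aleph_0}$ classes): the assumption that $\K$ is a $\PC_{\aleph_0}$ AEC categorical in $\aleph_0$ with $1 \le \Ii(\K, \aleph_1) < 2^{\aleph_1}$, combined with $2^{\aleph_0} < 2^{\aleph_1}$ from the WGCH hypothesis, yields amalgamation in $\aleph_0$, the existence of a model in $\aleph_2$, and in fact categoricity in $\aleph_1$. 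Here the $\PC_{\aleph_0}$ hypothesis is indispensable, as it allows the use of the presentation theorem and Shelah's strong omitting type theorems to build a structure theory for the few models in $\aleph_1$.

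Next, I would construct a good $\aleph_0$-frame $\s$ on $\K_{\aleph_0}$ following the road map of \cite[\S II.3]{shelahaecbook}. Then, as in the proof of Theorem \ref{few-model-thm-1} (which in turn follows II.9.2 of \cite{shelahaecbook}), the hypothesis $\Ii(\K, \aleph_n) < \muunif(\aleph_n, 2^{\aleph_{n-1}})$ for all $n \in [2, \omega)$ together with $\WGCH([\aleph_0, \aleph_\omega))$ and the freshly-established categoricity/existence conditions implies that $\s$ is $\omega$-successful and $\goodp$. By Fact \ref{very-good-facts}(\ref{very-good-1}), $\s$ is then $(<\omega)$-extendible.

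Once $\s$ is $(<\omega)$-extendible, Theorem \ref{excellence-wgch} applies and the AEC $\K^\ast$ generated by $\K_\s$ is excellent. By Theorem \ref{excellent-struct}, $\K^\ast$ has no maximal models at any cardinality and hence $\K$ itself has arbitrarily large models (since standard presentation-theorem arguments, using $\PC_{\aleph_0}$, identify $\K$ with the AEC generated by its $\aleph_0$-models above $\aleph_0$). Finally, if $\K$ is categorical in some uncountable $\mu$, Lemma \ref{categ-excellence-lem} applied with $\lambda = \aleph_0$ and $n = 0$ delivers categoricity in all $\mu' > \aleph_0$, as desired.

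The main obstacle will be the first step: the weak assumption $1 \le \Ii(\K, \aleph_1) < 2^{\aleph_1}$ (rather than outright categoricity in $\aleph_1$ as in Theorem \ref{few-model-thm-1}) means one must genuinely replicate the delicate combinatorial arguments of \cite{sh87a,sh87b} in the abstract $\PC_{\aleph_0}$ setting. This is where the $\PC_{\aleph_0}$ hypothesis does the heavy lifting; absent it, the passage from ``few models'' to amalgamation and to an appropriate good frame structure in $\aleph_0$ is not known. Once this first step is secured, the remainder of the argument is essentially a reduction to the excellence machinery developed in the earlier sections of the paper.
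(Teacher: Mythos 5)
Your overall skeleton (produce an $\omega$-successful $\goodp$ good $\aleph_0$-frame from Shelah's Chapter II machinery, deduce $(<\omega)$-extendibility via Fact \ref{very-good-facts}, get excellence from $\WGCH$, and conclude with Lemma \ref{categ-excellence-lem}) is the same as the paper's. However, your first step contains a genuine error: categoricity in $\aleph_1$ does \emph{not} follow from the hypotheses. The assumptions only give $1 \le \Ii (\K, \aleph_1) < 2^{\aleph_1}$, and there are standard examples satisfying all the hypotheses that are not $\aleph_1$-categorical (e.g.\ the elementary class of models of the theory of an equivalence relation with infinitely many infinite classes, which is $\PC_{\aleph_0}$, $\aleph_0$-categorical, and has few models in every $\aleph_n$ but at least two in $\aleph_1$). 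Indeed, if $\aleph_1$-categoricity were derivable, the conditional ``moreover'' clause of the theorem would yield unconditional categoricity in all uncountable cardinals, which the same example refutes. This is load-bearing for your argument: you use the (false) $\aleph_1$-categoricity to run the \cite[II.9.2]{shelahaecbook}-style argument of Theorem \ref{few-model-thm-1}, whose hypotheses include categoricity in $\lambda^+$. The paper's proof avoids this precisely by citing \cite[II.9.3]{shelahaecbook} instead of II.9.2: that is the variant tailored to $\PC_{\aleph_0}$ classes which produces the $\omega$-successful $\goodp$ $\aleph_0$-frame directly from categoricity in $\aleph_0$, $1 \le \Ii(\K,\aleph_1) < 2^{\aleph_1}$, and the few-models hypotheses in the $\aleph_n$'s, with no appeal to $\aleph_1$-categoricity. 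So the fix is to delete the claim of $\aleph_1$-categoricity and replace the II.9.2 step by II.9.3 (or, equivalently, by the II.3 frame construction together with the II.9.3 arguments for $\omega$-successfulness).

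A secondary slip: you apply Theorem \ref{excellence-wgch} directly to $\s$ and then Lemma \ref{categ-excellence-lem} with $n = 0$, but Theorem \ref{excellence-wgch} requires a \emph{very good} frame, and very goodness of $\s$ itself is not known; one must first pass to $\s^{+3}$ (or $\s^{+(n+3)}$) using Fact \ref{very-good-facts}(\ref{very-good-2}), which is exactly what Corollary \ref{categ-extendible-cor} packages, and then invoke Lemma \ref{categ-excellence-lem} with that value of $n$. With these two corrections your argument coincides with the paper's.
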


\begin{proof}
  As in the proof of Theorem \ref{few-model-thm-1}, using \cite[II.9.3]{shelahaecbook} instead of \cite[II.9.2]{shelahaecbook}.
\end{proof}

\printindex

\bibliographystyle{amsalpha}
\bibliography{multidim}

\providecommand{\bysame}{\leavevmode\hbox to3em{\hrulefill}\thinspace}
\providecommand{\MR}{\relax\ifhmode\unskip\space\fi MR }
\providecommand{\MRhref}[2]{%
  \href{http://www.ams.org/mathscinet-getitem?mr=#1}{#2}
}
\providecommand{\href}[2]{#2}
\begin{thebibliography}{BGKV16}

\bibitem[AV]{mv-universal-v4}
Marcos~Mazari Armida and Sebastien Vasey, \emph{Universal classes near
  $\aleph_1$}, Preprint. URL: \url{http://arxiv.org/abs/1712.02880v4}.

\bibitem[Bal09]{baldwinbook09}
John~T. Baldwin, \emph{Categoricity}, University Lecture Series, vol.~50,
  American Mathematical Society, 2009.

\bibitem[BB17]{baldwin-boney}
John~T. Baldwin and Will Boney, \emph{Hanf numbers and presentation theorems in
  {A}{E}{C}s}, Beyond first order model theory (Jos{\'e} Iovino, ed.), CRC
  Press, 2017, pp.~327--352.

\bibitem[BG17]{bg-apal}
Will Boney and Rami Grossberg, \emph{Forking in short and tame {A}{E}{C}s},
  Annals of Pure and Applied Logic \textbf{168} (2017), no.~8, 1517--1551.

\bibitem[BGKV16]{bgkv-apal}
Will Boney, Rami Grossberg, Alexei Kolesnikov, and Sebastien Vasey,
  \emph{Canonical forking in {A}{E}{C}s}, Annals of Pure and Applied Logic
  \textbf{167} (2016), no.~7, 590--613.

\bibitem[Bon14]{tamelc-jsl}
Will Boney, \emph{Tameness from large cardinal axioms}, The Journal of Symbolic
  Logic \textbf{79} (2014), no.~4, 1092--1119.

\bibitem[Bou99]{mordell-lang}
Elisabeth Bouscaren (ed.), \emph{Model theory and algebraic geometry: An
  introduction to {E}. {H}rushovski's proof of the geometric {M}ordell-{L}ang
  conjecture}, Lecture Notes in Mathematics, Springer-Verlag, 1999.

\bibitem[BTR17]{btr-almost-compact-tams}
Andrew Brooke-Taylor and Jiř{\'{\i}} Rosický, \emph{Accessible images
  revisited}, Proceedings of the American Mathematical Society \textbf{145}
  (2017), 1317--1327.

\bibitem[BV]{counterexample-frame-v4-toappear}
Will Boney and Sebastien Vasey, \emph{Good frames in the {H}art-{S}helah
  example}, Archive for Mathematical Logic, to appear. URL:
  \url{http://arxiv.org/abs/1607.03885v4}. DOI:
  \url{10.1007/s00153-017-0599-7}.

\bibitem[BV17]{tame-frames-revisited-jsl}
\bysame, \emph{Tameness and frames revisited}, The Journal of Symbolic Logic
  \textbf{82} (2017), no.~3, 995--1021.

\bibitem[DS78]{dvsh65}
Keith~J. Devlin and Saharon Shelah, \emph{A weak version of ${\Diamond}$ which
  follows from $2^{\aleph_0} < 2^{\aleph_1}$}, Israel Journal of Mathematics
  \textbf{29} (1978), no.~2, 239--247.

\bibitem[GK]{superior-aec}
Rami Grossberg and Alexei Kolesnikov, \emph{Superior abstract elementary
  classes are tame}, Preprint. URL:
  \url{http://www.math.cmu.edu/~rami/AtameP.pdf}.

\bibitem[GL05]{abstract-decomp}
Rami Grossberg and Olivier Lessmann, \emph{Abstract decomposition theorem and
  applications}, Logic and its applications (Andreas Blass and Yi~Zhang, eds.),
  Contemporary Mathematics, American Mathematical Society, 2005, pp.~73--108.

\bibitem[GS83]{grsh174}
Rami Grossberg and Saharon Shelah, \emph{On universal locally finite groups},
  Israel Journal of Mathematics \textbf{44} (1983), no.~4, 289--302.

\bibitem[GV06a]{tamenessthree}
Rami Grossberg and Monica VanDieren, \emph{Categoricity from one successor
  cardinal in tame abstract elementary classes}, Journal of Mathematical Logic
  \textbf{6} (2006), no.~2, 181--201.

\bibitem[GV06b]{tamenessone}
\bysame, \emph{Galois-stability for tame abstract elementary classes}, Journal
  of Mathematical Logic \textbf{6} (2006), no.~1, 25--49.

\bibitem[GV06c]{tamenesstwo}
\bysame, \emph{Shelah's categoricity conjecture from a successor for tame
  abstract elementary classes}, The Journal of Symbolic Logic \textbf{71}
  (2006), no.~2, 553--568.

\bibitem[GVV16]{gvv-mlq}
Rami Grossberg, Monica VanDieren, and Andr{\'e}s Villaveces, \emph{Uniqueness
  of limit models in classes with amalgamation}, Mathematical Logic Quarterly
  \textbf{62} (2016), 367--382.

\bibitem[Jar16]{jarden-tameness-apal}
Adi Jarden, \emph{Tameness, uniqueness triples, and amalgamation}, Annals of
  Pure and Applied Logic \textbf{167} (2016), no.~2, 155--188.

\bibitem[Jec03]{jechbook}
Thomas Jech, \emph{Set theory}, 3rd ed., Springer-Verlag, 2003.

\bibitem[JS13]{jrsh875}
Adi Jarden and Saharon Shelah, \emph{Non-forking frames in abstract elementary
  classes}, Annals of Pure and Applied Logic \textbf{164} (2013), 135--191.

\bibitem[KS]{Sh:E102}
Oren Kolman and Saharon Shelah, \emph{{[Sh:E102] Making [362] cover aec}}.

\bibitem[LRV]{indep-categ-v3}
Michael~J. Lieberman, Jiř{\'{\i}} Rosický, and Sebastien Vasey, \emph{Forking
  independence from the categorical point of view}, Preprint. URL:
  \url{http://arxiv.org/abs/1801.09001v3}.

\bibitem[Mor65]{morley-cip}
Michael Morley, \emph{Categoricity in power}, Transactions of the American
  Mathematical Society \textbf{114} (1965), 514--538.

\bibitem[MS90]{makkaishelah}
Michael Makkai and Saharon Shelah, \emph{Categoricity of theories in
  ${L}_{\kappa,\omega}$, with $\kappa$ a compact cardinal}, Annals of Pure and
  Applied Logic \textbf{47} (1990), 41--97.

\bibitem[PS85]{class-predicate-i}
Anand Pillay and Saharon Shelah, \emph{Classification theory over a predicate
  {I}}, Notre Dame Journal of Formal Logic \textbf{26} (1985), no.~4, 361--376.

\bibitem[Shea]{sh842}
Saharon Shelah, \emph{Eventual categoricity spectrum and frames}, Paper number
  842 on Shelah's publication list. In preparation.

\bibitem[Sheb]{Sh:932}
Saharon Shelah, \emph{{[Sh:932] Maximal failures of sequence locality in
  a.e.c.}}, \href{https://arxiv.org/abs/0903.3614}{arXiv: 0903.3614}.

\bibitem[Shec]{Sh:F1980}
\bysame, \emph{{[Sh:F1980] original 842, from July 2014}}.

\bibitem[She74]{sh31}
Saharon Shelah, \emph{Categoricity of uncountable theories}, Proceedings of the
  Tarski symposium (Leon Henkin, John Addison, C.~C. Chang, William Craig, Dana
  Scott, and Robert Vaught, eds.), American Mathematical Society, 1974,
  pp.~187--203.

\bibitem[She83a]{sh87a}
\bysame, \emph{Classification theory for non-elementary classes {I}: The number
  of uncountable models of $\psi \in {L}_{\omega_1, \omega}$. {P}art {A}},
  Israel Journal of Mathematics \textbf{46} (1983), no.~3, 214--240.

\bibitem[She83b]{sh87b}
\bysame, \emph{Classification theory for non-elementary classes {I}: The number
  of uncountable models of $\psi \in {L}_{\omega_1, \omega}$. {P}art {B}},
  Israel Journal of Mathematics \textbf{46} (1983), no.~4, 241--273.

\bibitem[She85]{main-gap-ams}
\bysame, \emph{Classification of first order theories which have a structure
  theorem}, Bulletin of the American Mathematical Society \textbf{12} (1985),
  no.~2, 227--232.

\bibitem[She86]{class-predicate-ii}
\bysame, \emph{Classification theory over a predicate {II}}, Lecture Notes in
  Mathematics, pp.~47--90, Springer-Verlag, 1986.

\bibitem[She87]{sh88}
\bysame, \emph{Classification of non elementary classes {II}. {A}bstract
  elementary classes}, Classification Theory (Chicago, IL, 1985) (John~T.
  Baldwin, ed.), Lecture Notes in Mathematics, vol. 1292, Springer-Verlag,
  1987, pp.~419--497.

\bibitem[She90]{shelahfobook}
\bysame, \emph{Classification theory and the number of non-isomorphic models},
  2nd ed., Studies in logic and the foundations of mathematics, vol.~92,
  North-Holland, 1990.

\bibitem[She98]{proper-and-imp}
\bysame, \emph{Proper and improper forcing}, second edition ed., Perspectives
  in mathematical logic, Springer-Verlag, 1998.

\bibitem[She99]{sh394}
\bysame, \emph{Categoricity for abstract classes with amalgamation}, Annals of
  Pure and Applied Logic \textbf{98} (1999), no.~1, 261--294.

\bibitem[She00]{sh702}
\bysame, \emph{On what {I} do not understand (and have something to say), model
  theory}, Math. Japonica \textbf{51} (2000), 329--377.

\bibitem[She09a]{shelahaecbook}
\bysame, \emph{Classification theory for abstract elementary classes}, Studies
  in Logic: Mathematical logic and foundations, vol.~18, College Publications,
  2009.

\bibitem[She09b]{shelahaecbook2}
\bysame, \emph{Classification theory for abstract elementary classes 2},
  Studies in Logic: Mathematical logic and foundations, vol.~20, College
  Publications, 2009.

\bibitem[She09c]{Sh:600}
Saharon Shelah, \emph{{[Sh:600] Categoricity in abstract elementary classes:
  going up inductively}}, 2009,
  \href{https://arxiv.org/abs/math/0011215}{arXiv: math/0011215} Ch. II of
  [Sh:h].

\bibitem[She09d]{Sh:734}
\bysame, \emph{{[Sh:734] Categoricity and solvability of A.E.C., quite
  highly}}, 2009, \href{https://arxiv.org/abs/0808.3023}{arXiv: 0808.3023} Ch.
  IV of [Sh:h].

\bibitem[SK96]{kosh362}
Saharon Shelah and Oren Kolman, \emph{Categoricity of theories in
  $\mathbb{{L}}_{\kappa, \omega}$, when $\kappa$ is a measurable cardinal.
  {P}art {I}}, Fundamentae Mathematica \textbf{151} (1996), 209--240.

\bibitem[SV]{aec-stable-aleph0-v5-toappear}
Saharon Shelah and Sebastien Vasey, \emph{Abstract elementary classes stable in
  $\aleph_0$}, Annals of Pure and Applied Logic, To appear. URL:
  \url{http://arxiv.org/abs/1702.08281v5}. DOI:
  \url{10.1016/j.apal.2018.02.004}.

\bibitem[SV99]{shvi635}
Saharon Shelah and Andr{\'e}s Villaveces, \emph{Toward categoricity for classes
  with no maximal models}, Annals of Pure and Applied Logic \textbf{97} (1999),
  1--25.

\bibitem[Van06]{vandierennomax}
Monica VanDieren, \emph{Categoricity in abstract elementary classes with no
  maximal models}, Annals of Pure and Applied Logic \textbf{141} (2006),
  108--147.

\bibitem[Van13]{nomaxerrata}
\bysame, \emph{Erratum to "{C}ategoricity in abstract elementary classes with
  no maximal models" [{A}nn. {P}ure {A}ppl. {L}ogic 141 (2006) 108-147]},
  Annals of Pure and Applied Logic \textbf{164} (2013), no.~2, 131--133.

\bibitem[Van16]{vandieren-symmetry-apal}
\bysame, \emph{Superstability and symmetry}, Annals of Pure and Applied Logic
  \textbf{167} (2016), no.~12, 1171--1183.

\bibitem[Vas]{categ-amalg-v1}
Sebastien Vasey, \emph{The categoricity spectrum of large abstract elementary
  classes with amalgamation}, Preprint. URL:
  \url{http://arxiv.org/abs/1805.04068v1}.

\bibitem[Vas16a]{indep-aec-apal}
\bysame, \emph{Building independence relations in abstract elementary classes},
  Annals of Pure and Applied Logic \textbf{167} (2016), no.~11, 1029--1092.

\bibitem[Vas16b]{ss-tame-jsl}
\bysame, \emph{Forking and superstability in tame {A}{E}{C}s}, The Journal of
  Symbolic Logic \textbf{81} (2016), no.~1, 357--383.

\bibitem[Vas16c]{sv-infinitary-stability-afml}
\bysame, \emph{Infinitary stability theory}, Archive for Mathematical Logic
  \textbf{55} (2016), 567--592.

\bibitem[Vas17a]{prime-categ-mlq}
\bysame, \emph{Building prime models in fully good abstract elementary
  classes}, Mathematical Logic Quarterly \textbf{63} (2017), 193--201.

\bibitem[Vas17b]{Vas17}
\bysame, \emph{Downward categoricity from a successor inside a good frame},
  Annals of Pure and Applied Logic \textbf{168} (2017), no.~3, 651--692.

\bibitem[Vas17c]{categ-saturated-afml}
\bysame, \emph{Saturation and solvability in abstract elementary classes with
  amalgamation}, Archive for Mathematical Logic \textbf{56} (2017), 671--690.

\bibitem[Vas17d]{ap-universal-apal}
\bysame, \emph{Shelah's eventual categoricity conjecture in universal classes:
  part {I}}, Annals of Pure and Applied Logic \textbf{168} (2017), no.~9,
  1609--1642.

\bibitem[Vas17e]{categ-universal-2-selecta}
\bysame, \emph{Shelah's eventual categoricity conjecture in universal classes:
  part {I}{I}}, Selecta Mathematica \textbf{23} (2017), no.~2, 1469--1506.

\bibitem[Vas18]{categ-primes-mlq}
\bysame, \emph{Shelah's eventual categoricity conjecture in tame {A}{E}{C}s
  with primes}, Mathematical Logic Quarterly \textbf{64} (2018), 25--36.

\end{thebibliography}

\end{document}